\numberwithin{equation}{section}
\newtheorem{theorem}{Theorem}[section]
\newtheorem{cor}[theorem]{Corollary}
\newtheorem{lemma}[theorem]{Lemma}
\newtheorem{prop}[theorem]{Proposition}
\newtheorem{defi}{Definition}[section]
\theoremstyle{definition}
\newtheorem{example}{Example}[section]
\newtheorem{rem}{Remark}[section]
\renewcommand\ll{\mathfrak l}
\newcommand\uno{\mathbbm 1}
\newcommand\half{\tfrac{1}{2}}
\newcommand\ov{\overline}
\renewcommand\({\left(}
\newcommand\be{\beta}
\newcommand\ka{\widehat \k}
\newcommand\g{\mathfrak g}
\newcommand\ga{\widehat{\mathfrak g}}
\newcommand\h{\mathfrak h}
\newcommand\q{\mathfrak q}
\newcommand\ha{\widehat{\mathfrak h}}
\newcommand\bb{\mathfrak b}
\newcommand\D{\Delta}
\renewcommand\l{\lambda}
\newcommand\Dp{\Delta^+}
\newcommand\Da{\widehat\Delta}
\newcommand\Pia{{\widehat\Pi}}
\newcommand\Dap{\widehat\Delta^+}
\newcommand\Wa{\widehat{W}}
\renewcommand\d{\delta}
\renewcommand\r{\mathfrak r}
\renewcommand\a{\alpha}
\renewcommand\aa{\mathfrak a}
\renewcommand\b{\bb}
\renewcommand\k{\mathfrak k}
\renewcommand\i{{\mathfrak  i}}
\newcommand\nat{\mathbb N}
\newcommand\ganz{\mathbb Z}
\newcommand\s{\sigma}
\renewcommand\L{\Lambda}
\renewcommand\aa{\mathfrak a}
\renewcommand\u{{\mathfrak u}}
\newcommand\y{{\bf y}}
\newcommand\e{\epsilon}
\newcommand\C{\mathbb C}
\newcommand\R{\mathbb R}
\newcommand\va{|0\rangle}
\renewcommand\ha{\widehat{\mathfrak h}}
\newcommand{\fa}{\mathfrak{a}}
\newcommand{\fg}{\mathfrak{g}}
\newcommand{\fk}{\mathfrak{k}}
\newcommand{\ft}{\mathfrak{t}}
\newcommand\p{\mathfrak p}
\newcommand{\CC}{\mathbb{C}}
\newcommand{\fp}{\mathfrak{p}}
\newcommand{\Wab}{\mathcal W_\s^{ab}}
\begin{document}
\title{Conformal embeddings and simple current extensions}
\author[Kac, M\"oseneder, Papi, Xu]{Victor~G. Kac}
\author[]{Pierluigi M\"oseneder Frajria}
\author[]{Paolo  Papi}
\author[]{Feng Xu}
\keywords{conformal embedding, vertex operator algebra, simple current}
\subjclass[2010]{Primary    17B69; Secondary 17B20, 17B65}
\begin{abstract} In this paper we investigate the structure of  intermediate vertex  algebras associated with  a maximal conformal embedding
of a reductive Lie algebra in a semisimple Lie algebra of classical type.
\end{abstract}
\maketitle
\tableofcontents
\section{Introduction}
Let $\g$ be a semisimple finite-dimensional complex Lie algebra and $\k$  a reductive subalgebra of $\g$. The embedding $\k\hookrightarrow \g$ is  called conformal    if the central charges of the Sugawara construction of the Virasoro algebra for
the affinizations $\ga,\ka$ are equal.
Conformal embeddings  were popular in physics literature in the mid 80's, due to their relevance for string compactifications. In particular, maximal conformal embeddings were classified
in \cite{SW},  \cite{AGO}. In the  vertex algebra framework the definition can be rephrased as follows: the simple affine vertex  algebras $V_{\bf k}(\k)$ and $V_1(\g)$ have the same
Sugawara conformal vector for some multiindex $\bf k$ of levels.\par
Recall that  an irreducible  module $M$  over a vertex algebra $V$ is called a {\it simple current} if for any irreducible $V$-module $M_2$ there exists a unique irreducible $V$-module $M_3$ such that the space of intertwiners $\left[\begin{matrix}M_3\\M\ M_2\end{matrix}\right]$ is one-dimensional and $\left[\begin{matrix}S\\M\ M_2\end{matrix}\right]=\{0\}$ for any irreducible $V$-module $S$ not isomorphic to $M_3$.
   A simple current extension of $V$ is a simple vertex operator algebra $W$ containing  $V$ such that  $V$ and  $W$ have the same Sugawara conformal vector and there is grading $W=\sum_{a\in D} W^a$ of $W$ by  an abelian group  $D$ such that $W^0=V$ and $W^a$ is a simple current for $V$ for any $a\in D$.
\par
Our main result is the following theorem, which appears as Theorem \ref{p} in the body of the paper.
\begin{theorem}\label{pp} Let $\k\hookrightarrow \g$ be a maximal conformal embedding  and $\g$ a simple classical Lie algebra. Assume that $W$ is a simple vertex subalgebra of $V_1(\g)$ such that
$V_{\mathbf{j}}(\k)\subset W$.
Then either $W=V_1(\g)$ or  $W$ is a simple current extension  of  $V^{\bf j}(\k)$. Moreover, all these extensions are explicitly described.
\end{theorem}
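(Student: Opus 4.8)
The plan is to turn the problem into a combinatorial one about the branching of $V_1(\g)$ over $V_{\mathbf j}(\k)$. Conformality means that the Sugawara vector $\omega$ of $V_{\mathbf j}(\k)$ is the Sugawara vector of $V_1(\g)$; it lies in $V_{\mathbf j}(\k)\subseteq W$, so all three algebras carry the same Virasoro element and $W$ is a conformal extension of $V_{\mathbf j}(\k)$ (which we also denote $V^{\mathbf j}(\k)$). For every embedding in the classification the levels $\mathbf j$ are positive integers, so $V^{\mathbf j}(\k)$ is rational (incorporating, when $\k$ has a centre, the relevant rank-one lattice vertex algebra) and $V_1(\g)$ is a finite, completely reducible $V^{\mathbf j}(\k)$-module; write $V_1(\g)=\bigoplus_{i\in I}V^i$ with $V^0=V^{\mathbf j}(\k)$, the sum being multiplicity free (checked case by case). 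Any intermediate $W$ is a $V^{\mathbf j}(\k)$-submodule, hence $W=\bigoplus_{i\in J}V^i$ for some $J$ with $0\in J\subseteq I$, and $W$ is a subalgebra exactly when $J$ is closed under fusion inside $V_1(\g)$: whenever $i,j\in J$ and $V^k$ is a constituent of the submodule $V^i\cdot V^j\subseteq V_1(\g)$, then $k\in J$; conversely every such $J$ yields a subalgebra. Thus everything is governed by the decomposition and the fusion, both accessible from the twisted affine algebra $\widehat L(\g,\si)$ attached to the finite-order automorphism $\si$ realizing $\k$ (the machinery developed earlier); in particular one reads off the conformal weights of the $V^i$.

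Next I would isolate the simple-current part. Let $D\subseteq I$ index those $V^i$ that are simple currents for $V^{\mathbf j}(\k)$. Since simple currents are invertible for the fusion product and $V_1(\g)$ is a simple, hence connected, extension of $V^{\mathbf j}(\k)$ (so the product of two nonzero constituents is nonzero), for $a,b\in D$ the product $V^a\cdot V^b$ inside $V_1(\g)$ is the single simple current $V^{a\cdot b}$, which therefore occurs in $V_1(\g)$; hence $D$ is a finite abelian group and $\widetilde W:=\bigoplus_{a\in D}V^a$ is a subalgebra — a simple current extension of $V^{\mathbf j}(\k)$. A subsum $\bigoplus_{a\in S}V^a\subseteq\widetilde W$ is a subalgebra if and only if $S$ is a subgroup of $D$ (closure under the group law forces inverses, as $D$ is finite), and it is then exactly the simple current extension of $V^{\mathbf j}(\k)$ determined by $S$; in particular it is simple. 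So every subalgebra contained in $\widetilde W$ is a simple current extension of $V^{\mathbf j}(\k)$, and the full list of them is the subgroup lattice of $D$ — this already produces the explicit description for this family of $W$.

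It then remains to prove that a subalgebra $W$ not contained in $\widetilde W$ must equal $V_1(\g)$. The Lie algebra $W_1$ (weight-one space) is reductive because $W$ is simple; it lies between $\k$ and $\g$, so maximality of the embedding forces $W_1=\k$ or $W_1=\g$. If $W_1=\g$ then $W$ contains the subalgebra strongly generated by $\g$, which is all of $V_1(\g)$, giving the alternative $W=V_1(\g)$. Hence it suffices to rule out a subalgebra $W$ with $W_1=\k$ containing a constituent $V^{i_0}$ which is not a simple current. This is where the explicit list of maximal conformal embeddings with $\g$ simple classical (\cite{SW}, \cite{AGO}) together with the branching data are used: for $i\ne 0$ the space $(V^i)_1$ meets $\k$ trivially, so $W_1=\k$ forces $(V^i)_1=0$ for every $V^i\ne V^0$ in $W$; and for a non-simple-current $V^{i_0}$ one shows, case by case, that the product $V^{i_0}\cdot(V^{i_0})^{*}$ inside $V_1(\g)$ — or, where this is not yet enough, one more fusion step — already involves a constituent $V^j$ with $(V^j)_1\ne 0$, contradicting $W_1=\k$. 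Because the decompositions have only a few summands in each case, these verifications are short once the fusion rules are written down.

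I expect this final step — pinning down the fusion among the non-simple-current constituents in each case of the list — to be the main obstacle; everything else (the reduction of the first paragraph, the simple-current bookkeeping, and the reductivity/maximality dichotomy) is formal and uniform. Assembling the pieces, an intermediate simple $W$ is either $V_1(\g)$ or is contained in $\widetilde W$ and hence a simple current extension of $V^{\mathbf j}(\k)$ by a subgroup of $D$; running through the finite list of embeddings and the corresponding groups $D$ yields the explicit description asserted in the theorem.
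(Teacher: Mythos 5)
Your overall architecture agrees with the paper's at a high level (reduce to showing that every constituent of a proper intermediate $W$ is a special simple current, then use the group structure on special simple currents), but the decisive step is missing. You defer the claim that a non-simple-current constituent $V^{i_0}\subset W$ forces $W=V_1(\g)$ to ``case by case'' fusion computations that you do not carry out, and your justification that ``the decompositions have only a few summands in each case'' is false: the constituents are indexed by $\b_0$-stable abelian subspaces of $\p$ (Theorem \ref{decoclassical}), whose number is unbounded within the infinite families of the classification — already in the adjoint case it is the number of even-dimensional abelian ideals of a Borel subalgebra, which grows exponentially in the rank. The paper replaces this by a uniform mechanism, which is the real content of the proof: realizing everything inside the fermionic vertex algebra $F(\bar\p)$, the $\lambda$-bracket of the explicit highest weight vector $v_{k,\ll}$ of a constituent with its dual vector $v^*_{k,\ll}$ (Proposition \ref{f}, formula \eqref{ach}) produces a canonical nonzero conformal-weight-one element $h_\ll=\sum_{\be\in\Phi_\ll}:\bar x_\be\bar x_{-\be}:$ of $W_1$ lying in a Cartan subalgebra $\h$ of $\g$; Lemma \ref{levi} (maximality of $\k$ among reductive subalgebras, used only through its control of $\h\cap W_1$, so that no reductivity of $W_1$ need be established) forces $h_\ll\in\ft$ unless $W_1=\g$; and $h_\ll\in\ft$ is exactly the condition that $\be(h)=1$ on $\Phi_\ll$ and $\be(h)=0$ off $\Phi_\ll\cup(-\Phi_\ll)$ for some $h\in\h_0$, which Lemmas \ref{sumcoweights} and \ref{fundcoweights} together with Proposition \ref{aresimplecurrents} show forces $\langle\Phi_\ll\rangle$ to be a special simple current weight. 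This uniform production of weight-one elements and the ensuing combinatorial classification are what you would have to supply; without them the proof does not close.

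Two smaller but genuine defects: the group $D$ of simple-current constituents need not be finite (it is isomorphic to $\ganz$ for, e.g., $\C\times A_{m-1}\hookrightarrow D_m$), so your deduction of closure under inverses from finiteness fails; the paper obtains $j\in S^\k_W\Rightarrow j^*\in S^\k_W$ from simplicity of $W$ (Lemma \ref{nonz} and Proposition \ref{bilinearfr}). Likewise, when $\k$ has a center $V_{\mathbf j}(\k)$ contains a Heisenberg factor and is not rational, so the complete reducibility and multiplicity-freeness you invoke must be read off from the explicit decompositions (Theorem \ref{decoclassical} and Proposition \ref{slpq}) rather than from rationality of $V_{\mathbf j}(\k)$.
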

As detailed in Remark \ref{exc}, the theorem holds when $\k$ is a regular subalgebra of a simple Lie algebra $\g$ of exceptional type.
We have not been able to settle the case when  $\k$ is not regular. However Remark \ref{conje}  leads us  to think that  the theorem is  true in this case too:\vskip5pt
\noindent
{\bf Conjecture 1.1}: {\it Theorem \ref{pp} holds for any conformal embedding $\k\hookrightarrow  \g$}.

\vskip10pt A motivation for this paper comes from the following
conjecture, which is a modified vertex algebra  version  of
conjecture 1.2 in \cite{F}: \vskip5pt\noindent {\bf Conjecture
1.2}Ê{\sl\ (generalized Wall's conjecture for vertex algebras)}:
{\it Suppose that $B$ is a vertex operator algebra and $A \subset B$
is a vertex operator subalgebra. Assume that $B$ decomposes into a
direct sum of finitely many irreducible representations of $A$. Then
the number of minimal (resp. maximal) vertex subalgebras between $A$
and $B$ is less than $C\cdot (\dim\,End_A(B))^{3/2}$ , where $C$ is
a constant and $End_A(B)$ is the space of linear maps from $B$ to
itself which commute with the action of $A$.}

Consider  the special case when $A$ is the fixed point subalgebra of
a simple vertex operator algebra $D$ under a faithful action of a
finite group $G,$  and $B$ is the fixed point subalgebra of  $D$
under a faithful action of a finite subgroup $H$ of $G$. By the
Galois correspondence (cf. \cite{HMT}, \cite{KacR}), the
intermediate vertex subalgebras between $A$ and $B$ are in one to
one correspondence with subgroups of $G$ which contain $H$, and
minimal/maximal vertex subalgebras correspond to maximal/minimal
subgroups of $G$ which contain $H$. So the minimal version of the
above conjecture in this case  states that the number of maximal
subgroups of $G$ which contain $H$ is less than  $C\cdot
n^{\frac{3}{2}}$ where $n$ is the number of double cosets of $H$ in
$G$. When $H$ is trivial or a normal subgroup of $G$, this is a
theorem of Liebeck, Pyber and Shalev \cite{LPS}. It is believed that
the constant $C$ can be chosen to be $1$.

 At the time of this writing, the
original 1961 conjecture of G.E. Wall (corresponding to bounding the
number of maximal subgroups by $|G|$)  seems to have been disproven.
But the above conjecture is still open even for group cases when the
subgroup $H$ of $G$ is not normal.

The maximal version of the above conjecture in the group case can be
easily verified. For example when $H$ is trivial the conjecture
follows from the fact that the number of minimal subgroups of $G$ is
less than $|G|$, and this can be directly checked as follows: note
that any minimal subgroup of $G$ is a cyclic group of prime order,
and so any two such minimal subgroups intersect only at identity. It
follows that the number of minimal subgroups of $G$ is less than
$|G|$.

In Xu's paper \cite{F}, the above
conjecture was tested for a special class of $A\subset B$ coming from
conformal embeddings. In particular, in Theorem 3.14 of
\cite{F} all simple intermediate vertex algebras  in these cases are listed. The proof
in \cite{F} is a mixture of
techniques from the theory of subfactors and  uses unbounded smeared vertex
operators.  The main theorem  of this paper gives a vast generalization of
Theorem  3.14 of \cite{F} using very different methods.\vskip5pt
 Theorem \ref{pp}  can be nicely illustrated by the case of the maximal conformal embedding $\g\hookrightarrow so(\g)$. This  example does not have the many technical complications of the general case,
but it is enlightening in showing the ideas underlying  the proof. We therefore refer the reader to  Section \ref{duee} for an outline of proof
in this special case. \par
A key tool in our treatment  is given by the decomposition formulas found in \cite{CKMP}. In that paper, starting from a  conformal embedding
$\k\hookrightarrow so(\p)$ associated to an infinitesimal symmetric space $\g=\k\oplus\p$, we found explicit formulas expressing the decomposition of   the basic module of $\ga$ in $\ka$-irreducibles. Also, we provided  a combinatorial interpretation of these decompositions in terms of the abelian subspaces of $\p$ that are stable with respect to the action of a Borel subalgebra of $\k$. \par
In the present paper,
we first generalize the results of \cite{CKMP}Ê to any conformal embedding in a classical simple Lie algebra.
It is very convenient, in view of this generalization, to reformulate our previous result in the framework of affine vertex algebras rather than that of basic modules of affine Lie algebras. This  is done in Theorem \ref{decoeabeliani}. The  generalization to the classical  case is given in Theorem \ref{decoclassical}. With this generalized result available, we proceed to prove
Theorem \ref{pp} along the lines of the adjoint case.\par
A natural question left open by Theorem \ref{pp} is to analyze the structure of the simple current extensions, or, in other terms, to determine the abelian groups parametrizing the extensions. This is done in the last section, where we relate these groups to certain subgroups of the center of the connected simply connected Lie group corresponding to $\k$ (cf. Proposition
\ref{centro}). These subgroups turn out to be
characterized by suitable integrality conditions: see Proposition \ref{integral} and Corollary \ref{cori}.\par
The paper is organized as follows. In Section \ref{duee}Ê we discuss the strategy of proof of Theorem \ref{pp}  in the adjoint case.  Section \ref{tre} is divided in  several subsections. In the first three subsections  we recall a few basic facts on vertex algebras  and perform some calculations in the fermionic vertex algebra needed in what follows. In Section \ref{confemb} we discuss the notion of conformal embedding from several points of view. In \ref{fdp} we highlight the relationships of conformal embeddings with  the finite decomposition property and we also apply   results  by Kac and Wakimoto to classify  certain  instances of finite decomposition  at admissible rational non-integer levels.
Section \ref{rva} contains the material we need about representations of vertex algebras, with  special emphasis on  fusion coefficients and simple currents.
In Section \ref{symmpairs} we collect all the results from the theory of symmetric pairs which will be used in the derivation of the decomposition formulas. In Section \ref{6} we state and prove the decomposition theorems, and in Section \ref{7} we apply them to prove Theorem \ref{pp}.
Section \ref{8}Ê illustrates the final sentence of Theorem \ref{pp}, yielding an explicit description of the groups parametrizing the  simple currents extensions.
\subsubsection*{Notational conventions} Regarding  Dynkin diagrams, we use the conventions for names and numberings of \cite{Kac}, Chapter~4, Tables~Fin, Aff 1-2.Ê\par We denote by $\nat$ the set of nonnegative integers.
\section{The adjoint case}\label{duee}
In this section we sketch the proof of Theorem \ref{pp} in the case of the conformal embedding of a simple Lie algebra $\g$ in $so(\g)$. Precise definitions and references will be given in the next sections.\par
 Let $V_{k}(\g)$ denote the simple affine vertex algebra of level $k\in\nat$ (cf. \ref{ava}).
It is known that  the  irreducible $V_{k}(\g)$-modules with highest  weight  $k \omega$ where $\omega$
is a minuscule fundamental weight of $\g$ are simple currents for $V_{k}(\g)$. We call them {\it special simple currents}. In this section we give an outline of proof of the following special case of Theorem \ref{pp}.
\begin{theorem} \label{T} Let $\g$ be a simple Lie algebra and let $h^\vee$ be its  dual Coxeter number. The only simple vertex operator algebras $V$ such that    $V_{h^\vee}(\g)\subseteq V \subsetneq V_{1}(so(\g))$ are simple currents
extensions of  $V_{h^\vee}(\g)$ whose factors are special simple currents.
\end{theorem}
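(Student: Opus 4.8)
The plan is to exploit the conformal embedding $\g \hookrightarrow so(\g)$ to obtain a completely explicit branching rule for $V_1(so(\g))$ as a $V_{h^\vee}(\g)$-module, and then to argue combinatorially that every intermediate simple vertex operator algebra is forced to be a sum of isotypic components indexed by special simple currents. First I would recall that the basic module of $\widehat{so(\g)}$ at level $1$ is realized inside a fermionic (free spinor) vertex superalgebra built on $\g$, and that under the diagonal Cartan $\h \subset \g$ the relevant decomposition has already been computed in \cite{CKMP} (reformulated vertex-algebraically in Theorem~\ref{decoeabeliani}, with the classical generalization in Theorem~\ref{decoclassical}). Applied to the adjoint case, this says $V_1(so(\g)) = \bigoplus_{a \in D_0} V_{h^\vee}(\g)\cdot m_a$ where the highest weights appearing are exactly $h^\vee$ times the minuscule fundamental weights of $\g$ (together with the weight $0$), and $D_0$ carries the structure of an abelian group — namely a subgroup of the center of the simply connected group $G$ — governed by the combinatorics of abelian $\bb$-stable subspaces of $\g$. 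The key structural input is: the set of highest weights occurring is $\{h^\vee \o : \o \text{ minuscule}\} \cup \{0\}$, each with multiplicity one, and these are precisely the special simple currents.

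Next I would use the general machinery of Section~\ref{rva}: since $V_1(so(\g))$ decomposes into finitely many irreducibles for $V_{h^\vee}(\g)$, any intermediate vertex operator algebra $V$ with $V_{h^\vee}(\g) \subseteq V \subsetneq V_1(so(\g))$ is, by complete reducibility, a direct sum of some subset of the isotypic components $V_{h^\vee}(\g)\cdot m_a$, $a \in D_0$. Closure of $V$ under the vertex algebra product, together with the fusion rules among simple currents (the product of two simple currents $W^a, W^b$ is the simple current $W^{a+b}$, which already sits inside $V_1(so(\g))$ by the group structure on $D_0$), forces the index set $S \subseteq D_0$ of components appearing in $V$ to be closed under the group operation; since $0 \in S$ (as $V_{h^\vee}(\g) \subseteq V$) and $D_0$ is finite, $S$ is a subgroup. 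Hence $V = \bigoplus_{a \in S} V_{h^\vee}(\g)\cdot m_a$ is exactly a simple current extension of $V_{h^\vee}(\g)$ whose factors $m_a$ are, by construction, special simple currents. The hypothesis $V \subsetneq V_1(so(\g))$ just says $S \subsetneq D_0$, and $V = V_{h^\vee}(\g)$ corresponds to $S = \{0\}$; all other $S$ give the genuinely intermediate simple current extensions.

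The main obstacle, and where the bulk of the work lies, is verifying that the pieces really assemble into a \emph{vertex operator algebra} — i.e. that the subset $S$ must be closed (not merely that a closed $S$ yields a VOA), and in particular that no ``mixed'' intermediate object can arise from a subspace that is not a sum of full isotypic components. This is handled by the simple-current property itself: because each $W^a = V_{h^\vee}(\g)\cdot m_a$ is a simple current, $\Hom_{V_{h^\vee}(\g)}(W^a \otimes W^b, W^c)$ is at most one-dimensional and nonzero for a unique $c = a+b$, so the restriction of the vertex operator map $Y$ to $W^a \otimes W^b$ lands in $W^{a+b}$; an intermediate $V$ stable under $Y$ and containing $W^0$ must therefore contain $W^{a+b}$ whenever it contains $W^a$ and $W^b$, and a short argument using that the isotypic components are the minimal $V_{h^\vee}(\g)$-submodules supported on a given weight shows $V$ cannot contain a proper submodule of any $W^a$ without containing all of it. A secondary technical point is checking that the combinatorial group $D_0$ from Theorem~\ref{decoeabeliani} coincides with the group indexing the special simple currents and that the product structure matches the tensor product of minuscule modules at level $h^\vee$; this is the content cross-referenced to Section~\ref{8} and Proposition~\ref{centro}, and in the adjoint case it reduces to the classical fact that minuscule weights of $\g$ are a torsor-like set under the center $Z(G)$, with $h^\vee\o_i + h^\vee\o_j$ congruent mod the root lattice to $h^\vee\o_k$ for the appropriate $k$.
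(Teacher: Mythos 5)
Your proposal rests on a false ``key structural input.'' You assert that the branching of $V_1(so(\g))$ over $V_{h^\vee}(\g)$ has highest weights exactly $\{h^\vee\o:\o\text{ minuscule}\}\cup\{0\}$, each a special simple current. This is not what Theorem \ref{decoeabeliani} (or \cite{CKMP}) says in the adjoint case: the decomposition is $L(\tilde\L_0)=\bigoplus_{I\in Ab_0}L(h^\vee\L_0+\langle I\rangle)$, indexed by \emph{all} even-dimensional abelian ideals $I$ of a Borel subalgebra, and for a general such $I$ the weight $\langle I\rangle$ is not of the form $h^\vee\o_i$ with $\o_i$ minuscule. (For $\g$ of type $E_8$ there are $2^8$ abelian ideals and no minuscule weights at all, yet the decomposition is highly nontrivial.) Only the ideals that are nilradicals of maximal parabolics attached to a simple root of coefficient $1$ in the highest root produce special simple currents. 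With your premise the theorem would be almost immediate; in fact the entire content of the theorem is to rule out the many non-simple-current components from any proper intermediate simple $V$, and your argument never addresses this.

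The step you are missing is the paper's core mechanism: from $I\in Ab_V$ one has the explicit highest weight vector $v_I=:\prod_{\g_\a\subset I}\bar x_\a:$ in $V$; simplicity of $V$ (via Lemma \ref{nonz}) forces the dual component $L(h^\vee\L_0+\langle I^*\rangle)$, hence $v_{-I}$, into $V$; the fermionic $\l$-bracket computation \eqref{lambdaproductss} then shows $(v_I)_{(2n-2)}v_{-I}=\sum_{\g_\a\subset I}\Theta(E_{\a,\a}-E_{-\a,-\a})$ lies in $V_1$. Maximality of the conformal embedding $ad(\g)\hookrightarrow so(\g)$ (together with Lemma \ref{levi}) leaves only two options: either this element generates all of $so(\g)$, giving $V=V_1(so(\g))$, or it equals $ad(h)$ for some $h\in\h$, and the combinatorial analysis of Lemmas \ref{sumcoweights}--\ref{fundcoweights} then forces $I$ to be the nilradical of a minuscule maximal parabolic, i.e.\ $\langle I\rangle=h^\vee\o_i$ with $\o_i$ minuscule. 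Your closure-under-fusion argument in the second and third paragraphs is essentially the paper's Lemma \ref{argfeng} and Corollary \ref{sumsimpleisext} and is fine, but it only covers the easy half (a sum of special simple currents closed under the product is a simple current extension); without the mechanism above you have no way to exclude components that are not simple currents.
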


We identify $V_{1}(so(\g))$ with the basic module $L(\tilde \L_0)$, where $\tilde \L_0$ is the $0$-th fundamental weight for $\widehat{so(\g)}$.
 Next we realize $L(\tilde \L_0)$ explicitly as the even part of  the fermionic vertex algebra  $F(\bar \g)$ of $(\bar\g,(\cdot,\cdot))$ where  $\bar\g$ is $\g$ seen as an odd space and
$(\cdot, \cdot)$  is the Killing form on $\g$ (cf. \ref{ferm}).  If $x\in\g$,  we let $\bar x$ be the corresponding element of $F(\bar \g)$.


If $X\in so(\g)$, set $\Theta(X)=\half\sum_{i} :\overline{X(x_i) }\bar x^i:$ where $\{x_i\}$ is a basis of $\g$ and $\{x^i\}$ its dual basis.
 In the identification $V_{1}(so(\g))=L(\tilde\L_0)\subset F(\bar \g)$,  an element $X$ of $so(\g)$  is mapped to $\Theta(X)$.



 Let $\h$ be a Cartan subalgebra of $\g$.
 Choose a non-zero vector
  $x_\a$ in the root space $\g_\a$ corresponding to the positive root $\a$ and consider $x_{-\a}$ such that $(x_\a,x_{-\a})=1$.

 Let $\L_0$ be the $0$-th fundamental weight for $\widehat{\g}$.
According to \cite{CKMP},  as a representation of the derived algebra of $\widehat \g$,   we have
$$L(\tilde \L_0)=\bigoplus_{I\in Ab_0}L(h^\vee\L_0+\langle I \rangle),$$
where
 $Ab_0$ is the set of even dimensional abelian ideals in a (suitable) Borel subalgebra $\b$ of $\g$ and $\langle I \rangle =\sum_{\g_\a\subset I}\a$. Moreover,
 the highest vector of $L(h^\vee\L_0+\langle I \rangle)$ is given by $$v_I=:\prod_{\g_\a\subset I}\bar x_\a:.$$
 Likewise,  we introduce
 $$v_{-I}=:\prod_{\g_\a\subset I}\bar x_{-\a}:.$$

We  identify $V_{h^\vee}(\g)$ with $L(h^\vee\L_0)$, so, if $V$ is a vertex algebra intermediate between $L(h^\vee\L_0)$ and $L(\tilde \L_0)$, then
$$V=\sum_{I\in Ab_V}L(h^\vee\L_0+\langle I \rangle)
$$
with $Ab_V$ a subset of $Ab_0$.


 Write $E_{\a,\a}\in gl(\g)$ for the linear transformation, diagonal in the given basis, such that $E_{\a,\a}(x_\a)=x_\a$,  $E_{\a,\a}(x_\be)=0$ for $\be\ne\a$, and $E_{\a,\a}(h)=0$ for $h\in\h$.
A special case of formula \eqref{lambdaproductss}  gives that,
$$
(v_I)_{(2n-2)} v_{-I}=\sum_{\g_\a\subset I}\Theta(E_{\a,\a}-E_{-\a,-\a}).
$$

If $I\in Ab_V$ then $v_I\in V$. It is easy to check (see also  \cite[Remark 4.10]{K}) that if $I\in Ab_0$, then there is a unique ideal $I^*\in Ab_0$ such that $\langle I^* \rangle =-w_0(\langle I \rangle)$. Here $w_0$ is the longest element of
 the Weyl group of $\g$. The simplicity of  $V$ implies that, if $L(h^\vee\L_0+\langle I \rangle)\subset V$,  then $L(h^\vee\L_0+\langle I^* \rangle)\subset V$ (see Lemma \ref{nonz} below).
Since $v_{-I}\in L(h^\vee\L_0+\langle I^* \rangle)$, we have that $v_{-I}\in V$. It follows that $\sum_{\g_\a\subset I}\Theta(E_{\a,\a}-E_{-\a,-\a})\in V$, so
 that $V$ contains both $\Theta( ad(\g))$ and $\sum_{\g_\a\subset I}\Theta(E_{\a,\a}-E_{-\a,-\a})$. If $\sum_{\g_\a\subset I}(E_{\a,\a}-E_{-\a,-\a}) \not\in ad(\g)$, then, since the conformal embedding of $ad(\g)$ in $so(\g)$ is maximal
, we have that
  $so(\g)\subset V$, but then $V=V_{1}(\g)$. Otherwise, we have $\sum_{\g_\a\subset I}(E_{\a,\a}-E_{-\a,-\a})=ad(h)$ with $h\in\h$. Since $ad(h)=\sum_{\a\in\D}\a(h)E_{\a,\a}$, there is a simple root $\a_i$ such that $\a_i\in I$ and this can be shown to   imply that $\a_i$ has coefficient 1 in the highest root and $I$ is the nilradical of the maximal parabolic corresponding to $\a_i$.  Thus $\langle I\rangle =h^\vee \omega_i$, and $\omega_i$ is a minuscule  fundamental weight of $\g$. But then $L(h^\vee(\L_0+\omega_i))$ is a special simple current.

\section{Vertex algebras}\label{tre}
For basic notions on the theory of vertex algebras we refer the reader to \cite{KacV} or  \cite[\S 1]{KacD}. We will be mainly consistent with notation used in the latter reference.
In particular, we shall denote by $a\mapsto Y(a,z)=\sum_{n\in\ganz} a_{(n)}Êz^{-n-1}$ the state-field correspondence, by $T$ the translation operator and by $p(a)$ the parity of $a\in V$. For
$a,b\in V$ we set $p(a,b)=(-1)^{p(a)p(b)}$.

A  conformal (or Virasoro) element in a vertex algebra $V$ is a vector
$\omega$ in $V$,  such that the  associated field $Y(\omega,z)=\sum_{n\in\ganz} \omega_{(n)}z^{-n-1}$
has the following three properties:
\begin{enumerate}
\item $[\omega_\lambda\omega]=(T+2\lambda)\omega+c\frac{\lambda^3}{12}$ for some $c\in\C$;
\item $\omega_{(1)}$ is diagonalizable;
\item
$\omega_{(0)}=T$.
\end{enumerate}
The number $c$ is called the central charge of $\omega$. Write $V=\oplus_{n\in\C}V_n$ for the eigenspace decomposition of $V$ with respect to $\omega_{(1)}$. If $v\in V_n$, then $n$ is called the conformal weight of $v$ and it is denoted by $\D_v$.
As shown in  Proposition 1.15 of \cite{KacD}
\begin{equation}\label{conformalweight}
\D_{a_{(n)}b}=\D_a +\D_b -n-1,\ \D_{T(a)}=\D_a+1.
\end{equation}

Recall that  $V_1/TV_0$ acquires a natural structure of Lie algebra, with bracket defined by $[a+TV_0,b+TV_0]=a_{(0)}b+TV_0$. In particular, if  $V_0=\C|0\rangle$, then $V_1$ is a Lie algebra with bracket $[a,b]=a_{(0)}b$.\par

If $A,B$ are subspaces of $V$, we define, following \cite[Remark 7.6]{BK}
\begin{equation}\label{.}
A\cdot B= Span\{a_{(n)}b\mid a\in A, b \in B, n\in\ganz\}.\end{equation}

\begin{lemma}\label{nonvanishing} (a) The product \eqref{.} and the vector space sum define on the set of all $T$-stable subspaces of $V$ a structure of unital associative commutative ring
$\mathcal A_V$.\par
(b) If $V$ is a simple  vertex algebra, then the ring $\mathcal A_V$ is a domain.\par
(c)
If $V$ is a simple vertex algebra, and $a,b\in V$ are such that $b\ne 0$ and $Y(a,z)b=0$, then $a=0$.\end{lemma}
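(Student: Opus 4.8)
The three parts build on one another, so I would organize the argument accordingly. For part (a), I would first check that $A\cdot B$ is again $T$-stable: this follows from the identity $T(a_{(n)}b) = (Ta)_{(n)}b + a_{(n)}(Tb)$ together with $(Ta)_{(n)} = -n\,a_{(n-1)}$, so the derivative of a product of spanning vectors stays in the span. Commutativity $A\cdot B = B\cdot A$ is a consequence of skew-symmetry of the $n$-th products, $a_{(n)}b = -p(a,b)\sum_{j\ge 0}(-1)^{j+n}\tfrac{1}{j!}T^j(b_{(n+j)}a)$, combined with the $T$-stability just established. Associativity $(A\cdot B)\cdot C = A\cdot(B\cdot C)$ is the one genuinely computational point: it follows from the Borcherds (associativity) identity expressing $(a_{(m)}b)_{(n)}c$ as an infinite sum of terms $a_{(p)}(b_{(q)}c)$ and $b_{(q)}(a_{(p)}c)$ with integer indices, so each such term lies in $A\cdot(B\cdot C)$ (using commutativity for the second family); the reverse inclusion is symmetric. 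The unit is the $T$-stable subspace $\C|0\rangle$, since $|0\rangle_{(n)}b = \delta_{n,-1}b$ and, by skew-symmetry, $a_{(n)}|0\rangle \in \C[T]a$.

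For part (b), I want to show $\mathcal A_V$ has no zero divisors when $V$ is simple. Suppose $A, B$ are nonzero $T$-stable subspaces with $A\cdot B = 0$. Pick $0\ne a\in A$ and $0\ne b\in B$; then $a_{(n)}b = 0$ for all $n\in\ganz$, i.e. $Y(a,z)b = 0$. Here I would invoke a standard fact about simple vertex algebras: the only vectors annihilated (via all their modes acting, or being acted upon) in this way are zero — which is exactly the content of part (c). So parts (b) and (c) are really the same statement, and I would prove (c) first and deduce (b). Concretely, once (c) gives $a = 0$, we contradict $a\in A\setminus\{0\}$, so no such $A,B$ exist and $\mathcal A_V$ is a domain.

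The crux is therefore part (c): if $V$ is simple, $b\ne 0$, and $Y(a,z)b = 0$, then $a = 0$. The plan is to consider the subspace $N = \{v\in V \mid Y(a,z)v = 0\}$ and show it is an ideal of the vertex algebra $V$. It is clearly $T$-stable (differentiate $Y(a,z)v=0$, using $[T, Y(a,z)] = Y(Ta,z) = \partial_z Y(a,z)$ and that $Y(a,z)$ itself is not being differentiated on the left), and it is a $V$-submodule: for $c\in V$ and $v\in N$, the associativity identity writes each mode $(c_{(m)}v)$ fed into $Y(a,z)$, or rather one uses the commutator formula $[a_{(p)}, c_{(m)}] = \sum_{j\ge0}\binom{p}{j}(a_{(j)}c)_{(p+m-j)}$ to move all $a$-modes to the right where they kill $v$ — this shows $a_{(p)}(c_{(m)}v)$ is a combination of terms $c_{(m-j)}(a_{(p+j)}v) = 0$ plus $(a_{(j)}c)_{(\ast)}v$; to close the argument cleanly one proves instead that $N$ is a left ideal for the product \eqref{.}, i.e. $V\cdot N \subseteq N$, which by $T$-stability and the ring structure of (a) makes $V\cdot N$ a $T$-stable subspace annihilated by $Y(a,z)$. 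Since $b\in N$, $N\ne 0$, and simplicity of $V$ (no proper nonzero ideals) forces $N = V$; in particular $|0\rangle\in N$, so $a = Y(a,z)|0\rangle|_{z=0}\cdot(\text{const}) = a_{(-1)}|0\rangle = 0$. The main obstacle I anticipate is making the ideal/module bookkeeping in this last step fully rigorous — isolating the precise sense in which $N$ is an "ideal" (it is the annihilator of $a$ acting as a field, and one must check both that modes of $V$ preserve it and that it is $T$-stable) and quoting the right formulation of simplicity, so that $N=V$ follows and yields $a=0$.
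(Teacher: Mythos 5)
Your overall architecture matches the paper's: part (a) is handled the same way (unit $\C|0\rangle$, commutativity from skewsymmetry, associativity from the Borcherds identity), (b) is reduced to (c), and for (c) you consider an annihilator subspace and invoke simplicity. In fact your subspace $N=\{v\in V\mid Y(a,z)v=0\}$ coincides, by skewsymmetry, with the paper's $Cent(a)=\{v\mid Y(v,z)a=0\}$ (the paper works with $Cent(b)$, i.e.\ with the roles of $a$ and $b$ swapped, and ends by contradiction rather than by your cleaner observation that $|0\rangle\in N$ forces $a=a_{(-1)}|0\rangle=0$; both endings are fine).

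However, there is a genuine gap at the load-bearing step: you never actually prove that $N$ is stable under all modes $c_{(m)}$ of $V$. Your attempt via the commutator formula correctly produces the uncontrolled terms $(a_{(j)}c)_{(p+m-j)}v$, and at that point you write that ``one proves instead that $N$ is a left ideal for the product'' --- but that is precisely the assertion to be established, not an argument for it, and you flag it yourself as the unresolved obstacle. The paper closes this step as follows (for $Cent(b)$, which is what you need after swapping $a\leftrightarrow b$ by skewsymmetry): apply the full Borcherds identity to $x,a,b$ with $m\gg 0$, so that all terms $x_{(m+j)}b$ with $j\ge 0$ vanish and the terms $a_{(k+j)}b$ vanish because $a\in Cent(b)$; this yields
$\sum_{j\in\nat}\binom{m}{j}\,(x_{(n+j)}a)_{(m+k-j)}\,b=0$
for all $n,k$. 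Choosing $N_0$ with $x_{(h)}a=0$ for $h>N_0$ and running a descending induction on $t$ (take $n=N_0-t$; the $j\ge 1$ terms involve higher modes already known to lie in $Cent(b)$) gives $(x_{(N_0-t)}a)_{(r)}b=0$ for all $t,r$, i.e.\ $Cent(b)$ is a left ideal, hence (being $T$-stable) a two-sided ideal. Without this $m\gg0$ specialization and the downward induction, your proof of (c) --- and therefore of (b) --- is incomplete.
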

\begin{proof} The subspace $\C|0\rangle$ is an identity element of  $\mathcal A_V$ by (1.23) and (1.24) of \cite{KacD}. The associativity of the product \eqref{.} follows from the Borcherds identity
(see \cite[(1.28)]{KacD}) and commutativity follows from the skewsymmetry  (see \cite[(1.29)]{KacD}), proving (a).\par
Claim (b) follows from (c). In order to prove (c), it is enough
 to show that, if $a,b\in V,\,b\ne 0$ and $Y(a,z)b=0$, then $a=0$.
Consider the set $Cent(b)=\{a\in V\mid Y(a,z)b=0\}$. We claim that $Cent(b)$ is an ideal of $V$. It is $T$-stable, since $Y(Ta,z)=\partial_zY(a,z)$. Moreover by the Borcherds identity (cf. (1.28) of \cite{KacD}), letting $m>>0$, we find that if $x\in V$, $a\in Cent(b)$, then, for any $k,n\in\ganz$,
$$
\sum_{j\in \nat}\binom{m}{j} (x_{(n+ j)}a)_{(m+k- j)}b=0
$$
Choose $N$ so that $x_{(h)}a=0$ for $h>N$. An obvious induction on $t\geq 0 $ shows that $(x_{(N-t)}a)_{(r)}b=0$ for all $t\geq 0 $ and all $r$. Hence $Cent(b)$ is a left ideal, and since it is $T$-stable it is a 2-sided ideal (by skewsymmetry). Since $V$ is simple, we have either $Cent(b)=\{0\}$ or $Cent(b)=V$. In the latter case, from the skewsymmetry relation, we get that $Y(b,z)=0$, which contradicts the assumption that $b\ne 0$.
\end{proof}


\subsection{The fermionic vertex algebra}\label{ferm}If $A$ is a vector space we write $\bar A$ for
the totally odd vector  superspace such that $\bar A^0=\{0\}$ and $\bar A^1=A$. If $a\in A$, write $\bar a$ to denote $a$ seen as an element of $\bar A$. Assume that $A$ is equipped with a nondegenerate symmetric  bilinear form $(\cdot,\cdot)$. Then  one can construct the
Clifford  Lie conformal algebra as
$$
R^{Cl}(\bar A)=(\C[T]\otimes\bar A)\oplus \C K,
$$
with the  $\l$-bracket
\begin{equation}\label{lambdabracketfermi}
[\bar a_\l \bar b]=(a,b)K\quad [\bar a_\l K]=[K_\l K]=0,\ a,b\in A,
\end{equation}
and $T$ acting on $\C[T]\otimes A$ by left multiplication and trivially on $K$.
Let $V(\bar A)$ be the universal enveloping vertex algebra of this Lie conformal algebra.
The vertex algebra
$$
F(\bar A)=V(\bar A)/:(K-1)V(\bar A):
$$
is called the {\it fermionic vertex algebra}.

The fermionic vertex algebra has a Virasoro element:
\begin{equation}\label{virf}\omega_{\bar A}= \frac{1}{2}\sum: T (\bar x_i )\bar x^i : ,\end{equation}
where
$\{x_i \}$ and $\{x^i\}$  is a pair of dual bases of $A$.

\begin{rem}\label{Cliffordmodule}
The vertex algebra $F(\bar A)$ can be constructed explicitly as follows: consider  $L(\bar A)=\sum_{r\in\frac{1}{2}+\ganz}t^r\otimes A$ and set $L(\bar A)^+=\sum_{r>0}t^r\otimes A$. Extend the form $(\cdot,\cdot)$ to $L(\bar A)$ by setting
$$
(t^r\otimes a_1,t^s\otimes a_2)=\d_{r,-s}(a_1,a_2).
$$
Let $\text{Cliff}(L(\bar A))$ be the corresponding Clifford algebra. Then one can identify $F(\bar A)$ and the Clifford module $\text{Cliff}(L(\bar A))/(\text{Cliff}(L(\bar A))L(\bar A)^+)$. If $x\in A$, then the corresponding field is $Y(x,z)=\sum_{n\in \ganz}x_{(n)}z^{-n-1}$ where $x_{(n)}$ is the operator given by the action of $t^{n+\frac{1}{2}}\otimes x$. (See \cite[3.6]{KacV}).\end{rem}



\subsection{Some calculations in the fermionic vertex algebra}
The goal of this section is to prove formula \eqref{genfactorials} in Proposition \ref{f}, which will be needed in the sequel.
\begin{lemma}\label{1lambamany}
If $x,y\in A$ are such that $(x,y)=1$, then
\begin{enumerate}
\item
\begin{align*}
[T^n(\bar y)_{\l} &:T^{n_1}(\bar x)T^{n_2}(\bar x)\dots T^{n_k}(\bar x):]\\&=\sum_{i=1}^k(-1)^{i+1}(-\l)^n\l^{n_i}:T^{n_1}(\bar x)T^{n_2}(\bar x)\dots \widehat{T^{n_i}(\bar x)}\dots T^{n_k}(\bar x):.
\end{align*}
\item
\begin{align*}
[:T^{n_1}(\bar x)&T^{n_2}(\bar x)\dots T^{n_k}(\bar x):_\l T^n(\bar y)]\\
&=\sum_{i=1}^k(-1)^{k-i}(\l+T)^n(-\l-T)^{n_i}:T^{n_1}(\bar x)T^{n_2}(\bar x)\dots \widehat{T^{n_i}(\bar x)}\dots T^{n_k}(\bar x):.
\end{align*}
\end{enumerate}
\end{lemma}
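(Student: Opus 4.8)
The plan is to prove both identities by induction on $k$, the number of factors in the normally ordered product, using the non-commutative Wick formula for the $\lambda$-bracket of an element with a normally ordered product. Recall from \eqref{lambdabracketfermi} that the only nontrivial bracket among generators is $[\bar a_\l\bar b]=(a,b)K$, which in $F(\bar A)$ becomes $[\bar a_\l\bar b]=(a,b)$; applying $T$ to arguments and using $[T^n(\bar a)_\l b]=(-\l)^n[\bar a_\l b]$ and $[\bar a_\l T^n(\bar b)]=(\l+T)^n[\bar a_\l \bar b]$, one gets the basic brackets $[T^n(\bar y)_\l T^{n_i}(\bar x)]=(-\l)^n\l^{n_i}(y,x)=(-\l)^n\l^{n_i}$ since $(y,x)=1$. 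Here the key structural fact is that $\bar x$ is odd and $[\bar x_\l\bar x]=(x,x)$, but in our situation $(x,x)$ need not vanish — however, what actually matters is that $T^{n_i}(\bar x)$ and $T^{n_j}(\bar x)$ \emph{anticommute up to a bracket term}, and crucially $[T^n(\bar y)_\l :P:]$ only ever contracts $\bar y$ against \emph{one} of the $\bar x$ factors, because after removing that factor the remaining bracket $[\bar y_\mu(\text{rest})]$ does not reintroduce another $\bar y$. So a single pass of the Wick formula suffices; no iterated contractions occur.

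For part (1), I would write $:T^{n_1}(\bar x)\cdots T^{n_k}(\bar x):\ =\ :T^{n_1}(\bar x)\,\big(:T^{n_2}(\bar x)\cdots T^{n_k}(\bar x):\big):$ and apply the non-commutative Wick formula
$$[a_\l :bc:]=:[a_\l b]c:+p(a,b):b[a_\l c]:+\int_0^\l[[a_\l b]_\mu c]\,d\mu.$$
With $a=T^n(\bar y)$ and $b=T^{n_1}(\bar x)$: the first term contributes the $i=1$ summand $(-\l)^n\l^{n_1}:T^{n_2}(\bar x)\cdots T^{n_k}(\bar x):$ (noting the sign $(-1)^{1+1}=1$); the integral term vanishes because $[a_\l b]$ is a scalar (an element of the base field, central), so $[[a_\l b]_\mu c]=0$; and the middle term $p(a,b):b\,[a_\l c]:$ feeds the induction, with $p(T^n(\bar y),T^{n_1}(\bar x))=-1$ since both are odd. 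The inductive hypothesis applied to $:T^{n_2}(\bar x)\cdots T^{n_k}(\bar x):$ produces $\sum_{i=2}^k(-1)^{i}(-\l)^n\l^{n_i}:T^{n_1}(\bar x)\cdots\widehat{T^{n_i}(\bar x)}\cdots:$ after the extra sign $-1$ from $p(a,b)$ and a reindexing; one checks the signs $(-1)\cdot(-1)^{(i-1)+1}=(-1)^{i+1}$ match. Part (2) is entirely analogous, applying instead the Wick formula for $[:bc:_\l a]$ (or deducing it from part (1) by skew-symmetry $[a_\l b]=-p(a,b)[b_{-\l-T}a]$), which is why the powers of $\l$ get replaced by powers of $\l+T$ and $-\l-T$ and the sign becomes $(-1)^{k-i}$ — the reversal of the product order relative to part (1) accounts for $i\mapsto k+1-i$ in the sign exponent.

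The main obstacle is purely bookkeeping: getting the Koszul signs right. One must be careful that (a) each $T^{n_j}(\bar x)$ is odd, so moving $T^n(\bar y)$ (also odd) past $j-1$ of them and then deleting the $i$-th factor generates a sign $(-1)^{i-1}$ times the sign from $p(a,b)$-type factors, and (b) in part (2) the normally ordered product is reached from the right, reversing the roles, so the cumulative sign flips orientation. I would organize this by fixing, once and for all, the convention that deleting the $i$-th (from the left) odd factor out of $k$ costs $(-1)^{i-1}$ when the removed element is pulled out to the left and $(-1)^{k-i}$ when pulled out to the right, and then just track one sign through the induction. No analytic or representation-theoretic input is needed beyond the defining $\lambda$-bracket and the standard Wick formula; the only subtlety worth flagging in the write-up is that the $\int_0^\l$ correction term in the Wick formula drops out because all contractions land in the (central) base field.
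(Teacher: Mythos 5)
Your proposal is correct and follows essentially the same route as the paper: part (1) by a direct (inductive) application of the non-commutative Wick formula, with the $\int_0^\l$ term vanishing because the elementary contractions $[T^n(\bar y)_\l T^{n_i}(\bar x)]=(-\l)^n\l^{n_i}$ are central, and part (2) deduced from part (1) via skew-symmetry $[a_\l b]=-p(a,b)[b_{-\l-T}a]$, which produces exactly the substitution $\l\mapsto-\l-T$ and the overall sign $(-1)^k$ turning $(-1)^{i+1}$ into $(-1)^{k-i}$. The sign bookkeeping you describe checks out.
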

\begin{proof}
The first formula is a direct application of Wick's formula. The second formula is obtained from the first by applying  skew-symmetry of the $\l$-bracket.
\end{proof}

We let  $\D_{[a_\l b]}$ be the conformal weight of the leading coefficient of the polynomial $[a_\l b]$. If $\mathbf{n}\in\mathbb{N}^k$, set $T^{\mathbf{n}}(\bar x)=T^{n_1}(\bar x)\cdots T^{n_k}(\bar x)$. In what follows we will use many times \eqref{conformalweight}.
\begin{lemma}\label{confw}If $\mathbf{n}\in\mathbb{N}^h$, $\mathbf{m}\in\mathbb{N}^k$ with $h<k$, and $[:T^{\mathbf{n}}(\bar x):_\l :T^{\mathbf{m}}(\bar y):]\ne 0$, then
$$
\Delta_{[:T^{\mathbf{n}}(\bar x):_\l :T^{\mathbf{m}}(\bar y):]}\ge min(m_i)+\half.
$$
\end{lemma}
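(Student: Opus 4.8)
The plan is to pass to the explicit Clifford–module realization of $F(\bar A)$ of Remark \ref{Cliffordmodule} (where, as in Lemma \ref{1lambamany}, $x$ and $y$ are isotropic with $(x,y)=1$) and reduce the statement to an elementary conformal–weight count. Write $\psi(z)=Y(\bar x,z)$, $\phi(z)=Y(\bar y,z)$, with modes $\bar x_{(p)},\bar y_{(p)}$ satisfying $\{\bar x_{(p)},\bar y_{(q)}\}=(x,y)\d_{p+q+1,0}=\d_{p+q+1,0}$ and $\{\bar x_{(p)},\bar x_{(q)}\}=\{\bar y_{(p)},\bar y_{(q)}\}=0$; in particular $\bar x_{(p)}^2=\bar y_{(p)}^2=0$. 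From $(T^n\bar x)_{(-1)}=n!\,\bar x_{(-n-1)}$ one gets, with $a:= {:}T^{\mathbf{n}}(\bar x){:}$ and $b:= {:}T^{\mathbf{m}}(\bar y){:}$,
$$
a=\Big(\textstyle\prod_l n_l!\Big)\,\bar x_{(-n_1-1)}\cdots\bar x_{(-n_h-1)}\va,\qquad
b=\Big(\textstyle\prod_j m_j!\Big)\,\bar y_{(-m_1-1)}\cdots\bar y_{(-m_k-1)}\va .
$$
If two of the $n_l$ (resp.\ two of the $m_j$) coincide these monomials vanish, hence so does the bracket, against the hypothesis; so we may assume both tuples have pairwise distinct entries.

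Next I would record two facts. Since $Y(a,z)={:}\partial^{n_1}\psi(z)\cdots\partial^{n_h}\psi(z){:}$, every mode $a_{(j)}$ is a finite linear combination of products of \emph{exactly} $h$ modes $\bar x_{(p_1)}\cdots\bar x_{(p_h)}$ (normal ordering reorders, it does not contract). And by \eqref{conformalweight}, a creation mode $\bar x_{(p)}$ or $\bar y_{(p)}$ with $p\le -1$ raises the conformal weight by $-p-\half\ge\half$, while $\bar y_{(-m_t-1)}$ raises it by exactly $m_t+\half$.

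The heart of the argument is a charge count. When $a_{(j)}b$ is straightened into a combination of monomials in creation modes applied to $\va$, the only relation mixing $\bar x$'s and $\bar y$'s is $\{\bar x_{(p)},\bar y_{(q)}\}=\d_{p+q+1,0}$, which either transposes a $\bar y$ past an $\bar x$ or deletes both (leaving a scalar); as $(y,y)=0$ the $\bar y$'s never pair among themselves, so they are only ever deleted, never created. Hence in each monomial of $a_{(j)}b$ the quantity $(\#\bar y)-(\#\bar x)$ equals its value $k-h$ on $a_{(j)}b$ before straightening, and since $\#\bar x\ge 0$ we get $\#\bar y\ge k-h\ge 1$, with the surviving $\bar y$–modes forming a subset of $\{\bar y_{(-m_1-1)},\dots,\bar y_{(-m_k-1)}\}$. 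By the weight dictionary above, every such monomial then has conformal weight $\ge \min_i m_i+\half$. Since $a_{(j)}b$ is homogeneous of weight $\D_a+\D_b-j-1$ by \eqref{conformalweight}, it follows that whenever $a_{(j)}b\ne 0$ one has $\D_a+\D_b-j-1\ge\min_i m_i+\half$; i.e.\ every nonzero coefficient of $[a_\l b]=\sum_j\tfrac{\l^j}{j!}a_{(j)}b$ has conformal weight $\ge \min_i m_i+\half$. In particular its leading coefficient, which is nonzero by hypothesis, does, giving $\D_{[{:}T^{\mathbf{n}}(\bar x){:}_\l\,{:}T^{\mathbf{m}}(\bar y){:}]}\ge\min_i m_i+\half$.

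\textbf{Main obstacle.} I expect the only substantive point to be the charge observation itself — that strictly more $\bar y$'s than $\bar x$'s survive, forced by isotropy of $x,y$ together with $h<k$ — the rest being bookkeeping: getting the mode/conformal-weight dictionary right, and noting that possible cancellations among the monomials in $a_{(j)}b$ are harmless, since a nonzero homogeneous vector shares its conformal weight with every basis monomial it involves. One could instead induct on $h$, peeling off one factor $T^{n_1}(\bar x)$ by the non-commutative Wick formula together with Lemma \ref{1lambamany}(1) and tracking $\l$-degrees and weights term by term; but the Fock-space argument above avoids the integral term in that formula and is cleaner.
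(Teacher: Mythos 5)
Your proof is correct, but it takes a genuinely different route from the paper's. The paper argues by induction on $h$ entirely inside the $\l$-bracket calculus: it applies the non-commutative Wick formula to peel off the factor $T^{m_1}(\bar y)$, bounds the conformal weights of the two normally ordered summands directly via \eqref{conformalweight} and Lemma \ref{1lambamany}, and disposes of the integral term by the induction hypothesis applied to $[:T^{\mathbf{n}_i}(\bar x):_\mu\, :T^{\mathbf{m}_1}(\bar y):]$. You instead pass to the Clifford-module realization of Remark \ref{Cliffordmodule} and run a fermionic charge count: since the only contractions pair an $\bar x$-mode with a $\bar y$-mode, each straightened monomial of $a_{(j)}b$ retains $\#\bar y-\#\bar x=k-h\ge 1$, hence at least one creation mode $\bar y_{(-m_t-1)}$ of weight $m_t+\half\ge\min_i m_i+\half$ survives, and all other surviving modes contribute nonnegatively. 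Your argument is non-inductive, makes the reason for the bound transparent (a $\bar y$-quantum must remain), and actually proves more than the statement: \emph{every} nonzero coefficient $a_{(j)}b$, not only the leading one, has conformal weight at least $\min_i m_i+\half$. The paper's version, by contrast, stays uniform with the Wick-formula machinery that drives the neighbouring Lemma \ref{lambdageneral}. Two minor points. First, you add the hypothesis that $x,y$ are isotropic with $(x,y)=1$, which appears neither in the statement nor in Lemma \ref{1lambamany}; this covers every actual use of the lemma (Proposition \ref{f} only needs isotropic pairs), and in fact your count survives without it, since possible $\bar x$--$\bar x$ contractions only delete $\bar x$'s (improving the inequality $\#\bar y\ge k-h$) and the $\bar y$-modes occurring in $b$ are all creation modes, so $(y,y)$ never enters. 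Second, ``finite linear combination'' for $a_{(j)}$ should be read as an infinite sum of which only finitely many terms act nontrivially on $b$; this is harmless but worth phrasing precisely.
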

\begin{proof}
The proof is by induction on $h$. If $h=1$, then  the result follows from Lemma \ref{1lambamany}. If $h>1$, set $\mathbf{n}_i=(n_1,\dots, \widehat{n_i},\dots, n_h)\in\nat^{h-1}$. By Wick's formula
\begin{align*}
[:T^{\mathbf{n}}&(\bar x):_\l :T^{\mathbf{m}}(\bar y):]=:[:T^{\mathbf{n}}(\bar x):_\l :T^{m_1}(\bar y):]T^{\mathbf{m}_1}(\bar y):\\&+(-1)^h:T^{m_1}(\bar y)[:T^{\mathbf{n}}(\bar x):_\l :T^{\mathbf{m}_1}(\bar y):]:\\
&-(-1)^h\sum_{i=1}^h(-1)^{i+1}\int_0^\l(\l-\mu)^{m_1}(-\l+\mu)^{n_i}[:T^{\mathbf{n}_i}(\bar x):_\mu :T^{\mathbf{m}_1}(\bar y):]d\mu.
\end{align*}
The conformal weight of the first summand is clearly bigger than $m_2+\half$. The conformal weight of the second summand is clearly bigger than $m_1+\half$. Finally, by the induction hypothesis,
$$\D_{[:T^{\mathbf{n}_i}(\bar x):_\mu :T^{\mathbf{m}_1}(\bar y):]}\ge min(m_2,\dots,m_k)+\half.$$
The result follows.
\end{proof}
If $\mathbf{n},\mathbf{m}\in\mathbb{N}^k$, set $A(\mathbf{n},\mathbf{m})=((-1)^{n_j}(m_i+n_j)!)_{1\le i,j\le k}$ and $C(\mathbf{n},\mathbf{m})=detA(\mathbf{n},\mathbf{m})$. For $k=0$, we set $C(\emptyset,\emptyset)=0$. Let $\nat^k_{reg}$ be the set of all $\mathbf{n}$ such that $n_i\ne n_j$ for $i\ne j$. Introduce the divided powers $\l^{(s)}=\frac{\l^s}{s!}$.
\begin{lemma}\label{lambdageneral}Assume $\mathbf{n},\mathbf{m}\in\nat_{reg}^k$, $k\ge1$. Let $j_0$ (resp. $i_0$) be the index such that $n_{j_0}=min(n_j)$ (resp. $m_{i_0}=min(m_i)$). Set $N=\Delta_{:T^{\mathbf{n}}(\bar x):}+\Delta_{:T^{\mathbf{m}}(\bar y):}-1$.

If $(x,y)=1$, then
\begin{align*}
&[:T^{\mathbf{n}}(\bar x):_{\l} :T^{\mathbf{m}}(\bar y):]=\\&(-1)^{[k/2]}
C(\mathbf{n},\mathbf{m})\l^{(N)}|0\rangle\\&+(-1)^{[k/2]}(-1)^{i_0+j_0}C(\mathbf{n}_{i_0},\mathbf{m}_{j_0})\l^{(N-n_{j_0}-m_{i_0}-1)}:T^{n_{j_0}}(\bar x)T^{m_{i_0}}(\bar y):\\&+\text{lower order terms}.
\end{align*}
\end{lemma}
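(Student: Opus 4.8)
The plan is to compute the $\l$-bracket $[:T^{\mathbf n}(\bar x):_\l :T^{\mathbf m}(\bar y):]$ by induction on $k$, extracting from the general Wick-formula expansion only the three pieces of interest: the top-degree term (proportional to $|0\rangle$), the term proportional to $:T^{n_{j_0}}(\bar x)T^{m_{i_0}}(\bar y):$, and the remainder which I will need to verify has strictly lower conformal weight than the second term. First I would set up the Wick recursion exactly as in the proof of Lemma \ref{confw}: writing $\mathbf n_i$ for $\mathbf n$ with the $i$-th entry deleted, one expands $[:T^{\mathbf n}(\bar x):_\l :T^{\mathbf m}(\bar y):]$ as a sum of a ``normally ordered'' part (where one contracts $:T^{\mathbf n}(\bar x):$ against a single $T^{m_1}(\bar y)$, leaving $:T^{\mathbf m_1}(\bar y):$ normally ordered beside it), a second such part with the roles of the first factor of $\bar y$ swapped, and an integral term $\int_0^\l$ pairing one $T^{n_i}(\bar x)$ against one $T^{m_1}(\bar y)$ and recursing on $[:T^{\mathbf n_i}(\bar x):_\mu :T^{\mathbf m_1}(\bar y):]$. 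Since $:|0\rangle$-valued contributions can only come from the branch where \emph{all} the $\bar x$'s and $\bar y$'s get contracted in pairs, the top term is governed by a sum over perfect matchings between the $n$'s and the $m$'s; each matching $j\mapsto\pi(j)$ contributes a sign $(-1)^{\text{(crossings)}}$ and a factor $\prod_j (\text{coefficient of the single pairing }[T^{n_j}(\bar x)_\mu T^{m_{\pi(j)}}(\bar y)])$, and Lemma \ref{1lambamany} together with the explicit integrals of $\mu^a(\l-\mu)^b$ turns the product of pairing-integrals over a fixed matching into a product of binomial/factorial expressions. Collecting signs over all permutations $\pi\in S_k$ is precisely the Leibniz expansion of a determinant, which is how $C(\mathbf n,\mathbf m)=\det((-1)^{n_j}(m_i+n_j)!)$ and the global sign $(-1)^{[k/2]}$ (the sign of the ``reversal'' permutation accounting for the nested ordering of normally ordered products) will appear; the power of $\l$ is forced to be $\l^{(N)}$ with $N=\D_{:T^{\mathbf n}(\bar x):}+\D_{:T^{\mathbf m}(\bar y):}-1$ by \eqref{conformalweight}.

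Next I would isolate the term proportional to $:T^{n_{j_0}}(\bar x)T^{m_{i_0}}(\bar y):$. This is the ``one pair left uncontracted'' contribution: we leave exactly one $\bar x$ and one $\bar y$ uncontracted and perfectly match the rest. Because $n_{j_0}$ is the strict minimum of the $n$'s and $m_{i_0}$ the strict minimum of the $m$'s (here I use $\mathbf n,\mathbf m\in\nat^k_{reg}$), among all such ``one pair left'' configurations the one leaving $T^{n_{j_0}}(\bar x)$ and $T^{m_{i_0}}(\bar y)$ uncontracted yields the \emph{lowest} conformal weight for the surviving monomial — more precisely $\D_{:T^{n_{j_0}}(\bar x)T^{m_{i_0}}(\bar y):}=n_{j_0}+m_{i_0}+1$ is minimal — and correspondingly the \emph{highest} power of $\l$, namely $\l^{(N-n_{j_0}-m_{i_0}-1)}$, again by \eqref{conformalweight}. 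Summing the matchings of the remaining $n$'s (all but $n_{j_0}$) with the remaining $m$'s (all but $m_{i_0}$), together with the crossing signs, gives $\det A(\mathbf n_{i_0},\mathbf m_{j_0})=C(\mathbf n_{i_0},\mathbf m_{j_0})$ up to the sign $(-1)^{i_0+j_0}$ coming from moving row $i_0$ and column $j_0$ out of the matrix, times the same overall $(-1)^{[k/2]}$. All remaining contributions either leave $\ge 2$ uncontracted pairs, or leave one uncontracted pair that is not the minimal one; in either case their conformal weight exceeds $n_{j_0}+m_{i_0}+1$, equivalently their $\l$-degree is strictly smaller than $N-n_{j_0}-m_{i_0}-1$, so they fall into ``lower order terms.'' Here Lemma \ref{confw} (and its proof pattern) is exactly the tool that controls the recursion: at each step of the induction the integral term feeds a lower-$k$ bracket whose conformal weight is bounded below by $\min$ of the surviving $m$'s plus $\half$, which propagates the required inequalities.

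The main obstacle I anticipate is purely bookkeeping: tracking the signs. Three sign sources interact — (i) the Koszul/parity signs $(-1)^{k-i}$ and $(-1)^{i+1}$ from Lemma \ref{1lambamany} and Wick's formula, (ii) the sign of the permutation encoding a perfect matching (number of crossings), and (iii) the reordering sign when one pulls factors out of normally ordered products to match the target monomial $:T^{n_{j_0}}(\bar x)T^{m_{i_0}}(\bar y):$ in the prescribed order. The claim is that (i) and (ii) assemble into the determinant's Leibniz sign while the residual global discrepancy is the constant $(-1)^{[k/2]}$ (independent of $\mathbf n,\mathbf m$), and that pulling out row $i_0$/column $j_0$ contributes exactly $(-1)^{i_0+j_0}$ as in cofactor expansion. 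I would pin this down by first checking $k=1$ and $k=2$ by hand against Lemma \ref{1lambamany}, then running the induction: assuming the formula for $k-1$, feed it into the three-term Wick recursion above, and match each surviving term. The determinant identity that makes the induction close is the standard cofactor/Laplace expansion of $C(\mathbf n,\mathbf m)$ along the row or column corresponding to the minimal index; the integrals $\int_0^\l \mu^{(a)}(\l-\mu)^{(b)}\,d\mu=\l^{(a+b+1)}$ are what convert the recursive $\l$-polynomial coefficients into the factorials $(m_i+n_j)!$ after accounting for the divided-power normalization.
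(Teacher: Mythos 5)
Your proposal is correct and follows essentially the same route as the paper: induction on $k$ via the Wick-formula recursion, with Lemma \ref{confw} used to discard all contributions whose coefficient has conformal weight exceeding $n_{j_0}+m_{i_0}+1$, the Beta-type integral $\int_0^\l \mu^{(a)}(\l-\mu)^{(b)}\,d\mu=\l^{(a+b+1)}$ producing the factorials, and the induction closing through the cofactor expansion of $C(\mathbf{n},\mathbf{m})$ along the row and column of the minimal indices. Your ``perfect matching / Leibniz expansion'' picture is only a global reformulation of the same computation, so it does not constitute a different argument.
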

\begin{proof}
The proof is by induction on $k$. If $k=1$, then
$$
[:T^n(\bar x):_\l :T^m(\bar y):]=(-1)^n\l^{n+m}=(-1)^n(m+n)!\l^{(n+m)},
$$
as desired.

If $k\ge 1$, we can assume that $i_0=j_0=1$.  By Wick's formula and Lemma \ref{1lambamany},
\begin{align*}
[:T^{\mathbf{n}}&(\bar x):_\l :T^{\mathbf{m}}(\bar y):]=:[:T^{\mathbf{n}}(\bar x):_\l :T^{m_1}(\bar y):]T^{\mathbf{m}_1}(\bar y):\\&+(-1)^k:T^{m_1}(\bar y)[:T^{\mathbf{n}}(\bar x):_\l :T^{\mathbf{m}_1}(\bar y):]:\\
&-(-1)^k\sum_{i=1}^k(-1)^{i+1}\int_0^\l(\l-\mu)^{m_1}(-\l+\mu)^{n_i}[:T^{\mathbf{n}_i}(\bar x):_\mu :T^{\mathbf{m}_1}(\bar y):]d\mu.
\end{align*}
By Lemma \ref{confw}, the conformal weights occurring in the first summand are greater or equal than $n_1+min(m_i\mid i\ge 2)+1$. Since $n_1+min(m_i\mid i\ge 2)+1>m_1+n_1+1$, we see that
\begin{align}
[:T^{\mathbf{n}}(\bar x)&:_\l :T^{\mathbf{m}}(\bar y):]=(-1)^k:T^{m_1}(\bar y)[:T^{\mathbf{n}}(\bar x):_\l :T^{\mathbf{m}_1}(\bar y):]:\label{WRight}\\
&-(-1)^k\sum_{i=1}^k(-1)^{i+1}\int_0^\l(\l-\mu)^{m_1}(-\l+\mu)^{n_i}[:T^{\mathbf{n}_i}(\bar x):_\mu :T^{\mathbf{m}_1}(\bar y):]d\mu\notag\\&+\text{lower order terms.}\notag\end{align}
Applying Wick's formula and Lemma \ref{1lambamany}, we obtain that
\begin{align*}
[:T^{\mathbf{n}}&(\bar x):_\l :T^{\mathbf{m}_1}(\bar y):]=:e^{T\partial_\l }T^{n_1}(\bar x)[:T^{\mathbf{n}_1}(\bar x):_\l :T^{\mathbf{m}_1}(\bar y):]:
\\&+(-1)^{k-1}:e^{T\partial_\l} T^{\mathbf{n}_1}(\bar x)[:T^{n_1}(\bar x):_\l :T^{\mathbf{m}_1}(\bar y):]:\\
&+(-1)^{k-1}\sum_{i=1}^{k-1}(-1)^{i+1}\int_0^\l(-\l+\mu)^{n_1}(\l-\mu)^{m_{i+1}}[:T^{\mathbf{n}_1}(\bar x):_\mu :T^{(\mathbf{m}_1)_i}(\bar y):]d\mu.
\end{align*}
The conformal weight of the second summand is bigger than $min(n_i\mid i>1)+\half$ and, by Lemma \ref{confw}, the conformal weights of the terms occurring in the third sum are bigger than $min(n_i\mid i>1)+\half$ as well. Since $min(n_i\mid i>1)+\half>n_1+\half$, substituting in \eqref{WRight}, we have
\begin{align*}
[:T^{\mathbf{n}}&(\bar x):_\l :T^{\mathbf{m}}(\bar y):]=(-1)^k:T^{m_1}(\bar y)e^{T\partial_\l }T^{n_1}(\bar x)[:T^{\mathbf{n}_1}(\bar x):_\l :T^{\mathbf{m}_1}(\bar y):]:\\
&-(-1)^k\sum_{i=1}^k(-1)^{i+1}\int_0^\l(\l-\mu)^{m_1}(-\l+\mu)^{n_i}[:T^{\mathbf{n}_i}(\bar x):_\mu :T^{\mathbf{m}_1}(\bar y):]d\mu\\&+\text{lower order terms.}
\end{align*}
By the induction hypothesis, we find that
\begin{align*}
[&:T^{\mathbf{n}}(\bar x):_\l :T^{\mathbf{m}}(\bar y):]=
(-1)^{k+[(k-1)/2]}C(\mathbf{n}_1,\mathbf{m}_1)\l^{(N-m_1-n_1-1)}:T^{n_1}(\bar x)T^{m_1}(\bar y):\\
&-(-1)^{k+[(k-1)/2]}\sum_{i=1}^k(-1)^{i+1} C(\mathbf{n}_i,\mathbf{m}_1)\int_0^\l(-1)^{n_i}(\l-\mu)^{m_1+n_i}\mu^{(N-n_i-m_1-2)}|0\rangle d\mu\\&+\text{lower order terms.}
\end{align*}
Now observe that
$$\int_0^\l(-1)^{n_i}(\l-\mu)^{m_1+n_i}\mu^{(N-n_i-m_1-2)}d\mu=(-1)^{n_i}(m_1+n_i)!\l^{(N)}
$$
and that $-(-1)^{k+[(k-1)/2]}=(-1)^{[k/2]}$, hence the proof is complete.
\end{proof}

Let $\mathbf{n!}=(0,1,\dots,n)\in \nat^{n+1}$.
\begin{cor}\label{formulafactorial} Let $N=(n+1)^2-1$. If $n\ge1$ and $(x,y)=1$ then
$$
[:T^{\mathbf{n!}}(\bar x):_\l :T^{\mathbf{n!}}(\bar y):]=(\prod_{i=0}^n i!)^2(\l^{(N)}|0\rangle+(n+1)\l^{(N-1)}:\bar x\bar y:)+\text{lower order terms.}
$$
\end{cor}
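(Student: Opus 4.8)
The statement is precisely the specialization of Lemma~\ref{lambdageneral} to $\mathbf n=\mathbf m=\mathbf{n!}=(0,1,\dots,n)$, and the plan is to read it off from that lemma and then evaluate the two determinants that occur. First, $\mathbf{n!}\in\nat^{n+1}_{reg}$ because its components are pairwise distinct, so Lemma~\ref{lambdageneral} applies with $k=n+1\ge 2$. Using \eqref{conformalweight} and the fact that $\bar x$ has conformal weight $\half$ in $F(\bar A)$, one gets $\Delta_{:T^{\mathbf{n!}}(\bar x):}=\sum_{j=0}^n(\half+j)=\tfrac{(n+1)^2}{2}$, so the integer $N$ of that lemma is $(n+1)^2-1$, as in the statement. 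Since the least component of $\mathbf{n!}$ is $0$, occurring in the first slot, in the lemma's notation we have $i_0=j_0=1$, $n_{j_0}=m_{i_0}=0$, hence $(-1)^{i_0+j_0}=1$, $:T^{n_{j_0}}(\bar x)T^{m_{i_0}}(\bar y):\ =\ :\bar x\bar y:$, and $\l^{(N-n_{j_0}-m_{i_0}-1)}=\l^{(N-1)}$. Lemma~\ref{lambdageneral} then yields
\begin{align*}
[:T^{\mathbf{n!}}(\bar x):_{\l}:T^{\mathbf{n!}}(\bar y):]&=(-1)^{[(n+1)/2]}\,C(\mathbf{n!},\mathbf{n!})\,\l^{(N)}|0\rangle\\
&\quad+(-1)^{[(n+1)/2]}\,C(\mathbf{n!}_1,\mathbf{n!}_1)\,\l^{(N-1)}:\bar x\bar y:+\text{lower order terms},
\end{align*}
where $\mathbf{n!}_1=(1,2,\dots,n)$; so it remains to compute $C(\mathbf{n!},\mathbf{n!})$ and $C(\mathbf{n!}_1,\mathbf{n!}_1)$, which are Hankel determinants of factorials.

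For the first, $C(\mathbf{n!},\mathbf{n!})=\det\big((-1)^{j-1}(i+j-2)!\big)_{1\le i,j\le n+1}$, and pulling the sign $(-1)^{j-1}$ out of the $j$-th column gives $C(\mathbf{n!},\mathbf{n!})=(-1)^{n(n+1)/2}\det\big((i+j)!\big)_{0\le i,j\le n}$. Writing $(i+j)!=i!\,j!\binom{i+j}{i}$ and using Vandermonde's identity $\binom{i+j}{i}=\sum_\ell\binom i\ell\binom j\ell$, the matrix $\big(\binom{i+j}{i}\big)_{0\le i,j\le n}$ is $BB^{t}$ with $B=\big(\binom i\ell\big)_{0\le i,\ell\le n}$ lower unitriangular, so it has determinant $1$ and $\det\big((i+j)!\big)_{0\le i,j\le n}=\big(\prod_{i=0}^n i!\big)^2$. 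For the second, $C(\mathbf{n!}_1,\mathbf{n!}_1)=\det\big((-1)^{j}(i+j)!\big)_{1\le i,j\le n}=(-1)^{n(n+1)/2}\det\big((i+j+2)!\big)_{0\le i,j\le n-1}$, and writing $(i+j+2)!=(i+1)!\,(j+1)!\binom{i+j+2}{i+1}$ with $\binom{i+j+2}{i+1}=\sum_{\ell=0}^n\binom{i+1}{\ell}\binom{j+1}{\ell}$ shows $\big(\binom{i+j+2}{i+1}\big)_{0\le i,j\le n-1}=CC^{t}$, where $C=\big(\binom{i+1}{\ell}\big)_{0\le i\le n-1,\,0\le\ell\le n}$ consists of the last $n$ rows of the Pascal matrix $\big(\binom ab\big)_{0\le a,b\le n}$. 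By the Cauchy--Binet formula $\det(CC^{t})$ is the sum of the squares of the $n+1$ maximal minors of $C$, each of which is $\pm1$ (a maximal minor of $C$ is, up to sign, an entry of the inverse $\big((-1)^{a-b}\binom ab\big)$ of the unitriangular Pascal matrix), so $\det\big((i+j+2)!\big)_{0\le i,j\le n-1}=(n+1)\big(\prod_{i=0}^n i!\big)^2$. Hence $C(\mathbf{n!},\mathbf{n!})=(-1)^{n(n+1)/2}\big(\prod_{i=0}^n i!\big)^2$ and $C(\mathbf{n!}_1,\mathbf{n!}_1)=(-1)^{n(n+1)/2}(n+1)\big(\prod_{i=0}^n i!\big)^2$.

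Finally, in both terms of the displayed identity the factor $(-1)^{[(n+1)/2]}$ multiplies a determinant carrying the sign $(-1)^{n(n+1)/2}$; since $\tfrac{n(n+1)}{2}=1+2+\dots+n$ is, modulo $2$, the number of odd integers in $\{1,\dots,n\}$, namely $[(n+1)/2]$, we get $(-1)^{[(n+1)/2]}(-1)^{n(n+1)/2}=1$ each time. Substituting, the right-hand side becomes $\big(\prod_{i=0}^n i!\big)^2\big(\l^{(N)}|0\rangle+(n+1)\l^{(N-1)}:\bar x\bar y:\big)+\text{lower order terms}$, which is the claim. I expect the two Hankel determinant evaluations — in particular the identity $\det\big(\binom{i+j+2}{i+1}\big)_{0\le i,j\le n-1}=n+1$, via Cauchy--Binet and the inverse of the Pascal matrix — to be the main technical point; the sign bookkeeping is routine once the parity identity $[(n+1)/2]\equiv\tfrac{n(n+1)}{2}\pmod 2$ is observed.
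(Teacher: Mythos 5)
Your proof is correct and follows the same route as the paper: specialize Lemma \ref{lambdageneral} to $\mathbf n=\mathbf m=\mathbf{n!}$ (so $k=n+1$, $i_0=j_0=1$, $N=(n+1)^2-1$) and then evaluate the two Hankel determinants $C(\mathbf{n!},\mathbf{n!})$ and $C(\mathbf{n!}_1,\mathbf{n!}_1)$, checking that the prefactor $(-1)^{[(n+1)/2]}$ from the lemma cancels against the sign $(-1)^{n(n+1)/2}$ extracted from the alternating columns. The one genuine difference is how the evaluations $\det\big((i+j)!\big)_{0\le i,j\le n}=\prod_{i=1}^n(i!)^2$ and $\det\big((i+j)!\big)_{1\le i,j\le n}=(n+1)\prod_{i=1}^n(i!)^2$ are obtained: the paper simply cites Krattenthaler's survey \cite{Kra} for both (the second via Proposition~1 of Section~2.1 there), whereas you derive them from scratch using $(i+j)!=i!\,j!\binom{i+j}{i}$, the factorization of the binomial Hankel matrix through the unitriangular Pascal matrix, and Cauchy--Binet for the shifted case. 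Your version is self-contained where the paper's is not, at the cost of some extra linear algebra; the key step --- that the $n+1$ maximal minors of the truncated Pascal matrix are all $\pm1$ because they are, up to sign, entries of the inverse Pascal matrix, so their squares sum to $n+1$ --- is sound, and your sign bookkeeping via the parity identity $[(n+1)/2]\equiv n(n+1)/2 \pmod 2$ matches the paper's.
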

\begin{proof}We note that $\Delta_{:T^{\mathbf{n!}}(\bar x):}=(n+1)^2/2$.
Moreover
$$
C(\mathbf{n!},\mathbf{n!})=det((-1)^j (i+j)!)_{0\le i,j\le n}=(-1)^{[k/2]}det( (i+j)!)_{0\le i,j\le n}.
$$
The latter determinant  is classically evaluated as (see e.g. \cite{Kra})
$$
det( (i+j)!)_{0\le i,j\le n}=\prod_{i=1}^n(i!)^2.
$$
Note that
$$C(\mathbf{n!}_1,\mathbf{n!}_1)=det((-1)^j (i+j)!)_{1\le i,j\le n}=(-1)^{[k/2]}det( (i+j)!)_{1\le i,j\le n}
$$
Using Proposition 1 in Section 2.1 of \cite{Kra}, one can show that
\begin{equation}\label{Kra}
det( (i+j)!)_{1\le i,j\le n}=(n+1)\prod_{i=1}^n(i!)^2.
\end{equation}
The statement now follows from Lemma \ref{lambdageneral}.
\end{proof}

Let $A'$ be a subspace of $A$ such that $A=A'\oplus (A')^\perp$. Then the map $a\otimes b\mapsto :ab:$ establishes an isomorphism between $F(\bar A')\otimes F((\bar A')^\perp)$ and $F(\bar A)$. Hence, if $a',b'\in F(\bar A')$ and $a'',b''\in F((\bar A')^\perp)$,
$$
:a'a'':_{(n)}:b'b'':=p(a'',b')\sum_{r+s=n-1}:(a'_{(r)}b')(a''_{(s)}b''):.
$$

In particular, if $N'=\D_{a'}+\D_{b'}-1$, $N''=\D_{a''}+\D_{b''}-1$, and $N=N'+N''+1$, then
\begin{align}
[a'a''_\l b'b'']&=p(b',a''):(a'_{(N')}b')(a''_{(N'') }b''):\l^{(N)}\label{decouple}\\&+p(a',a'')(:(a'_{(N'-1)}b')(a''_{(N'')} b''):+:(a'_{(N')}b')(a''_{(N''-1)}b''):)\l^{(N-1)}\notag\\&+ \text{lower order terms.}\notag
\end{align}

We are now ready to prove the main result of this section.
Set $C_n=\prod_{i=0}^n(i!)^2$.
 \begin{prop}\label{f} Assume that $x_1,\dots,x_k,y_1,\dots,y_k\in A$ are such that $(x_i,x_j)=(y_i,y_j)$ $=0$ for all $i,j$ and $(x_i,y_j)=\d_{ij}$. Fix $n_1,\dots,n_k\in\nat$ and set
 $$a=:T^{\mathbf{n_1!}}(\bar x_1)\dots T^{\mathbf{n_k!}}(\bar x_k):
$$
and
$$
b=:T^{\mathbf{n_1!}}(\bar y_1)\dots T^{\mathbf{n_k!}}(\bar y_k):.
$$

Then
\begin{align}
[a_\l b]=&\prod_{i<j}(-1)^{(n_i+1)(n_j+1)}\prod_{i=1}^k C_{n_i}\left(\l^{(N)}|0\rangle+\l^{(N-1)}\sum_{i=1}^k (n_i+1):\bar x_i\bar y_i:\right)\label{genfactorials}\\
&+\text{lower order terms,}\notag
\end{align}
where $N=\sum_{i=1}^k (n_i+1)^2-1$. In particular
\begin{align}\label{lambdaproductss}
[:\bar x_1\dots \bar x_k:_\lambda : \bar y_1\dots\bar y_k:]&=(-1)^{\lfloor k/2\rfloor}(|0\rangle\l^{(k-1)}+\lambda^{(k-2)}(\sum_i :\bar x_i\bar y_i:)) \\&+ \text{lower order terms}\notag.
\end{align}
\end{prop}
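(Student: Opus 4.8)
The plan is to prove \eqref{genfactorials} by induction on $k$, the base case $k=1$ being precisely Corollary \ref{formulafactorial} (with $N=(n_1+1)^2-1$, $C_{n_1}=(\prod_{i=0}^{n_1}i!)^2$). For the inductive step I would split off the first pair of vectors. Since $(x_1,y_1)=1$, the subspace $A':=\sp\{x_1,y_1\}$ is nondegenerate of dimension $2$, so $A=A'\oplus (A')^\perp$ and the map $u\otimes v\mapsto\,:uv:$ gives an isomorphism $F(\bar A')\otimes F((\bar A')^\perp)\cong F(\bar A)$. Under it write $a=\,:a'a'':$ and $b=\,:b'b'':$, where $a'=\,:T^{\mathbf{n_1!}}(\bar x_1):$, $b'=\,:T^{\mathbf{n_1!}}(\bar y_1):$ lie in $F(\bar A')$ and $a''=\,:T^{\mathbf{n_2!}}(\bar x_2)\cdots T^{\mathbf{n_k!}}(\bar x_k):$, $b''=\,:T^{\mathbf{n_2!}}(\bar y_2)\cdots T^{\mathbf{n_k!}}(\bar y_k):$ lie in $F((\bar A')^\perp)$. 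The vectors $x_2,\dots,x_k,y_2,\dots,y_k$ belong to $(A')^\perp$ and satisfy there the same orthogonality relations, so the induction hypothesis applies to $a''$ and $b''$.

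Next I would apply the decoupling identity \eqref{decouple} with $N'=\Delta_{a'}+\Delta_{b'}-1=(n_1+1)^2-1$ and $N''=\Delta_{a''}+\Delta_{b''}-1=\sum_{i=2}^k(n_i+1)^2-1$, so that $N=N'+N''+1=\sum_{i=1}^k(n_i+1)^2-1$. The point is that the $\lambda^{(N)}$- and $\lambda^{(N-1)}$-coefficients appearing in \eqref{decouple} involve only the four vectors $a'_{(N')}b'$, $a'_{(N'-1)}b'$, $a''_{(N'')}b''$, $a''_{(N''-1)}b''$, and these are read off exactly: Corollary \ref{formulafactorial} gives $a'_{(N')}b'=C_{n_1}|0\rangle$ and $a'_{(N'-1)}b'=(n_1+1)C_{n_1}:\bar x_1\bar y_1:$, while the induction hypothesis gives $a''_{(N'')}b''=P|0\rangle$ and $a''_{(N''-1)}b''=P\sum_{i=2}^k(n_i+1):\bar x_i\bar y_i:$, where $P=\prod_{2\le i<j\le k}(-1)^{(n_i+1)(n_j+1)}\prod_{i=2}^kC_{n_i}$. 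Substituting into \eqref{decouple} and transporting back along the tensor isomorphism — so that $:\bar x_1\bar y_1:\otimes|0\rangle$ and $|0\rangle\otimes:\bar x_i\bar y_i:$ become $:\bar x_1\bar y_1:$ and $:\bar x_i\bar y_i:$ in $F(\bar A)$ — one finds the $\lambda^{(N)}$-coefficient of $[a_\lambda b]$ equal to $p(b',a'')C_{n_1}P\,|0\rangle$ and the $\lambda^{(N-1)}$-coefficient equal to $p(a',a'')C_{n_1}P\sum_{i=1}^k(n_i+1):\bar x_i\bar y_i:$, the two subleading contributions $(n_1+1):\bar x_1\bar y_1:$ and $\sum_{i=2}^k(n_i+1):\bar x_i\bar y_i:$ from \eqref{decouple} merging into the single sum over all $i$. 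Everything dropped along the way is lower order in $\lambda$, directly by \eqref{decouple} and the fact that Corollary \ref{formulafactorial} and the induction hypothesis determine the displayed coefficients exactly.

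The remaining work is the sign bookkeeping, which is purely mechanical. Each of $a',b'$ has $n_1+1$ fermionic factors and each of $a'',b''$ has $\sum_{i=2}^k(n_i+1)$, so $p(a',a'')=p(b',a'')=(-1)^{(n_1+1)\sum_{i=2}^k(n_i+1)}=\prod_{i=2}^k(-1)^{(n_1+1)(n_i+1)}$; multiplying this by the sign in $P$ reconstitutes $\prod_{1\le i<j\le k}(-1)^{(n_i+1)(n_j+1)}$, and the factor $C_{n_1}$ together with the $C_{n_i}$ ($i\ge 2$) in $P$ gives $\prod_{i=1}^kC_{n_i}$ — exactly the scalar in \eqref{genfactorials}.

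Finally, \eqref{lambdaproductss} is the specialization $n_1=\dots=n_k=0$: then $T^{\mathbf{0!}}(\bar x_i)=\bar x_i$, $C_0=1$, $n_i+1=1$, $N=k-1$, and $\prod_{i<j}(-1)=(-1)^{\binom{k}{2}}=(-1)^{\lfloor k/2\rfloor}$ because $\binom{k}{2}\equiv\lfloor k/2\rfloor\pmod 2$. I do not anticipate a genuine obstacle: the only real effort is keeping the parities $p(a',a'')=p(b',a'')$ straight through the tensor decomposition and checking that the discarded terms in Corollary \ref{formulafactorial}, in the induction hypothesis, and in \eqref{decouple} indeed contribute only to the lower order part.
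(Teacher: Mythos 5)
Your proof is correct and is essentially the paper's own argument: the paper likewise writes $a$ and $b$ as iterated normally ordered products and "applies repeatedly \eqref{decouple} combined with Corollary \ref{formulafactorial}", which is exactly your induction splitting off the pair $(x_1,y_1)$. You merely make explicit the sign bookkeeping $p(a',a'')=p(b',a'')=\prod_{i\ge 2}(-1)^{(n_1+1)(n_i+1)}$ and the congruence $\binom{k}{2}\equiv\lfloor k/2\rfloor \pmod 2$ that the paper leaves to the reader.
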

\begin{proof}
By formula (1.40) of \cite{KacD}, we have that
$$
a=::T^{\mathbf{n_1!}}(\bar x_1):\dots :T^{\mathbf{n_k!}}(\bar x_k)::,\qquad
b=::T^{\mathbf{n_1!}}(\bar y_1):\dots :T^{\mathbf{n_k!}}(\bar y_k)::.$$
We can therefore apply repeatedly  \eqref{decouple} combined with Corollary \ref{formulafactorial} to obtain the result.
\end{proof}
\subsection{The affine vertex algebra}\label{ava}
Let $\g$ be a simple or abelian finite dimensional complex Lie algebra.

Let $(\cdot,\cdot)$  be
the normalized nondegenerate invariant symmetric bilinear form on $\g$ (i.e., the square length of a long root of $\g$ is $2$); if $\g$ is abelian, any non-degenerate symmetric bilinear form will do. One defines the  Lie
conformal algebra $Cur(\g)$ as
$$
Cur(\g)=(\C[T]\otimes \g)+\C K
$$
with $\l$-bracket defined for $a,b\in1\otimes \g$  by
\begin{align*}
[a_\l b]&=[a,b]+\l(a,b)K,\ a,b\in\g,\\
[a_\l K]&=[{K}_\l K]=0.
\end{align*}
Let $V(\g)$ be its universal enveloping vertex algebra.
Let $h^\vee$ be the dual Coxeter number of $\g$ ($h^\vee=0$ if $\g$ is abelian), and choose $k\in\C$ with $k\ne -h^\vee$. The vertex algebra
$$
V^{{k}}(\g)=V(\g)/:(K-k\va)V(\g):
$$
is called the {\it level ${k}$} universal affine vertex algebra. It has a unique simple quotient $V_{k}(\g)$ called the {\it level $k$} simple affine vertex algebra. If $\g$ is reductive, let  $\g=\g_0\oplus \g_1 \oplus \dots \oplus \g_s$ with $\g_0$ abelian and $\g_i$ simple ideals for $i=1,\dots,s$. If $\mathbf{k}\in\C^{s+1}$, we define
$$
V^{\mathbf{k}}(\g)=V^{k_0}(\g_0)\otimes V^{k_1}(\g_1)\otimes\dots\otimes V^{k_s}(\g_s)
$$
and
$$
V_{\mathbf{k}}(\g)=V_{k_0}(\g_0)\otimes V_{k_1}(\g_1)\otimes\dots\otimes V_{k_s}(\g_s).
$$
Note that both $V^{\mathbf{k}}(\g)$ and $V_{\mathbf{k}}(\g)$ are vertex algebras with Virasoro element $\omega_\g$ given by the Sugawara construction:
$$\omega_\g=\sum_{j=0}^s\frac{1}{2(k_j+h_j^\vee)}\sum_{i=1}^{\dim \g_j} :x^j_i x_j^i :.
$$
Here $\{x_i^j\},\{x_j^i\}$ are dual bases of $\g_j$ and $h^\vee_j$ is its dual Coxeter number.\par
Observe that $V^{\mathbf{k}}(\g)_0=\C|0\rangle$, so  $V^{\mathbf{k}}(\g)_1$ is a Lie algebra. The obvious embedding of $\g$ in $Cur(\g)$ defines an embedding of $\g$ in $V^{\mathbf{k}}(\g)$ and it is easy to check that the image of this embedding is precisely $V^{\mathbf{k}}(\g)_1$. If $\pi: V^{\mathbf{k}}(\g)\to V_{\mathbf{k}}(\g)$ is the canonical homomorphism, then it is not hard to check  that $\pi$ is injective when restricted to $V^{\mathbf{k}}(\g)_1$, thus we can identify $V_{\mathbf{k}}(\g)_1$ and $\g$.


\subsection{Conformal embeddings}\label{confemb}
\begin{defi}
Let $V$ and $W$ be vertex algebras equipped with Virasoro elements $\omega_V$, $\omega_W$ and assume that $W$ is a vertex subalgebra of $V$. We say that $W$ is conformally embedded if $\omega_V=\omega_W$.
\end{defi}
The following construction provides several examples of conformal embeddings. Let $\fa =\r \oplus \fp$ be the eigenspace
decomposition of an involution $\s$ of a semisimple Lie algebra
$\fa$. In this case it is usually said that $(\fa,\r)$ is a symmetric pair. Let $\fa=\oplus \fa_s$ be the decomposition of $\fa$ into $\s$-indecomposable ideals. Let $(\cdot,\cdot)$ be the Killing form of $\fa$. Clearly $(\cdot,\cdot)$ is nondegenerate when restricted to $\p$. Write
    $\r=\sum_{S=0}^M\r_S$ with $\r_0$ abelian and $\r_S$  simple ideal for $S>0$. Let $(\cdot,\cdot)_0=\half(\cdot,\cdot)_{|\r_0\times\r_0}$ and, if $S>0$, let $(\cdot,\cdot)_S$ be the normalized   invariant form on $\r_S$ .  Let $ad_\p$ denote the adjoint action of $\r$ on $\p$.     Set $h_i^\vee$ to be the dual Coxeter number  of $\fa_i$ (since $\s_{|\fa_i}$ is indecomposable, $\fa_i$ is either simple or the sum of two isomorphic simple ideals, and in the latter case $h_i^\vee$ is the dual Coxeter number of one of these simple ideals). If $S>0$, let $h^\vee_S$ be the dual Coxeter number of $\r_S$.
    Set, for $S>0$,
    $$
    j_S=n_Sh_{i_S}^\vee-h^\vee_S
   $$
where $n_S=\frac{1}{h_{i_S}^\vee(\a,\a)}$,  $\a$ is any long root of $\r_S$ and $\fa_{i_S}$ is the unique indecomposable ideal containing $\r_S$. Set also $j_0=1$.
 Then,  (see \cite[Section 2]{CKMP})
     \begin{equation}\label{jS}
     \half tr(ad_\p(h)ad_\p(h'))=j_S(h,h')_S
     \end{equation}
for any $h,h'\in\r_S$.
     Set
     \begin{equation}\label{mathbfj}\mathbf{j}=(j_0,\dots, j_M).
     \end{equation}

Let  $\{x_i\}$ a basis of $\p$, and $\{x^i\}$ its dual basis. It was shown by Kac and Peterson (when $\s$ is inner)
\cite{KacPeterson} and by Goddard, Nahm and Olive
\cite{GNO}  that the map
\begin{equation}
\Theta(X)=\half\sum_{i} :\overline{[X,x_i]}\bar  x^i:
\end{equation}
can be extended to an embedding of $V_{\mathbf{j}}(\r)$ in $F(\bar \p)$.

Moreover the image of the Virasoro element of $V_{\mathbf{j}}(\r)$ in $F(\bar\p)$ coincides with $\omega_{\bar\p}$ (cf. \eqref{virf}). Thus $V_{\mathbf{j}}(\r)$  embeds conformally in $F(\bar \p)$.

The other way around also holds: let
\begin{equation}\label{reductive}
\k=\k_0\oplus \sum_{i=1}^s\k_i
\end{equation} be a reductive Lie algebra with, as usual, $\k_0$ abelian and $\k_i$ simple ideals for $i>0$. The symmetric space theorem \cite{GNO} implies  the following statement.
\begin{prop}\label{symmthm} If $\k$ is such that
\begin{enumerate}
\item[i.] there is vertex algebra embedding of $V_{\mathbf{k}}(\k)$  in $F(\bar A)$ for some vector space $A$ and some ${\mathbf{k}}\in\C^*\times\nat^s$;
\item[ii.]  the Virasoro elements of $V_{\mathbf{k}}(\k)$  and  $F(\bar A)$ coincide,
\end{enumerate}
 then there is a symmetric pair  $(\aa,\r)$ such that $\r=\k$ and $\p= A$; moreover,   $\mathbf{k}=\mathbf{j}$.
 \end{prop}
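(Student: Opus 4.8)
The plan is to run the construction of $\Theta$ in reverse, building the symmetric pair from the data $(\k,\mathbf{k},A)$. First I would use the hypothesis to identify $\k$ with the weight-$1$ subspace $V_{\mathbf{k}}(\k)_1$ (as in Subsection \ref{ava}), so that each $x\in\k$ acts on $F(\bar A)$ by a field $Y(x,z)=\sum x_{(n)}z^{-n-1}$. Since $\k$ maps into $F(\bar A)_1$ and $F(\bar A)_1=\bar A\cdot T\bar A$-type expressions are spanned by $:\bar a\bar b:$ with $a,b\in A$, each $x\in\k$ is sent to a quadratic element $\Theta_0(x)=\half\sum_i:\overline{\rho(x)x_i}\,\bar x^i:$ for a uniquely determined linear map $\rho(x)\in \mathfrak{gl}(A)$; invariance of the form (forced by the $\lambda$-bracket structure on $F(\bar A)_1$, which is exactly the current algebra of $\mathfrak{so}(A)$ at level $1$) shows $\rho(x)\in\mathfrak{so}(A)$. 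Thus we get a Lie algebra homomorphism $\rho\colon\k\to\mathfrak{so}(A)$, and computing $[\Theta_0(x)_{(0)}\Theta_0(y)]$ via Wick's formula shows $\rho$ is a homomorphism; faithfulness on each simple ideal follows because $V_{\mathbf{k}}(\k)\hookrightarrow F(\bar A)$ is an embedding (if $\rho$ killed an ideal, that ideal would map to $0$). So $\k$ sits inside $\mathfrak{so}(A)$, and we set $\aa=\k\oplus A$ with $A$ carrying the $\k$-module structure $\rho$ and the given form; the bracket $[\,\cdot\,,\,\cdot\,]\colon A\times A\to\k$ is obtained by dualizing $\rho$ through the form $(\cdot,\cdot)$ on $A$ (the component of $[\rho(x)u,v]$-type pairings), and one checks the Jacobi identity, making $\aa$ a Lie algebra with involution $\s=\mathrm{id}$ on $\k$, $\s=-\mathrm{id}$ on $A$.

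Next I would verify that $\aa$ is semisimple. This is where the conformal hypothesis (ii) enters: the equality of Virasoro elements says $\Theta_0$ carries $\omega_{\mathbf k}$ (the Sugawara vector of $V_{\mathbf{k}}(\k)$) to $\omega_{\bar A}$. Taking the central charge of both sides gives the numerical conformal-embedding identity; feeding this, together with the level constraint $\mathbf k\in\C^*\times\nat^s$ and the structure of $\rho$, into the trace formula would show that the Killing form of $\aa$ is nondegenerate — equivalently that $A$ together with the $[\,\cdot\,,\,\cdot\,]\colon A\times A\to\k$ pairing generates enough of $\k$ that no abelian ideal survives. Concretely, the central charge match forces $\tfrac12\,\mathrm{tr}_A(\rho(h)\rho(h'))=\mathbf{j}\text{-scalar}\cdot(h,h')$ on each simple ideal $\k_S$, i.e. precisely relation \eqref{jS} with the correct constant, and on the abelian part $\k_0$ the $\C^*$-level hypothesis pins down the form up to the factor $\half$ in $(\cdot,\cdot)_0$. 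Once $\aa$ is semisimple with involution $\s$, $(\aa,\k)$ is a symmetric pair with $\r=\k$, $\p=A$ by construction, and the identity of conformal vectors becomes exactly the statement that $V_{\mathbf j}(\k)$, with $\mathbf j$ as in \eqref{mathbfj}, is the image — so $\mathbf k=\mathbf j$ by matching the two forms term by term.

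The main obstacle is the semisimplicity of $\aa$: a priori the $\k$-module $A$ could fail to close up into a semisimple Lie algebra (the bracket $A\times A\to\k$ built from $\rho$ and the form need not land in $\k$ if $\k\subsetneq\mathfrak{so}(A)$ is not the full normalizer of its action, and even when it does, $\aa$ could have a nonzero radical meeting $A$). The right way around this is to invoke the Goddard–Nahm–Olive symmetric space theorem (``the symmetric space theorem \cite{GNO}'') in the precise form that says a unitary-type embedding $V_{\mathbf k}(\k)\hookrightarrow F(\bar A)$ matching Virasoro vectors is \emph{necessarily} realized by a symmetric pair — so the content of the Proposition is really a dictionary-translation of that theorem into vertex-algebra language: hypotheses (i)–(ii) are exactly the input of \cite{GNO}, and the conclusion ($\aa$ semisimple, $\mathbf k=\mathbf j$) is its output. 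I would therefore structure the proof as: (1) extract $\rho\colon\k\to\mathfrak{so}(A)$ from hypothesis (i) as above; (2) quote \cite{GNO} to conclude $\rho$ integrates to an isotropy representation of a symmetric pair $(\aa,\k)$ with $\p=A$; (3) compute central charges / traces to verify $\mathbf k=\mathbf j$, using \eqref{jS} and $j_0=1$ together with the normalization $(\cdot,\cdot)_0=\half(\cdot,\cdot)_{|\r_0\times\r_0}$. The only genuinely new work beyond citing \cite{GNO} is steps (1) and (3), both of which are short Wick-formula computations in $F(\bar A)$ of the same flavor as those in Subsection \ref{ferm}.
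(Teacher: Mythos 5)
Your proposal matches the paper's treatment: the paper gives no proof at all, simply prefacing the statement with ``The symmetric space theorem \cite{GNO} implies the following statement,'' so the entire content is, as you say, the dictionary translation of the Goddard--Nahm--Olive theorem into vertex-algebra language, with the identification $F(\bar A)_1\cong so(A)$ via $\Theta$ and the trace relation \eqref{jS} supplying the level computation $\mathbf k=\mathbf j$. Your steps (1)--(3) are exactly the unwritten details the paper leaves to the reader, and your instinct to route the semisimplicity of $\aa$ through \cite{GNO} rather than a direct Jacobi-identity check is the intended argument.
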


A special case of the above discussion occurs when we consider a vector space $A$ with a nondegenerate bilinear symmetric form $(\cdot,\cdot)$, and embed $A$ in $\tilde A=A\oplus\C$ with the form extended by $(a,1)=0$ for $a\in A$ and $(1,1)=1$. Let $L$  be the linear map on $\tilde A$ defined by $L(a+c)=-a+c$. Then $\s=Ad(L)$ is an involution of $so(\tilde A,(\cdot,\cdot))$ and the corresponding eigenspace decomposition is
$$
so(\tilde A,(\cdot,\cdot))=so(A,(\cdot,\cdot))\oplus A.
$$
The adjoint action of $so(A,(\cdot,\cdot))$ on $A$ coincides with the standard action of $so(A,(\cdot,\cdot))$ on $A$. In this case the above construction gives that
$V_1(so(A,(\cdot,\cdot))$ embeds conformally in $F(\bar A)$ and the embedding is given by the extension of the map $\Theta$ defined by
\begin{equation}\label{theta}
\Theta(X)=\half\sum_{i} :\overline{X(x_i)}\bar  x^i:.
\end{equation}

 Note also that $F(\bar A)_0=\C|0\rangle$, so $F(\bar A)_1$ is a Lie algebra.
One checks easily that the map $\Theta$
 between $so(A, (\cdot ,\cdot))=V_{1}(so(A,(\cdot,\cdot)))_1$ and $F(\bar A)_1$ defined above is onto. Thus $\Theta$ gives an isomorphism between $F(\bar A)_1$ and $so(A, (\cdot ,\cdot))$.

 Set
 \begin{equation}\label{F}F(\bar A)^0=\oplus_{n\in\nat}F(\bar A)_n.\end{equation} Since $V_{1}(so(A,(\cdot, \cdot))_n\ne \{0\}$ only if $n\in\nat$, it is clear that $\Theta(V_{1}(so(A,(\cdot, \cdot)))\subset F(\bar A)^0$. Since $F(\bar A)_1\subset \Theta(V_{1}(so(A,(\cdot, \cdot)))$, we can identify $F(\bar A)^0$ and $V_{1}(so(A,(\cdot, \cdot))$.

\begin{defi}\label{c} If $\g$ is a simple Lie algebra and $\k$ is a reductive subalgebra as in \eqref{reductive}, we say that $\k$ is conformally embedded in $\g$, if there is $\mathbf{k}\in \C^*\times \nat^M$ such that $V_{\mathbf{k}}(\k)$
is conformally embedded in $V_{1}(\g)$.
\end{defi}

\begin{rem}\label{confinson}If $(\aa,\r)$ is a symmetric pair, it is clear that $\Theta$ maps $V_{\mathbf{j}}(\r)$ in $F(\bar\p)^0=V_{1}(so(\p))$. Thus $\r$ is conformally embedded in $\g=so(\p)$ and the symmetric space theorem  implies that any conformal embedding in $so(n,\C)$ arises in this way.
\end{rem}

\begin{rem}
If $\k\hookrightarrow  \g$ is a conformal embedding and $\mathfrak v$ is  a reductive subalgebra of $\g$ containing $\k$, then $\mathfrak v$ is conformal in $\g$. This follows from the fact that, since $V_{1}(\g)$ is a unitarizable $V_{1}(\g)$-module, then the equality of the Virasoro elements is equivalent to the equality of the corresponding central charges. Thus, if $\k$ is a maximal conformal subalgebra of $\g$, then it  is actually a maximal reductive subalgebra.
The list of such maximal embeddings is given in \cite{AGO}.
\end{rem}
\subsection{Finite decomposition property}\label{fdp} The notion of conformal embedding first arose in literature in  a slightly different way.
Consider a  pair $(\g ,\fk)$, where $\g$ is a finite-dimensional
simple Lie algebra over $\CC$ and $\fk$ is a reductive
subalgebra of $\g$, such that the restriction of the Killing form of
$\g$ to $\fk$ is non-degenerate.
Denote by $\ga,\ka$ the corresponding  affinizations \cite{Kac}. Recall that a level $k$ highest weight representation of $\ga$ can be naturally viewed as
a representation of $V^{k}(\g)$ (cf. \cite{KacV}).
It is well-known that any integrable highest weight
$\widehat{\g}$-module, when restricted to $\widehat{\fk}$, decomposes
into a direct sum of irreducible $\widehat{\fk}$-modules \cite{Kac},
but almost always this decomposition is infinite.  As we have already remarked, the first cases of a finite decomposition were found in \cite{KacPeterson}.
This led to define the notion of  a conformal
  pair in terms of the following finite decomposition property: $\fk$ was called a \emph{conformal subalgebra} of $\fg$
if there exists a non-trivial  integrable highest weight module $V$
over the affine Kac--Moody algebra $\widehat{\g}$, such that the restriction to $\widehat{\fk}$ of
each weight space of the center $\mathfrak z(\k)$ of $\fk$ in $V$ decomposes into a
finite direct sum of irreducible $\widehat{\fk}$-modules. \par
It was readily found that the decomposition in question is finite
if and only if  the
central charges of the Sugawara construction of the Virasoro
algebra for $\widehat{\g}$ and $\widehat{\fk}$ are equal \cite{GNO}.
Recall that the function
$$c_\g(k)=\frac{k\dim\g}{k+h^\vee}$$
expresses the central charge of the Sugawara construction at level $k$ for a simple Lie algebra $\g$ with dual Coxeter number $h^\vee$. In \cite[Section 2]{AGO} it is shown that
$c_{\g,\k}(k)=c_\g(k)-\sum_Sc_{\k_S}(j_S k)$ is strictly increasing as  a function of $k$.
By \cite[Proposition 12.12.c)]{Kac} we have $c_{\g,\k}(k)\geq 0$ if $k$ is a positive integer. By the coset construction, we have the finite decomposition property
of an irreducible highest weight $\widehat{\g}$-module $V$ of level $k$ only if the coset central charge is zero:
\begin{equation}\label{centralzero}
c_{\g,\k} (k)=0.
\end{equation}
 Hence the
decomposition in
question has a chance to be finite only if the level of the
$\widehat{\g}$-module $V$ is equal to~$1$, and if it is finite for
one of the $\widehat{\fg}$-modules of level~$1$, it is also finite
for all others.

A more conceptual argument, kindly suggested by the referee (see also \cite{A}), is the following: if the embedding is conformal then $\omega_\g-\omega_\k$ is in the maximal ideal of  $V^k(\g)$, hence there must be a singular vector  in $V^k(\g)$ of conformal weight $2$. This implies that there is  $\l$, with $\l$ either zero or a sum of at most two roots of $\g$, such that
$$
\frac{(2\bar\rho+\l,\l)-2(k+h^\vee)}{k+h^\vee}=2.
$$
Here $\bar\rho$ is a Weyl vector for $\g$ and $(\cdot,\cdot)$ is the normalized invariant form.
Since  $(\bar\rho,\a)\le h^\vee-1$  for any root $\a$ and $\Vert \l\Vert^2\le 8$, we see that the above equality implies that $
\frac{2h^\vee+4-2k}{k+h^\vee}\ge 2$, so $k\le 1$.

\vskip5pt
On the other hand, if we remove the integrability condition,  equality of central charges may happen at levels less than  $1$. Some examples of this
phenomenon were found in \cite{A}; it was also noted that sometimes one also gets the finite decomposition property. We would like to put the examples discovered by Adamovi\'c and Per\v{s}e \cite{A} in the framework of Kac-Wakimoto theory of admissible representations \cite{KacWW}.\par
Let $\ha=\h\oplus\C K\oplus\C d$ a Cartan subalgebra of $\ga$ ($\h$ is a Cartan subalgebra of $\g$). Let $\rho$ be a Weyl vector for $\ga$ (cf. \cite{Kac}).
Recall that an irreducible highest weight module $L(\l),\,\l\in\ha^*$ over an affine algebra $\ga$ is said to be {\it admissible} if
\begin{enumerate}
\item $(\l+\rho)(\a)\notin -\nat$, for each positive coroot $\a$;
\item the rational linear span of positive simple coroots equals the rational linear span of the coroots which are integral valued on $\l+\rho$.
\end{enumerate}
Admissible modules are classified in \cite{KacWW}; their importance lies in the fact that the character of these representations has modular invariance properties even though they are not necessarily integrable.
We are concerned about admissible representation by virtue of the following observation, which relies on the theory developed in \cite{KacWW}.\par
 Denote by  $g_\g(\l)$ the number, called the growth of $L(\l)$, defined by  formula (3.20b) of \cite{KacWW} for the weight $\l$ of $\ga$. It expresses the asymptotic behavior
of the character of $L(\l)$:  in the limit  ${t\to 0^+}$ of the real parameter $t$
$$ tr_{L(\l)} e^{2\pi t(d+z)} \sim b_\l\cdot e^{\frac{\pi}{12 t}g_\g(\l)}\quad(z\in \h)$$
where $b_\l=b_\l(z)$ is a nonzero function of $z$.
\par
\begin{theorem}\label{cond} Let $\g$ be  a simple Lie algebra and $\k=\sum_{S=0}^r\k_S$ a reductive equal rank maximal
subalgebra of $\g$.  Assume that $L(k\L_0),\,k\in\mathbb Q\setminus \ganz,$ is admissible and that any irreducible subquotient of $L(k\L_0)$ is  admissible as a $\ka$-module.  Then  the $\widehat{\g}$-module $L(k\Lambda_0)$
decomposes finitely w.r.t.  $\widehat{\k}$ if and only if \eqref{centralzero} and
\begin{equation}\label{eecc}
g_\g(k\L_0)=\sum_{S=0}^r g_{\k_S}(j_S k\L_{0_S})
\end{equation}
hold.
 \end{theorem}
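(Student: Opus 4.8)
The plan is to combine the general principle that finite decomposition forces equality of central charges with a refined asymptotic estimate governed by the growth function $g_\g$. The ``only if'' direction is the easy half: if $L(k\L_0)$ decomposes as a finite direct sum $\bigoplus_i L(\mu_i)$ of irreducible $\ka$-modules, then first \eqref{centralzero} holds by the coset construction (equality of central charges is forced, exactly as recalled in \ref{fdp} for the integrable case — and the argument carries over because all subquotients are assumed admissible, so the coset Virasoro acts with non-negative central charge and must in fact vanish for the decomposition to be finite). Then, looking at the asymptotics of characters as $t\to 0^+$: the left-hand side behaves like $b_{k\L_0}\,e^{\frac{\pi}{12t}g_\g(k\L_0)}$, while the right-hand side, being a \emph{finite} sum, behaves like $\big(\sum_i b_{\mu_i}\big)e^{\frac{\pi}{12t}\max_i g_{\k}(\mu_i)}$; matching the leading exponential rate gives $g_\g(k\L_0)=\max_i g_\k(\mu_i)$. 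Here one must use that $\k$ is equal rank and maximal, together with \eqref{centralzero}, to identify $\max_i g_\k(\mu_i)$ with $\sum_{S=0}^r g_{\k_S}(j_Sk\L_{0_S})$ — essentially because the coset having zero central charge means the coset module is finite-dimensional, contributing nothing to the growth, and the growth of each $L(\mu_i)$ is at most the ``full'' value $\sum_S g_{\k_S}(j_Sk\L_{0_S})$ with equality for at least one $i$. This yields \eqref{eecc}.

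For the ``if'' direction, assume \eqref{centralzero} and \eqref{eecc}. First I would write the restriction of $L(k\L_0)$ to $\ka$ as a (possibly infinite) direct sum $\bigoplus_i L(\mu_i)^{\oplus m_i}$ — this is legitimate because $\k$ is equal rank, so $L(k\L_0)$ is a weight module for a Cartan subalgebra of $\ka$ with finite-dimensional weight spaces, hence completely reducible over $\ka$ once one knows (from admissibility of all subquotients) that the summands are admissible highest weight modules. The zero coset central charge \eqref{centralzero} forces each branching space $\Hom_{\ka}(L(\mu_i),L(k\L_0))$ to be a module over the coset vertex algebra, which has central charge $0$ and is therefore trivial — this is the standard coset argument and it already shows each $m_i$ is finite and each $\mu_i$ occurs with a bounded ``horizontal'' multiplicity. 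The remaining task is to bound the \emph{number} of distinct $\mu_i$. Here the key estimate is that $g_\g(k\L_0)=\sum_S g_{\k_S}(j_Sk\L_{0_S})$ is precisely the maximal possible growth of an admissible $\ka$-module appearing, and an infinite branching would produce, via the Kac--Wakimoto asymptotic formula \cite{KacWW}, a strictly larger growth on the left than \eqref{eecc} permits — contradiction. More precisely, one estimates $tr_{L(k\L_0)}e^{2\pi t(d+z)}$ from below by the contribution of infinitely many summands and shows the exponential rate would have to exceed $\frac{\pi}{12t}g_\g(k\L_0)$, unless only finitely many $\mu_i$ occur.

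The main obstacle, as I see it, is making the asymptotic comparison rigorous when the branching is a priori infinite: one needs a uniform control on the ``error'' functions $b_{\mu_i}(z)$ and on the subleading terms in the asymptotic expansion, since naively summing infinitely many asymptotic estimates is not valid. The clean way around this is to localize at $z=0$ (or integrate against a suitable test function in $z$) so that the $b_{\mu_i}$ become honest positive constants, and then invoke a Tauberian-type argument: the generating function $\sum_i m_i\,q^{\D_{\mu_i}-\text{const}}$ (tracking conformal weights of the branching components) must have radius of convergence controlled by the difference of the two growth functions, and \eqref{eecc} forces this to be a polynomial-type bound rather than exponential, i.e.\ finitely many terms. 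Handling the equal-rank, maximal hypothesis correctly in identifying $\sum_S g_{\k_S}(j_Sk\L_{0_S})$ with the maximal growth — which is where the levels $j_S$ from \eqref{jS} and \eqref{mathbfj} enter — is the other delicate point, and I expect it to rely on an explicit evaluation of $g_\g$ via formula (3.20b) of \cite{KacWW} together with the relation $c_{\g,\k}(k)=0$.
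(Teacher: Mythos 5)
Your ``only if'' direction follows the paper's route (comparison of character asymptotics as $t\to 0^+$), but the way you identify the growth of the branching components is weaker than what is needed: rather than taking a maximum and asserting that equality with $\sum_S g_{\k_S}(j_Sk\L_{0_S})$ holds ``for at least one $i$'', the paper observes that, because $\k$ is equal rank, any two highest weights $\l_i,\l_j$ occurring in the branching differ by a sum of roots of $\ka$, and the growth of an admissible $\ka$-module depends only on the set of coroots taking integral values on the highest weight; hence \emph{all} the $L^\k(\l_i)$ have the \emph{same} growth, equal to $\sum_S g_{\k_S}(j_Sk\L_{0_S})$ because $L^\k(\sum_S j_Sk\L_{0_S})$ certainly occurs. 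This removes the ambiguity in your ``max'' and is precisely where the equal-rank and admissibility hypotheses enter.

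The genuine gap is in your ``if'' direction. The assertion that vanishing of the coset central charge forces the multiplicity spaces $\Hom_{\ka}(L(\mu_i),L(k\L_0))$ to be trivial is false: a Virasoro module of central charge $0$ need not be trivial or finite-dimensional (a Verma module at $c=0$ with generic conformal weight is irreducible and infinite-dimensional), so central charge alone proves nothing. You then try to repair this by bounding the number of distinct $\mu_i$ through a Tauberian argument applied to an a priori infinite sum of asymptotic expansions --- exactly the step you yourself concede you cannot control. The paper avoids this entirely: writing $L(k\L_0)=\bigoplus_i U(k\L_0,\l_i)\otimes L^{\ka}(\l_i)$ as a module over $Vir\oplus\ka$, it invokes Proposition 5 of \cite{KacWWW}, which (under the hypothesis, satisfied for $\l=k\L_0$, that the $(\omega_\g)_0$-eigenspaces are finite-dimensional) guarantees that each coset module $U(k\L_0,\l_i)$ has a \emph{finite Jordan--H\"older series}, with every irreducible subquotient having central charge $c_{\g,\k}(k)$ and growth $g_\g(k\L_0)-\sum_S g_{\k_S}(j_Sk\L_{0_S})$. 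Hypotheses \eqref{centralzero} and \eqref{eecc} make both numbers zero, which forces each subquotient --- and hence each $U(k\L_0,\l_i)$ --- to be finite-dimensional, giving the finite decomposition. Without this input, or a genuine substitute for it, your argument does not close.
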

\begin{proof}Consider $\mu\in \h_0^*$ and  define $U(\L,\mu)$ as in
\cite[12.12]{Kac}. Then $L(k\L_0)=\bigoplus\limits_i U(k\L_0,\l_i)\otimes L^{\ka}(\l_i)$ as  representation of $Vir\oplus\ka$, where $Vir$ is the coset Virasoro algebra.

If $L(k\L_0)$ decomposes finitely  then $U(k\L_0,\l_i)$ are finite dimensional hence the central charge of the coset Virasoro, which is $c_{\g,\k} (k)$, is zero, hence \eqref{centralzero} holds.

Next we prove \eqref{eecc}. Since $\k$ is an equal rank subalgebra of $\g$ and $L^\k(\l_i)$ occurs in $L(k\L_0)$, we have that $\l_i-\l_j$ is a sum of roots of $\widehat \k$. Since the growth of an admissible
 $\widehat{\k}$-module $L^\k(\l)$ depends only on the set of roots $\a$ of $\widehat \k$ such that $\l(\a^\vee)\in\ganz$, we see that the growth of $L^\k(\l_i)$ does not depend on $i$.
Denote it by $g'$. Since $L^\k(\sum_S j_S k\L_{0_S})$ clearly occurs in $L(k\L_0)$, we have that $g'=\sum_{S=0}^r g_{\k_S}(j_S k\L_{0_S})$.
 Setting  $g=g_\g(k\L_0)$ we have,
as $t\to 0^+$, 
$$
b_{k\L_0}(z)e^{\pi g/12t}(1+o(1))=(\sum_ib_{\l_i}(z))e^{\pi g'/12t}(1+o(1)).
$$
 If $g'<g$, we obtain by dividing by $e^{\pi g/12t}$,
as $t\to 0^+$, that $b_{k\L_0}(z)=0$ for all $z$.  If $g'>g$, we obtain by dividing by $e^{\pi g'/12t}$,
as $t\to 0^+$, that $(\sum_ib_{\l_i}(z))=0$, but then again $b_{k\L_0}(z)=0$ for all $z$, a contradiction.

Assume now that \eqref{centralzero} and \eqref{eecc} hold. If $\l$ is admissible, then, by Proposition 5 of \cite{KacWWW}, which holds under the additional assumption that the eigenspaces of $(\omega_\g)_0$ on $L(\l)$ are finite-dimensional, the $Vir$-modules $U(\l,\l_i)$ have finite Jordan-Holder series. The additional hypothesis is satisfied in the special case $\l=k\L_0$. Moreover each irreducible subquotient has central charge given by $c_{\g,\k} (k)$ and growth given by $g_\g(k\L_0)-\sum_{S=0}^r g_{\k_S}(j_S k\L_{0_S})$. Since they are both zero, we see that $U(k\L_0,\l_i)$ is finite dimensional, hence the decomposition is finite.
\end{proof}
\begin{rem}
Computing explicitly   equal rank pairs  $(\g,\k)$ having  levels satisfying  condition \eqref{centralzero} and   condition \eqref{eecc}, we obtain   just the following three cases
$$
\begin{tabular}{ c | l | l  }
 $k$ & $\g$ & $\k$ \\ \hline
  $-l+3/2$ & $B_l$ & $D_l$ \\
  $-5/3$ & $G_2$ & $A_2$ \\
  $-5/2$ & $F_4$ & $B_4$ \\
\end{tabular}
$$
\centerline{\text{\rm \small Table 1}}
The finiteness of the decomposition in these cases has been proved in \cite{A}
for  $(B_l,D_l),\,(G_2,A_2)$  and in  \cite{Perse} for $(F_4,B_4)$.
 Note that our methods do not cover the cases  when one of the levels involved is a negative integer (see e.g., \cite[Table 2]{A}).\end{rem}

\section{Representations of vertex algebras}\label{rva}
\subsection{Intertwining operators and simple currents}\label{simplecurrentext}
Let $V$ be a vertex algebra. A $V$-module   is  a vector  superspace $M$  endowed with a  parity preserving map $Y^M$ from
$V$ to the superspace of $End(M)$-valued fields
$$ a\mapsto Y^M(a,z)=\sum_{n\in\ganz}a^M_{(n)}z^{-n-1}
$$ such that
\begin{enumerate}
\item $Y^M(|0\rangle,z)=I_M$,
\item for $a,b \in V$, $m,n,k \in \ganz$,
\begin{align*}& \sum_{j\in\nat}\binom{m}{j}(a_{(n+j)}b)^M_{(m+k-j)}\\
&=\sum_{j\in\nat}(-1)^j\binom{n}{j}(a^M_{(m+n-j)}b^M_{(k+j)}-p(a,b)(-1)^nb^M_{(k+n-j)}a^M_{(m+j)}).
\end{align*}
\end{enumerate}

Given three $V$-modules $M_1$, $M_2$, $M_3$, an {\it intertwining operator of type $\left[\begin{matrix}M_3\\M_1\ M_2\end{matrix}\right]$} (cf. \cite{FZ}) is a   map $I:a\mapsto I(a,z)=\sum_{n\in \ganz}a^I_{(n)}z^{-n-1}$ from
$M_1$ to the space of $End(M_2,M_3)$-valued fields such that
for $a \in V$, $b \in M_1$, $m,n \in \ganz$,
\begin{align*}& \sum_{j\in\nat}\binom{m}{j}(a^{M_1}_{(n+j)}b)^I_{(m+k-j)}\\
&=\sum_{j\in\nat}(-1)^j\binom{n}{j}(a^{M_3}_{(m+n-j)}b^I_{(k+j)}-p(a,b)(-1)^nb^I_{(k+n-j)}a^{M_2}_{(m+j)}).
\end{align*}
\begin{example}\label{ex} Let $W$ be a vertex algebra and $V$ a simple subalgebra of $W$. Let $M_i, i=1,2,3,$ be $V$-submodules (with respect to the standard representation of $W$ to itself, restricted to $V$), and let $\pi: W\to M_3$ be a linear map, commuting with the action of $V$ on $W$. Let $a\in M_1$. Then
$$I(a,z)=\pi Y(a,z)_{|M_2},$$
is an intertwining operator of type $\left[\begin{matrix}M_3\\M_1\ M_2\end{matrix}\right]$.
\end{example}
\vskip5pt
We let $N^{M_3}_{M_1,M_2}$ be the dimension of the space of intertwining operators of type $\left[\begin{matrix}M_3\\M_1\ M_2\end{matrix}\right]$. When $N^{M_3}_{M_1,M_2}$ is finite, it is  usually called a {\it fusion coefficient}.

\begin{defi}\ Let $V$ be a vertex algebra. \par{\bf 1.} A simple current for $V$  is a $V$-module $S$ such that, for any irreducible $V$-module $M$, there is a unique irreducible $V$-module $M_S$ such that   $N_{S,M}^{M_S}$ is nonzero and also $N_{S,M}^{M_S}=1$.
\par {\bf 2.} Let $V$ be a vertex algebra equipped with a Virasoro vector.
A simple current extension of $V$ is a simple vertex algebra $W$ such that $V\hookrightarrow W$ is a conformal embedding and there is grading $W=\sum_{a\in D} W^a$ of $W$ by  an abelian group  $D$ such that $W^0=V,\,W^a\cdot W^b\subset W^{a+b}$ and $W^a$ is a simple current for $V$ for any $a\in D$.
\end{defi}

\begin{rem}  Simple currents are usually defined in terms of a tensor product between $V$-modules (see, for example, \cite{DLM} and references therein). They are defined by the condition that $S\otimes M$ is irreducible whenever $M$ is. We won't need this general theory.
\end{rem}



We are mainly interested in the vertex algebra
 $V_k(\g)$ when $\g$ is  a simple Lie algebra and $k\in\nat$.

 Choose a Cartan subalgebra $\h$ of $\g$ and  a set of simple roots $\{\a_1,\ldots,\a_n\}$ for the root system of $(\g,\h)$. Let  $\b$ be the Borel subalgebra corresponding to these choices and let $\theta$ be the highest root.

 If $\l\in \h^*$, we denote by $L^\g(\l)$ the irreducible representation of $V^{k}(\g)$ having a vector $v_\l$ such that
\begin{align}
&x_{(n)}v_\l=0\text{ for }x\in\g\text{ and }n>0,\label{highestone}\\
&x_{(0)}v_\l=\l(x)\text{ for }x\in\b.\label{highestwo}
\end{align}
Here $\l$ is extended trivially on $[\b,\b]$. If the action of $V^{k}(\g)$ pushes down to an action of $V_{k}(\g)$, we keep denoting by $L^\g(\l)$ the corresponding $V_{k}(\g)$ module. The module $L^\g(\l)$ is called the irreducible module of highest weight $\l$ and the vector $v_\l$ is called a highest weight vector.  Let $P$ denote the weight  lattice  of $\g$, and let $\{\omega_1,\ldots,\omega_n\}$ be the fundamental weights. Let $P_+=\sum_i \nat\omega_i$ be the set of dominant integral weights. Set $P^k_+=\{\l\in P_+\mid \l(\theta^\vee)\le k\}$.  The irreducible  $V_k(\g)$-modules are precisely  the irreducible  highest weight  modules $L^\g(\l)$ with $\l\in P^k_+$.
In particular $V_k(\g)$ has a finite number of irreducible representations. Exercise 13.35 of \cite{Kac} gives an explicit  formula for the fusion coefficents $N^{L^\g(\nu)}_{L^\g(\l),L^\g(\mu)}$.

 Write $\theta=\sum_{i} a_i \a_i$. It is well known (cf. \cite{DLM}) that if $a_i=1$, then $L^\g(k\omega_i)$ is a simple current for  $V_k(\g)$. Set $J=\{i\mid a_i=1\}$. To simplify notation, set
\begin{align}\label{uno}
 \mathbbm 1 &=V_k(\g),\\\label{oi}
 o_i&= L^\g(k\omega_i),\,\,(i\in J),
 \end{align}  and define
\begin{equation}\label{Omega}
\Omega_{\g}=\{ \mathbbm 1\}\cup\{o_i\mid  i\in J\}.\end{equation}  We call  the elements of $\Omega_{\g}$ the {\it special simple currents} for $V_k(\g)$.

 It turns out that the special simple currents  are related to the
 the center $\mathfrak z(G)$ of the connected simply connected Lie group $G$ corresponding to $\g$.
 let $P^\vee \subset \h$ denote the coweight  lattice  of $\g$, and let $\{\omega^\vee_1,\ldots,\omega_n^\vee\}$ be the fundamental coweights.

 Let $f:\Omega_{\g}\to P^\vee$ be defined by
 \begin{equation}\label{ff}
 f(o_i)=\omega_i^\vee,\quad f(\mathbbm 1)=0.
 \end{equation}
  Then we have the following bijection $\exp\circ f$:



\begin{equation*}
\begin{CD}\Omega_{\g} @> f >>P^\vee @>  \exp>>\mathfrak z(G).\end{CD}\end{equation*}
Thus $\Omega_\g$ acquires a group structure $(M,M')\mapsto MM'$ from this bijection; as notation suggests, $\mathbbm 1=V_k(\g)$ is the identity element. It is well known that, if $M_1,M_2\in \Omega_{\g}$ and $M_3$ is an irreducible $V_k(\g)$-module, then
\begin{equation}\label{grouplaw}
N^{M_3}_{M_1,M_2}=\delta_{M_3,M_1 M_2}.
\end{equation}

 This result easily extends to the case when $\g$ is reductive. First remark that
 if $\g$ is abelian, letting $\b=\g$, then \eqref{highestone} and \eqref{highestwo} define $L^\g(\l)$ for any $\l\in\g^*$. Moreover $L^\g(\l)$ is a simple current for $V_{k}(\g),\,k\in\C^*$. Indeed, in this case we have,
 $$
N^{L^\g(\nu)}_{L^\g(\l),L^\g(\mu)}=\d_{\nu,\l+\mu}.
 $$
We therefore set
 $$
 \Omega_{\g}=\{L^\g(\l)\mid \l\in\g^*\}
 $$
and call its elements {\it special simple currents}. Identifying $\Omega_{\g}$ and $\g^*$ gives a group structure to $\Omega_{\g}$ and \eqref{grouplaw} holds also in this case.

If, finally, $\g$ is reductive,
let $\g=\oplus_{i=0}^s\g_i$ be the decomposition into the center $\g_0$ and the  sum of  the simple ideals of $\g$. Fix ${\bf  k}\in\C^*\times\nat^s$.
Set, for $\l\in \h^*$, $L^\g(\l)=L^{\g_0}(\l_0)\otimes L^{\g_1}(\l_1)\otimes \dots \otimes L^{\g_s}(\l_s)$, where $\l_i=\l_{|\h\cap\g_i}$, and
\begin{equation}\label{Of}
\Omega_{\g}=\{\otimes_{i=0}^sM_i\mid M_i\in \Omega_{\g_i}\}.
\end{equation}
The elements of $\Omega_{\g}$ are simple currents that we call the {\it special simple currents} for $V_{\mathbf{k}}(\g)$.
The map $M_0\otimes M_1\otimes \dots \otimes M_s\mapsto (M_0,M_1,\dots, M_s)$ identifies $\Omega_{\g}$ and $\prod_{i=0}^s \Omega_{\g_i}$ thus giving a group structure to $\Omega_{\g}$ that makes \eqref{grouplaw} to hold also in this case.

 \subsection{Relative fusion ring} Let $W$ be a simple vertex algebra and $V$ a simple subalgebra of $W$. Assume that the standard representation of $W$, when restricted to $V$, decomposes
 into a direct sum of $T$-invariant irreducible modules. Write
 \begin{equation}\label{id}
 W=\bigoplus_{j\in \mathcal J} W_j,
 \end{equation}
 for the decomposition of $W$ as a $V$-module into isotypic components. Here $\mathcal J$ is the set of all $j$ in the set of isomorphism classes of irreducible representations of $V$ such that $W_j\ne\{0\}$. As in the previous section we let $\uno$ be the isomorphism class of $V$ as a $V$-module. 
 Define $c_{ij}^k$ to be $1$ if $W_i\cdot W_j$ (which is a $V$-submodule of $W$, due to the Borcherds identity) intersects non-trivially $W_k$ and $0$ otherwise.
\begin{defi} The relative fusion ring $FR(W,V)$ is the free  $\ganz$-module with basis $ \mathcal J$ and product
\begin{equation}\label{pfp} i\cdot j=\sum_{k\in \mathcal J} c_{ij}^k k.\end{equation}
\end{defi}

\begin{example}
The complete
reducibility Theorem 10.7 from \cite{Kac} implies that \eqref{id}
holds    in the case of  the conformal embedding of an affine vertex algebra
$V=V_{\mathbf k}(\k)\hookrightarrow V_1(\g)$ with $\g$ a simple Lie algebra, $\k$ a reductive algebra as in \eqref{reductive} and $\mathbf k\in C^*\times \nat^s$.  Thus, we can define the relative fusion ring $F(V_1(\g),V_{\mathbf k}(\k))$.

Formula \eqref{id} holds also in the case of the conformal embedding $V_{\bf j}(\k)\hookrightarrow F(\bar A)$ for some vector space  $A$ (see Section 3.1). Indeed, by Proposition \ref{symmthm} above, \eqref{id} is given by the results of \cite{CKMP} (see Theorems \ref{decoeabeliani} and \ref{finalehs} below). Thus, we have the relative fusion ring $FR(F(\bar A),V_{\bf j}(\k))$.
\end{example}

By Lemma \ref{nonvanishing} (a), (c), $FR(W,V)$ is a unital commutative associative ring (with identity element $\uno$), such that $i\cdot j\ne 0 $ for all $i,j\in \mathcal J$.

\begin{defi} Let $\k$ be a reductive complex Lie algebra, $\mathfrak{t}$ a Cartan subalgebra of $\k$.
For $\l\in \mathfrak{t}^*$, set
$$\l^*=-w_0(\l),$$ where $w_0$ is the longest element of the  Weyl group of $\k$.
\end{defi}

\begin{lemma}\label{nonz}Let $W$ be a simple vertex algebras and $V$ a vertex subalgebra such that $\eqref{id}$ holds.

\begin{enumerate}
\item  For each $j\in \mathcal J$ there exists $j^*\in \mathcal J$ such that $c_{j,j^*}^\uno\ne 0$.
\item Let $V=V_{\mathbf k}(\k)$ with $\k$ as in \eqref{reductive} and $\mathbf{k}\in \C^*\times \nat^s$. If $j=L^\k(\l)$ then then there exists a unique $j^*$ as in (1), and $j^*=L^\k(\l^*)$.
\end{enumerate}
\end{lemma}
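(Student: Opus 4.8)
The plan is to prove part (1) first, using the ring structure on $FR(W,V)$ from Lemma~\ref{nonvanishing}, and then refine it to part~(2) by exploiting the explicit fusion rules for affine vertex algebras together with a contragredient/duality argument. For part~(1), I would argue as follows. Since $W$ is a simple vertex algebra, by Lemma~\ref{nonvanishing}(b) the ring $\mathcal A_V$ is a domain; moreover for any nonzero $T$-stable subspaces $A,B\subseteq W$ one has $A\cdot B\neq 0$ (this is exactly the nonvanishing statement feeding into $i\cdot j\neq 0$ in $FR(W,V)$). Fix $j\in\mathcal J$ and let $W_j$ be the corresponding isotypic component. Then $W_j\cdot W_j$, hence more generally the product of $W_j$ against all of $W$, is a nonzero $V$-submodule of $W$, so its expansion in the basis $\mathcal J$ is nonzero; the point is to show that $\uno$ appears. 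Here I would use that $W$ itself is simple: the two-sided ideal of $W$ generated by $W_j$ is all of $W$, in particular it contains $|0\rangle\in V=W_\uno$. Writing this out in terms of products $a_{(n)}b$ with $a,b$ ranging over the isotypic components forces some pair $W_j\cdot W_k$ to hit $W_\uno$, i.e.\ $c^{\uno}_{j,k}\neq 0$ for some $k$; setting $j^*:=k$ gives the claim. (One should check $j^*$ is the same whichever way one slices it, but for part~(1) mere existence suffices.)

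For part~(2), with $V=V_{\mathbf k}(\k)$, $\k$ reductive, $\mathbf k\in\C^*\times\nat^s$, the isotypic components are the $L^\k(\l)$ occurring in $W$, and $\uno=L^\k(0)$. The relative fusion coefficient $c^{\uno}_{L^\k(\l),L^\k(\mu)}$ detects whether $L^\k(\l)\cdot L^\k(\mu)$ meets $V$; by Example~\ref{ex} a nonzero such product produces a nonzero intertwining operator of type $\left[\begin{smallmatrix}L^\k(0)\\ L^\k(\l)\ L^\k(\mu)\end{smallmatrix}\right]$, so $N^{L^\k(0)}_{L^\k(\l),L^\k(\mu)}\neq 0$. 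Now I invoke the explicit fusion rules recalled in Section~\ref{simplecurrentext}: for affine vertex algebras of a simple (resp.\ abelian) $\k_i$, the module $L^{\k_i}(0)=\uno$ occurs in $L^{\k_i}(\nu)\cdot L^{\k_i}(\rho)$ only when $\rho=\nu^*=-w_0(\nu)$, and then with multiplicity one — this is a standard consequence of the Verlinde/fusion formula (Exercise 13.35 of \cite{Kac}) and, for the abelian factors, of $N^{L^\k(\nu)}_{L^\k(\l),L^\k(\mu)}=\delta_{\nu,\l+\mu}$. Taking tensor products over the factors $\k_0,\dots,\k_s$ and using that $(-w_0)$ acts factorwise, we get that $j^*$ is forced to be $L^\k(\l^*)$ and is unique. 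The existence of $L^\k(\l^*)$ in $\mathcal J$ is then automatic from part~(1).

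The main obstacle I anticipate is the passage from ``$W$ is simple as a vertex algebra'' to ``$W_\uno$ appears in some $W_j\cdot W_k$''. The subtlety is that $W\cdot W_j$ is a priori only a $V$-submodule of $W$, not obviously all of $W$; one must genuinely use simplicity of $W$ as a vertex algebra (ideals, not just $V$-submodules) to conclude that $|0\rangle$ lies in the span of products $a_{(n)}b$ with one argument in $W_j$ — and then track which isotypic component each factor lands in. An alternative route, perhaps cleaner, is to run the $Cent(b)$ argument from Lemma~\ref{nonvanishing}(c): pick $0\neq b\in W_j$, so $Y(b,z)$ does not annihilate all of $W$; since the nonzero vectors $b_{(n)}w$ are distributed among the isotypic components and their span is a nonzero $V$-submodule, some $W_k$ with $c^{\text{(class of }b_{(n)}w)}_{j,k}\neq 0$ works, and a weight/conformal-grading count pins down $\uno$ once one chooses $b$ and $w$ with complementary conformal weights. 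The affine fusion-rule input in part~(2) is routine modulo citing \cite{Kac}, so I would spend the bulk of the write-up making the part~(1) ideal argument airtight and then deducing part~(2) in a few lines.
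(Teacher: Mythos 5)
Your proposal is correct and follows essentially the same route as the paper: part (1) is the observation that $W\cdot W_j$ is a nonzero ideal of $W$ (via Lemma \ref{nonvanishing}) which by simplicity must meet $W_\uno$, and part (2) combines Example \ref{ex} with the Verlinde-type fusion rule singling out $L^\k(\l^*)$ as the unique module fusing with $L^\k(\l)$ into $L^\k(0)$. The "obstacle" you worry about is already resolved by Lemma \ref{nonvanishing}(a), which makes $W\cdot W_j$ a two-sided ideal, exactly as in the paper's (contrapositive) argument.
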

\begin{proof} To prove (1) assume that for $j\in \mathcal J$ the contrary case holds. Then subspace $W\cdot M_j$ is a non-trivial proper ideal of $W$  if $j\ne \uno$. The case $j=\uno$ is obvious.

To prove (2) we observe that, as a consequence of Verlinde's formula (see e. g. \cite[(5.4)]{Wakimoto}), $L^\k(\l^*)$ is the unique irreducible $V$-module such that
$N_{L^\k(\l), L^\k(\l^*)}^{L^\k(0) }=1$ and \break  $N_{L^\k(\l), M}^{L^\k(0) }=0$ for any other irreducible $V$-module $M$. Applying Example \ref{ex}, with $M_1=j=L^\k(\l)\subset W_j$, $M_2=j^*=L^\k(\mu)\subset W_{j^*}$ and $\pi$ the projection $W\to V$, we see that $N_{L^\k(\l), L^\k(\mu)}^{L^\k(0) }\ne0$ so $\mu=\l^*$.
\end{proof}



Assume $V=V_{\mathbf k}(\k)$ and define a $\ganz$-valued pairing on $FR(W,V)$ by:
\begin{equation}\label{pairing}(i,j)=\d_{i,j^*}.\end{equation}
Lemma \ref{nonz} implies the following proposition.
\begin{prop} \label{bilinearfr}(a). The bilinear form induced by \eqref{pairing} on $FR(W,V)$ is associative, i.e.  $(i\cdot j,k)=(i,j\cdot k)$.\par
(b). The map $i\to i^*$ is an involution of the ring $FR(W,V)$, leaving the bilinear form invariant.
\end{prop}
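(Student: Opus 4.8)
The plan is to verify both statements by unwinding the definitions, using Lemma~\ref{nonz} as the only nontrivial input. For part (a), associativity of the bilinear form, I would start from the definition $(i,j)=\delta_{i,j^*}$, so that $(i\cdot j,k)=\sum_{\ell\in\mathcal J}c_{ij}^\ell\,\delta_{\ell,k^*}=c_{ij}^{k^*}$, while $(i,j\cdot k)=\sum_{\ell\in\mathcal J}c_{jk}^\ell\,\delta_{i,\ell^*}=\sum_{\ell:\,\ell^*=i}c_{jk}^\ell$. The key point is that $\ell\mapsto\ell^*$ is an involution on $\mathcal J$ (the uniqueness in Lemma~\ref{nonz}, together with the explicit identification $j^*=L^\k(\l^*)$ and the fact that $w_0$ is an involution on weights up to a diagram automorphism — in any case $(\l^*)^*=\l$), so there is exactly one $\ell$ with $\ell^*=i$, namely $\ell=i^*$, and hence $(i,j\cdot k)=c_{jk}^{i^*}$. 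Thus it remains to show $c_{ij}^{k^*}=c_{jk}^{i^*}$.

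The equality $c_{ij}^{k^*}=c_{jk}^{i^*}$ should follow from a symmetry of the structure constants coming from the associativity and commutativity of the ring $FR(W,V)$ (Lemma~\ref{nonvanishing}(a),(c)) together with the nonvanishing property $i\cdot j\neq 0$. Concretely, I would argue: $c_{ij}^{k^*}\neq 0$ iff $W_i\cdot W_j$ meets $W_{k^*}$ nontrivially, i.e. iff $k^*$ appears in $i\cdot j$; by associativity this happens iff $\uno$ appears in $(i\cdot j)\cdot k=i\cdot(j\cdot k)$ — here one uses that $k^*\cdot k$ contains $\uno$ (Lemma~\ref{nonz}(1)) and, crucially, that for simple currents or more generally by the domain property of $\mathcal A_W$, the coefficient ``$k^*$ in $i\cdot j$'' is detected by ``$\uno$ in $i\cdot j\cdot k$''. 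Since the latter is visibly symmetric under permuting $i,j,k$ appropriately, one gets $c_{ij}^{k^*}\neq 0\iff \uno$ appears in $i\cdot j\cdot k\iff c_{jk}^{i^*}\neq 0$; as all these numbers are $0$ or $1$, equality follows. For part (b), I would check that $i\mapsto i^*$ is a ring homomorphism: $(i\cdot j)^*=i^*\cdot j^*$ because applying $*$ corresponds on the level of modules to the contragredient/conjugate module, which is compatible with the product $W_i\cdot W_j$; it is an involution since $(\l^*)^*=\l$; and it leaves the form invariant because $(i^*,j^*)=\delta_{i^*,(j^*)^*}=\delta_{i^*,j}=\delta_{i,j^*}=(i,j)$.

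The main obstacle is making the symmetry argument for $c_{ij}^{k^*}=c_{jk}^{i^*}$ fully rigorous: I must be careful that the relative structure constants $c_{ij}^k$ are genuinely $\{0,1\}$-valued (given) but that the ``detection of $\uno$ inside a triple product'' is legitimate — that is, that $\uno$ occurs in $i\cdot j\cdot k$ with a well-defined meaning, and that ``$k^*$ occurs in $i\cdot j$'' is equivalent to ``$\uno$ occurs in $(i\cdot j)\cdot k$''. The forward direction is clear (if $k^*$ is a summand of $i\cdot j$ then $k^*\cdot k\ni\uno$ contributes). For the reverse direction I would invoke the pairing perspective directly: $(i\cdot j,k)=(i,j\cdot k)$ is exactly the statement I want and is perhaps cleaner proved by noting that both sides count, with multiplicity in $\{0,1\}$, whether $\uno\in i\cdot j\cdot k$, using associativity of the ring and the fact that the pairing with $\uno$ is ``$\ell\mapsto$ coefficient of $\uno^*=\uno$ in $\ell$''. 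So in the write-up I would likely reorganize: first establish $(\ell,\uno)=$ [multiplicity of $\uno$ in $\ell$] $=$ [multiplicity of $\uno$ in $\ell$ read off from $\mathcal A_W$], then $(i\cdot j,k)=((i\cdot j)\cdot k,\uno)=(i\cdot(j\cdot k),\uno)=(i,j\cdot k)$, where the outer equalities use the defining property of $*$ and the middle one uses associativity of the ring $FR(W,V)$ from Lemma~\ref{nonvanishing}. That reduces everything to Lemma~\ref{nonvanishing} and Lemma~\ref{nonz}, with no further computation.
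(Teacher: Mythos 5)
Your argument for part (a) is correct and is essentially the argument the paper intends (the paper's entire proof is the sentence ``Lemma \ref{nonz} implies the following proposition'', so the content you supply is exactly what is being left to the reader). The clean reorganized version at the end is the right one: the existence and uniqueness in Lemma \ref{nonz} say precisely that $(\ell,k)=\delta_{\ell,k^*}=c_{\ell k}^{\uno}=(\ell\cdot k,\uno)$, whence $(x,k)=(x\cdot k,\uno)$ by bilinearity, and then $(i\cdot j,k)=((i\cdot j)\cdot k,\uno)=(i\cdot(j\cdot k),\uno)=(i,j\cdot k)$ using the associativity of $FR(W,V)$ asserted after its definition. Likewise the form-invariance $(i^*,j^*)=\delta_{i^*,j}=\delta_{i,j^*}=(i,j)$ and the involutivity of $i\mapsto i^*$ (from $(\l^*)^*=\l$ together with Lemma \ref{nonz}(1)) are fine.

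The genuine gap is in the remaining claim of part (b), that $i\mapsto i^*$ is a ring \emph{homomorphism}, i.e.\ $c_{ij}^{k}=c_{i^*j^*}^{k^*}$. This does not follow from part (a): setting $N(i,j,k)=c_{ij}^{k^*}$, the identities you can extract from associativity and commutativity of the ring and of the form are exactly the statements that $N$ is totally symmetric in its three arguments; they never produce the simultaneous conjugation $N(i,j,k)=N(i^*,j^*,k^*)$, which is an independent assertion (one can check that the group generated by transpositions and the cyclic shift $c_{ab}^{c}=c_{c^*a}^{b^*}$ never moves $(i,j,k)$ to $(i^*,j^*,k^*)$). Your justification --- that $*$ ``corresponds to the contragredient module, which is compatible with the product'' --- does not close this, because the $c_{ij}^k$ are not fusion coefficients of $V$-modules but indicators of whether $W_i\cdot W_j$ meets $W_k$ inside the fixed algebra $W$; Example \ref{ex} only gives the implication $c_{ij}^k\neq 0\Rightarrow N^{k}_{i,j}\neq 0$, not the converse, so charge-conjugation symmetry of the genuine fusion coefficients cannot be transported back. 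To make this step honest you would need an actual (anti)automorphism of $W$ carrying $W_i$ to $W_{i^*}$ (e.g.\ one extending a Chevalley-type involution of $V$), which is not constructed here. In fairness, the paper supplies no argument for this either, and only the facts that $j\in\mathcal J\Rightarrow j^*\in\mathcal J$ and that the form is $*$-invariant are used later; but as written your proof of the multiplicativity of $*$ is a gap, not a proof.
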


\begin{defi}\label{SW} Set $S_W^\k= \mathcal J\cap \Omega_\k$ and let
$SR(W,V)$ be the free $\ganz$-submodule of \break $FR(W,V)$ with basis $S^\k_W$.
\end{defi}
\begin{lemma}\label{argfeng}
$SR(W,V)$ is a subring of $FR(W,V)$. Moreover  $S^\k_W$ is a subgroup of $\Omega_\k$ and $SR(W,V)$ is isomorphic as a ring with basis to the group ring $\ganz[S^\k_W]$ of $S^\k_W$.
\end{lemma}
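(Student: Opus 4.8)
The plan is to show that $\mathcal J \cap \Omega_\k$ is closed under the relative fusion product and that on this subset the product reduces to the group law of $\Omega_\k$. First I would take $i, j \in S_W^\k = \mathcal J \cap \Omega_\k$; since these are both special simple currents for $V_{\mathbf k}(\k)$, equation \eqref{grouplaw} tells us that as $V$-modules the only module $M_3$ with $N^{M_3}_{i,j} \ne 0$ is $M_3 = ij$ (the group product in $\Omega_\k$), and for that module $N^{ij}_{i,j}=1$. Now I invoke Example \ref{ex}: for each $k \in \mathcal J$ with $c_{ij}^k \ne 0$, the projection $\pi : W \to W_k$ composed with $Y(\cdot,z)$ restricted to $W_j$ is a nonzero intertwining operator of type $\left[\begin{smallmatrix} k \\ i\ j\end{smallmatrix}\right]$, so $N^{k}_{i,j} \ge 1$, hence $k = ij$. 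Thus $i \cdot j = c_{ij}^{ij}\, (ij)$ in $FR(W,V)$, and since the relative fusion ring has no zero divisors among basis elements by Lemma \ref{nonvanishing}(c) (equivalently $W_i \cdot W_j \ne 0$), we get $c_{ij}^{ij} = 1$ and $i\cdot j = ij \in \Omega_\k$. It remains to check $ij \in \mathcal J$, i.e.\ $W_{ij} \ne \{0\}$: this is exactly the statement that $W_i \cdot W_j$ is a nonzero $V$-submodule of $W$, which is again Lemma \ref{nonvanishing}(c).

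Next I would verify that $S_W^\k$ is a subgroup of $\Omega_\k$, not merely a subset closed under multiplication. Closure under the product is what we just proved. The identity $\uno = V_{\mathbf k}(\k)$ lies in $S_W^\k$ since $W_\uno = V \ne \{0\}$. For inverses: given $j \in S_W^\k$, Lemma \ref{nonz}(2) produces $j^* = L^\k(\l^*) \in \mathcal J$ with $c_{j,j^*}^\uno \ne 0$; but for special simple currents $j^*$ coincides with the group-theoretic inverse $j^{-1}$ in $\Omega_\k$ (this follows from \eqref{grouplaw} with $M_3 = \uno$, forcing $M_1 M_2 = \uno$), and $j^* \in \mathcal J$ by construction, so $j^{-1} = j^* \in S_W^\k$. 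Since $S_W^\k$ is a finite subset of the finite group $\Omega_\k$ closed under products and inverses and containing the identity, it is a subgroup.

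Finally, the ring isomorphism $SR(W,V) \cong \ganz[S_W^\k]$ is essentially a restatement of what has been established: $SR(W,V)$ is by Definition \ref{SW} the free $\ganz$-module on the basis $S_W^\k$, and we have shown the product of two basis elements $i,j$ is the single basis element $ij$ given by the group multiplication in the subgroup $S_W^\k$. That is precisely the defining multiplication table of the group ring $\ganz[S_W^\k]$, and the identity basis element $\uno$ matches the group identity, so the $\ganz$-linear map sending each basis element of $SR(W,V)$ to the corresponding group element of $S_W^\k$ is a ring isomorphism preserving the distinguished basis.

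The main obstacle, and the only place real content enters, is the two-sided comparison of fusion numbers: one must know both that $c_{ij}^k = 1$ for $k = ij$ (no multiplicities, which uses that $W_i \cdot W_j \ne \{0\}$ via Lemma \ref{nonvanishing}(c) together with $N^{ij}_{i,j}=1$ from \eqref{grouplaw}) and that $c_{ij}^k = 0$ for $k \ne ij$ (which uses that $N^k_{i,j} \ne 0$ forces $k = ij$, again from \eqref{grouplaw}, the nonvanishing coming from Example \ref{ex}). Everything else is bookkeeping with the group structure on $\Omega_\k$ and the definitions.
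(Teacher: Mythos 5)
Your proposal is correct and follows essentially the same route as the paper: closure of $S^\k_W$ under the product via Example \ref{ex} combined with \eqref{grouplaw}, existence of inverses via $j^*$ (Lemma \ref{nonz} / Proposition \ref{bilinearfr}) together with $j\cdot j^*=jj^*=\uno$, and the group-ring identification as bookkeeping. The extra care you take in checking $c_{ij}^{ij}=1$ and $ij\in\mathcal J$ from the nonvanishing $W_i\cdot W_j\ne\{0\}$ is exactly the content the paper compresses into its citation of Lemma \ref{nonvanishing}.
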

\begin{proof}
Let $i,j\in S_W^\k$.  Let $W_k$ be such that $W_k\cap (W_i\cdot W_j)\ne\{0\}$. As shown in Example \ref{ex}, $N^{k}
_{i,j}\ne 0$.
It follows from \eqref{grouplaw} that $k=ij\in\Omega_\k$ and $i\cdot j=k$. Thus $i\cdot j=ij\in S_W^\k$.

To prove the second claim first observe that we already proved above that $S_W^\k$ is a submonoid of $\Omega_\k$. It is clear that, if $L^\k(\l)\in \Omega_\k$, then $L^\k(\l^*)\in\Omega_\k$. Thus,  by Proposition  \ref{bilinearfr},  if $j\in S_W^\k$, then $j^*\in S_W^\k$. Since $\uno=j\cdot j^*=jj^*$, $S_W^\k$ is a subgroup of $\Omega_\k$.
\end{proof}


\begin{cor}\label{sumsimpleisext}If $W$ is a sum of special simple currents for  $V_{\mathbf k}(\k)$, then it is a simple current extension of $W_\uno$.
\end{cor}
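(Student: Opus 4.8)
The plan is to obtain the grading group directly from the relative fusion ring and then identify $W^0$ with $V$. Since $W$ is a sum of special simple currents for $V:=V_{\mathbf k}(\k)$, every isomorphism class occurring in the decomposition \eqref{id} lies in $\Omega_\k$, i.e. $\mathcal J=S_W^\k$; by Lemma \ref{argfeng}, $D:=S_W^\k$ is then a finite abelian subgroup of $\Omega_\k$ and $FR(W,V)=SR(W,V)\cong\ganz[D]$, so that the structure constants of $FR(W,V)$ are $c_{a,b}^c=\delta_{c,ab}$. I set $W^a:=W_a$ for $a\in D$, so $W=\bigoplus_{a\in D}W^a$ and $W^0=W_\uno$. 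Since $W^a\cdot W^b$ is a $V$-submodule of $W$ (Borcherds identity) and $c_{a,b}^c=\delta_{c,ab}$, it is contained in $W^{ab}$; in particular $W_\uno\cdot W_\uno\subseteq W_\uno$, and since the $V$-submodule generated by $|0\rangle$ is $V$ itself (of class $\uno$) we have $|0\rangle\in W_\uno$, so $W_\uno$ is a $T$-stable vertex subalgebra of $W$.

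The next step, which I expect to be the only substantive one, is the multiplicity-one statement $W_\uno=V$. In every situation in which this corollary is applied, $W$ is a vertex subalgebra of $V_1(\g)$ or of $F(\bar A)^0$ and $V\hookrightarrow W$ is conformal (cf. \S\ref{confemb}); I use both facts. Since $V\hookrightarrow W$ is conformal, the conformal vector $\omega$ of $W$ lies in $V\subseteq W_\uno$, whence the eigenspace decomposition of $W$ under $\omega_{(1)}$ restricts to each isotypic component $W_a$ and there coincides with the conformal grading of the corresponding $V$-module. For $a\ne\uno$ that module is a non-trivial special simple current, and its lowest conformal weight is strictly positive --- immediate from the Sugawara formula for the minimal $L_0$-eigenvalue, since the defining weight of the current is nonzero. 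Hence $W$ has no nonzero vector of conformal weight $0$ outside $W_\uno$. On the other hand $W_0=\C|0\rangle$, because $W$ is contained in $V_1(\g)$ (resp. in $F(\bar A)^0$) and $(V_1(\g))_0=(F(\bar A)^0)_0=\C|0\rangle$. Therefore $V$ occurs in $W_\uno$ with multiplicity one, i.e. $W_\uno=V$; in particular $W^0=W_\uno=V$ is simple and $W_\uno\hookrightarrow W$ is a conformal embedding.

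It then remains only to note that for each $a\in D$ the module $W^a=W_a$ is the irreducible $V$-module of class $a\in\Omega_\k$, hence a simple current for $V=W_\uno$ by the definition of $\Omega_\k$. Combining this with $W^a\cdot W^b\subseteq W^{ab}$, with $W^0=W_\uno$, and with the simplicity of $W$, all the clauses in the definition of a simple current extension are verified, with abelian group $D$. Thus, apart from the multiplicity-one identification $W_\uno=V$ --- where the ambient structure of $W$ (namely $W_0=\C|0\rangle$) is genuinely used --- the whole argument is a formal consequence of the ring isomorphism $FR(W,V)\cong\ganz[D]$ provided by Lemma \ref{argfeng}.
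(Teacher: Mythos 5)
Your argument is correct in its core, and that core coincides with the paper's: the paper's entire proof is the observation that $W$ is graded by the abelian group $S^\k_W$ supplied by Lemma \ref{argfeng}, with $W^a\cdot W^b\subset W^{ab}$ coming from the group-like fusion rule \eqref{grouplaw}, exactly as in your first paragraph. What you add beyond the paper is the multiplicity-one identification $W_\uno=V$. Note that this is not required by the statement, which deliberately says ``extension of $W_\uno$'' rather than ``of $V$''; the paper postpones the identification $W_\uno=V$ to Corollary \ref{subgroup}, where it appears as a hypothesis and where the multiplicity-freeness of \emph{all} the components $W_j$ is obtained from quantum Galois theory (\cite{DM}, \cite{KacR}) via the action of the character group of $S^\k_W$. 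Your conformal-weight argument for $W_\uno=V$ is a nice elementary alternative in the situations where the corollary is actually applied, but be aware of two things. First, it imports hypotheses absent from the corollary as stated ($W$ sitting conformally inside $V_1(\g)$ or $F(\bar A)^0$, so that $W_0=\C|0\rangle$). Second, it only settles the identity component: your closing assertion that each $W_a$ with $a\ne\uno$ \emph{is} the irreducible module of class $a$ (rather than a multiple of it, which would not be a simple current under the paper's definition, since then $N^{M_S}_{S,M}>1$) does not follow from $W_\uno=V$ by anything you have written --- this is precisely the point the paper handles by Galois theory in Corollary \ref{subgroup}, and its own one-line proof of the present corollary glosses over it just as you do, so you are not worse off than the source. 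One small slip: $S^\k_W$ need not be finite; Proposition \ref{centro} and Tables 3--5 give $S^\k_{V_1(\g)}\cong\ganz$ whenever $\k$ has a one-dimensional centre. Finiteness is nowhere used in your argument, so this is harmless.
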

\begin{proof}
$W$ is graded by the abelian group $S^\k_W$.
\end{proof}

\begin{cor}\label{subgroup}Set $V=V_{\mathbf k}(\k)$ and assume that $W_\uno=V$.
\begin{enumerate}
\item
 $U=\sum_{j\in S^\k_W}W_j$ is a simple current extension of $V$ and $U=\sum_{j\in S^\k_W}W_j$ is a multiplicity free decomposition.
\item
The intermediate simple vertex algebras $V\subset U'\subset U$ are in one to one correspondence with the subgroups of $S^\k_W$.
\end{enumerate}
\end{cor}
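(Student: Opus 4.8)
I would deduce everything from two facts about the pieces $W_j$, $j\in S^\k_W$: that the decomposition $U=\sum_{j\in S^\k_W}W_j$ is multiplicity free, i.e.\ each $W_j$ is an irreducible $V$-module; and that the product rule is \emph{exact}, $W_i\cdot W_j=W_{ij}$ for $i,j\in S^\k_W$. The second follows from the first: $W_i\cdot W_j\neq\{0\}$ by Lemma~\ref{nonvanishing}(c), it is contained in $W_{ij}$ by the proof of Lemma~\ref{argfeng}, and it is a $T$-stable $V$-submodule of $W_{ij}$ since $V\cdot(W_i\cdot W_j)=(W_\uno\cdot W_i)\cdot W_j\subseteq W_i\cdot W_j$; as $W_{ij}$ is irreducible, equality follows. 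So the first thing to establish is the multiplicity-free claim.

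\textbf{Multiplicity-freeness.} Fix $j\in S^\k_W$ and choose a $T$-stable irreducible $V$-submodule $M\subseteq W_j$, which exists because by hypothesis \eqref{id} $W$ is a direct sum of $T$-stable irreducible $V$-modules. Then $W\cdot M$ is a nonzero (it contains $|0\rangle_{(-1)}M=M$) $T$-stable ideal of $W$ by associativity of $\mathcal A_W$ (Lemma~\ref{nonvanishing}(a)), hence $W\cdot M=W$ by simplicity of $W$. Now for $a\in W_k$ and the projection $\pi_l\colon W\to W_l$, Example~\ref{ex} realizes $\pi_l Y(a,z)|_M$ as an intertwining operator with sources $W_k$ and $M$; since $M\cong j$ is a simple current, such an operator can be nonzero only when $W_l$ is isotypic of the unique type $\sigma_j(k):=j\cdot k$, where $k\mapsto\sigma_j(k)$ is injective (its inverse being $\sigma_{j^*}$) and $\sigma_j(\uno)=j$. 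Thus $W_k\cdot M\subseteq W_{\sigma_j(k)}$, so the only $k$ contributing to the $W_j$-isotypic component of $W=W\cdot M$ is $k=\uno$, giving $W_j=\pi_j(W\cdot M)=W_\uno\cdot M=V\cdot M=M$. Hence each $W_j$, $j\in S^\k_W$, is irreducible and $U=\bigoplus_{j\in S^\k_W}W_j$ is multiplicity free.

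\textbf{Part (1).} Since $S^\k_W$ is closed under multiplication (Lemma~\ref{argfeng}) and $W_i\cdot W_j=W_{ij}$, the space $U=\bigoplus_{j\in S^\k_W}W_j$ is a $T$-stable vertex subalgebra of $W$, graded by the abelian group $S^\k_W$ with $U^\uno=W_\uno=V$; it inherits the Virasoro vector of $W$, which in our setting equals $\omega_V$, so $V\hookrightarrow U$ is conformal, and each piece $W_j\cong j\in\Omega_\k$ is a (special, hence genuine) simple current for $V_{\mathbf k}(\k)$. Finally $U$ is simple: a nonzero ideal $I\subseteq U$ is a $V$-submodule of the multiplicity-free module $U$, so $I=\bigoplus_{j\in H'}W_j$ with $H'\neq\emptyset$, and picking $j\in H'$ gives $I\supseteq U\cdot W_j=\sum_{i\in S^\k_W}W_i\cdot W_j=\sum_{i}W_{ij}=U$. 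Thus $U$ is a simple current extension of $V$ (equivalently, invoke Corollary~\ref{sumsimpleisext}).

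\textbf{Part (2).} To a subgroup $H\leq S^\k_W$ attach $U_H=\bigoplus_{j\in H}W_j$; by the same three observations $U_H$ is a simple vertex subalgebra with $V\subseteq U_H\subseteq U$, and $H\mapsto U_H$ is clearly injective. Conversely, let $V\subseteq U'\subseteq U$ be a simple intermediate vertex algebra. Being a $V$-submodule of the multiplicity-free $U$, $U'=\bigoplus_{j\in H}W_j$ with $H=\{j:W_j\subseteq U'\}\ni\uno$; $H$ is closed under products because $W_{ij}=W_i\cdot W_j\subseteq U'\cdot U'\subseteq U'$; and $H$ is closed under inverses because $U'\cdot W_j$ is a nonzero ideal of the simple algebra $U'$, forcing $U'=U'\cdot W_j=\bigoplus_{i\in H}W_{ij}$, whence $Hj=H$ and $j^{-1}\in H$. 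So $H$ is a subgroup and $U'=U_H$, which gives the claimed bijection. The one genuinely non-formal step is the multiplicity-free assertion; its crux is that the $W_j$-isotypic part of $W=W\cdot M$ is fed only by $W_\uno\cdot M=M$, which uses simultaneously that $j$ is a simple current (so the fusion action $\sigma_j$ is a bijection) and that $W_\uno=V$.
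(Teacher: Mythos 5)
Your proof is correct, and parts (1) and (2) run essentially parallel to the paper's (vertex subalgebra since products of graded pieces stay in the grading, simplicity by multiplying a nonzero ideal back into $V$, and the subgroup correspondence); the genuine divergence is in how you establish multiplicity-freeness, which is the crux in both treatments. The paper gets irreducibility of each $W_j$ from quantum Galois theory: the character group $\widehat{S^\k_W}$ acts on $U$ with fixed-point subalgebra $W_\uno=V$, and Theorem 3 of \cite{DM} (or Theorem 1.1 of \cite{KacR}) then says each graded piece is an irreducible $V$-module. You instead argue directly with fusion: pick an irreducible $M\subseteq W_j$, use simplicity of $W$ (Lemma \ref{nonvanishing}) to get $W\cdot M=W$, and then use the simple-current property of $j$ to see that only $W_\uno\cdot M=M$ can feed the $j$-isotypic component, so $W_j=M$. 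This is more self-contained (no appeal to the Galois correspondence) but does quietly use that the fusion coefficient is symmetric in its two lower arguments, since your intertwiner from Example \ref{ex} has the simple current $M$ in the \emph{second} slot while the paper's Definition of simple current constrains only the first; you can close this by first replacing $W_k\cdot M$ with $M\cdot W_k$ via the commutativity in Lemma \ref{nonvanishing}(a), or by noting the paper already relies on the standard fusion formalism for $V_{\mathbf k}(\k)$ (Verlinde's formula in Lemma \ref{nonz}). A small bonus of your route is the exact product rule $W_i\cdot W_j=W_{ij}$, which sharpens the containment implicit in Lemma \ref{argfeng} and makes the simplicity and subgroup arguments in (1) and (2) entirely mechanical.
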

\begin{proof}
Since $U\cdot U\subset U$, $U$ is a vertex subalgebra. If $I$ is an ideal of $U$ and $W_j\cap I\ne \{0\}$, then, since $W$ is simple, $W_{j^*}\cdot (W_j\cap I)\ne \{0\}$, hence $V\subset I$. It follows that $I=U$. This proves that $U$ is simple.
 By Corollary \ref{sumsimpleisext}, it is a simple current extension.
 The character group $\widehat S^\k_W$ of $S^\k_W$ acts on $U$ by $\chi\cdot v=\chi(j)v$ for $v\in W_j,\chi\in \widehat S^\k_W$. By Theorem 3 of \cite{DM}
 (or Theorem 1.1 of \cite{KacR}), $W_j$ is an irreducible $V$-module. This proves (1).

If $U'$ is a simple intermediate vertex subalgebra, then $S^\k_{U'}$ is a subgroup of $S^\k_W$, hence we have a map from simple intermediate vertex subalgebras to the set of subgroups of $S^\k_W$. This map is surjective: if $H\subset S^\k_W$ is a subgroup, then $U'=\sum_{j\in H}W_j$ is an intermediate vertex subalgebra since $U'\cdot U'\subset U'$. The same argument employed for $U$ shows that $U'$ is simple. The map is obviously injective by the multiplicity free property. This proves (2).
\end{proof}

We will compute explicitly $S^\k_{V_1(\g)}$ and $SR(V_1(\g),V_{\mathbf{j}}(\k))$ in  Section 8  for any maximal conformal embedding $\k\hookrightarrow  \g$ with $\g$ of classical type.


\section{Symmetric pairs}\label{symmpairs}
We will consider conformal embeddings $V_{\mathbf{k}}(\k)\subset V_{1}(\g)$ closely related to symmetric pairs. In this section we review the basic features of such pairs.

Let  $(\fa,\r)$ be a symmetric pair and write $\aa=\r\oplus\p$ for the corresponding eigenvalue decomposition.
   As in Section \ref{confemb}, we let $\fa=\oplus \fa_s$ be the decomposition of $\fa$ into $\s$-indecomposable ideals and $(\cdot,\cdot)$ the Killing form of $\fa$. Let $\h_0$ be a
 Cartan subalgebra of $\r$ and $\h$ the centralizer of $\h_0$ in $\aa$.
    Write $\h=\h_0\oplus\h_\p$ for the orthogonal decomposition of $\h$. Denote
by $\D(\r)$ the set of $\h_0$-roots of $\r$ and by $\D(\p)$ the set
of $\h_0$-weights of $\p$. A choice of an element in $\h_0$ which is regular for $\aa$ defines a set of positive roots $\Dp$ for  the set of $\h$-roots of
$\aa$. Clearly the same element defines  a set of positive roots $\Dp(\r)\subset\D(\r)$ such that  $\Dp(\r)\subset\Dp_{|\h_0}$. Let
$\b_0$ be the corresponding Borel subalgebra and set $\Dp(\p)=\D(\p)\cap\Dp_{|\h_0}\setminus\{0\}$. \par
Clearly $\D=\cup_s \D(\aa_s)$, where $\D(\aa_s)$ is the set of roots for $\aa_s$ with respect to $\h\cap\aa_s$. If $\a\in \D$, then $\a\in\D(\aa_s)$ for some $s$ and we say that it is long if its length is largest among all root lengths in $\D(\aa_s)$. Likewise, if $\l\in\D(\r)\cup\D(\p)$, then $\l \in (\D(\r)\cup\D(\p))\cap (\D(\aa_s)_{|\h_0})$ for some $s$. We say that $\l$ is a long weight if there is a long root of $\a\in\D(\aa_s)$ such that $\Vert \l\Vert=\Vert\a\Vert$.

We say that a nonzero weight $\l\in\D(\r)\cup\D(\p)$ is a complex weight if $\l\in\D(\r)\cap\D(\p)$. If $\l$ is not complex, then we say that it is a compact weight if $\l\in\D(\r)$,  and a noncompact one if $\l\in\D(\p)$.

Fix a maximal isotropic subspace $\h_\p^+$ of $\h_\p$. Let moreover $\h_\p^-$ be
 a complementary isotropic subspace (if $\p$ is odd dimensional) also choose a unit vector $h$ such that $\h_\p=\C h\oplus\h^+_\p\oplus\h^-_\p$.
 Fix a basis of $\p$ by extending a basis of $\h_\p$ with weight
vectors $x_\a$,   $x_{-\a}$ for $\a\in\Dp(\p)$ chosen in such a way that $(x_\a,x_{-\a})=1$.
Order the basis of $\p$ as follows: $h$ (if $\p$ is odd-dimensional), a basis $\{h_i^+\}$ of $\h_\p^+$ , $x_\a,\,\a\in\Dp(\p),$ a basis $\{h_i^-\}$ of  $\h_\p^-$ dual to the chosen basis of $\h_\p^+$,  $x_{-\a},\,\a\in\Dp(\p).$ \par

Let $\p_\a$ denote the weight space associate to $\a$. Introduce the following notation.
\begin{align*}
&\Sigma = \text{set of $\b_0$-stable abelian subspaces of $\p$},\\
&\Sigma^{even}= \text{subset of even dimensional subspaces in $\Sigma$,}\\
&\Phi_{\ll}=\{\a\in\D(\p)\mid \p_\a\subset \ll\}, (\ll\in\Sigma),\\
&\Phi_\ll^\pm=\Phi_\ll\cap \D^{\pm}(\p).
\end{align*}

We will need an explicit description of the weights in $\D(\p)$. Since $\D(\p)=\cup_s \D(\p)\cap\D(\aa_s)$, we can assume that $\s$ is indecomposable. Recall that a symmetric pair
$(\fa,\r)$ is called irreducible if the corresponding involution $\s$ is indecomposable.

With this assumption the weights of $\p$ can be described  using the relationship between irreducible symmetric pairs and affine diagrams, that we now recall (see \cite{MMJ} and \cite{MMJA} for details).  One can associate to an indecomposable automorphism $\s$ an affine Kac-Moody Lie algebra $\widehat L(\aa,\s)$. Recall that $\widehat L(\aa,\s)=L(\aa,\s)\oplus \C d\oplus \C K$ where
$$
L(\aa,\s)= \left(\C[t,t^{-1}]\otimes\r \right)\oplus \left(t^{1/2}\C[t,t^{-1}]\otimes \p\right),
$$
$K$ is a central element, and $d$ acts as $td/dt$. Let $\ha=\h_0\oplus \C K\oplus \C d$ be the Cartan subalgebra of $\widehat L(\aa,\s)$ and  $\Da$ be the corresponding set of roots.

Fix the following set of positive roots in $\Da$:
$$
\Dap=\Dp(\r)\cup\{\a\in\Da\mid \a(d)>0\}.
$$

Let
 $\Pia$ be the corresponding set of simple roots.   In what follows we will denote by the same letter both diagrams and the corresponding sets of nodes. In particular $\Pia$ will also denote the Dynkin diagram of $\widehat L(\aa,\s)$.

If $\gamma\in \Da$, we write $\gamma=\sum_{\a \in \Pia} c_\a(\gamma)\a$. Let also $\a\mapsto \bar\a$ be the restriction map from $\ha$ to $\h_0$. Let $\d'\in\ha^*$ be defined by $\d'(d)=1$ and $\d'(K)=\d'(\h_0)=0$. Then $\Dap$ can be explicitly described as
$$
\Dp(\r)\cup\{s\d'+\eta\mid s\in\nat\backslash\{0\},\eta\in\D(\r)\}\cup\{(\tfrac{1}{2}+s)\d'+\eta\mid s\in \nat, \eta\in\D(\p)\}.
$$

 If $\a$ is a node of $\Pia$, let $a_\a$ be the label  associated to it in  Tables Aff $r$ of \cite{Kac} ($r=1,2$).     If $\aa$ is simple, according to Kac's classification of automorphisms (\cite[Ch. 8]{Kac}), one can encode $\s$ by a pair $(\eta;\mathbf{s'})$ where $\eta$ is an automorphism of the diagram of $\aa$ and $\mathbf{s'}=(s'_\a)_{\a\in \Pia}$ is a set of labels $s'_\a\in \ganz^+$.
Since $\s$ is an involution, $\eta$ and $\mathbf s'$ must satisfy the following constraints:  $s'_\a\in\{0,1\}$ for all $\a\in \Pia$, and $r\sum_{\a\in \Pia}s'_\a a_\a=2$. Define
\begin{equation}\label{salfa1}s_\a=\frac{s'_\a}{2}.\end{equation}
If $\aa$ is not simple then $\aa=\mathfrak  s\oplus \mathfrak s$, then $\Pia=\{\half \d'-\theta,\a_1,\ldots \a_l\}$ where $\{\a_1,\ldots, \a_l\}, \theta$ are a set of simple roots and the highest root of $\mathfrak s$, respectively. In this case we set
\begin{equation}\label{salfa2}
s_\a=\begin{cases}\half\quad&\text{if $\a=\half\d'-\theta$}\\0\quad&\text{otherwise}\end{cases}\end{equation}.\par
Let $P=\{\a\in \Pia\mid s_\a\ne 0\}$. Note that $P$ has at most two elements.
Set $$\Da^1=\{\gamma\in \Da\mid \sum_{\a\in P}c_\a(\gamma)=1\}.$$ By the explicit description of the set of simple roots corresponding to $\Dap$ given in \cite[Proposition 3.2]{MMJ}, one obtains  that
$$
\Da^1=\{\half\d'+\eta\mid \eta\in \D(\p)\},
$$
thus the restriction map $\gamma \mapsto \bar \gamma$ establishes a bijection between $\Da^1$ and $\D(\p)$.

If $P=\{p\}$ and $\gamma \in\Da^1$, the bijection is given by
\begin{equation}\label{weights}
\gamma\mapsto \bar \gamma=\sum_{\a\ne p}(c_\a(\gamma)-\frac{a_\a}{a_p}) \a.
\end{equation}
Recall that $P$ has only one element if and only if   $\r$ is semisimple. In such cases $\p$ is irreducible as a $\r$-module. We let  $\theta_{\p}$ be the highest weight of $\p$ with respect to $\Dp(\r)$. Since $(\cdot,\cdot)$ is nondegenerate when restricted to $\p$, it is clear that $\p$ is a self dual $\r$-module. It follows that $-\theta_\p$ is the lowest weight of $\p$. In the above correspondence $-\theta_\p$ clearly corresponds to $p$ so
\begin{equation}\label{highestp}
\theta_\p=\sum_{\a\ne p}\frac{a_\a}{a_p} \a.
\end{equation}

In the cases when $P$ has two elements, the explicit description of the map $\gamma\mapsto\bar \gamma$ is as follows. Let $\Pi_{\aa}$ be the set of simple roots for $\aa$ corresponding to $\Dp$. If $P=\{p,q\}$, there is a simple root $\a_{\p}\in\Pi_{\aa}$   such that
  $\Pi=\Pi_{\aa}\backslash\{\a_{\p}\}$. Moreover $\aa$ is simple and $\h\cap\aa=\h_0\cap\aa$. Let $\theta$ be the highest root of $\aa$ with respect to $\Dp$. One can always set up the bijection in such a way that $\bar p=-\theta$, $\bar q=\a_{\p}$, hence, if $\gamma\in\Da^1$,
  \begin{equation}\label{weights2}
\gamma\mapsto \bar \gamma=\sum_{\a\not\in  P}c_\a(\gamma) \a - c_p(\gamma)\theta + c_q(\gamma)\a_{\p}.
\end{equation}

 Let $\Wa$ be the Weyl group of $\widehat L(\aa,\s)$.
If $w\in\Wa$, set
$$
N(w)=\{\a\in\Dap\mid w^{-1}(\a)\in-\Dap\}.
$$

Set $\Wab=\{w\in \Wa\mid N(w)\subset \Da^1\}$. In \cite{IMRN} Êit has been shown that there is a bijection $\ll\mapsto w_\ll$ from $\Sigma$ to $\Wab$ such that, if $\ll$ is a $\b_0$-stable abelian subspace of $\p$, then $\Phi_\ll=\{-\bar\be\mid\be\in N(w_\ll)\}$.\par
Let $\Pi$ be the set of simple roots for $\r$ corresponding to our choice of $\Dp(\r)$. It turns out that $\Pi=\Pia\setminus P$. In particular, to each simple ideal  $\r_S$ corresponds
a connected component $\Pi_S$ of $\Pi$.

If $\a\in \Pi_S$, set $\omega_\a^\vee$ to be the unique element in $\h_0\cap \r_S$, such that $\be(\omega^\vee_\a)=\delta_{\a,\be}$ for all $\be\in \Pi$.

We say that a pair $(\aa,\r)$ is hermitian symmetric if it is  irreducible and  $\r$ is reductive but not semisimple. In such a case,   we let $\varpi$  be the element of $\h=\h_0$ defined by $\a(\varpi)=\d_{\a,\a_\p}$ for all $\a\in\Pi_\aa$. Then $\C\varpi$ is the center of $\r$. For notational convenience, we set $\varpi=0$ in the semisimple case.
 \begin{lemma}\label{sumcoweights} Assume that $(\aa,\r)$ is irreducible.
Let $\ll\in\Sigma$ and assume that there is $h\in \h_0,\,h\ne 0,$ such that  $\gamma(h)=1$ for $\gamma\in\Phi_\ll$  and $\gamma(h)=0$ for $\gamma\in \D(\p)\setminus(\Phi_\ll\cup(-\Phi_\ll))$.

Then
 there are   the following possibilities:
 \begin{enumerate}
 \item \label{sumofone} there exists $\a\in \Pi$ such that $h=\omega_{\a}^\vee+(\epsilon-1-\a_\p(\omega_{\a}^\vee))\varpi,\,\epsilon=0,1$;
 \item \label{sumoftwo}there exist
 $\a,\a'\in \Pi$ such that  $h=\omega_{\a}^\vee+\omega_{\a'}^\vee+(-1-\a_\p(\omega_{\a}^\vee+\omega_{\a'}^\vee))\varpi$;
 \item $h=\pm\varpi$.
 \end{enumerate}
 \end{lemma}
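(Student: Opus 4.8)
The idea is to translate the hypothesis on $h$ into combinatorial constraints on the coefficients $c_\a(\gamma)$, using the bijection $\Sigma\leftrightarrow\Wab$ of \cite{IMRN} together with the explicit restriction formulas \eqref{weights}, \eqref{highestp}, \eqref{weights2}. First I would record what the hypothesis says: since $\Phi_\ll=\{-\bar\beta\mid\beta\in N(w_\ll)\}$, the element $h\in\h_0$ is characterized by $\bar\beta(h)=-1$ for $\beta\in N(w_\ll)$ and $\bar\gamma(h)\in\{0\}$ for $\gamma\in\Da^1$ whose restriction lies outside $\Phi_\ll\cup(-\Phi_\ll)$. Writing an arbitrary $\gamma\in\Da^1$ as $\gamma=\sum_{\a\in\Pia}c_\a(\gamma)\a$ with $\sum_{\a\in P}c_\a(\gamma)=1$, the value $\bar\gamma(h)$ becomes a $\ZZ$-linear function of the $c_\a(\gamma)$ once we fix $\a(h)$ for $\a\in\Pi=\Pia\setminus P$ (and use that $\delta'(h)=0$, $K(h)=0$). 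The point is that $h$ is then determined by its values $n_\a:=\a(h)$, $\a\in\Pi$, and the two normalization quantities coming from $P$.

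Next I would split according to $|P|$. When $P=\{p\}$ (so $\r$ semisimple, $\varpi=0$), formula \eqref{weights} gives $\bar\gamma(h)=\sum_{\a\ne p}(c_\a(\gamma)-a_\a/a_p)\,n_\a$. Running over all $\gamma\in\Da^1$, and in particular over $\gamma$ restricting to a simple root $-\a_i$ of $\p$ as well as over the highest-weight case \eqref{highestp}, one gets that the only way for $\bar\gamma(h)$ to take just the values $\{-1,0,1\}$ is that $h$, expanded in the fundamental coweight basis $\{\omega_\a^\vee\}$ of $\h_0$, has all coordinates $0$ except at most two equal to $1$; the $-1$'s on $N(w_\ll)$ then force, via the affine labels $a_\a$, the normalization shift $-a_\a/a_p$. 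This is exactly the semisimple specialization of cases \eqref{sumofone}–\eqref{sumoftwo}. When $P=\{p,q\}$, i.e. $(\aa,\r)$ hermitian, I would use \eqref{weights2}: here $\bar\gamma(h)=\sum_{\a\notin P}c_\a(\gamma)n_\a-c_p(\gamma)\,\theta(h)+c_q(\gamma)\,\a_\p(h)$, and the constraint $c_p(\gamma)+c_q(\gamma)=1$ means the "$P$-part'' contributes either $-\theta(h)$ or $\a_\p(h)$. Writing $h=\sum_{\a\in\Pi}m_\a\omega_\a^\vee+c\,\varpi$ and using $\a_\p(\varpi)=1$, $\a_\p(\omega_\a^\vee)=$ the coefficient of $\a$ in... one reduces again to: the $m_\a$ lie in $\{0,1\}$ with at most two nonzero, and $c$ is forced by the requirement that $-\theta(h)\in\{-1,0,1\}$; bookkeeping the finitely many combinations gives precisely the three listed forms, with the $\varpi$-correction $(\epsilon-1-\a_\p(\omega_\a^\vee))$ resp.\ $(-1-\a_\p(\omega_\a^\vee+\omega_{\a'}^\vee))$ appearing exactly so that $\theta(h)$ comes out right, and case (3) $h=\pm\varpi$ corresponding to the "empty'' choice of simple coweights.

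The main obstacle I anticipate is the bookkeeping in the hermitian case: one must show that \emph{no other} integer combinations of $\omega_\a^\vee$'s and $\varpi$ satisfy the three-valued condition, i.e. that three or more nonzero $m_\a$, or an $m_\a\ge 2$, always produces some $\bar\gamma(h)\notin\{-1,0,1\}$. This requires knowing, for each irreducible hermitian type, which $\gamma\in\Da^1$ actually occur (equivalently which weights lie in $\D(\p)$) and pinning down $\theta(\omega_\a^\vee)=a_\a/a_p$-type quantities; it is essentially a uniform argument using that the marks $a_\a$ on an affine diagram of type Aff~$1$–$2$ are bounded and that $\p$ is a single (or a pair of) "cominuscule-like'' pieces, but it will need a short case inspection. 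The semisimple case and case (3) are comparatively routine once the linear-algebra reformulation is in place. I would also need the elementary fact, presumably already available from \cite{IMRN} or by direct check, that $h\in\h_0$ is uniquely determined by the data $(\a(h))_{\a\in\Pi}$ together with $\varpi(h)$ — i.e. that $\{\omega_\a^\vee\}_{\a\in\Pi}\cup\{\varpi\}$ (with $\varpi$ dropped in the semisimple case) is a basis of $\h_0$ — which is where the splitting $\h_0=\bigoplus_S(\h_0\cap\r_S)\oplus\C\varpi$ enters.
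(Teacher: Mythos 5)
You have a genuine gap at the central step. Your reduction rests on the claim that the condition $\bar\gamma(h)\in\{-1,0,1\}$ for all $\gamma\in\D(\p)$ already forces $h$ to be a sum of at most two fundamental coweights (suitably corrected by $\varpi$). That is false, and it is exactly the point where the real content of the hypothesis must enter. The hypothesis is not merely that $h$ takes three values on $\D(\p)$: it says that the level set $\{\gamma\in\D(\p)\mid \gamma(h)=1\}$ equals $\Phi_\ll$ for a $\b_0$-stable $\ll$, hence is closed under adding simple roots of $\r$ inside $\D(\p)$. Without that closure there are many more solutions of the three-valued condition. Concretely, in the adjoint pair ($\p\cong\g$ as $\k$-module, $\varpi=0$), take $\g=sl_4$ and $h=\omega_1^\vee-\omega_3^\vee$: every root takes value in $\{-1,0,1\}$ on $h$, yet $h$ is of none of the three listed forms; it is excluded only because $\{\gamma\mid\gamma(h)=1\}=\{\e_1-\e_2,\e_1-\e_3\}$ is not closed under adding $\a_3$. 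So the assertion ``the only way for $\bar\gamma(h)$ to take just the values $\{-1,0,1\}$ is that $h$ has all coordinates $0$ except at most two equal to $1$'' does not follow from what you have set up (note also that the statement allows $\a=\a'$ in case (2), i.e.\ one coordinate equal to $2$), and the case inspection you anticipate for the hermitian types would not repair this, since the failure already occurs in the semisimple case.

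The paper closes this gap uniformly, with no type-by-type analysis, by a weight-string argument. One picks an irreducible component of $\p$ with highest weight $\l$ satisfying $\l(h)=1$ and lowest weight $\mu$, and a string $\mu=\l_0,\l_1,\dots,\l_N=\l$ with each $\l_{t+1}-\l_t$ a simple root of $\r$, arranged (via \eqref{highestp}, resp.\ the expression of $\theta-\a_\p$) so that every $\a\in\Pi$ occurs as a step. The $\b_0$-stability of $\ll$, applied to both $\Phi_\ll$ and $-\Phi_\ll$, makes $t\mapsto\l_t(h)$ non-decreasing: a block of $-1$'s, then $0$'s, then $1$'s. Every simple root occurring inside a constant block has $\a(h)=0$, the one or two transition roots carry the total increase $\l(h)-\mu(h)\le 2$, and this yields precisely cases (1)--(3); the $\varpi$-component is then fixed as in your sketch. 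If you wish to keep your coefficient-bookkeeping framework, you must feed in the closure of $\Phi_\ll$ under addition of elements of $\Pi$ from the start — that, not the three-valued condition, is what forces $\a(h)\ge 0$ for $\a\in\Pi$ and bounds the number of nonzero coordinates.
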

\begin{proof} Since $\ll$ is $\b_0$-stable, there exists  an irreducible component $\p_0$ of $\p$ whose highest weight $\l$ is such that $\l(h)=1$. Let $\mu$ be the corresponding lowest weight. We can find a string $\l_0,\l_1,\dots, \l_N$ of weights of $\p$ such that $\l_0=\mu$, $\l_N=\l$ and, if $t>0$, $\l_t=\l_{t-1}+\a$ for some $\a\in \Pi$. If $\p$ is irreducible, then $\l=\theta_\p,\,\mu=-\theta_\p$ and
by \eqref{highestp}, $\l_N-\l_0=2\sum_{\a\ne p }\frac{a_\a}{a_p} \a$. Otherwise, either  $\l=\theta,\,\mu=\a_\p$ or $\l=-\a_\p,\,\mu=-\theta$, so that
$\l_N-\l_0=\sum_{\a\ne \a_\p }a_\a\a$.
In both cases, for any $\a\in \Pi$, there is $t$ such that $\l_{t+1}=\l_t+\a$.

 Let $t_0$ be the first index such that $\l_{t_0}(h)=1$. Since $\ll$ is $\b_0$-stable, we have that $\l_t(h)=1$ for all $t>t_0$. Assume first
 that $\mu(h)=-1$.
 Let  $s_0$ be the largest index such that $\l_{s_0}(h)=-1$. This implies that $-\l_{s_0}\in\Phi_\ll$,
  hence the fact that $\ll$ is $\b_0$-stable implies as above that $\l_t(h)=-1$ for $t<s_0$. Since $\l_t(h)\in\{-1,0,1\}$,
we have two cases. Either $t_0=s_0+1$ or $\l_t(h)=0$ for $s_0<t<t_0$.
In the first case let $\a\in \Pi$ be such that $\l_{t_0}=\l_{s_0}+\a$. We obtain that $\a(h)=2$ and $\be(h)=0$ for $\be \in \Pi,\,\be\ne \a$.
 Thus $h=2\omega_{\a}^\vee+(-1-2\a_\p(\omega^\vee_{\a}))\varpi$. In the second case let $\a,\a'$ be such that $\l_{s_0+1}=\l_{s_0}+\a$ and $\l_{t_0}=\l_{t_0-1}+\a'$. Then $\a(h)=\a'(h)=1$ and $\be(h)=0$ for $\be\in\Pi,\,\be\ne \a,\a'$. If $\a=\a'$,
 we have $h=\omega_\a^\vee+(-1-\a_\p(\omega_\a^\vee))\varpi$. If $\a\ne \a'$, we have $h=\omega_{\a}^\vee+\omega_{\a'}^\vee+(-1-\a_\p(\omega_{\a}^\vee+\omega_{\a'}^\vee))\varpi$ as wished.

 Assume now that $\mu(h)=0$. In this case, if $\l_{t_0}=\l_{t_0-1}+\a$, we have $h=\omega^\vee_{\a}-\a_\p(\omega_\a^\vee)\varpi$ if $\be=\a_\p$ and $h=\omega^\vee_{\a}+(-1-\a_\p(\omega_{\a}^\vee))\varpi$ otherwise.

 Finally, if $\mu(h)=1$, then $h=\pm\varpi$.
\end{proof}

 If $\ll$ and $h$ are  as in Lemma \ref{sumcoweights}, then we write that $\ll=\ll(h)$.
  Let $\theta_S$ be the highest root of $\r_S$ with respect to $\Dp(\r)$. Write $\theta_S=\sum_{\a\in \Pi_S} a^S_\a\a$.

 \begin{lemma} \label{fundcoweights} \
 \begin{enumerate}
 \item If $\ll=\ll(h)$  with $h$ as in Lemma \ref{sumcoweights} \eqref{sumofone}, let $\Pi_S$ be  the component of $\Pi$ containing $\a$. Then either $a_\a=a_p$ for some $p\in P$ or $a_\a=\sum_{p\in P}a_p$. Moreover $a_\a^S=1$. \label{fund1}
 \item\label{sommadue} If $\ll=\ll(h)$  with $h$ as in Lemma \ref{sumcoweights} \eqref{sumoftwo}, let $\Pi_S,\Pi_{S'}$ be the components of $\Pi$ such that $\a\in \Pi_S$, $\a'\in \Pi_{S'}$. Then $a_\a=a_{\a'}=a_\a^S=a_{\a'}^{S'}=1$ and $S\ne S'$.\label{fund2}
  \end{enumerate}
   \end{lemma}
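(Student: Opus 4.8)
The plan is to feed the explicit form of $h$ produced in Lemma \ref{sumcoweights} into two structural facts. First, in either case $h$ is $\r$-dominant: for a simple root $\be$ of $\r$ one has $\be(h)=\delta_{\a\be}$ in case \eqref{sumofone} and $\be(h)=\delta_{\a\be}+\delta_{\a'\be}$ in case \eqref{sumoftwo} (since $\be(\varpi)=0$ for $\be\in\Pi$), so on each irreducible $\r$-constituent of $\p$ the function $\gamma\mapsto\gamma(h)$ is maximized at the highest weight. Second, $\ll=\ll(h)$ is abelian, so no two weights $\gamma_1,\gamma_2\in\D(\p)$ with $\gamma_1(h)=\gamma_2(h)=1$ may have $\gamma_1+\gamma_2$ a root of $\aa$: otherwise $[\p_{\gamma_1},\p_{\gamma_2}]$ is a nonzero root space contained in $\ll$.

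First I would prove the identities for $a_\a$. Since $\ll\ne\{0\}$, some weight of $\p$ has $h$-value $1$, and since every weight of $\p$ takes $h$-value in $\{-1,0,1\}$ (part of the hypothesis of Lemma \ref{sumcoweights}), the highest weight of the $\r$-constituent carrying it has $h$-value exactly $1$ (it is $\le1$ by the bound and $\ge1$ by dominance). When $\r$ is semisimple ($P=\{p\}$), $\p$ is irreducible with highest weight $\theta_\p=\sum_{\be\ne p}\tfrac{a_\be}{a_p}\be$ by \eqref{highestp}; evaluating on $h$ gives $1=\tfrac{a_\a}{a_p}$ in case \eqref{sumofone} (whence $a_\a=a_p=\sum_{q\in P}a_q$) and $1=\tfrac{a_\a+a_{\a'}}{a_p}$ in case \eqref{sumoftwo} (whence $a_\a+a_{\a'}=a_p$). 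When $(\aa,\r)$ is hermitian, $\s$ is inner, so $\widehat L(\aa,\s)=\aa^{(1)}$ and $P=\{p,q\}$ with $a_p=a_q=1$; by \eqref{weights2} the extreme weights of the constituents of $\p$ are $\pm\theta$ and $\pm\a_\p$, and writing $\theta=\sum_\be m_\be\be$ in terms of the marks of $\aa$ and using $\a_\p(h)=\epsilon-1$, the equation $\theta(h)=1$ reads $m_\a+m_{\a_\p}(\epsilon-1)=1$, i.e.\ (as $a_\a=m_\a$) $a_\a=a_p$ when $\epsilon=1$ and $a_\a=a_p+a_q$ when $\epsilon=0$; the analogous computation in case \eqref{sumoftwo} gives $m_\a+m_{\a'}=m_{\a_\p}+1$.

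Next I would prove $a_\a^S=1$ by contradiction, using abelian-ness. If $a_\a^S\ge2$, then, since the coefficient of $\a$ in the positive roots of $\r_S$ runs through $0,1,\dots,a_\a^S$, there is a root $\be\in\D(\r_S)$ with $\be(h)=2$; the task is to write $\be=\gamma_1+\gamma_2$ with $\gamma_1\ne\gamma_2$ weights of $\p$ of $h$-value $1$. One does this by starting from the highest weight of the relevant $\r$-constituent (of $h$-value $1$) and subtracting $\be$, breaking the degeneracy $\gamma_1=\gamma_2$ either by a second simple ideal of $\r$ (in the semisimple case $\p$ is a genuine external tensor product over the $\r_S$) or by the central direction $\C\varpi$ (hermitian case). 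Then $\gamma_1,\gamma_2\in\Phi_\ll$ with $\gamma_1+\gamma_2\in\D(\r_S)$, contradicting that $\ll$ is abelian; hence $a_\a^S=1$. Part \ref{fund2} then follows by the same device: $a_\a\ge2$ (or $a_{\a'}\ge2$) is excluded as above, so $a_\a=a_{\a'}=1$ and hence $a_p=2$; $a_\a^S=a_{\a'}^{S'}=1$ is exactly what has just been shown; and if $\a,\a'$ lay in one component $\Pi_S$, the highest weight of that constituent together with its translate by a suitable root of $\r_S$ would again be a value-$1$ pair summing to a root, so $S\ne S'$.

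The hard part is the splitting $\be=\gamma_1+\gamma_2$: whether a prescribed root of $\r_S$ decomposes into two distinct $h$-value-$1$ weights of $\p$ is not settled by a uniform argument, since it depends on the $\r$-module structure of $\p$. The cleanest organization is to run this step through the classification of irreducible symmetric pairs, each case being a short inspection against Tables Aff of \cite{Kac}, where the affine labels $a_\a$, the marks $a_\a^S$, and the module $\p$ are all read off directly; everything in the first two paragraphs, by contrast, is uniform.
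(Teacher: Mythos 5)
Your computation of the affine labels $a_\a$ (and $a_\a+a_{\a'}$) by evaluating the extreme weights of $\p$ on $h$ is correct and is essentially the paper's argument for that half of the lemma. The genuine gap is in the second half: for the claims $a_\a^S=1$, $a_{\a'}^{S'}=1$ and $S\ne S'$ you propose to exhibit a root $\be$ of $\r_S$ with $\be(h)=2$ and split it as $\gamma_1+\gamma_2$ with $\gamma_1,\gamma_2\in\Phi_\ll$ distinct, and you concede that you cannot do this uniformly and would instead run through the classification of irreducible symmetric pairs. As written this step is not proved, and the splitting you ask for is not the mechanism that works: there is no reason a prescribed root of $\r_S$ of $h$-value $2$ should decompose into two weights of $\p$ at all.

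The uniform argument you are missing starts from the \emph{lowest} weight rather than the highest. The component of $\p$ carrying a weight of $h$-value $-1$ has lowest weight $\be=\bar p$ for some $p\in P$, and one knows that $p+\theta_S\in\Da^1$ (resp.\ $p+\theta_S+\theta_{S'}\in\Da^1$ when $S\ne S'$); since $\Da^1$ restricts bijectively onto $\D(\p)$, this guarantees that $\be+\theta_S$ (resp.\ $\be+\theta_S+\theta_{S'}$) is a weight of $\p$, with $h$-value $a_\a^S-1$ (resp.\ $a_\a^S+a_{\a'}^{S'}-1$). Since every weight of $\p$ has $h$-value in $\{-1,0,1\}$, if $a_\a^S>1$ this value is forced to equal $1$, so $\be+\theta_S\in\Phi_\ll$; together with $-\be\in\Phi_\ll$ this gives two elements of $\Phi_\ll$ summing to the root $\theta_S$ of $\r$, contradicting that $\ll$ is abelian. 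The same device with $\theta_S+\theta_{S'}$ pins down $a_\a^S=a_{\a'}^{S'}=1$ in case (2), and applied with $S=S'$ (where $\theta_S(h)=a_\a^S+a_{\a'}^S\ge 2$) it rules out $\a,\a'$ lying in the same component. So no appeal to the classification is needed; the affine root $p+\theta_S$ does exactly the work your ``hard part'' asks for.
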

  \begin{proof}
 Let us prove \eqref{fund1}. If $\p$ is irreducible, then $\theta_\p(h)=1$ hence, by \eqref{highestp}, $a_p=a_\a$. If $\p$ is reducible then $\theta(h)\in\{0,1\}$. If $\theta(h)=0$  then $a_\a+\a_\p(h)=0$ so $a_\a=1=a_p$ for some $p\in P$. If $\theta(h)=1$ then $1=a_\a+\a_\p(h)$ so $a_\a=2=\sum_{p\in P}a_p$.

 Let now $\be$ be the lowest weight of an irreducible component of $\p$ such that $\be(h)=-1$. We have that $\be=\bar p$ for some $p\in P$.  We use the known fact that $p+\theta_S\in \Da^1$. It follows that $\be+\theta_S\in\D(\p)$. Observe that $(\be+\theta_S)(h)=a_\a^S-1$. If $a_\a^S>1$, then $ a_\a^S-1= 1$, thus $\be+\theta_S\in\Phi_\ll$. On the other hand $-\be\in \Phi_\ll$ so $-\be+(\be+\theta_S)$ is a root of $\r$. This contradicts the fact that $\ll$ is abelian.

  Let us prove \eqref{fund2}. Let $\l,\mu$ be respectively the highest and lowest weight  of an irreducible component of $\p$ such that $\l(h)=1$. In this case we have $\mu(h)=-1$. Then, if $\p$ is irreducible,
  $1=\l(h)=\sum_{\gamma\ne p }\frac{a_\gamma}{a_p} \gamma(h)=\frac{a_\a+a_{\a'}}{a_p}$ hence $a_\a+a_{\a'}=a_p$. It follows that $a_p=2$ and $a_\a=a_{\a'}=1$. If $\p$ is reducible, under our hypothesis, we have that $\theta(h)=1$ hence $a_\a+a_{\a'}-1=1$. We obtain $a_\a=a_{\a'}=1$ as well. Choose $p\in P$  so that $\bar p=\mu$. If $S\ne S'$, it is easy to check that $p+\theta_S+\theta_{S'}\in \Da^1$, so $\mu+\theta_S+\theta_{S'}\in\D(\p)$. Since $(\mu+\theta_S+\theta_{S'})(h)=a_\a^S+a_{\a'}^{S'}-1$, we find that $a_\a^S+a_{\a'}^{S'}=2$, so $a_\a^S=a_{\a'}^{S'}=1$.

   It remains to check that $S\ne S'$. If, on the contrary, $S=S'$, use the known fact that $p+\theta_S\in \Da^1$. Hence $\mu+\theta_S\in \D(\p)$. Since $(\mu+\theta_S)(h)=a_\a^S+a_{\a'}^{S}-1=1$, we have $\mu+\theta_S\in\ll$. But also $-\mu\in\Phi_\ll$ and $-\mu+(\mu+\theta_S)$ is a root of $\r$. This contradicts the fact that $\ll$ is abelian.
  \end{proof}

 \vskip10pt
 Let $\nu:\h\to\h^*$ be the  identification via the Killing form of $\aa$. If $\a\in\Pi_S$, let $\omega_\a$ be the corresponding fundamental weight of $\r_S$. We need one more general notation, which will be used often in the rest of the paper.
If $M$ is a set of weights of  a Lie algebra, we let
$$\langle M \rangle=\sum_{\mu\in M}\mu.$$

 \begin{prop}\label{aresimplecurrents} Assume that  $\ll\in \Sigma$ is such that $\ll=\ll(h)$ for some $h\in\h_0$. Then
 \begin{enumerate}
 \item  $\langle \ll(h) \rangle =j_S\omega_\a+\frac{|\Phi^+_\ll|-|\Phi_\ll^-|}{\dim(\p)}\nu(\varpi)$ if $h$ is as in Lemma \ref{sumcoweights}, (\ref{sumofone}).\label{unosolo}
  \item $\langle \ll(h) \rangle =j_S\omega_\a+j_{S'}\omega_{\a'}^{}+ \frac{|\Phi^+_\ll|-|\Phi_\ll^-|}{\dim(\p)} \nu(\varpi)$ if $h$ is as in Lemma \ref{sumcoweights}, (\ref{sumoftwo}).\label{due}
 \end{enumerate}
  \end{prop}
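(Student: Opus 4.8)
The plan is to compute the functional $\langle\ll(h)\rangle=\sum_{\gamma\in\Phi_\ll}\gamma\in\h_0^*$ by evaluating it on an orthogonal decomposition of $\h_0$ and identifying the outcome with the right-hand side of \eqref{unosolo} or \eqref{due}. The starting point is the identity
\[
\langle\ll(h)\rangle(h')=\tfrac12\,\mathrm{tr}_{\p}\big(\ad_\p(h)\,\ad_\p(h')\big)\qquad(h'\in\h_0).
\]
Indeed $\mathrm{tr}_\p(\ad_\p(h)\ad_\p(h'))=\sum_{\gamma}(\dim\p_\gamma)\,\gamma(h)\gamma(h')$, the sum over the $\h_0$-weights of $\p$. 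Since $(\cdot,\cdot)$ is nondegenerate on $\p$, the $\r$-module $\p$ is self-dual, so $\D(\p)=-\D(\p)$ with $\dim\p_\gamma=\dim\p_{-\gamma}$; moreover every nonzero weight space of $\p$ is one-dimensional (a standard property of $\D(\p)$, reflected in the bijection $\D(\p)\leftrightarrow\Da^1$). By the hypothesis on $h$, each nonzero $\gamma\in\D(\p)$ has $\gamma(h)\in\{1,0,-1\}$, the value $1$ being attained exactly on $\Phi_\ll$ and $-1$ exactly on $-\Phi_\ll$; pairing $\gamma$ with $-\gamma$ collapses the sum to $2\sum_{\gamma\in\Phi_\ll}\gamma(h')$, which proves the identity.

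Next I would use the decomposition $\h_0=\C\varpi\oplus\bigoplus_{S\ge1}(\h_0\cap\r_S)$, orthogonal for the Killing form of $\aa$ (the summand $\C\varpi$ being absent, and $\varpi=0$, in the semisimple case), and write $h=h_0+\sum_S h_S$ accordingly. The bilinear form $(x,y)\mapsto\mathrm{tr}_\p(\ad_\p x\,\ad_\p y)$ is $\aa$-invariant, hence on $\r$ it is a combination of the Killing forms of the simple ideals and vanishes between distinct ideals; moreover $\ad_\p(\varpi)$ is semisimple and commutes with $\ad_\p(\r)$, so its eigenspaces are $\r$-submodules, and the trace of an element of a simple ideal vanishes on each of them, whence $\mathrm{tr}_\p(\ad_\p(\varpi)\ad_\p(h'))=0$ when $h'$ lies in a simple ideal. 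Thus, for $h'\in\h_0\cap\r_S$, the identity above and \eqref{jS} give $\langle\ll(h)\rangle(h')=\tfrac12\,\mathrm{tr}_\p(\ad_\p(h_S)\ad_\p(h'))=j_S(h_S,h')_S$. By Lemma \ref{fundcoweights} one has $h_S=\omega_\a^\vee$, with $\a$ occurring with coefficient $1$ in the highest root $\theta_S$ of $\r_S$; such a simple root is necessarily long (its dual Kac label is $(\a,\a)_S/2$, an integer, forcing $(\a,\a)_S=2$), so $\nu_S(\omega_\a^\vee)=\omega_\a$ and $(h_S,h')_S=\langle\omega_\a,h'\rangle$. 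Therefore $\langle\ll(h)\rangle$ restricts to $j_S\omega_\a$ on $\h_0\cap\r_S$; in case \eqref{due} the same computation yields $j_{S'}\omega_{\a'}$ on $\h_0\cap\r_{S'}$ (with $S\ne S'$ by Lemma \ref{fundcoweights} \eqref{fund2}) and $0$ on any $\h_0\cap\r_{S''}$ with $h_{S''}=0$.

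It remains to treat the central direction, present only when $\r$ is not semisimple, i.e.\ $\varpi\ne0$. From the description \eqref{weights2} of $\D(\p)$, together with the fact that $\a_\p$ has coefficient $1$ in the highest root $\theta$ of $\aa$, one computes $\gamma(\varpi)=c_q(\gamma)-c_p(\gamma)\in\{1,-1\}$ for every $\gamma\in\D(\p)$, this value being $+1$ precisely on $\Dp(\p)$ (as may be arranged by the choice of regular element defining the positive system). Hence $\langle\ll(h)\rangle(\varpi)=\sum_{\gamma\in\Phi_\ll}\gamma(\varpi)=|\Phi^+_\ll|-|\Phi^-_\ll|$. On the other hand $\ad(\varpi)$ vanishes on $\r$ and, by the eigenvalue computation just made, acts on $\p$ with all eigenvalues $\pm1$, so it squares to the identity on $\p$; therefore $\nu(\varpi)(\varpi)=(\varpi,\varpi)=\mathrm{tr}_\aa(\ad(\varpi)^2)=\dim\p$, while $\nu(\varpi)$ vanishes on each $\h_0\cap\r_S$ by the orthogonality above and $\omega_\a,\omega_{\a'}$ vanish on $\C\varpi$. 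Comparing the restrictions to all summands of $\h_0$ now gives $\langle\ll(h)\rangle=j_S\omega_\a+\dfrac{|\Phi^+_\ll|-|\Phi^-_\ll|}{\dim(\p)}\nu(\varpi)$ in case \eqref{unosolo} and $\langle\ll(h)\rangle=j_S\omega_\a+j_{S'}\omega_{\a'}+\dfrac{|\Phi^+_\ll|-|\Phi^-_\ll|}{\dim(\p)}\nu(\varpi)$ in case \eqref{due}; in the semisimple case the $\nu(\varpi)$-term disappears and only the second paragraph is needed.

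The part I expect to require the most care is the bookkeeping around the central direction $\varpi$ — matching the sign of $\gamma(\varpi)$ with $\Dp(\p)$ and pinning down the normalization $(\varpi,\varpi)=\dim(\p)$ — together with the two elementary facts invoked en route: multiplicity one for the nonzero weight spaces of $\p$, and that a simple root occurring with coefficient $1$ in the highest root is long. The remainder is an assembly of \eqref{jS}, Lemmas \ref{sumcoweights} and \ref{fundcoweights}, and \eqref{weights2}.
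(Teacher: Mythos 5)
Your proof is correct, but it is organized differently from the paper's, and the difference is worth noting. The paper first pins down the \emph{shape} of $\langle\Phi_\ll\rangle$ (as $x\omega_\a+z\nu(\varpi)$, resp.\ $x\omega_\a+y\omega_{\a'}+z\nu(\varpi)$) by observing that $\langle\Phi_\ll\rangle$ is the weight of a one-dimensional representation of the relevant Levi subalgebra, and then extracts the coefficients one at a time from the \emph{diagonal} identity $\langle\Phi_\ll\rangle(h)=\dim\ll=\tfrac12\mathrm{tr}(\ad_\p(h)^2)$ together with \eqref{jS}; in case (2) this forces a separate evaluation at $\omega_\a^\vee$ and $\omega_{\a'}^\vee$, a check that $j_S(\omega_\a,\omega_\a)_S=j_{S'}(\omega_{\a'},\omega_{\a'})_{S'}$ via $\tfrac18\dim\p$, and an entirely separate computation with the sets $\Dp(\p,R)$ in the hermitian symmetric subcase. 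You instead \emph{polarize} that identity into $\langle\Phi_\ll\rangle(h')=\tfrac12\mathrm{tr}_\p(\ad_\p(h)\ad_\p(h'))$ for all $h'\in\h_0$ — valid because the nonzero weight spaces of $\p$ are one-dimensional (they correspond to real roots $\tfrac12\d'+\gamma$ of $\widehat L(\aa,\s)$) and the zero weight contributes nothing — and then read off the whole functional from the orthogonal decomposition $\h_0=\C\varpi\oplus\bigoplus_S(\h_0\cap\r_S)$ of the invariant form $\mathrm{tr}_\p(\ad_\p\cdot\;\ad_\p\cdot)$. This handles cases (1) and (2), semisimple and hermitian symmetric alike, in one uniform computation, and the inputs you use beyond the paper's (\eqref{jS}, Lemma \ref{fundcoweights} giving $a_\a^S=1$, hence $\a$ long and $\nu_S(\omega_\a^\vee)=\omega_\a$; the evaluation $\gamma(\varpi)=\pm1$ and $(\varpi,\varpi)=\dim\p$) are all correctly justified. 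The one point to state with a little more care is the multiplicity-one claim: it is the nonzero weight spaces of $\p$ that are one-dimensional (the zero weight space $\h_\p$ need not be), but since the zero weight drops out of the trace this does not affect the argument.
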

 \begin{proof} Let us prove \eqref{unosolo}. Let $\r^\a$ be the Levi component of the parabolic of $\r$ defined by $\omega_\a^\vee$.
Observe that if  $\be\in\Phi_\ll$ and $\gamma\in \Pi\cap\D(\r^\a)$, then clearly $\be+\gamma\in\Phi_\ll$ and also  $\be-\gamma\in\Phi_\ll$, since $\gamma(h)=0$. It follows that there
 is a 1-dimensional representation  of $\r^\a$ whose weight is $\langle \Phi_\ll\rangle$, hence  we have that
$\langle \Phi_\ll\rangle=x\omega_\a + z\nu(\varpi)$.  We now prove that if $\varpi\ne 0$, then $z=\frac{|\Phi^+_\ll|-|\Phi_\ll^-|}{\dim(\p)}$.  Indeed, on one hand
$\langle \Phi_\ll\rangle(\varpi)=|\Phi^+_\ll|-|\Phi_\ll^-|$. On the other hand, since $\omega_\a(\varpi)=0$,
$$\langle \Phi_\ll\rangle(\varpi)=z \nu(\varpi)(\varpi)=z tr(ad_\p(\varpi)^2)=z\dim \p.
$$
Recall that $h=\omega^\vee_\a+y\varpi$ with $y$ explicitly given in Lemma \ref{sumcoweights}. We now show that $y=2z$.
Indeed,
$$\sum_{\gamma\in\Dp(\p)}\gamma(h)=\sum_{\gamma\in\Dp(\p)}\gamma(\omega_\a^\vee)+y\sum_{\gamma\in\Dp(\p)}\gamma(\varpi).$$
On one hand, the left hand side equals $|\Phi^+_\ll|-|\Phi_\ll^-|$. On the other hand, $\sum_{\gamma\in\Dp(\p)}\gamma$ is a multiple of $\nu(\varpi)$, hence vanishes on $\omega_\a^\vee$, whereas
$\sum_{\gamma\in\Dp(\p)}\gamma(\varpi)=\dim(\p)/2$.\par

Note that $\half tr(ad_\p(h)^2)=\dim \ll$, hence, by \eqref{jS},
$$
\dim \ll=j_S(\omega_\a^\vee,\omega_\a^\vee)_S+2z^2\nu(\varpi)(\varpi).
$$
Note also that $\langle \Phi_\ll\rangle(h)=\dim\ll$.
Since $a_\a^S=1$, we have that $(\a,\a)_S=2$ so $(\omega_\a^\vee,\omega_\a^\vee)_S=(\omega_\a,\omega_\a)_S=\omega_\a(\omega_\a^\vee)$.
We therefore obtain that $$x(\omega_\a,\omega_\a)_S+zy\nu(\varpi)(\varpi)=\dim \ll=j_S(\omega_\a,\omega_\a)_S+2z^2\nu(\varpi)(\varpi),$$ hence $x=j_S$ as claimed.

Let us prove \eqref{due} in the semisimple case. As above we note that $\langle \Phi_\ll\rangle$ is the weight of   a one-dimensional representation of $\r^\a\cap\r^{\a'}$, so that  $\langle \Phi_\ll\rangle =x\omega_\a+y\omega_{\a'}$.
By \eqref{highestp} and Lemma \ref{fundcoweights}, \eqref{sommadue}, we know that $\theta_\p(\omega_\a^\vee)=\theta_\p(\omega_{\a'}^\vee)=\half$. If $\be,\gamma\in\D(\p)$, let us write $\be\le \gamma$ if $\gamma-\beta=\sum_{\gamma\in \Pi}n_\gamma\gamma$ with $n_\gamma\in\nat$. Since $\be\le \theta_\p$, then $\be(\omega_\a^\vee)\le \half$ and $\be(\omega_{\a'}^\vee)\le\half$. Moreover $\be(\omega_{\a}^\vee)\in  \half+\ganz$, $\be(\omega_{\a'}^\vee)\in  \half+\ganz$. Since $\be(\omega_\a^\vee+\omega_{\a'}^\vee)=1$ for $\be\in\Phi_\ll$, then $\be(\omega_\a^\vee)=\be(\omega^\vee_{\a'})=\half$. Thus
$$
\langle\Phi_\ll \rangle(\omega_{\a}^\vee)=\langle \Phi_\ll \rangle(\omega_{\a'}^\vee)=\frac{\dim \ll}{2}.
$$
Since $S\ne S'$,
$$\half tr(ad_\p(\omega_\a^\vee+\omega^\vee_{\a'})^2)=\dim\ll=j_S(\omega_\a^\vee,\omega_\a^\vee)_S+j_{S'}(\omega^\vee_{\a'},\omega^\vee_{\a'})_{S'}.
$$
Since $a_\a^S=a_{\a'}^{S'}=1$, we have $(\omega^\vee_{\a},\omega^\vee_{\a})_S=(\omega_{\a},\omega_{\a})_S$, $(\omega^\vee_{\a'},\omega^\vee_{\a'})_{S'}=(\omega_{\a'},\omega_{\a'})_{S'}$, and
$$
\langle \Phi_\ll \rangle(\omega_{\a'}^\vee)=x(\omega_{\a},\omega_{\a})_S,\quad \langle \Phi_\ll \rangle(\omega_{\a'}^\vee)=y(\omega_{\a'},\omega_{\a'})_{S'}.
$$
To conclude we need only to check that $j_S(\omega_\a,\omega_\a)=j_{S'}(\omega_{\a'},\omega_{\a'})_{S'}$. Let $\be\in \D(\p)$. Since $-\theta_\p\le\be\le
 \theta_\p$, we have $-\half=-\theta_\p(\omega^\vee_{\a})\le\be(\omega_\a^\vee)\le\theta_\p(\omega_\a^\vee)=\half$ and\break  likewise for $\omega_{\a'}^\vee$.  So $$j_S(\omega_\a,\omega_\a)_S=\half tr(ad_\p(\omega_\a^\vee)^2)=\frac{1}{8}\dim\p=\half tr(ad_\p(\omega_{\a'}^\vee)^2)=j_{S'}(\omega_{\a'},\omega_{\a'})_{S'},
$$
as wished.\par
It remains to deal with the hermitian symmetric case. If $R$ is a subset of $A_{\a\a'}:=\{\a,\a'\}$, let  $\Dp(\p,R)$ be the subset of $\Dp(\p)$ consisting of the weights $\l$ such that\break  $supp(\l)\cap A_{\a\a'}=R$. Note that $\Phi_\ll^+=\Dp(\p,A_{\a\a'})$ and $\Phi_\ll^-=-\Dp(\p,\emptyset)$. Let $\eta\in\Dp(\p,\emptyset)\cup\Dp(\p,\a')$ and $\gamma\in\Pi\backslash \{\a\}$. Since, by Lemma \ref{fundcoweights}, $a_\a=1$, we see that, if $\eta+\gamma$ is a root, then $\eta+\gamma\in\Dp(\p,\emptyset)\cup\Dp(\p,\a')$. Also, if $\eta-\gamma$ is a root,  then $\eta-\gamma \in\Dp(\p,\emptyset)\cup\Dp(\p,\a')$. This implies as above that
$\langle\Dp(\p,\emptyset)\cup\Dp(\p,\a')\rangle=x\omega_\a+z\nu(\varpi)$. Arguing as for the proof of \eqref{unosolo} above we obtain that
$$-\langle\Dp(\p,\emptyset)\rangle-\langle\Dp(\p,\a')\rangle=j_S\omega_\a-\frac{|\Dp(\p,\emptyset)|+|\Dp(\p,\a')|}{\dim(\p)}\nu(\varpi).
$$
Likewise
$$-\langle\Dp(\p,\emptyset)\rangle-\langle\Dp(\p,\a)\rangle=j_{S'}\omega_{\a'}-\frac{|\Dp(\p,\emptyset)|+|\Dp(\p,\a)|}{\dim(\p)}\nu(\varpi),
$$
Thus, noting that $\langle\Dp(\p)\rangle=\tfrac{1}{2}\nu(\varpi)$, we obtain
\begin{align*}\langle \Phi_{\ll(h)}\rangle&=-2\langle\Dp(\p,\emptyset)\rangle-\langle\Dp(\p,\a')\rangle-\langle\Dp(\p,\a)\rangle+\langle\Dp(\p)\rangle\\
&=j_S\omega_\a + j_{S'}\omega_{\a'}+\frac{|\Phi_\ll^+|-|\Phi_\ll^-|}{\dim(\p)}\nu(\varpi)
\end{align*}
as wished.
\end{proof}

Assume now that $P=\{p\}$ and that $\theta_\p$ is a long non compact weight.  If $\r_S$ is a simple ideal of $\r$,  let $ \a_S$ be the unique element  of  $\Pi_S$ not orthogonal  to $p$. Then it is shown in \cite{IMRN} that $a_{\a_S}^S=1$.
 Denote by  $\omega_S$ the fundamental weight of $\r_S$  corresponding to $\a_S$.
 Set
\begin{equation}\label{specialm}
 \mathfrak m=\{\mu\in\D(\p): \mu=\theta_\p-\be,\,\be\in\Dp(\k)\cup\{0\}\}.
 \end{equation}
 \begin{lemma}\label{special}
 $$\langle \mathfrak m \rangle + \theta_\p=\sum_S j_S \omega_S.$$
 \end{lemma}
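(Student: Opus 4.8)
The plan is to reduce the identity $\langle\mathfrak m\rangle+\theta_\p=\sum_S j_S\omega_S$ to a statement about the weight of a one-dimensional representation, exactly in the spirit of the proof of Proposition \ref{aresimplecurrents}. First I would observe that $\mathfrak m$ has a geometric meaning: since $\theta_\p$ is the highest weight of the (now irreducible, as $P=\{p\}$) $\r$-module $\p$, the set $\mathfrak m=\{\theta_\p-\be:\be\in\Dp(\r)\cup\{0\}\}\cap\D(\p)$ consists precisely of the weights lying one level below $\theta_\p$ in the obvious sense; equivalently, $\mathfrak m\cup\{\theta_\p\}$ is $\b_0$-stable, and in fact it spans the space $\ll_0$ associated (via the bijection $\ll\mapsto w_\ll$ from $\Sigma$ to $\Wab$) to a suitable abelian subspace of $\p$. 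The key point is that $\langle\mathfrak m\rangle+\theta_\p=\langle\Phi_{\ll_0}\rangle$ where $\ll_0$ is the $\b_0$-stable abelian subspace with $\Phi_{\ll_0}=\mathfrak m\cup\{\theta_\p\}$, so I am really computing $\langle\Phi_{\ll_0}\rangle$.

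**Key steps.** (i) Show $\mathfrak m\cup\{\theta_\p\}\in\Sigma$: since $\theta_\p$ is long and noncompact, the sum of any two weights in $\mathfrak m\cup\{\theta_\p\}$ is never a root of $\r$ (abelianness), and stability under $\b_0$ follows because every positive-root shift of a weight $\theta_\p-\be$ with $\be\in\Dp(\r)$ either leaves the set or reaches $\theta_\p$. (ii) Exhibit an $h\in\h_0$ realizing this $\ll_0$ as $\ll(h)$ in the sense of Lemma \ref{sumcoweights}: the natural candidate is $h=\sum_S\omega_{\a_S}^\vee$ (using the $\a_S$ and $a^S_{\a_S}=1$ from the paragraph preceding the lemma), because then $\theta_\p(h)=\sum_S\theta_\p(\omega^\vee_{\a_S})=1$ since $\a_S$ is the unique simple root of $\Pi_S$ non-orthogonal to $p$ and it appears with the right coefficient in $\theta_\p$ by \eqref{highestp}; one checks $\mu(h)\in\{-1,0,1\}$ for all $\mu\in\D(\p)$ and $\mu(h)=1$ exactly on $\mathfrak m\cup\{\theta_\p\}$. (iii) Apply (the semisimple case of) Proposition \ref{aresimplecurrents} repeatedly — or rather its underlying argument — with the multi-simple-root $h=\sum_S\omega_{\a_S}^\vee$: $\langle\Phi_{\ll_0}\rangle$ is the weight of a one-dimensional representation of the intersection of the parabolics $\r^{\a_S}$, hence equals $\sum_S x_S\omega_{\a_S}$ for scalars $x_S$, and the normalization computation (using \eqref{jS}, $a^S_{\a_S}=1$ so $(\a_S,\a_S)_S=2$, and $\half\,\mathrm{tr}(ad_\p(\omega^\vee_{\a_S})^2)$) forces $x_S=j_S$, giving $\langle\Phi_{\ll_0}\rangle=\sum_S j_S\omega_S$.

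**Main obstacle.** The delicate part is step (ii)–(iii): verifying that $h=\sum_S\omega^\vee_{\a_S}$ really satisfies the hypotheses of Lemma \ref{sumcoweights} for the specific $\ll_0$ — i.e. that $\mu(h)$ takes no value outside $\{-1,0,1\}$ on $\D(\p)$ and that $\{\mu:\mu(h)=1\}$ is exactly $\mathfrak m\cup\{\theta_\p\}$ rather than something larger. This needs the hypothesis that $\theta_\p$ is long and noncompact (so that $\theta_\p-\be$ for $\be\in\Dp(\r)$ behaves like an $\mathfrak{sl}_2$-string bottom and one cannot subtract twice), together with the known facts from \cite{IMRN} about the $a_{\a_S}^S=1$ pattern. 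Once the correct $h$ is in hand, the computation of $\langle\Phi_{\ll_0}\rangle$ is a routine adaptation of the proof of Proposition \ref{aresimplecurrents}\eqref{due} to an arbitrary number of components $\r_S$, with the $\varpi$-term absent since $\r$ is semisimple; the decomposition into $\sum_S j_S\omega_S$ then follows component by component.
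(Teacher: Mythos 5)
Your reduction does not work, and the failure is structural rather than a matter of detail. First, $\theta_\p$ already belongs to $\mathfrak m$ (take $\be=0$ in \eqref{specialm}), so $\mathfrak m\cup\{\theta_\p\}=\mathfrak m$ and the weight $\langle\Phi_{\ll_0}\rangle$ you propose to compute is $\langle\mathfrak m\rangle$, which differs from the target by $\theta_\p$: the weight $\langle\mathfrak m\rangle+\theta_\p$ counts $\theta_\p$ \emph{twice}. The second copy comes from the derivative factor $T(\bar x_{\theta_\p})$ in the highest weight vector $v_{\mathfrak m}$ of \eqref{maximal} --- equivalently, from the affine root $\tfrac{3}{2}\d'-\theta_\p$ of level $\tfrac{3}{2}$ in $N(w_\s)$ --- and no $\Phi_\ll$ with $\ll\in\Sigma$ produces a weight containing $\theta_\p$ with multiplicity two; this is precisely why this summand is listed separately from the $\Sigma$-indexed ones in \eqref{anomalo}. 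Second, and independently, $\mathfrak m$ is not of the form $\Phi_{\ll(h)}$: since both $\theta_\p$ and $\theta_\p-\a_S$ lie in $\mathfrak m$ for every $S$ (because $(\theta_\p,\a_S)>0$), any $h$ with $\gamma(h)=1$ on all of $\mathfrak m$ must satisfy $\a_S(h)=0$ for all $S$, which is incompatible with your candidate $h=\sum_S\omega^\vee_{\a_S}$; moreover Lemma \ref{sumcoweights} \emph{proves} that an admissible $h$ involves at most two fundamental coweights, so the ``routine adaptation of Proposition \ref{aresimplecurrents} to an arbitrary number of components'' is exactly what is unavailable. A concrete instance is $(\aa,\r)=(so(6,\C),so(3,\C)\times so(3,\C))$, i.e.\ the conformal embedding $so(3,\C)\times so(3,\C)\hookrightarrow so(9,\C)$ with $j_S=j_{S'}=6$: here $\mathfrak m=\{\a,\a',\a+\a'\}$ with $\a,\a'$ the simple roots of the two $A_1$-factors, the conditions $\a(h)=\a'(h)=(\a+\a')(h)=1$ are inconsistent, and $\langle\mathfrak m\rangle=2(\a+\a')$ whereas $\langle\mathfrak m\rangle+\theta_\p=3(\a+\a')=6\omega_S+6\omega_{S'}$.

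The paper's proof necessarily takes a different route: it uses the explicit element $w_\s\in\Wa$ (which does \emph{not} lie in $\Wab$) with $N(w_\s)=\{\tfrac{1}{2}\d'-\a\mid\a\in\mathfrak m\}\cup\{\tfrac{3}{2}\d'-\theta_\p\}$, writes $\langle\mathfrak m\rangle+\theta_\p=(|\mathfrak m|+1)\theta_\p-\sum_S\langle\Psi_S\rangle=\sum_S(n_S\ell(w_\s)-h^\vee_S)\omega_S$ using $\theta_\p=\sum_Sn_S\omega_S$, and then invokes the identity $\ell(w_\s)=h^\vee_\aa$ from \cite{CMPP} to identify $n_S\ell(w_\s)-h^\vee_S$ with $j_S$. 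Any repair of your argument would have to supply a substitute for this length computation; the one-dimensional-representation normalization trick of Proposition \ref{aresimplecurrents} cannot see the extra $\theta_\p$.
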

 \begin{proof}    Let $w_\s\in\Wa$ be as defined in  \cite[Proposition 3.8] {CKMP}. According to \cite[(3.14)]{CKMP},
 $$N(w_\s)=\{\tfrac{1}{2}\d'-\a\mid     \a\in\mathfrak{m}\}\cup\{\tfrac{3}{2}\d'-\theta_\p\}.
 $$It can be deduced from  \cite[Lemma 5.7]{IMRN} that $\theta_\p=\sum_Sn_S\omega_S$. Set $\Psi_S=\{\a\in\Pi_S\mid \a(\omega_S)>0\}$. We have
\begin{equation}\label{previous}\langle \mathfrak m \rangle + \theta_\p=(|\mathfrak m|+1)\theta_\p-\sum_S \langle \Psi_S\rangle=\sum_S(n_S\ell(w_\s)-h^\vee_S)\omega_S.\end{equation}
 It has been proved in
 \cite[Lemma 7.5 (4)]{CMPP} that  $\ell(w_\s)$ equals the dual Coxeter number of $\aa$, hence \eqref{previous} reads
 $\langle \mathfrak m \rangle +\theta_\p=\sum_S j_S \omega_S$, as desired.
 \end{proof}

\section{Decomposition formulas for conformal embeddings in the classical cases}\label{6}
 Let $\g$ be a simple complex Lie algebra of classical type and let $\k$ be a proper  reductive subalgebra conformally embedded in $\g$. We also assume that the embedding is maximal among conformal ones.

In order to study the intermediate vertex   algebras between $V_{\mathbf{k}}(\k)$ and $V_{1}(\g)$,  we need an explicit description of the decomposition of $V_{1}(\g)$ as a representation of $V_{\mathbf{k}}(\k)$.
We shall introduce a symmetric pair $(\aa,\r),\,\aa=\r\oplus\p,$ associated to $(\g,\k)$ and we shall decompose
 $V_{1}(\g)$ into irreducible $V_{\mathbf{k}}(\k)$-modules indexed by suitable Borel stable abelian subspaces of $\p$.

Let $\ft$ be a
 Cartan subalgebra of $\k$. Fix a Borel subalgebra $\b_\k$ containing $\ft$. Recall from Section \ref{simplecurrentext} that $L^\k(\l)$ denotes the irreducible highest weight module for $V_{\mathbf{k}}(\k)$ of highest weight $\l$.

\subsection{The case of $so(n,\C)$}\label{caseson}
We first discuss the case when $\g=so(n,\C)$.
As observed in Remark \ref{confinson} above, there is a symmetric pair
 $(\aa,\r)$ such that  $\g=so(\p), \k=\r, \ft=\h_0,\b_\k=\b_0$.  Recall that we have denoted by $\Sigma$ the set of abelian $\b_0$-stable subspaces of $\p$.
If  $\ll\in\Sigma$, $k\in\ganz$, we set (see notation from Section 3)
\begin{equation} \label{hwvhs}v_{k,\ll}= \begin{cases}
:\prod\limits_{\eta\in\Phi^+_{\ll}}T^{{\bf k!}}\bar x_{\eta}\prod\limits_{\eta\in-\Phi^-_{\ll}}T^{{\bf (k-2)!}}\bar x_{\eta}
\prod\limits_{\eta\in\Dp(\p)\setminus (\Phi^+_\ll\cup-\Phi^-_\ll)}T^{{\bf (k-1)!}}\bar x_{\eta}:&\text{ if $k>0$},\\
:\prod\limits_{\eta\in\Phi^-_{\ll}}T^{{\bf (-k)!}}\bar x_{\eta}\prod\limits_{\eta\in-\Phi^+_{\ll}}T^{{\bf (-k-2)!}}\bar x_{\eta}
\prod\limits_{\eta\in-\Dp(\p)\setminus (-\Phi^+_\ll\cup\Phi^-_\ll)}T^{{\bf (-k-1)!}}\bar x_{\eta}:&\text{ if $k<0$},\\
:\prod\limits_{\eta\in \Phi_\ll}\bar x_\eta: &\text{ if $k=0$}.
\end{cases}
\end{equation}

Define $\mathbf{j}$ as in \eqref{mathbfj}.
The following result provides the decomposition of $F(\bar \p)$ as a $V_{\mathbf{j}}(\k)$-module when $\s$ is indecomposable and $\k$ is semisimple.
In \cite{CKMP} we found explicit $\widehat {\mathfrak r}$-decompositions of the Clifford module $\text{Cliff}(L(\bar \p))/(\text{Cliff}(L(\bar \p))L(\bar \p)^+)$. Remark \ref{Cliffordmodule} allows us to use these results. We now state them in the present setting.

\begin{theorem}\label{decoeabeliani}\cite[Theorem 3.9]{CKMP} Assume $\s$ indecomposable and  $\k$  semisimple. Then
\begin{enumerate}
\item If $\theta_\p$ is not long noncompact then
\begin{equation}\label{corta}
F(\bar \p)=\bigoplus_{\ll\in \Sigma}L^\k\left(\langle \Phi_\ll\rangle\right).\end{equation}
 \item If $\theta_\p$ is
long  noncompact then
\begin{equation}\label{anomalo}
F(\bar \p)=\bigoplus_{\ll\in \Sigma}L^\k\left(\langle
\Phi_\ll\rangle\right)\bigoplus L^\k\left(\langle \mathfrak m\rangle+\theta_\p\right)\end{equation} where $\mathfrak m$ is as in \eqref{specialm}.

\end{enumerate}
Moreover, in
both cases  the highest weight vector of $L^\k\left(\langle
\Phi_\ll\rangle\right)$ is, up to a
constant factor,
$v_{0,\ll}$ (cf. \eqref{hwvhs}).
 A highest weight vector of  the rightmost   component
of \eqref{anomalo}  is
\begin{equation}\label{maximal}v_\mathfrak m=:T(\bar x_{\theta_\p})\bar x_{\eta_1}\dots \bar x_{\eta_s}:
\end{equation}
if $\mathfrak m=\{\eta_1,\ldots,\eta_s\}$.
\end{theorem}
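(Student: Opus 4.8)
The plan is to deduce the theorem from \cite[Theorem 3.9]{CKMP}, where exactly this decomposition is established for the Clifford module $\text{Cliff}(L(\bar\p))/(\text{Cliff}(L(\bar\p))L(\bar\p)^+)$ regarded as a module over the affine algebra $\widehat{\mathfrak r}$. All that is really needed is to set up the dictionary between that picture and the present vertex algebra one and to check that the decomposition, the highest weights, and the highest weight vectors correspond; there is no new combinatorial content, which lives in \cite{CKMP} and \cite{IMRN}.

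First I would use Remark \ref{Cliffordmodule} to identify $F(\bar\p)$ with that Clifford module: under this identification the mode $x_{(n)}$ of $x\in\p$ acts as $t^{n+1/2}\otimes x$, so $\bar x=x_{(-1)}|0\rangle$ corresponds to $(t^{-1/2}\otimes x)|0\rangle$, the state $T\bar x$ corresponds to $(t^{-3/2}\otimes x)|0\rangle$, and $L(\bar\p)^+$ annihilates $|0\rangle$. Next, by the Kac--Peterson/Goddard--Nahm--Olive construction recalled in Section \ref{confemb} (see \cite{KacPeterson,GNO}), $\Theta$ restricted to $\r\hookrightarrow so(\p)$ extends to an embedding of $V_{\mathbf j}(\r)$ into $F(\bar\p)$, and hence the modes $\Theta(x)_{(n)}$, $x\in\r$, $n\in\ganz$, realize the level $\mathbf j$ action of $\widehat{\mathfrak r}$ on $F(\bar\p)$ under which \cite{CKMP} performed the decomposition. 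Since an integrable highest weight $\widehat{\mathfrak r}$-module of level $\mathbf j$ is, via the standard correspondence, the $V_{\mathbf j}(\k)$-module $L^\k(\lambda)$ with $\lambda$ the finite part of the affine highest weight, the $\widehat{\mathfrak r}$-decompositions of \cite[Theorem 3.9]{CKMP} translate verbatim into \eqref{corta} and \eqref{anomalo}; what remains is to match the finite highest weights with $\langle\Phi_\ll\rangle$ and with $\langle\mathfrak m\rangle+\theta_\p$. For the former I would invoke the bijection $\ll\mapsto w_\ll$ from $\Sigma$ onto $\Wab$ together with $\Phi_\ll=\{-\bar\beta\mid\beta\in N(w_\ll)\}$ from \cite{IMRN}; for the anomalous component I would use the description of $N(w_\s)$ in \cite[(3.14)]{CKMP}, exactly as in the proof of Lemma \ref{special}.

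To pin down the highest weight vectors I would translate the ones given in \cite{CKMP}. No weight of $\Phi_\ll$ is the negative of another (if $\pm\eta\in\Phi_\ll$ then $0\ne[\p_\eta,\p_{-\eta}]\subset\r$ would contradict $\ll$ abelian), so the $\bar x_\eta$, $\eta\in\Phi_\ll$, pairwise anticommute and $v_{0,\ll}$ equals, up to sign, $\prod_{\eta\in\Phi_\ll}(t^{-1/2}\otimes x_\eta)\,|0\rangle$; likewise $v_\mathfrak m$ equals, up to sign, $(t^{-3/2}\otimes x_{\theta_\p})\prod_i(t^{-1/2}\otimes x_{\eta_i})\,|0\rangle$. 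Then I would check, as in \cite{CKMP}, that these vectors are killed by $\Theta(x)_{(n)}$ for $x\in\r$, $n>0$, and by $\Theta(x)_{(0)}$ for $x$ in the nilpotent radical of $\b_0$, while $\Theta(h)_{(0)}$, $h\in\h_0$, acts by $\langle\Phi_\ll\rangle(h)$, respectively $(\langle\mathfrak m\rangle+\theta_\p)(h)$; the weight computation is immediate since the $\h_0$-weight of the displayed product of creation operators is $\sum_{\eta\in\Phi_\ll}\eta=\langle\Phi_\ll\rangle$, respectively $\theta_\p+\sum_i\eta_i=\theta_\p+\langle\mathfrak m\rangle$.

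The main obstacle I expect is purely one of bookkeeping: one must make the grading conventions on $F(\bar\p)$ (half-integer modes, the sign $p(a,b)$ in Wick's formula, the action of $T$, and the nesting in iterated normal ordered products) agree with those of the Clifford-module presentation of \cite{CKMP}, so that $v_{0,\ll}$ and $v_\mathfrak m$ are literally (not merely up to a scalar) the highest weight vectors used there and the weights carry over without sign or normalization errors. Once this dictionary is fixed, the decomposition and the weight identifications are formal consequences of \cite{CKMP} and \cite{IMRN}.
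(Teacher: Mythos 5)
Your proposal is correct and follows essentially the same route as the paper, which gives no independent proof of this statement but simply cites \cite[Theorem 3.9]{CKMP} and invokes Remark \ref{Cliffordmodule} to transport the $\widehat{\mathfrak r}$-decomposition of the Clifford module $\text{Cliff}(L(\bar\p))/(\text{Cliff}(L(\bar\p))L(\bar\p)^+)$ into the fermionic vertex algebra setting. Your additional care about matching the highest weight vectors $v_{0,\ll}$ and $v_{\mathfrak m}$ and the grading conventions is a reasonable elaboration of the same dictionary, not a different argument.
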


We now discuss the hermitian symmetric case. For $C\in\ganz$, denote by $F(\bar\p)[C]$ the eigenspace of eigenvalue $C$ of $\varpi$ (recall that $\varpi$ is the generator of the
center of $\r$ such that $\a_\p(\varpi)=1$).  Also set
\begin{equation}\label{sigmaprimo}
\Sigma'=\{\ll\in\Sigma\mid \aa_{-\a_\p}\subset \ll\}.
\end{equation}

\begin{theorem}\label{finalehs} \cite[Theorem 5.4]{CKMP} In the hermitian symmetric case,
\begin{equation}\label{hsf}F(\bar\p)[C]=\sum_{\substack{\ll\in\Sigma'\\
|\Phi _\ll^+|-|\Phi _\ll^-|\equiv
C\,mod\,\frac{\dim(\p)}{2}}}
L^\k(\langle
\Phi_\ll\rangle+ {k_\ll(C)}\nu(\varpi)
),\end{equation}
where $k_\ll(C)=\frac{(C-|\Phi _\ll^+|+|\Phi _\ll^-|)}{\dim(\p)}$.
\end{theorem}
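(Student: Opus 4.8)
The statement is the translation to the fermionic vertex algebra $F(\bar\p)$, through the identification of Remark~\ref{Cliffordmodule}, of the $\widehat\r$-decomposition of the Clifford module $\text{Cliff}(L(\bar\p))/(\text{Cliff}(L(\bar\p))L(\bar\p)^+)$ proved in \cite[Theorem~5.4]{CKMP}: under that dictionary the monomial $\prod_\eta\bar x_\eta$ of the present notation is the product of the corresponding Clifford generators of $L(\bar\p)$ applied to the vacuum, so the decomposition transports verbatim, and the plan is to indicate how one produces it. First I would note that $F(\bar\p)$ is a direct sum of integrable $\widehat{so(\p)}$-modules, so that the conformal embedding $\widehat\r\hookrightarrow\widehat{so(\p)}$ together with the complete reducibility Theorem~10.7 of \cite{Kac} makes $F(\bar\p)$ a completely reducible $\widehat\r$-module. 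Since $\C\varpi$ is the center of $\r$, the zero mode $\Theta(\varpi)_{(0)}$ commutes with $\widehat\r$ and is diagonalizable with integer eigenvalues, giving the charge grading $F(\bar\p)=\bigoplus_{C\in\ganz}F(\bar\p)[C]$; each $F(\bar\p)[C]$ is a $\widehat\r$-submodule, so it is enough to exhibit its $\widehat\r$-highest weight vectors together with their weights.

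For $\ll\in\Sigma$ the vector $v_{0,\ll}=:\prod_{\eta\in\Phi_\ll}\bar x_\eta:$ of \eqref{hwvhs} is annihilated by all positive modes and, since $\ll$ is $\b_0$-stable, by the nilradical of $\b_0$ and by $\Theta(\varpi)_{(m)}$ for $m>0$, hence is $\widehat\r$-singular; as $\eta(\varpi)=\pm1$ according as $\eta\in\D^\pm(\p)$, its $\varpi$-charge is $|\Phi_\ll^+|-|\Phi_\ll^-|$, and its $\widehat\r$-weight is $\langle\Phi_\ll\rangle$, which by the argument used in the proof of Proposition~\ref{aresimplecurrents} (invariance under the relevant Levi subalgebra of $\r$) splits as $\langle\Phi_\ll\rangle=\langle\Phi_\ll\rangle|_{[\r,\r]}+\tfrac{|\Phi_\ll^+|-|\Phi_\ll^-|}{\dim\p}\nu(\varpi)$. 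The remaining $\widehat\r$-singular vectors are obtained from these by charge shifts realized by the $u(1)$-Heisenberg generated by $\Theta(\varpi)$ and by the special monomials $v_{0,\p^\pm}$, which have trivial $[\r,\r]$-weight and $\varpi$-charge $\pm\tfrac12\dim\p$; consequently each irreducible $\widehat{[\r,\r]}$-type appears bundled over a whole residue class of charges modulo $\tfrac12\dim\p$, and $\Sigma'$, characterized by \eqref{sigmaprimo}, is a set of representatives for these orbits. Matching the $\varpi$-eigenvalue to $C$ then forces the $\nu(\varpi)$-component of the corresponding highest weight, measured relative to $\langle\Phi_\ll\rangle$, to be $k_\ll(C)=\tfrac{C-|\Phi_\ll^+|+|\Phi_\ll^-|}{\dim\p}$, which yields the summand $L^\k(\langle\Phi_\ll\rangle+k_\ll(C)\nu(\varpi))$ for every $\ll\in\Sigma'$ with $|\Phi_\ll^+|-|\Phi_\ll^-|\equiv C\ (\mathrm{mod}\ \tfrac12\dim\p)$.

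The substantive point, and the main obstacle, is to prove that these highest weight vectors already account for all of $F(\bar\p)[C]$ and that the decomposition is multiplicity free. I would do this exactly as in the semisimple case of Theorem~\ref{decoeabeliani}: compare characters by a principal specialization, writing $\mathrm{ch}\,F(\bar\p)[C]$ explicitly as a charge slice of a fermionic Fock space and $\sum_\ll\mathrm{ch}\,L^\k(\langle\Phi_\ll\rangle+k_\ll(C)\nu(\varpi))$ by the Weyl--Kac formula, and matching the two through the Kac--Peterson numerator identity for the basic module of $\widehat{so(\p)}$ together with the combinatorics of the bijection $\ll\mapsto w_\ll$ between $\Sigma$ and $\Wab$ recalled in Section~\ref{symmpairs}; the hermitian hypothesis $\p=\p^+\oplus\p^-$ with $\p^\pm$ abelian and mutually dual is what makes this bookkeeping tractable. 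This is precisely the computation carried out in \cite{CKMP}, so granting \cite[Theorem~5.4]{CKMP} the only fresh ingredient here is the transfer to $F(\bar\p)$ via Remark~\ref{Cliffordmodule}, which is routine.
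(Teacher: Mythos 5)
Your proposal matches the paper exactly: the paper offers no independent proof of this statement, but simply imports \cite[Theorem~5.4]{CKMP} and restates it in the vertex algebra setting via the identification of $F(\bar\p)$ with the Clifford module in Remark~\ref{Cliffordmodule}, which is precisely the reduction you make in your final paragraph. The additional sketch of how the decomposition arises (charge grading by $\Theta(\varpi)_{(0)}$, the singular vectors $v_{0,\ll}$, and a character comparison for completeness) is consistent with the machinery the paper later invokes from \cite{CKMP} and \cite{MMJ}, but it is supplementary to, not a replacement for, the citation on which both you and the authors rely.
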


\begin{rem} If $\s$ is not indecomposable then, clearly, $F(\bar\p)=\otimes_s F(\ov{\p\cap\aa_s})$, so combining Theorems \ref{decoeabeliani} and \ref{finalehs} one has the decomposition of $F(\bar\p)$ as a $V_{\mathbf{j}}(\k)$-module.\end{rem}
\vskip5pt
In order to write down explicit formulas for  highest weight vectors  for the irreducibles appearing in the r.h.s. of \eqref{hsf}, we need the following refined analysis.
Identify $\h_0$ and $\h_0^*$ via $\nu$ and set $(\h_0)_\R$ to be the real span $\D_{|\h_0}$. Set $\ha_\R=(\h_0)_\R\oplus\R K\oplus \R d$. Define
\begin{equation}\label{caf}
\mathcal A=\{h\in(\h_0)_\R\mid \bar\a(h)> -s_\a,\,\a\in \Pia\}
\end{equation}
(the $s_\a$ were defined in \eqref{salfa1},\eqref{salfa2}). Clearly $\Wa$ acts naturally on $d+(\h_0)_\R$ viewed  as a subspace $\ha_\R/\R K$. The same argument used  in \cite[Chapter 6]{Kac} shows that  the closure of $d+\mathcal A$ is a fundamental domain for this action.

Let $L^\k(\langle
\Phi_\ll\rangle+ {k}\nu(\varpi)
)$ occur  in $F(\bar\p)[C]$ with $k=k_\ll(C)$. Let $w_k$ be the unique element of $\Wa$ such that
\begin{equation}\label{wk}
w_k(d+\mathcal A)=t_{2k\varpi}w_\ll(d+\mathcal A)
\end{equation}
(here $t_h:
 \h_0\to\h_0$ is the translation by $h$).
Write explicitly
$N(w_k)=\{\be_1,\dots, \be_s\}$. If $x\in \mathcal A$, it is clear that
\begin{equation}\label{Nwk}
N(w_k)=\{\a\in\Dap\mid \a(t_{2k\varpi}w_\ll(d+x))<0\}.
\end{equation}
If $\a=h\d'+\bar \a\in\Dp(\k)\cup\{s\d'+\eta\mid s>0,\eta\in\D(\k)\}$, then
$$
\a(w_\ll(d+x)+2k\varpi)=h+\a(w_\ll(d+x))\ge 0,
$$
thus, for $i=1,\dots,s$,
$$
\be_i=(n_i+\half)\d'+\bar \be_i.
$$
By Lemma 5.1 of \cite{CKMP} and Theorem 1.1 of \cite{MMJ}, we have that
$$
F(\bar\p)=\sum_{k\in \ganz} L^\k(w_k^{-1}(\rho)-\rho)
$$
with $w_k$ as in \eqref{wk}. In the proof of Theorem 1.1 of \cite{MMJ} an explicit expression for the highest weight vector for $L^\k(w_k^{-1}(\rho)-\rho)$ is given. Since $w_k^{-1}(\rho)-\rho\equiv\langle\Phi_
\ll\rangle+ {k}\nu(\varpi)\mod \C\d'$ (see again Section 5 of \cite{CKMP}), the highest weight vector of $L^\k(\langle
\Phi_\ll\rangle+ {k}\nu(\varpi)
)$ is, under the isomorphism described in Remark \ref{Cliffordmodule},
\begin{equation}\label{hwwk}
v=:T^{n_1}(\bar x_{-\bar\be_i})\cdots T^{n_s}(\bar x_{-\bar\be_s}):.
\end{equation}

We need to express $v$ more explicitly in terms of the roots in $\Phi_\ll$, thus we compute $N(w_k)$. By \eqref{Nwk}, we have to solve the inequality
$\a(w_\ll(d+x)+2k\varpi)<0$, with $\a=h\d'+(\half\d'+\bar \a)$, $\bar\a\in\D(\p)$.\par
 If $\bar\a=0$, then
$\a(w_\ll(d+x)+2k\varpi)=h+\half>0$.
If $\bar\a\ne 0$, we first observe that
\begin{equation}\label{minusone}
(\half\d'+\bar\a)(w_\ll(d+x))>-1
\end{equation}
Indeed, if $(\half\d'+\bar\a)(w_\ll(d+x))\leq -1$, then $(\d'+\half\d'+\bar\a)(w_\ll(d+x))<0$, contrary to the fact that $w_\ll$ is $\Wab$.

If $\bar\a\in\Phi_\ll$, then
\begin{equation}\label{plusinl}
1<(\half\d'+\bar\a)(w_\ll(d+x))<2
\end{equation}
Indeed, by \eqref{minusone}, we have $-1<(\half\d'-\bar\a)(w_\ll(d+x))<0$, so $-2<(-\half\d'-\bar\a)(w_\ll(d+x))<-1$.

If $\bar\a\in-\Phi_\ll$, then, by \eqref{minusone}
\begin{equation}\label{minusinl}
-1<(\half\d'+\bar\a)(w_\ll(d+x))<0
\end{equation}

Finally, if $\bar\a\in\D(\p)\backslash (\Phi_\ll\cup(-\Phi_\ll))$, then
\begin{equation}\label{plusone}
0<(\half\d'+\bar\a)(w_\ll(d+x))<1
\end{equation}
Indeed, since $\bar\a\not\in-\Phi_\ll$, we have $(\half\d'+\bar\a)(w_\ll(d+x))>0$. Since $\bar\a\not\in\Phi_\ll$,  we have $(\half\d'-\bar\a)(w_\ll(d+x))>0$, so  $(-\half\d'-\bar\a)(w_\ll(d+x))>-1$.
Observe also that, if $\eta\in\D(\p)$ and $\eta\ne0$, then $\eta(\varpi)=1$ if and only if $\eta\in\Dp(\p)$.

Assume $k>0$. If $\bar\a\in\Dp(\p)$, then
$\a(w_\ll(d+x)+2k\varpi)=h+(\half\d'+\bar \a)(w_\ll(d+x))+2k$, hence, by \eqref{plusinl}, \eqref{minusinl}, and \eqref{plusone}, we deduce that $\a(w_k(d+x))>0$.
 If $\bar\a\in-\Dp(\p)$, then
$\a(w_\ll(d+x)+2k\varpi)=h+(\half\d'+\bar \a)(w_\ll(d+x))-2k$, hence, by \eqref{plusinl}, \eqref{minusinl}, and \eqref{plusone}, we get
$$
((h+\half)\d'+\bar\a)(w_k(d+x))<0 \Leftrightarrow\begin{cases}h\le 2k-2&\text{if $\bar\a\in\Phi_\ll$}\\
h\le 2k-1&\text{if $\bar\a\not\in(\Phi_\ll\cup(-\Phi_\ll))$}\\
h\le 2k&\text{if $\bar\a\in-\Phi_\ll$}
\end{cases}.
$$

Summarizing,  we obtain that, if $k>0$,
\begin{align*}
N(w_k)&=\{(h+\half)\d'+\eta\mid \eta\in -\Phi_\ll^+, h\le 2k\}\cup\{(h+\half)\d'+\eta\mid \eta\in \Phi_\ll^-, h\le 2k-2\}\\&\cup\{(h+\half)\d'+\eta\mid \eta\in -\Dp(\p)\backslash(-\Phi_\ll^+\cup\Phi_\ll^-), h\le 2k-1\}.
\end{align*}

If instead $k<0$ and $\bar\a\in-\Dp(\p)$, then
$\a(w_\ll(d+x)+2k\varpi)=h+(\half\d'+\bar \a)(w_\ll(d+x))-2k$, hence, by \eqref{plusinl}, \eqref{minusinl}, and \eqref{plusone}, $\a(w_k(d+x))>0$.
 If $\bar\a\in\Dp(\p)$, then
$\a(w_\ll(d+x)+2k\varpi)=h+(\half\d'+\bar \a)(w_\ll(d+x))+2k$, hence, by \eqref{plusinl}, \eqref{minusinl}, and \eqref{plusone},
$$
((h+\half)\d'+\bar\a)(w_k(d+x))<0 \Leftrightarrow\begin{cases}h\le -2k-2&\text{if $\bar\a\in\Phi_\ll$}\\
h\le -2k-1&\text{if $\bar\a\not\in(\Phi_\ll\cup(-\Phi_\ll))$}\\
h\le -2k&\text{if $\bar\a\in-\Phi_\ll$}
\end{cases}.
$$

Hence we obtain that, if $k<0$
\begin{align*}
&N(w_k)=\\
&\{(h+\half)\d'+\eta\mid \eta\in -\Phi_\ll^-, h\le 2|k|\}\cup\{(h+\half)\d'+\eta\mid \eta\in \Phi_\ll^+, h\le 2|k|-2\}\\&\cup\{(h+\half)\d'+\eta\mid \eta\in \Dp(\p)\backslash(-\Phi_\ll^-\cup\Phi_\ll^+), h\le 2|k|-1\}.
\end{align*}


Combining our description of $N(w_k)$ with \eqref{hwwk} we have proven the following proposition:
\begin{prop}\label{nw} If $(\aa,\k)$ is an hermitian symmetric  pair and $\ll\in\Sigma'$, then a highest weight vector of $L^\k(\langle \Phi_\ll\rangle +k\nu(\varpi))$ in $F(\bar\p)$ is $v_{2k,\ll}$
(cf. \eqref{hwvhs}).
\end{prop}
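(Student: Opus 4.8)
The plan is to read the highest weight vector straight off the two computations of $N(w_k)$ just performed, using the general formula \eqref{hwwk}. Recall that \eqref{hwwk} exhibits a highest weight vector of $L^\k(\langle\Phi_\ll\rangle+k\nu(\varpi))$ as a normally ordered monomial $:T^{n_1}(\bar x_{-\bar\be_1})\cdots T^{n_s}(\bar x_{-\bar\be_s}):$ in which each affine root $\be=(h+\half)\d'+\eta\in N(w_k)$ (with $\eta\in\D(\p)$, $h\in\nat$) contributes exactly one factor $T^{h}(\bar x_{-\eta})$. So the argument will be pure bookkeeping: for each finite weight $-\eta$ that occurs, record the range of the derivative index $h$.

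First I would treat $k>0$. Substituting the description of $N(w_k)$ displayed above for $k>0$ into \eqref{hwwk} and grouping the factors by their finite weight $-\eta$: the factors with $-\eta\in\Phi_\ll^+$ carry $h=0,1,\dots,2k$ and assemble into $:\prod_{\zeta\in\Phi_\ll^+}T^{{\bf (2k)!}}(\bar x_\zeta):$; those with $-\eta\in-\Phi_\ll^-$ carry $h=0,\dots,2k-2$ and assemble into $:\prod_{\zeta\in-\Phi_\ll^-}T^{{\bf (2k-2)!}}(\bar x_\zeta):$; and those with $-\eta\in\Dp(\p)\setminus(\Phi_\ll^+\cup-\Phi_\ll^-)$ carry $h=0,\dots,2k-1$ and assemble into the corresponding $T^{{\bf (2k-1)!}}$ product. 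Comparing with \eqref{hwvhs} evaluated at the index $2k$, this is precisely $v_{2k,\ll}$ up to the nonzero scalar produced by restoring the prescribed order of the (odd) fermionic factors, which is immaterial since a highest weight vector is only defined up to scalar. The case $k<0$ goes the same way, feeding in the $k<0$ description of $N(w_k)$ and matching against the $k<0$ branch of \eqref{hwvhs} at the index $2k$.

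It then remains to treat $k=0$, where none of the inequalities above are needed. Here \eqref{wk} specializes to $w_0(d+\mathcal A)=w_\ll(d+\mathcal A)$, hence $w_0=w_\ll$; since $w_\ll\in\Wab$ we have $N(w_\ll)\subset\Da^1=\{\half\d'+\eta\mid\eta\in\D(\p)\}$, so every exponent in \eqref{hwwk} vanishes, while the defining property of the bijection $\ll\mapsto w_\ll$ gives $\{-\bar\be_i\}=\Phi_\ll$; thus the highest weight vector is $:\prod_{\eta\in\Phi_\ll}\bar x_\eta:\,=v_{0,\ll}$.

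I do not expect a serious obstacle: the substantive work, the determination of $N(w_k)$, is already in hand, and what remains is combinatorial. The two places to be careful are the off-by-one matching between the bounds $h\le 2k,\,2k-1,\,2k-2$ and the lengths of the divided-power strings ${\bf (2k)!}=(0,1,\dots,2k)$, ${\bf (2k-1)!}$, ${\bf (2k-2)!}$ occurring in \eqref{hwvhs}, and the disjointness of the three families, which uses the abelianity of $\ll$: $\p_\eta$ and $\p_{-\eta}$ cannot both lie in an abelian subspace of $\p$ (their bracket is a nonzero vector of $\h_0$), so $\Phi_\ll^+\cap(-\Phi_\ll^-)=\emptyset$ and $\Phi_\ll\cap(-\Phi_\ll)=\emptyset$, which is exactly what makes the partition $\Dp(\p)=\Phi_\ll^+\sqcup(-\Phi_\ll^-)\sqcup\big(\Dp(\p)\setminus(\Phi_\ll^+\cup-\Phi_\ll^-)\big)$ legitimate.
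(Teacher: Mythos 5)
Your proposal is correct and follows essentially the same route as the paper: the paper's proof of Proposition \ref{nw} consists precisely of combining the displayed computations of $N(w_k)$ for $k>0$ and $k<0$ with the general highest-weight-vector formula \eqref{hwwk} and matching the result against \eqref{hwvhs}. Your extra remarks on the $k=0$ case and on the disjointness of the three families are sound but not needed beyond what the paper already records.
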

\vskip10pt
\subsection{Two special cases}\label{specialcases} In order to obtain decomposition formulas for conformal embeddings in a simple Lie algebra of classical type other than $so(n,\C)$, we need an application of the results of Section \ref{caseson} to two special cases.
  Let $\g_n$ be either $sp(n,\C)$ or $sl(n+1,\C)$. Set $\u=sl(2,\C)$ in the former case, $\u=\C$ in the latter. It  turns out  that $(\g_{n+1},\u\times\g_n)$ is a symmetric pair, and let  $\g_{n+1}=(\u\times\g_n)\oplus \q$ be the corresponding eigenspace decomposition. In these special cases $\mathbf{j}=(j_\u,j_{\g_n})$ and it turns out that $j_{\g_n}=1$.

Choose  Cartan subalgebras $\h_\u,\,\h_n$ of $\u,\,\g_n,$ respectively.  If $M$ is an irreducible $V_{\mathbf{j}}(\u\times \g_n)$-module, then
$M=L^\u(\l)\otimes L^{\g_n}(\mu)$ with $\l\in \h_\u^*$, $\mu\in \h_n^*$, and $L^\u(\l)$ (resp. $L^{\g_n}(\mu)$ ) an irreducible highest weight module  for $V_{j_\u}(\u)$ (resp. $V_{1}(\g_n)$). Given a  $V_{\mathbf{j}}(\u\times \g_n)$-module $M$,  denote by $\L^\k(M)$ the set of irreducible modules appearing in the decomposition of $M$.

 \begin{lemma} \label{isirrep}
 If $L^{\u}(0)\otimes L^{\g_n}(\mu)\in \L^\k(F(\bar \q))$,  then $\mu=0$ and $L^\u(0)\otimes L^{\g_n}(0)= V_{\mathbf{j}}(\u\times \g_n)$ occurs with multiplicity one.
\end{lemma}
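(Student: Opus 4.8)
The plan is to read the statement off the explicit decompositions of Section~\ref{caseson}, treating separately the two possibilities for $\g_n$.

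\emph{The case $\g_n=sp(n,\C)$.} Then $\k:=\u\times\g_n$ is semisimple, the (indecomposable) involution of the pair $(\g_{n+1},\k)$ is of type $CII$, $\q$ is, as a $\k$-module, the outer tensor product of the standard $sl(2,\C)$-module $\C^2$ and the standard $sp(n,\C)$-module $V$, and $\h_0=\h_\u\oplus\h_n$. The highest weight $\theta_\q$ of $\q$ with respect to $\b_0$ is a short root of $\g_{n+1}$, hence not long noncompact, so Theorem~\ref{decoeabeliani}(1) gives $F(\bar\q)=\bigoplus_{\ll\in\Sigma}L^\k(\langle\Phi_\ll\rangle)$. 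The crucial step would be to show that every $\ll\in\Sigma$ is contained in $\q_+:=v_+\otimes V$, where $v_+$ is a highest weight vector of $\C^2$ and $\q_\pm$ are the two eigenspaces of a nonzero $h\in\h_\u$: writing $\ll=(\ll\cap\q_+)\oplus(\ll\cap\q_-)$, if $v_+\otimes a,\ v_-\otimes b\in\ll$ then the $sp(n,\C)$-component of $[v_+\otimes a,v_-\otimes b]$ is a nonzero multiple of the symmetric product $a\cdot b\in S^2V\cong sp(n,\C)$, which vanishes only for $a=0$ or $b=0$; hence $\ll\cap\q_+=0$ or $\ll\cap\q_-=0$, and if $\ll\cap\q_-\neq0$ then applying the positive root vector of $sl(2,\C)$ (which carries $\q_-$ onto $\q_+$) contradicts $\b_0$-stability. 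So $\ll\subseteq\q_+$ always, whence $\langle\Phi_\ll\rangle(h)=\dim\ll\geq0$, with equality only for $\ll=0$. Since $L^\u(0)\otimes L^{\g_n}(\mu)$ occurring means $\langle\Phi_\ll\rangle$ vanishes on $\h_\u$, this forces $\ll=0$, i.e. $\langle\Phi_\ll\rangle=0$, $\mu=0$, and $L^\u(0)\otimes L^{\g_n}(0)=L^\k(0)=V_{\mathbf{j}}(\u\times\g_n)$ with multiplicity one.

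\emph{The case $\g_n=sl(n+1,\C)$.} Then $\k=\u\times\g_n=\C\times sl(n+1,\C)$ and $(\g_{n+1},\k)$ is the hermitian symmetric pair of type $AIII$, with $\q=\q^+\oplus\q^-$, $\q^+\cong\C^{n+1}$ carrying $\varpi$-eigenvalue $+1$ and $\q^-\cong(\C^{n+1})^*$ carrying eigenvalue $-1$, both abelian subalgebras, and $\varpi\in\u$. A component $L^\u(0)\otimes L^{\g_n}(\mu)$ is annihilated by $\Theta(\varpi)_{(0)}$, hence lies in $F(\bar\q)[0]$, which by Theorem~\ref{finalehs} is $\sum L^\k(\langle\Phi_\ll\rangle+k_\ll(0)\nu(\varpi))$ over $\ll\in\Sigma'$ with $|\Phi_\ll^+|-|\Phi_\ll^-|\equiv0\pmod{n+1}$. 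I would then argue: (i) any $\b_0$-stable abelian $\ll$ is $\h_0$-graded, hence a span of root vectors, and is contained in $\q^+$ or in $\q^-$, because $[x,y]\neq0$ for all nonzero root vectors $x\in\q^+,\ y\in\q^-$ (here $[\q^+,\q^-]=\k$ and $\a-\be\in\D\cup\{0\}$ for the relevant roots $\a,\be$); (ii) $\aa_{-\a_\p}$ is the highest weight space of $\q^-$, so $\ll\in\Sigma'$ forces $0\neq\ll\subseteq\q^-$, whence $|\Phi_\ll^+|=0$ and $-\dim\ll\equiv0\pmod{n+1}$ forces $\ll=\q^-$. Finally $k_{\q^-}(0)=\half$, while $\langle\Phi_{\q^-}\rangle=-\half\nu(\varpi)$ (being $sl(n+1,\C)$-trivial it is a multiple of $\nu(\varpi)$, and $\langle\Phi_{\q^-}\rangle(\varpi)=-(n+1)$, $\nu(\varpi)(\varpi)=\dim\q$), so the highest weight is $0$ and $F(\bar\q)[0]=L^\k(0)=V_{\mathbf{j}}(\u\times\g_n)$. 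As a module on which $\Theta(\varpi)_{(0)}$ acts by $0$ can occur only in $F(\bar\q)[0]$, we conclude $\mu=0$ and multiplicity one.

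The step I expect to be the main obstacle is, in both cases, the structural claim that every $\b_0$-stable abelian subspace of $\q$ is ``one-sided''. It rests on the non-vanishing of suitable brackets of weight (root) vectors of $\q$ --- the $S^2V$-component in the $CII$ case and the nondegenerate pairing $[\q^+,\q^-]=\k$ in the $AIII$ case --- together with $\b_0$-stability to exclude the ``negative'' side. The remaining ingredients (identifying $\q$ as a $\k$-module, checking that $\theta_\q$ is short, and the bookkeeping with the central twist $k_\ll(C)\nu(\varpi)$) are routine once the correct $\ll$ has been pinned down.
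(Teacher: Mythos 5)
Your proof is correct, and it follows the paper's overall reduction exactly: both arguments come down to Theorems \ref{decoeabeliani} and \ref{finalehs} and then determine which $\ll\in\Sigma$ (resp.\ $\Sigma'$) can contribute a summand whose highest weight vanishes on $\h_\u$ (resp.\ lies in $F(\bar\q)[0]$). Where you diverge is in how that determination is made. The paper enumerates $\Sigma$ combinatorially: it lists the encoding elements $w_j=s_1\cdots s_j$ of $\Wab$, reads off $\langle\Phi_\ll\rangle$ from the inversion set, and checks the coefficient of $\a_0$ (type $C$) or the congruence $|\Phi_\ll|\equiv 0 \bmod (n+1)$ for $\ll\in\Sigma'$ (type $A$). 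You instead prove the structural fact that every $\b_0$-stable abelian subspace of $\q$ is ``one-sided'' (contained in $v_+\otimes V$, resp.\ in $\q^+$ or $\q^-$), using the nonvanishing of the $S^2V$-component of the bracket in the $CII$ case and of $[\q^+,\q^-]$ in the $AIII$ case, and then the vanishing of the $\h_\u$-component of $\langle\Phi_\ll\rangle$ immediately forces $\ll=0$ (resp.\ the congruence forces $\ll=\q^-$, whose weight is $0$ after the central shift $k_\ll(0)\nu(\varpi)$). Your route avoids the affine Weyl group bookkeeping entirely at the cost of two explicit bracket computations; it also makes transparent \emph{why} only the trivial summand survives (the $\u$-weight of $\langle\Phi_\ll\rangle$ is essentially $\dim\ll$), whereas the paper's list is shorter to state but presupposes the parametrization of $\Sigma$ by $\Wab$. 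All the auxiliary facts you invoke ($\theta_\q$ short in the $C$ case, $\aa_{-\a_\p}$ the highest weight space of $\q^-$, the evaluation $k_{\q^-}(0)=\tfrac12$ against $\langle\Phi_{\q^-}\rangle=-\tfrac12\nu(\varpi)$) check out.
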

 \begin{proof} Suppose $\g_n$ of type $C_n$, so that  $\g_{n+1}$ is of type $C_{n+1}$. The Lie algebra $\widehat L^\k(\g_{n+1},\s_n)$ corresponding to the symmetric pair $(\g_{n+1},\g_n\times \u)$ is of type $C_{n+1}^{(1)}$. If we use the numbering of Dynkin diagrams given in \cite{Kac}, then the $A_1$-factor corresponds to label $0$ and the $C_n$ factor to labels $2,\ldots,n+1$. Since $\theta_\q$ is short, formula \eqref{corta} gives
 $$
F(\bar \q)=\sum_{\ll\in\Sigma}L^\k(\langle \Phi_\ll \rangle).
 $$

The set of elements of $\Wa$ encoding $\Sigma$  is given by
 $\{w_j\mid 0\leq j \leq n-1\}$ where
 \begin{equation}\label{w}
 w_j=s_1s_2\cdots s_{j}.
 \end{equation} It follows that, if $w_\ll=w_j$, then $\langle \Phi_\ll \rangle =-j\bar \a_1-(j-1)\a_2-\ldots-\a_j$ and
 $\bar\a_1=-\frac{1}{2}\a_0-\a_2-\ldots-\a_n-\frac{1}{2}\a_{n+1}$, hence the coefficient of $\a_0$ is zero only if $j=0$, i.e. $w_0=Id$, so $\ll=\{0\}$.

 Assume now $\g_n$ of type $A_{n}$. The Lie algebra $\widehat L(\g_{n+1},\s_n)$ corresponding to the symmetric pair $(\g_{n+1},\g_n\times \u)$ is of type $A_{n+1}^{(1)}$.  We can assume that $\Pi=\{\a_2,\dots,\a_{n+1}\}$. In this setting the set of elements of $\Wa$ encoding the abelian subspaces   in $\Sigma'$  is $\{w_j\mid 1\leq j \leq n+1\}$ with $w_j$ as in \eqref{w}. If $\ll\in\Sigma'$, then $\Phi_\ll^\pm=\{\be\in\Phi_\ll\mid \be(\varpi)=\pm 1\}$. It follows that $\Phi _\ll^+=\emptyset$. By Theorem \ref{finalehs},
  $L^\k(\langle \Phi_\ll \rangle+ k_\ll(0)\nu(\varpi))$ occurs in $F(\bar\q)[0]$  if and only if $|\Phi_\ll|\equiv 0\mod n+1$. Only $w_{n+1}$ has this property, proving our claim.
\end{proof}

\subsection{A general decomposition theorem}\label{generaldec} Let now $\k\hookrightarrow  \g$ be a maximal conformal embedding with $\g$ simple of classical type. If $\g=so(n,\C)$, let $(\aa,\r)$ be the symmetric pair associated to the conformal embedding $\k\hookrightarrow  \g$ by Proposition \ref{symmthm}. If $\g=sp(n,\C)$ or $sl(n+1,\C)$, let $\u$ and $\q$ be as in Section \ref{specialcases}. Remark that $\u\times \k$ is conformal in $\u\times\g$. Thus $\u\times \k$ is conformal in $so(\q)$, hence, by the Symmetric Space Theorem,  there is a symmetric pair $(\aa,\r)$ with
 $\r=\u\times \k$.

 Recall the eigenspace decomposition $\aa=\r\oplus\p$. Let $\h_0$, $\b_0$, $\Sigma$ be as defined in Section \ref{symmpairs}. Recall that $\ft$ is a Cartan subalgebra of $\k$. Then $\h_0= \ft'\times\ft$ with $\ft'=\{0\}$ when $\g=so(n,\C)$ and $\ft'=\h_\u$ in the other cases.     Let $\mathfrak z(\k)$ be the center of $\k$.  Browsing through the maximal conformal embedding described in \cite{AGO}, one checks that  $\dim\mathfrak{z}(\k)= 1$, hence we can choose $\varkappa\in\ft$ so that $\mathfrak{z}(\k)=\C\varkappa$. Let $\kappa
\in \ft^*$ be defined by setting $\kappa(\varkappa)=1$ and $\kappa(\ft\cap[\k,\k])=0$. Write $\aa=\sum_{i\in I}\aa_{i}$  for the decomposition of $\aa$ into indecomposable ideals. Divide the index set $I$ in the two subsets $I_0$ and $I'$ according to whether  $\mathfrak z(\r)\cap \aa_{i}= \{0\}$ or not.

 Set $\aa_0=\sum_{i\in I_0}\aa_i$, $\aa'=\sum_{i\in I'}\aa_i$, and
let, for $\ll\in\Sigma$,
 \begin{equation}\label{llprime}\ll_0=\ll\cap\aa_0\quad \ll'=\ll\cap\aa'.
 \end{equation}
For $i\in I'$, choose $\varkappa_i\in \mathfrak z(\r)\cap \aa_i$ normalized by setting $\a_{\p\cap\aa_i}(\varkappa_i)=1$.
Set
$$\Sigma'=\{\ll\in\Sigma\mid -\a_{\p\cap\aa_i}\in\Phi_\ll\text{ for all $i\in I'$}\}.$$ Clearly this definition of $\Sigma'$ generalizes the one given in \eqref{sigmaprimo}.
Set
$$
\Sigma_{0}=\{\ll\in\Sigma\mid \langle\Phi_\ll\rangle_{|\ft'}=0\},\quad\Sigma'_{0}=\Sigma'\cap\Sigma_{0},\quad \Sigma^{even}_{0}=\Sigma_{0}\cap\Sigma^{even}.
$$

Set $\chi$ to  be the truth function that is $1$ if and only if $\g=so(n,\C)$, $\k$ semisimple, $\s$ indecomposable, and $\theta_\p$ long noncompact. Recall from Theorem \ref{decoeabeliani} that in such cases we defined a set of roots $\mathfrak{m}$. Set   $\e_\mathfrak{m}=1$ if $\mathfrak{m}$  has odd cardinality and $\e_\mathfrak{m}=0$ otherwise.
  \begin{theorem} \label{decoclassical}Let $\k\hookrightarrow  \g$ be a maximal conformal embedding with $\g$ simple of classical type. Then its decomposition into  irreducible $V_{\bf j}(\k)$-modules
  is given by
\begin{equation}\label{gn}V_{1}(\g)=\bigoplus_{\ll\in\Sigma^{even}_0} L^\k({\langle \Phi_\ll\rangle}_{|\ft})\oplus \chi \e_\mathfrak{m}L^\k(\langle \mathfrak m\rangle+\theta_\p)\end{equation}
if $\k$ is semisimple.

 If $\k$ is not semisimple and  $\mathfrak z({\r})=\mathfrak z({\k})$, so that $I'=\{i\}$ and we can set $\varkappa=\varkappa_i$, then
\begin{equation}\label{gnhs}V_{1}(\g)=\bigoplus_{\substack{\ll\in\Sigma'_0,k\in\frac{1}{2}\ganz\\
\dim\ll+k\dim(\p\cap\aa_i)\in 2\ganz}} L^\k({\langle \Phi_\ll\rangle_{|\ft}}+k\dim(\p\cap\aa_i)\kappa)
\end{equation}

In particular $v_{0,\ll}$, $v_\mathfrak m$ give
  highest weight vectors  in $F(\bar\p)$  for the irreducibles appearing in \eqref{gn}. The highest weight vector for the irreducible occurring in \eqref{gnhs} is $:v_{2k,\ll'}v_{0,\ll_0}:$ (cf. \eqref{hwvhs}, \eqref{maximal}).
 \end{theorem}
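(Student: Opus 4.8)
The plan is to realise $V_{1}(\g)$ inside a fermionic vertex algebra attached to a symmetric pair, to invoke the $V_{\mathbf j}(\r)$-module decomposition of that fermionic algebra supplied by Theorems~\ref{decoeabeliani} and~\ref{finalehs}, and then to cut out exactly the portion equal to $V_{1}(\g)$ by combining a parity argument with the uniqueness statement of Lemma~\ref{isirrep}.

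\emph{The case $\g=so(n,\C)$.} Here $\r=\k$, $\ft=\h_{0}$, and $V_{1}(\g)=V_{1}(so(\p))=F(\bar\p)^{0}$, which is simultaneously the integer-conformal-weight part and the even-parity part of $F(\bar\p)$. Since $V_{\mathbf j}(\k)\subset F(\bar\p)^{0}$ is purely even, every irreducible $V_{\mathbf j}(\k)$-summand of $F(\bar\p)$ is homogeneous for the parity, and this parity equals that of its highest weight vector. I would therefore take the decomposition of $F(\bar\p)$ given by Theorem~\ref{decoeabeliani} when $\k$ is semisimple, and by the tensor product over the $\s$-indecomposable ideals of Theorems~\ref{decoeabeliani} and~\ref{finalehs} when it is not, and retain the summands whose highest weight vector is even. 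For $L^{\k}(\langle\Phi_{\ll}\rangle)$, with highest weight vector $v_{0,\ll}=:\prod_{\eta\in\Phi_{\ll}}\bar x_{\eta}:$, this parity is $\dim\ll\bmod 2$, giving $\ll\in\Sigma^{even}$; for the anomalous summand $L^{\k}(\langle\mathfrak m\rangle+\theta_{\p})$, with highest weight vector $v_{\mathfrak m}$, it is $|\mathfrak m|+1\bmod 2$, giving the factor $\e_{\mathfrak m}$; for the hermitian summand, with highest weight vector $:v_{2k,\ll'}v_{0,\ll_{0}}:$, a direct count from \eqref{hwvhs} gives parity $\dim\ll+k\dim(\p\cap\aa_{i})\bmod 2$. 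Since $\ft'=\{0\}$ here one has $\Sigma_{0}=\Sigma$ and $\Sigma'_{0}=\Sigma'$, so this already yields \eqref{gn} (with $\chi=1$ exactly in the long noncompact case) and \eqref{gnhs}, together with the asserted highest weight vectors.

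\emph{The cases $\g=sp(n,\C)$ and $\g=sl(n+1,\C)$.} I would use the auxiliary symmetric pair $(\g_{n+1},\u\times\g_{n})$ together with the pair $(\aa,\r)$, $\r=\u\times\k$, of Section~\ref{generaldec}, noting that $\p=\q$ by the Symmetric Space Theorem. The conformal embeddings $\u\times\g\hookrightarrow so(\q)$ and $\u\times\k\hookrightarrow so(\q)$ give
\[
V_{\mathbf j}(\u\times\k)\ \subseteq\ A:=V_{j_{\u}}(\u)\otimes V_{1}(\g)\ \subseteq\ F(\bar\q)^{0}=V_{1}(so(\q)),
\]
so $A$ is a vertex subalgebra of $F(\bar\q)$, hence a $V_{\mathbf j}(\u\times\k)$-submodule. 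The key point is that $A$ is precisely the $L^{\u}(0)$-isotypic component of $F(\bar\q)$, i.e. the sum of those summands in the decomposition of Theorems~\ref{decoeabeliani}/\ref{finalehs} whose $\u$-factor is the vacuum module $L^{\u}(0)=V_{j_{\u}}(\u)$. One inclusion is clear, since $A$ is $V_{j_{\u}}(\u)$ tensored with a vector space; for the other, any $\u\times\k$-irreducible submodule $L^{\u}(0)\otimes L^{\k}(\mu)$ of $F(\bar\q)$ lies inside a $V_{\mathbf j}(\u\times\g)$-component whose $\u$-factor, being an irreducible $V_{j_{\u}}(\u)$-module containing the vacuum, equals $L^{\u}(0)$, whence by Lemma~\ref{isirrep} that $\u\times\g$-component is $A$ itself. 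Reading off the $\ft'$-weights from Theorems~\ref{decoeabeliani}/\ref{finalehs} (the central shift $k_{\ll}(C)\nu(\varpi)$ of the hermitian case being absorbed into a suitable choice of $\ll\in\Sigma$), the condition that the $\u$-factor be the vacuum becomes $\langle\Phi_{\ll}\rangle_{|\ft'}=0$, i.e. $\ll\in\Sigma_{0}$ (together with $\ll\in\Sigma'$, which Theorem~\ref{finalehs} already forces when $\r$ has a centre, so $\ll\in\Sigma'_{0}$ in the case of \eqref{gnhs}). Stripping off the common factor $V_{j_{\u}}(\u)$, and keeping as in the orthogonal case only the even summands — which the parity count identifies as the condition $\ll\in\Sigma^{even}_{0}$, resp. $\dim\ll+k\dim(\p\cap\aa_{i})\in 2\ganz$ — yields \eqref{gn} and \eqref{gnhs}. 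One notes in passing that, since each weight of $\p$ has $\ft'$-component $\pm\o$ for $\u=sl(2,\C)$ and $\pm1$ for $\u=\C$, vanishing of $\langle\Phi_{\ll}\rangle_{|\ft'}$ already forces $\dim\ll$ even, in agreement with the $so$ case, and that here, as in the proof of Lemma~\ref{isirrep}, the anomalous $\mathfrak m$-term is either absent or carries nonzero $\u$-weight, so $\chi=0$ is correct. The highest weight vectors are those furnished by Theorem~\ref{decoeabeliani}, namely $v_{0,\ll}$; for \eqref{gnhs} one splits $F(\bar\p)=F(\overline{\p\cap\aa_{0}})\otimes F(\overline{\p\cap\aa_{i}})$ and forms the normal ordered product $:v_{2k,\ll'}v_{0,\ll_{0}}:$ of the highest weight vector $v_{0,\ll_{0}}$ of the semisimple tensor factor and the highest weight vector $v_{2k,\ll'}$ of the hermitian one (Proposition~\ref{nw}).

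The step I expect to be the main obstacle is the identification of $A=V_{j_{\u}}(\u)\otimes V_{1}(\g)$ with exactly the $L^{\u}(0)$-isotypic part of $F(\bar\q)$: this is where Lemma~\ref{isirrep} is indispensable, used together with the observation that passing from the $\u\times\g$-decomposition to the finer $\u\times\k$-decomposition leaves the $\u$-factor unchanged. The remaining work — the parity bookkeeping, the equivalence of the two formulations of the evenness condition, the uniform re-indexing of the hermitian decomposition by a subset of $\Sigma$, and the verification that the $\mathfrak m$-term does not intervene for $\g=sp(n,\C),sl(n+1,\C)$ — is routine but must be carried through case by case using the explicit affine-diagram description of $\D(\p)$ recalled in Section~\ref{symmpairs}.
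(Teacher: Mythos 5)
Your overall strategy is the one the paper itself follows: realize $V_1(\g)$ inside $F(\bar\p)$ for the associated symmetric pair, quote Theorems \ref{decoeabeliani} and \ref{finalehs}, isolate the $L^\u(0)$-isotypic part via Lemma \ref{isirrep} when $\g=sp(n,\C)$ or $sl(n+1,\C)$, and cut down to $V_1(so(\p))=F(\bar\p)^0$ by the parity of the explicit highest weight vectors. The parity bookkeeping, the $so(n,\C)$ case, the $sp(n,\C)$ case, and the treatment of the non-semisimple case \eqref{gnhs} are all in order.

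There is, however, a genuine gap in the case $\g=sl(n+1,\C)$ with $\k$ semisimple. There $\r=\C\times\k$ is hermitian symmetric, so the $L^\u(0)$-isotypic part of $F(\bar\p)$ is $F(\bar\p)[0]$, and Theorem \ref{finalehs} indexes it by the $\ll\in\Sigma'$ satisfying only the congruence $|\Phi_\ll^+|\equiv|\Phi_\ll^-|\bmod\tfrac{\dim\p}{2}$, with weights shifted by $k_\ll(0)\nu(\varpi)$. Your assertion that ``the condition that the $\u$-factor be the vacuum becomes $\langle\Phi_\ll\rangle_{|\ft'}=0$, i.e.\ $\ll\in\Sigma_0$'' is not what the theorem delivers: the index set genuinely contains subspaces with $|\Phi_\ll^+|\ne|\Phi_\ll^-|$, and your parenthetical about absorbing the central shift into a different choice of $\ll$ does not dispose of them. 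Eliminating these terms is not routine re-indexing: the paper invokes Panyushev's theorem that $\tfrac{\dim\p}{2}$ is the maximal dimension of an abelian subspace in the hermitian symmetric case, which combined with $|\Phi_\ll^+|+|\Phi_\ll^-|\le\dim\ll$ forces one of $\Phi_\ll^{\pm}$ to be empty; then $\ll$ is the nilradical of the parabolic defined by $-\varpi$, $\langle\Phi_\ll\rangle_{|\ft}=0$, and the extra summands collapse to $L^\k(0)$, after which one still has to verify $\Sigma'_0\cup\{0\}=\Sigma_0$. Without this input your argument does not close in the $sl$ case. (Two minor points: your identification of $V_{j_\u}(\u)\otimes V_1(\g)$ with the $L^\u(0)$-isotypic component is exactly the paper's use of Lemma \ref{isirrep}, and your claim that the anomalous $\mathfrak m$-term carries nonzero $\u$-weight is what the paper justifies by citing formula (5.13) of \cite{IMRN}; both need that reference, not just the proof of Lemma \ref{isirrep}, but are otherwise fine.)
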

 \begin{rem}\label{zetarzetak}
 Actually the hypothesis of Theorem \ref{decoclassical} that $\mathfrak z(\k)=\mathfrak z(\r)$ in the non semisimple case rules out only the conformal embedding $sl(p)\times sl(q)\times \C\hookrightarrow sl(p+q)$. This latter case requires a special discussion which will be done in Proposition \ref{slpq}.
 \end{rem}
\begin{proof}Assume first $\k$ semisimple.
Since $V_{1}(so(\p))=F(\bar\p)^0$, formula \eqref{gn} follows immediately from Theorem \ref{decoeabeliani} in the case when $\g=so(\p)$ and $\s$ indecomposable, because, in this case, $\Sigma_0^{even}=\Sigma^{even}$.
The only case with $\g$ of type $so(n,\C)$ and $\s$ decomposable occurs when $\k=so(p,\C)\times so(q,\C)$ and $\aa=so(p+1,\C)\times so(q+1,\C)$. Set $\Sigma^N$ to denote the set of abelian subspaces corresponding to the pair $(so(N+1,\C),so(N,\C))$. Then Theorem \ref{decoeabeliani} gives in this case that
$$
F(\bar\p)=\bigoplus_{\ll\in\Sigma^p,\i\in\Sigma^q} L^{so(p,\C)}(\langle \Phi_\ll\rangle)\otimes L^{so(q,\C)}(\langle \Phi_\i\rangle)=\bigoplus_{\ll\in\Sigma} L^\k(\langle\Phi_\ll\rangle)
$$
hence \eqref{gn} follows also in this case from the observation that $V_{1}(so(\p))=F(\bar\p)^0$.

It remains to check formula \eqref{gn} in the cases $\g=sp(n,\C)$ and $\g=sl(n+1,\C)$. Recall that we are assuming $\k$ semisimple. Looking at the possible cases listed in \cite{AGO} one checks that $\s$ is indecomposable. By Lemma \ref{isirrep}, $L^\u(0)\otimes L^{\g}(0)=L^\u(0)\otimes V_{1}(\g)$ is the unique factor in $V_{1}(so(\p))$ of type $L^\u(0)\otimes L^{\g}(\mu)$. It follows that $L^\k(\l)$ occurs in $V_{1}(\g)$ if and only if $L^\u(0)\otimes L^\k(\l)$ occurs in $F(\bar\p)$.
By  \cite[(5.13)]{IMRN}, we have Ê that $({\langle \mathfrak m\rangle}+\theta_\p)_{|\ft'}\ne 0$ (see also \eqref{previous}).
By Theorem  \ref{decoeabeliani}, we get
$$
L^\u(0)\otimes V_{1}(\g)=\bigoplus_{\ll\in\Sigma_0}L^\u(0)\otimes L^\k(\langle \Phi_\ll\rangle_{|\ft}),
$$
In particular, since  $L^\u(0)\otimes V_{1}(\g)\subset F(\bar\p)^0$, we get $\Sigma_0=\Sigma_0^{even}$ and \eqref{gn} follows in this case.
The same argument applied for $\g=sl(n+1,\C)$ gives, by Theorem \ref{finalehs},
\begin{equation}\label{parz}
V_{1}(\g)=\sum_{\substack{\ll\in\Sigma'\\
|\Phi_\ll^+|\equiv|\Phi_\ll^-|\,mod\,\frac{\dim(\p)}{2}}}L^\k({\langle
\Phi_\ll\rangle_{|\ft}}
).\end{equation}
Following the notation preceding Theorem \ref{finalehs}, define $\varpi$ to be the element in $\h_0$ such that $\a(\varpi)=\d_{\a,\a_\p}$ for all $\a\in\Pi_\aa$.
Observe that   ${\langle\Phi_\ll\rangle}_{|\ft'}=-k_\ll(0)\nu(\varpi)$, hence $\ll\in \Sigma'_0$ if and only if
  $|\Phi_\ll^+|=|\Phi_\ll^-|$. Split the sum appearing in the r.h.s. of \eqref{parz} into two pieces as
 \begin{equation}\label{parz2}
 V_{1}(\g)=\sum_{\ll\in\Sigma'_0}L^\k({\langle
\Phi_\ll\rangle_{|\ft}})
+\sum_{\substack{\ll\in\Sigma', |\Phi_\ll^+|\ne|\Phi_\ll^-|\\
|\Phi_\ll^+|\equiv|\Phi_\ll^-|\,mod\,\frac{\dim(\p)}{2}}}L^\k({\langle
\Phi_\ll\rangle_{|\ft}})
\end{equation}
A result of Panyushev \cite{Pan} guarantees that, in the hermitian symmetric case, $\frac{\dim(\p)}{2}$ is the maximal dimension of an abelian subspace.
Hence the conditions
 $|\Phi_\ll^+|\ne|\Phi_\ll^-|, |\Phi_\ll^+|\equiv|\Phi_\ll^-|\,mod\,\frac{\dim(\p)}{2}
$ imply that either $\Phi_\ll^+$ or $\Phi_\ll^-$ is empty. By the definition of $\Sigma'$, we have $\Phi_\ll^-\ne\emptyset$, hence $\Phi_\ll=\Phi_\ll^-$ and
$|\Phi_\ll|=\frac{\dim(\p)}{2}$. In turn, $\ll$ is the nilradical of the parabolic defined by $-\varpi$ and $\langle\Phi_\ll\rangle=-\frac{1}{2}\nu(\varpi)$, so $\langle\Phi_\ll\rangle_{|\ft}=0$.
We conclude that  \eqref{parz2} can be written as
$$
V_{1}(\g)=\sum_{\ll\in\Sigma'_0\cup\{0\}}L^\k({\langle
\Phi_\ll\rangle_{|\ft}}).
$$
To  finish the proof of  \eqref{gn} observe that $\sum_{\ll\in\Sigma_0}L^\k({\langle
\Phi_\ll\rangle_{|\ft}})$ must occur in $V_{1}(\g)$, so $\Sigma'_0\cup\{0\}=\Sigma_0$. Since $L^\u(0)\otimes V_{1}(\g)\subset F(\bar\p)^0$, we see that also in this case $\Sigma_0=\Sigma_0^{even}$ and \eqref{gn} follows in this case.

 Assume now that $\k$ is reductive but not semisimple and $\mathfrak z(\r)=\mathfrak z(\k)$. As observed in Remark \ref{zetarzetak}, this hypothesis covers all cases when $\s$ is indecomposable and the case $\r=\k=so(p,\C)\times so(2,\C)\hookrightarrow so(p+2,\C)$.

 We now discuss the cases when $\s$ is indecomposable. Since $\mathfrak z(\r)=\mathfrak z(\k)$, we have that $\u\subset [\r,\r]$ for otherwise $\u\subset \mathfrak z(\r)$. Also recall that we normalize $\varkappa$ assuming that $\a_\p(\varkappa)=1$.
Set $\g'=\g$ when $\g=so(n,\C)$ and $\g'=\u\times \g$ in the other cases. According to Theorem \ref{finalehs},
 \begin{equation}V_{1}( \g')=\sum_{\substack{k\in\frac{1}{2}\ganz,\ll\in\Sigma'\\
k\dim(\p)+|\Phi^+_\ll|-|\Phi^-_\ll|\in2\ganz}}
L^\r(\langle
\Phi_\ll\rangle+k\nu(\varkappa)
),\end{equation}
so,
applying Lemma \ref{isirrep} if $\g\ne so(n,\C)$, we obtain in all cases that
 \begin{equation}V_{1}( \g)=\sum_{\substack{k\in\frac{1}{2}\ganz,\ll\in\Sigma'\\
k\dim(\p)+|\Phi^+_\ll|-|\Phi^-_\ll|\in2\ganz\\
(\langle
\Phi_\ll\rangle+k\nu(\varkappa))(\u)=0}}
L^\k((\langle
\Phi_\ll\rangle+k\nu(\varkappa))_{|\ft}).
\end{equation}
Since  $\u\subset [\r,\r]$ and $\varkappa\in\mathfrak z(\r)$, we have $\nu(\varkappa)(\u)=0$.   Thus $(\langle
\Phi_\ll\rangle+k\nu(\varkappa))(\u)=0$ if and only if $\langle
\Phi_\ll\rangle(\u)=0$. This implies that $\ll\in\Sigma'_0$. Observe that $\nu(\varkappa)(\varkappa)=tr(ad_\p(\varkappa)^2)=\dim \p$, so $\nu(\varkappa)_{|\ft}=(\dim \p )\kappa$.
It follows that
 \begin{equation}V_{1}( \g)=\sum_{\substack{k\in\frac{1}{2}\ganz,\ll\in\Sigma_0'\\
k{\dim(\p)}+|\Phi^+_\ll|-|\Phi^-_\ll|\in2\ganz}}
L^\k(\langle
\Phi_\ll\rangle_{|\ft}+k\,{\dim\p}\,\kappa).
\end{equation}
which is \eqref{gnhs} in these cases.

It remains to check the case when $\r=\k=so(p,\C)\times so(2,\C)$ with $p\ge 3$.  We observe that  $\aa'=so(3,\C)$ and $\aa_0=so(p+1,\C)$. Since $(\aa_0,\r\cap\aa_0)$ is irreducible with $\r\cap\aa_0$ semisimple, we can apply Theorem \ref{decoeabeliani}. Since $\theta_{\p\cap\aa_0}$ is short or complex, \eqref{corta} holds.  We apply Theorem \ref{finalehs} to $\aa'$. We get
\begin{equation}
F(\bar\p)=\sum_{\ll_0\in\Sigma,k\in\frac{1}{2}\ganz,\ll'\in\Sigma'\\
}
L^\k(\langle\Phi_{\ll_0}\rangle+\langle
\Phi_{\ll'}\rangle+ k\nu'(\varkappa)
),
\end{equation}
where $\nu'$ is the identification of $\h_0\cap\aa'$ with its dual via the Killing form of $\aa'$.
Note that in this case $\Sigma'=\Sigma'_0$ and that $\ll\in\Sigma'$ if and only if $\ll_0\in\Sigma$ and $\ll'\in\Sigma'$.
Since $V_{1}(\g)=F(\bar \p)^0$ we see that $L^\k(\langle\Phi_{\ll_0}\rangle+\langle
\Phi_{\ll'}\rangle+ k\nu'(\varkappa)
)$ occurs in $V_{1}(\g)$ if and only if its highest weight vector $:v_{k,\ll'}v_{0,\ll_0}:$ is in $F(\bar\p)^0$. It is clear from its explicit description given in \eqref{hwvhs} that this happens if and only if $\dim\ll_0+\dim\ll'+k\dim{\p\cap\aa'}=\dim\ll+ k\dim{\p\cap\aa'}\in2\ganz$. This observation proves \eqref{gnhs} in this case.
\end{proof}

We now discuss the missing case $sl(p)\times sl(q)\times \C\hookrightarrow sl(p+q)$.  Here  $\aa=\aa'=sl(p+1,\C)\times sl(q+1,\C)$. Write $\aa_1=sl(p+1,\C)$ and $\aa_2=sl(q+1,\C)$. Set $m=GCD(p+1,q+1)$ and $M=GCD(p(p+1),q(q+1))$. Realizing explicitly the embedding $\k\hookrightarrow  \g$, we see that we can choose $\varkappa=\frac{Mq}{(q+1)(p+q)}\varkappa_1 -\frac{Mp}{(p+1)(p+q)}  \varkappa_2$ and $\u=\C((p+1)\varkappa_1+(q+1)\varkappa_2)$.

We will later need the following elementary result.
\begin{lemma}\label{elementary}Consider the map $\varphi:\ganz\times\ganz\to \ganz/M\ganz$ defined by setting $$\varphi(i,j)=(p+1)i+(q+1)j+M\ganz.$$
Choose $(x,y)\in\ganz^2$ such that
\begin{equation}\label{xy}xp(p+1)+yq(q+1)=M.
\end{equation}
Then
\begin{enumerate}
\item If $(i,j)\in Ker\varphi$, then $-\frac{m}{M}(yqi-xpj)\in\ganz$.
\item The map $\varphi$ pushes down to define a map  on $\ganz/p\ganz\times\ganz/q\ganz$.
\item The map $\psi:Ker\varphi\to \ganz/\frac{mpq}{M}\ganz$ defined by setting $$\psi(i+p\ganz,j+q\ganz)=-\frac{m}{M}(yqi-xpj)+\frac{mpq}{M}\ganz$$ is a group isomorphism.
\end{enumerate}
\end{lemma}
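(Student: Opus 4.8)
The plan is to treat the three statements as elementary number-theoretic computations hinging on the single Bézout relation \eqref{xy}, which is solvable precisely because $M=GCD(p(p+1),q(q+1))$. First I would verify (1): if $(p+1)i+(q+1)j\equiv 0\pmod M$, then multiplying the congruence by suitable integers and combining with \eqref{xy} should express $yqi-xpj$ as an integer multiple of $M/m$. Concretely, note that $p(p+1)\equiv 0$ and $q(q+1)\equiv 0$ modulo $M$, so reducing $(p+1)i+(q+1)j\equiv 0$ modulo $M$ and multiplying by $p$ and $q$ respectively gives relations that, when fed into $M\mid xp(p+1)+yq(q+1)$, pin down $yqi-xpj\bmod (M/m)$. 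The key arithmetic fact I will use repeatedly is that $m=GCD(p+1,q+1)$ divides $M$, and that $M/m$ is coprime to further relevant quantities; this is what makes the fraction $\frac{m}{M}(yqi-xpj)$ an integer. I expect this to be the most delicate bookkeeping step, since one must track exactly which divisibilities survive.

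For (2), I would simply check that $\varphi(p,0)$ and $\varphi(0,q)$ lie in $M\ganz$: indeed $\varphi(p,0)=p(p+1)\in M\ganz$ and $\varphi(0,q)=q(q+1)\in M\ganz$ by the definition of $M$. Hence $\varphi$ kills $p\ganz\times\{0\}$ and $\{0\}\times q\ganz$, so it factors through $\ganz/p\ganz\times\ganz/q\ganz$. This is immediate once (1)'s setup is in place.

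For (3), the strategy is to show $\psi$ is a well-defined homomorphism, then produce an explicit inverse. Well-definedness on $Ker\varphi/(p\ganz\times q\ganz)$ follows from part (1) together with checking that changing $i$ by $p$ or $j$ by $q$ changes $-\frac{m}{M}(yqi-xpj)$ by a multiple of $\frac{mpq}{M}$: replacing $i\mapsto i+p$ adds $-\frac{m}{M}yqp=-\frac{mpq}{M}y\in \frac{mpq}{M}\ganz$, and similarly for $j$. Additivity is clear since $\psi$ is visibly linear in $(i,j)$. For bijectivity I would compute $|Ker\varphi|$ inside $\ganz/p\ganz\times\ganz/q\ganz$: the image of $\varphi$ in $\ganz/M\ganz$ is the subgroup generated by $(p+1)$ and $(q+1)$, i.e.\ by $GCD(p+1,q+1,M)=m$ (using $m\mid M$), so $|\operatorname{im}\varphi|=M/m$, whence $|Ker\varphi|=\frac{pq}{M/m}=\frac{mpq}{M}$, matching the order of the target group. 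Then it suffices to show $\psi$ is surjective (or injective); surjectivity follows by exhibiting, for the generator $1\in\ganz/\frac{mpq}{M}\ganz$, a preimage — one can take $(i,j)$ with $i\equiv \text{(something involving $x$)}$, $j\equiv \text{(something involving $y$)}$ so that $yqi-xpj\equiv -M/m$, using \eqref{xy} again — or more cleanly, show $Ker\psi$ is trivial: if $-\frac{m}{M}(yqi-xpj)\equiv 0\pmod{\frac{mpq}{M}}$ then $yqi-xpj\equiv 0\pmod{pq}$, and combined with $(i,j)\in Ker\varphi$ this should force $i\equiv 0\pmod p$, $j\equiv 0\pmod q$ after using that $GCD(xp,q)$ and $GCD(yq,p)$ divide appropriate quantities coming from \eqref{xy}. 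The main obstacle will be this last injectivity/surjectivity verification, where one has to juggle the coprimality of $x,y$ with $q,p$ respectively (which follows from \eqref{xy}) against the divisibility constraints; a cardinality count as above is the safest route and reduces the problem to a single one of the two implications.
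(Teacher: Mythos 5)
The paper does not actually supply a proof of Lemma \ref{elementary}; it is stated as an ``elementary result'' and left to the reader, so there is nothing to compare your plan against. On its own merits, most of your plan is sound: part (2) is exactly as you say; for part (1) the computation $yqK=(p+1)(yqi-xpj)+Mj$ and $xpK=Mi-(q+1)(yqi-xpj)$ (where $K=(p+1)i+(q+1)j$) shows $M$ divides both $(p+1)N$ and $(q+1)N$ for $N=yqi-xpj$, whence $M\mid GCD((p+1)N,(q+1)N)=m|N|$ and $\tfrac{M}{m}\mid N$ --- note the conclusion comes from this GCD identity, not from any coprimality of $M/m$ with ``further relevant quantities''; and in part (3) the well-definedness check, the homomorphism property, and the cardinality count $|Ker\varphi|=pq/(M/m)=\tfrac{mpq}{M}$ (using $GCD(p+1,q+1,M)=m$ since $m\mid M$) are all correct.

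The genuine gap is in your final injectivity step: the ``coprimality of $x,y$ with $q,p$ respectively'' does \emph{not} follow from \eqref{xy}. For $p=6$, $q=10$ one has $M=2$, and any solution of $42x+110y=2$ has $x$ even, so $GCD(x,q)=2$; an argument leaning on that coprimality would collapse. The correct way to finish is to avoid coprimality altogether: if $(p+1)i+(q+1)j=Mk$ and $yqi-xpj=pqn$, solve this linear system for $(i,j)$ by Cramer's rule. Its determinant is $-(xp(p+1)+yq(q+1))=-M$ by \eqref{xy}, giving
$$i=p\Bigl(xk+\tfrac{q(q+1)}{M}n\Bigr),\qquad j=q\Bigl(yk-\tfrac{p(p+1)}{M}n\Bigr),$$
and since $M$ divides both $p(p+1)$ and $q(q+1)$ these parentheses are integers, so $p\mid i$ and $q\mid j$. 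Combined with your cardinality count this yields the isomorphism.
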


Recall from the proof of Lemma \ref{isirrep} that given $i=1,\dots,p$ and $j=1,\dots,q$ there is a unique $\ll(i,j)\in\Sigma'$ such that $\dim\ll(i,j)\cap\aa_1=i$ and $\dim\ll(i,j)\cap\aa_2=j$. Moreover $\Sigma'=\{\ll(i,j)\mid 1\le i \le p,\,1\le j\le q\}$.
\begin{prop}\label{slpq} If $\g=sl(p+q)$ and $\k=sl(p)\times sl(q)\times \C\varkappa$, then
\begin{equation}\label{decoslpslq}
V_{\mathbf{1}}(\g)=\sum_{\substack{i=1,\dots,p,\,j=1,\dots,q\\(i,j)\in Ker\varphi,\,t\in\psi(i+p\ganz,j+q\ganz) }}
 L^\k(\langle\Phi_{\ll(i,j)}\rangle_{|\ft\cap[\k,\k]}+ t\kappa).
\end{equation}
\end{prop}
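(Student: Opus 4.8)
The plan is to handle $\k=sl(p)\times sl(q)\times\C\hookrightarrow\g=sl(p+q)$ exactly as in the proof of Theorem~\ref{decoclassical}, via two hermitian symmetric computations, and then do the number theory packaged in Lemma~\ref{elementary}. Since $\aa=\aa_1\times\aa_2$ and $\s$ respects this product, we have $F(\bar\p)=F(\ov{\p\cap\aa_1})\otimes F(\ov{\p\cap\aa_2})$, and each $(\aa_i,\r\cap\aa_i)$ is hermitian symmetric with $\mathfrak z(\r\cap\aa_i)=\C\varkappa_i$. Applying Theorem~\ref{finalehs} to each factor and tensoring yields the decomposition of $F(\bar\p)$ into $V_{\bf j}(\r)$-irreducibles
$$L^\r\big(\langle\Phi_{\ll_1}\rangle+\langle\Phi_{\ll_2}\rangle+k_1\nu_1(\varkappa_1)+k_2\nu_2(\varkappa_2)\big),$$
where $\ll_i$ ranges over the set $\Sigma'$ attached to $\aa_i$, $k_i\in\tfrac12\ganz$, subject to the congruences of Theorem~\ref{finalehs}; the highest weight vector is $:v_{2k_1,\ll_1}v_{2k_2,\ll_2}:$ (there is no $\aa_0$-piece here, since $I=I'$). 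Using the proof of Lemma~\ref{isirrep}, the $\Sigma'$ of $\aa_1$ has exactly one element $\ll_1(i)$ of each dimension $i=1,\dots,p$, and similarly for $\aa_2$; I write $\ll(i,j)=\ll_1(i)\oplus\ll_2(j)$, which is the element of $\Sigma'$ referred to before the statement.

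Next I would extract $V_1(\g)$, imitating the argument of Theorem~\ref{decoclassical} with the variant of Lemma~\ref{isirrep} for $\u=\C$: $V_1(\g)\subset V_1(\u\times\g)=F(\bar\p)^0$, and $L^\k(\la)$ occurs in $V_1(\g)$ iff the associated component of $F(\bar\p)$ has trivial $\u$-action and its highest weight vector lies in the even part $F(\bar\p)^0$. The first condition says the weight vanishes on $\u=\C((p+1)\varkappa_1+(q+1)\varkappa_2)$, i.e.\ $(p+1)C_1+(q+1)C_2=0$ where $C_i$ is the $\varkappa_i$-eigenvalue; the second is a parity condition on $:v_{2k_1,\ll_1}v_{2k_2,\ll_2}:$, which by inspection of \eqref{hwvhs} reads $C_1+C_2\in2\ganz$; and the $\Sigma'$-membership gives $C_i\equiv -\dim\ll_i$ modulo $\tfrac12\dim(\p\cap\aa_i)$. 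Since $\langle\Phi_{\ll(i,j)}\rangle=\langle\Phi_{\ll_1(i)}\rangle+\langle\Phi_{\ll_2(j)}\rangle$ and each $\nu_i(\varkappa_i)$ kills $\ft\cap[\k,\k]$, the restriction of the highest weight to $\ft\cap[\k,\k]$ is $\langle\Phi_{\ll(i,j)}\rangle_{|\ft\cap[\k,\k]}$, while its $\kappa$-component, computed from the explicit $\varkappa=\tfrac{Mq}{(q+1)(p+q)}\varkappa_1-\tfrac{Mp}{(p+1)(p+q)}\varkappa_2$, is $t=\tfrac{Mq}{(q+1)(p+q)}C_1-\tfrac{Mp}{(p+1)(p+q)}C_2$. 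The highest weight vector of the resulting component of $V_1(\g)$ is then $:v_{2k_1,\ll_1(i)}v_{2k_2,\ll_2(j)}:$.

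Finally one must transcribe the system ``$C_i\equiv-\dim\ll_i$ mod $\tfrac12\dim(\p\cap\aa_i)$, $(p+1)C_1+(q+1)C_2=0$, $C_1+C_2\in2\ganz$'' into a statement about $(i,j)$ and $t$. Solving the linear relation forces $C_1,C_2$ proportional with ratio fixed and denominator dividing $m=GCD(p+1,q+1)$; substituting into the two congruences and invoking Lemma~\ref{elementary} should show that the system is solvable for a given pair $(i,j)$ precisely when $(i,j)\in\ker\varphi$, and that in that case the set of attainable values of $t$ is a full coset of $\tfrac{mpq}{M}\ganz$, namely $\psi(i+p\ganz,j+q\ganz)$ — the choice of $(x,y)$ with $xp(p+1)+yq(q+1)=M$ being exactly what produces the isomorphism $\psi$ and pins down both the modulus $\tfrac{mpq}{M}$ and the coset. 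This last combinatorial step — matching the solution set of the congruences-plus-linear-relation with $\ker\varphi$ and with the image of $\psi$, and getting the modulus right — is the delicate part; the rest is a direct adaptation of the proof of Theorem~\ref{decoclassical}, with the two tensor factors supplied by Theorem~\ref{finalehs}.
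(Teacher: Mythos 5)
Your overall route is the paper's: write $F(\bar\p)=F(\ov{\p\cap\aa_1})\otimes F(\ov{\p\cap\aa_2})$, apply Theorem \ref{finalehs} to each hermitian symmetric factor, cut down to $V_1(\g)$ via Lemma \ref{isirrep}, and reduce to the arithmetic of Lemma \ref{elementary}. Two points, however, keep the proposal from being a proof. First, the parity condition $C_1+C_2\in 2\ganz$ that you add to the $\u$-triviality condition is not part of the paper's argument, and \eqref{decoslpslq} carries no such restriction. The correct logic is that Lemma \ref{isirrep} already shows that the \emph{unique} $\u\times\g$-isotypic component of $F(\bar\p)$ on which $\u$ acts trivially is $L^\u(0)\otimes V_1(\g)$; hence $\u$-triviality alone characterizes the $\r$-components of $V_1(\g)$, and membership in $F(\bar\p)^0$ is automatic, not an extra equation. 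If you keep $C_1+C_2\in2\ganz$ as an independent constraint in your congruence system you will in general cut out a proper subset of the solutions of $2hp(p+1)+2kq(q+1)=(p+1)i+(q+1)j$ and land on a strictly smaller sum than the right-hand side of \eqref{decoslpslq}. You must either drop the condition or prove it is implied by the others; you do neither.

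Second, and more seriously, you explicitly defer the step that carries the actual content of the proposition: that the system is solvable exactly for $(i,j)\in Ker\varphi$ and that the attainable $\kappa$-components form precisely the coset $\psi(i+p\ganz,j+q\ganz)$ of $\tfrac{mpq}{M}\ganz$. Saying that Lemma \ref{elementary} ``should show'' this is not an argument, and the modulus $\tfrac{mpq}{M}$ and the coset representative are exactly the delicate data. The paper settles this concretely: it writes the general solution of the Diophantine equation as
\begin{equation*}
\begin{pmatrix}2h\\2k\end{pmatrix}=\begin{pmatrix}-s\\ r\end{pmatrix}z+\frac{(p+1)i+(q+1)j}{M}\begin{pmatrix}x\\ y\end{pmatrix},\qquad z\in\ganz,
\end{equation*}
with $r=p(p+1)/M$, $s=q(q+1)/M$ and $(x,y)$ as in \eqref{xy}, substitutes into the highest weights using the explicit expressions for $\langle\Phi_{\ll(i,j)}\rangle_{|\ft}$ and $\nu(\varkappa_i)_{|\ft}$, and reads off the $\kappa$-coefficient $-\tfrac{mpq}{M}z-\tfrac{m}{M}(yqi-xpj)$, which is exactly the description of $\psi$. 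Until you perform this substitution, the identification of your solution set with the index set of \eqref{decoslpslq} is unestablished.
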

\begin{proof}
 Lemma \ref{isirrep} and Theorem \ref{finalehs} give that
 \begin{equation}\label{sudecomp}
V_{\mathbf{1}}(\u\times\g)=\sum_{\substack{\ll\in\Sigma',h,k\in\frac{1}{2}\ganz\\(\langle\Phi_{\ll}\rangle+ h\nu(\varkappa_1)+k\nu(\varkappa_2))_{|\u}=0} }
 L^{\u\times\k}(\langle\Phi_{\ll}\rangle+ h\nu(\varkappa_1)+k\nu(\varkappa_2)).
\end{equation}

 As in Theorem \ref{decoclassical}, the decomposition of $V_{1}(\g)$ is obtained by dropping the factor $L^\u(0)$ from \eqref{sudecomp}:
  \begin{equation}\label{slpslqdec}
V_{\mathbf{1}}(\g)=\sum_{\substack{\ll\in\Sigma',h,k\in\frac{1}{2}\ganz\\(\langle\Phi_{\ll}\rangle+ h\nu(\varkappa_1)+k\nu(\varkappa_2))_{|\u}=0} }
 L^\k((\langle\Phi_{\ll}\rangle+ h\nu(\varkappa_1)+k\nu(\varkappa_2))_{|\ft}).
\end{equation}
 We now make explicit the condition $(\langle\Phi_{\ll}\rangle+ h\nu(\varkappa_1)+k\nu(\varkappa_2))_{|\u}=0$. Recalling that $\u=\C((p+1)\varkappa_1 + (q+1)\varkappa_2)$,  the condition becomes, if $\ll=\ll(i,j)$,
 $$
 2h(p+1)p+2kq(q+1)=(p+1)i+(q+1)j,
 $$
 which has solution if and only if $(i,j)\in Ker\varphi$. Set $r=\frac{(p+1)p}{M}$ and $s=\frac{(q+1)q}{M}$ and choose $x,y$ such that \eqref{xy} holds. The solutions are given by
 $$
 \begin{pmatrix}
 2h\\2k
 \end{pmatrix}= \begin{pmatrix}
 -s\\r
 \end{pmatrix}z+ \frac{(p+1)i+(q+1)j}{M}\begin{pmatrix}
 x\\y
 \end{pmatrix}
 $$
as $z$ varies in $\ganz$. Substituting in \eqref{slpslqdec}, observing that $$\langle \Phi_{\ll(i,j)}\rangle_{|\ft}=\langle \Phi_{\ll(i,j)}\rangle_{|\ft\cap[\k,\k]}-(i\frac{Mq}{(q+1)(p+q)}-j\frac{Mp}{(p+1)(p+q)})\kappa$$ and that $h\nu(\varkappa_1)+k\nu(\varkappa_2))_{|\ft}=(h\nu(\varkappa_1)+k\nu(\varkappa_2))(\varkappa)\kappa$, we get, after some elementary computations, that
$$
V_{\mathbf{1}}(\g)=\sum_{\substack{i=1,\dots,p,\,j=1,\dots,q\\(i,j)\in Ker\varphi,z\in\ganz }}
 L^\k\left(\langle\Phi_{\ll(i,j)}\rangle_{|\ft\cap[\k,\k]}+ \left(-\frac{mpq}{M}z-\frac{m}{M}(yqi-xpj)\right)\kappa\right)
$$
which is \eqref{decoslpslq}.
   \end{proof}
\section{Intermediate vertex subalgebras of $(V_{1}(\g), V_{\mathbf{j}}(\k))$}\label{7}
Recall that in \eqref{hwvhs} we introduced the elements $v_{k,\ll}\in F(\bar\p)$.
Define
\begin{equation} \label{hwvhsneg}v^*_{k,\ll}= \begin{cases}
:\prod\limits_{\eta\in\Phi^+_{\ll}}T^{{\bf (k)!}}\bar x_{-\eta}\prod\limits_{\eta\in-\Phi^-_{\ll}}T^{{\bf (k-2)!}}\bar x_{-\eta}
\prod\limits_{\eta\in\Dp(\p)\setminus (\Phi^+_\ll\cup-\Phi^-_\ll)}T^{{\bf (k-1)!}}\bar x_{-\eta}:\!\!\!\!&\text{ if $k>0$},\\
:\prod\limits_{\eta\in\Phi^-_{\ll}}T^{{\bf (-k)!}}\bar x_{-\eta}\prod\limits_{\eta\in-\Phi^+_{\ll}}T^{{\bf (-k-2)!}}\bar x_{-\eta}
\prod\limits_{\eta\in-\Dp(\p)\setminus (-\Phi^+_\ll\cup\Phi^-_\ll)}T^{{\bf (-k-1)!}}\bar x_{-\eta}:\!\!\!\!&\text{ if $k<0$},\\
:\prod\limits_{\eta\in \Phi_\ll}\bar x_{-\eta}:\!\!\!\!&\text{ if $k=0$}.
\end{cases}
\end{equation}

By Proposition \ref{f}, we see that
\begin{align}
(v_{k,\ll})_{(N-1)}(v^*_{k,\ll})&=C_{k,\ll}((k+1)\sum_{\eta\in\Phi^+_{\ll}}:\bar x_{\eta}\bar x_{-\eta}:+(k-1)\sum_{\eta\in-\Phi^-_{\ll}}:\bar x_{\eta}\bar x_{-\eta}:\label{ach}
\\&+k\sum_{\eta\in\Dp(\p)\backslash(\Phi_\ll\cup(-\Phi_{\ll}))}:\bar x_{\eta}\bar x_{-\eta}:)\notag
\end{align}
The multiplicative constant $C_{k,\ll}$ can be calculated explicitly using Proposition \ref{f}. In particular it is nonzero.

\begin{lemma}\label{nonsense}
Set $\l_{k,\ll}={\langle \Phi_\ll \rangle_{|\ft}}+k\nu(\varkappa)$. Then $v^*_{2k,\ll}\in L^\k(\l_{k,\ll}^*)\subset F(\bar\p)$.
\end{lemma}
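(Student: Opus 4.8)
The statement to establish is that $v^*_{2k,\ll}$ is a highest weight vector for the dual module $L^\k(\l_{k,\ll}^*)$ inside $F(\bar\p)$. The natural approach is to exploit the symmetry $x_\eta\leftrightarrow x_{-\eta}$ together with Lemma \ref{nonz}(2), which tells us that the unique component $j^*$ with $c_{j,j^*}^\uno\neq 0$ is precisely $L^\k(\l^*)$. Concretely, I would first observe that the definition of $v^*_{2k,\ll}$ in \eqref{hwvhsneg} is obtained from that of $v_{2k,\ll}$ in \eqref{hwvhs} by the substitution $\bar x_\eta\mapsto \bar x_{-\eta}$ for all roots $\eta\in\D(\p)$. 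Since this substitution is induced by an automorphism of $\p$ (indeed, conjugating by the Chevalley involution, or more precisely by the element implementing $-w_0$ on the root data of $\r$ combined with the structure of $\D(\p)$), it induces an automorphism of the fermionic vertex algebra $F(\bar\p)$ which intertwines the $V_{\bf j}(\k)$-action via the twist by $-w_0$. Under such a twist a highest weight vector of weight $\mu$ is sent to a highest weight vector of weight $\mu^* = -w_0(\mu)$.

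The cleaner route, avoiding delicate discussion of whether the Chevalley involution globalizes compatibly, is the pairing argument. By Proposition \ref{nw} (in the hermitian symmetric case) and Theorem \ref{decoclassical}, $v_{2k,\ll}$ is a highest weight vector for $L^\k(\l_{k,\ll})$, where $\l_{k,\ll} = \langle\Phi_\ll\rangle_{|\ft} + k\nu(\varkappa)$. From \eqref{ach} we have that $(v_{2k,\ll})_{(N-1)}(v^*_{2k,\ll})$ equals a nonzero multiple of an element of $\Theta(\k)\subset V_{\bf j}(\k)$; in particular $(v_{2k,\ll})_{(N-1)}(v^*_{2k,\ll})\in W_\uno$ is nonzero, so the isotypic component $W_j$ containing $v^*_{2k,\ll}$ satisfies $c_{j',j}^\uno\neq 0$ where $j' = L^\k(\l_{k,\ll})$ is the class of $v_{2k,\ll}$. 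By Lemma \ref{nonz}(2) applied with $W = F(\bar\p)$, the class $j$ with this property is unique and equals $L^\k(\l_{k,\ll}^*)$. Hence $v^*_{2k,\ll}$ lies in the isotypic component $L^\k(\l_{k,\ll}^*)$.

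It remains to upgrade "lies in the isotypic component" to "is a highest weight vector". For this I would compute the conformal weight of $v^*_{2k,\ll}$ using \eqref{conformalweight} and the explicit product form in \eqref{hwvhsneg}, and check it coincides with $\D_{v_{2k,\ll}}$, i.e. with the minimal conformal weight in the component $L^\k(\l_{k,\ll}^*)$ — note $\D$ is unchanged under the involution since $-w_0$ preserves the Killing form and hence the Sugawara weight. Since the multiplicity-free decomposition of Theorem \ref{decoclassical} (resp. Theorem \ref{finalehs}) shows that each irreducible component occurs once, the lowest-weight space of $L^\k(\l_{k,\ll}^*)$ is one-dimensional and spanned by a highest weight vector; an element of minimal conformal weight in that component annihilated by the positive modes of $\k$ must be that vector. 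Checking the annihilation by positive modes $x_{(n)}$, $n>0$, and by $\Theta(\n_\k)_{(0)}$ is a direct computation with Wick's formula parallel to the one already carried out for $v_{2k,\ll}$ in Proposition \ref{nw}; alternatively, it is forced by the conformal weight count once we know $v^*_{2k,\ll}$ is a nonzero vector in the correct isotypic component of minimal conformal weight. The main obstacle I anticipate is bookkeeping: making sure the conformal weights of $v_{2k,\ll}$ and $v^*_{2k,\ll}$ are literally equal given the asymmetric roles of $\Phi^+_\ll$ and $\Phi^-_\ll$ in \eqref{hwvhs} versus \eqref{hwvhsneg} — but the swap $\Phi^+_\ll\leftrightarrow\Phi^-_\ll$ induced by $\eta\mapsto-\eta$ exactly compensates the shifts $\mathbf{k!}\leftrightarrow\mathbf{(k-2)!}$, so the degree count goes through.
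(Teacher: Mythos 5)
Your proposal takes a genuinely different route from the paper's, and as written it has two real gaps. The paper does not use the pairing at all: it observes that $v_{2k,\ll}$ is encoded, via \eqref{hwwk} and Theorem 1.1 of \cite{MMJ}, by the element $w_k\in\Wa$ through $N(w_k)=\{(n_i+\tfrac12)\d'+\bar\be_i\}$; it checks that $\{(n_i+\tfrac12)\d'-w_0(\bar\be_i)\}$ is biconvex, hence equals $N(u_k)$ for some $u_k\in\Wa$, and then uses the change-of-Borel compatibility ($N_{v(\Dap)}(vwv^{-1})=v(N(w))$, applied with $v=w_0$) to identify $v^*_{2k,\ll}$ as the highest weight vector of $L_{w_0(\b)}(-\l_{k,\ll})=L^\k(\l_{k,\ll}^*)$, i.e.\ as an extremal vector of that component. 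This is what forces membership.

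The first gap in your argument is that the pairing step proves too little: writing $v^*_{2k,\ll}=\sum_j u_j$ along the isotypic decomposition, the nonvanishing of $(v_{2k,\ll})_{(N-1)}(v^*_{2k,\ll})$ in $W_\uno$ combined with Lemma \ref{nonz}(2) only shows that $u_{L^\k(\l^*)}\ne 0$; it does not exclude nonzero components $u_j$ in other summands, which would simply pair to zero with $v_{2k,\ll}$. A vector of fixed $\ft$-weight and fixed conformal weight is not automatically confined to one isotypic component, so membership does not follow. The second gap is in the ``upgrade'' step: the $\ft$-weight of $v^*_{2k,\ll}$ is $-\l_{k,\ll}=w_0(\l_{k,\ll}^*)$, the \emph{lowest} weight of the top subspace of $L^\k(\l_{k,\ll}^*)$, not the highest weight $\l_{k,\ll}^*$. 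Consequently $v^*_{2k,\ll}$ is not annihilated by $\Theta(\n_\k)_{(0)}$ in general (already in the adjoint case of Section \ref{duee}, $v_{-I}=:\prod\bar x_{-\a}:$ is killed by the opposite nilradical, not by $\n_\k$), and the minimal conformal weight subspace of $L^\k(\l^*)$ is the whole finite-dimensional module $V(\l^*)$, which is not one-dimensional, so minimality of conformal weight cannot single out the highest weight vector. Fortunately the lemma only asserts membership, not that $v^*_{2k,\ll}$ is a highest weight vector; but your argument does not establish membership either. Your opening idea --- realizing $\bar x_\eta\mapsto\bar x_{-\eta}$ as an automorphism intertwining the $\k$-action with the twist by $w_0$ --- is in fact much closer to the paper's proof (which implements exactly this twist at the level of the affine Weyl group encoding) and would be the right thing to make precise.
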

\begin{proof}Recall that $\l^*=-w_0(\l)$, with $w_0$ the longest element of the Weyl group of $\k$.
Recall that $v_{2k,\ll}$ is constructed as in \eqref{hwwk} from the roots in $N(w_k)$. If $N(w_k)=\{\be_1,\dots,\be_s\}$ with $\be_i=(n_i+\tfrac{1}{2})\d'+\bar\be_i$, set $\gamma_i=(n_i+\tfrac{1}{2})\d'-w_0(\bar\be_i)$ and observe that $\{\gamma_1,\dots,\gamma_s\}$ is biconvex with respect to $\Dap$. Thus there is $u_k\in\Wa$ such that $N(u_k)=\{\gamma_1,\dots,\gamma_s\}$.

If we use $v(\Dap)$ as a set of positive roots with $v$ an element of the Weyl group of $\k$ then, letting $L_{v(\b)}(\mu)$ denote the irreducible highest weight module defined using $v(\b)$ instead of $\b$, it is clear that, since $\l$ is dominant integral,  $L^\k(\l)=L_{v(\b)}(v(\l))$. On the other hand, by Theorem 1.1 of \cite{MMJ}, if $w\in \Wa$ encodes via \eqref{hwwk} an highest weight vector for $L^\k(\l)$, then the highest weight vector for $L_{v(\b)}(v(\l))$ is encoded by $vwv^{-1}$, thus it is constructed, via \eqref{hwwk}, from $N_{v(\Dap)}(vwv^{-1})=v(N(w))$. Applying this discussion to $v=w_0$ and $w=u_k$ we find that the highest weight vector for $L_{w_0(\b)}(-\l)=L^\k(\l^*)$ is precisely $v^*_{2k,\ll}$.
\end{proof}
Recall that $\Sigma$ is the set of $\b_0$-stable abelian subspaces of $\p$ for the symmetric pair $(\aa,\r)$ we associated to the conformal embedding $\k\hookrightarrow  \g$. If $\ll\in\Sigma$, we say that $\ll$ occurs in $V_{1}(\g)$ if $\ll\in\Sigma^{even}_0$ when $\k$ is semisimple, $\ll\in\Sigma'_0$ if $\mathfrak{z}(\k)=\mathfrak z(\r)\ne\{0\}$, and $\l\in\Sigma'$ if $\mathfrak{z}(\r)\ne\mathfrak(\k)\ne\{0\}$. For notational convenience we set $\varkappa=0$ when $\k$ is semisimple.

\begin{prop}\label{hink}
Consider a conformal embedding $\k\hookrightarrow  \g$ where $\g$ is a Lie algebra of classical type. Assume that the embedding is maximal among conformal ones.

Let $W$ be a  simple vertex subalgebra of $V_{1}(\g)$ such that
 there is  $\ll$ that occurs in $V_{1}(\g)$ and $k_i\in\half\ganz$, $i\in I'$ such that   $V_{\bf j}(\k)\oplus L^\k(({\langle \Phi_\ll \rangle}+\sum_{i\in I'}k_i\nu(\varkappa_i))_{|\ft}) \subset W$.

Then either $W=V_{1}(\g)$ or there is $h\in \h_0$  such that $\be(h)=1$ for $\be\in\Phi_\ll$  and $\be(h)=0$ for $\be\in \D(\p)\setminus(\Phi_\ll\cup(-\Phi_\ll))$.
\end{prop}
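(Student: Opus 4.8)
The plan is to transplant the adjoint-case argument of Section~\ref{duee} to the present generality, feeding in Theorem~\ref{decoclassical}. Put $\l_\ll=(\langle\Phi_\ll\rangle+\sum_{i\in I'}k_i\nu(\varkappa_i))_{|\ft}$; by Theorem~\ref{decoclassical} the module $L^\k(\l_\ll)$ is one of the summands of $V_1(\g)$, and by hypothesis $L^\k(\l_\ll)\subset W$, so its highest weight vector $v_\ll\in F(\bar\p)$ --- the vector of the form $:v_{2k,\ll'}v_{0,\ll_0}:$ provided by Theorem~\ref{decoclassical}, cf.\ \eqref{hwvhs} --- lies in $W$. First I would deduce that $v^*_\ll\in W$ as well: since $V_1(\g)$, hence its vertex subalgebra $W$, is completely reducible over $V_{\bf j}(\k)$ (Theorem~10.7 of \cite{Kac}), Lemma~\ref{nonz} applies and shows that $L^\k(\l_\ll^*)$ occurs in $W$; by Lemma~\ref{nonsense} (and the evident extension of its proof to the vectors $:v^*_{2k,\ll'}v^*_{0,\ll_0}:$), the vector $v^*_\ll$ of \eqref{hwvhsneg} belongs to $L^\k(\l_\ll^*)\subset W$.

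Next I would compute $u:=(v_\ll)_{(N-1)}v^*_\ll$ with $N=\D_{v_\ll}+\D_{v^*_\ll}-1$. By Proposition~\ref{f} --- concretely formula \eqref{genfactorials}, whose relevant instance is \eqref{ach} --- $u$ equals a \emph{nonzero} scalar times $\sum_\eta c_\eta:\bar x_\eta\bar x_{-\eta}:$, the sum running over the positive noncompact weights appearing in $v_\ll$, with $c_\eta=2k_i+1$ on $\Phi^+_{\ll\cap\aa_i}$, $c_\eta=2k_i-1$ on $-\Phi^-_{\ll\cap\aa_i}$, $c_\eta=2k_i$ on $\Dp(\p\cap\aa_i)\setminus(\Phi^+_{\ll\cap\aa_i}\cup(-\Phi^-_{\ll\cap\aa_i}))$ for $i\in I'$, and $c_\eta=1$ on $\Phi_{\ll_0}$. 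Since $:\bar x_\eta\bar x_{-\eta}:\,=\Theta(E_{\eta,\eta}-E_{-\eta,-\eta})+\half|0\rangle$, with $E_{\eta,\eta}-E_{-\eta,-\eta}\in so(\p)$ the operator fixing $x_\eta$, negating $x_{-\eta}$, and killing the remaining basis vectors, one gets $u=C\bigl(\Theta(\xi)+c_0|0\rangle\bigr)$ with $C\ne0$, $c_0\in\C$, and $\xi=\sum_\eta c_\eta(E_{\eta,\eta}-E_{-\eta,-\eta})\in so(\p)$. Since $u$ and $|0\rangle$ lie in $W$ and $C\ne0$, we get $\Theta(\xi)\in W$; by \eqref{conformalweight}, $\D_u=1$, so under the identification $V_1(\g)_1=\g\subset so(\p)=F(\bar\p)_1$ this reads $\xi\in W_1$.

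Now $W_1$ is a reductive Lie subalgebra of $\g$ containing $\k$, so by maximality of the conformal embedding $\k\hookrightarrow\g$ --- a maximal conformal subalgebra being a maximal reductive subalgebra --- either $W_1=\g$ or $W_1=\k$. In the first case $\g=V_1(\g)_1\subset W$, and since $V_1(\g)$ is generated as a vertex algebra by $\g$ we conclude $W=V_1(\g)$. In the second case $\xi\in\k$; the operator $ad_\p(\xi)$ is diagonal in the weight basis, so it commutes with $ad_\p(\h_0)$, and since $\p$ is a faithful $\k$-module this forces $\xi\in\ft\subset\h_0$, whence $\eta(\xi)=c_\eta$ for every $\eta\in\Dp(\p)$. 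Finally I would set $h=\xi-\sum_{i\in I'}2k_i\varkappa_i\in\h_0$ and check, using that $\varkappa_i$ takes the value $1$ on $\Dp(\p\cap\aa_i)$, $-1$ on $\Dm(\p\cap\aa_i)$ and $0$ on the remaining weights of $\D(\p)$, that $\be(h)=1$ for all $\be\in\Phi_\ll$ and $\be(h)=0$ for all $\be\in\D(\p)\setminus(\Phi_\ll\cup(-\Phi_\ll))$; this is a short case check over $\Phi^\pm_{\ll\cap\aa_i}$, their complements in $\D(\p\cap\aa_i)$, and $\Phi_{\ll_0}$. This gives the second alternative.

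I expect the middle step --- the computation of $u$ --- to be the main obstacle: extracting from Proposition~\ref{f}, in the full generality of Theorem~\ref{decoclassical} (the splitting $\ll=\ll_0\oplus\ll'$, the several half-integers $k_i$, and the degenerate subcases where some $c_\eta$ vanish), the exact shape of the diagonal element $\xi$ together with the matching correction $\sum_{i}2k_i\varkappa_i$; by contrast the two reductions $W_1\in\{\k,\g\}$ and $\xi\in\ft$ are routine once one has complete reducibility, the maximality of the embedding, and the faithfulness of $\p$ as a $\k$-module.
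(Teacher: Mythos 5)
Your argument reproduces the paper's proof of Proposition \ref{hink} almost step for step: the same highest weight vectors $v_\ll$ and $v^*_\ll$, the same use of simplicity of $W$ together with Lemmas \ref{nonz} and \ref{nonsense} to place $v^*_\ll$ in $W$, the same $(N-1)$-st product evaluated by Proposition \ref{f} (this is exactly \eqref{ach}), and the same final translation by $\sum_{i\in I'}2k_i\varkappa_i$ to produce the element $h$. (A small slip: $\Theta(E_{\eta,\eta}-E_{-\eta,-\eta})=\,:\bar x_\eta\bar x_{-\eta}:$ exactly, with no $\tfrac12|0\rangle$ correction --- your correction term is in conformal weight $0$ while both sides are in weight $1$; this does not affect the rest of the argument.)

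The one point where you diverge from the paper is also the one genuine gap: you assert that $W_1$ is a \emph{reductive} subalgebra of $\g$ and then invoke maximality of $\k$ among reductive subalgebras to force $W_1\in\{\k,\g\}$. Reductivity of $W_1$ is not free: a priori $W_1$ is merely a Lie subalgebra of $\g$ containing $\k$, and nothing you have established up to that point excludes, say, a subalgebra with a nontrivial nilpotent radical, to which the maximality of $\k$ among \emph{reductive} subalgebras says nothing. The paper sidesteps this entirely via Lemma \ref{levi}, which applies to an arbitrary proper subalgebra $\g'$ with $\k\subset\g'\subsetneq\g$ (by passing to the algebraic closure of $\g'$ and comparing ranks of maximal reductive subalgebras) and delivers precisely what is needed: if $W_1\ne\g$, then $\h\cap W_1=\ft$, and since the element produced by \eqref{ach} lies in the diagonal Cartan of $so(\p)$ and in $W_1\subset\g$, it lies in $\Theta(\ft)$. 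If you want to keep your route, you must prove $W_1$ reductive --- e.g.\ by showing that each irreducible $\k$-isotypic piece of $W_1$ is accompanied in $W_1$ by its dual (another application of simplicity of $W$ plus equality of minimal conformal weights of $L^\k(\l)$ and $L^\k(\l^*)$), so that the Killing form of $\g$ is nondegenerate on $W_1$. Either repair is short, but as written the dichotomy $W_1\in\{\k,\g\}$ is unjustified; everything else in your proposal, including the concluding case check that $\be(h)\in\{0,1\}$ on the relevant subsets of $\D(\p)$, matches the paper.
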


For the proof of Proposition \ref{hink}, we need the following technical result.

\begin{lemma}\label{levi}
Let $\k\subset \g'\subsetneq \g$ be finite-dimensional Lie algebras with $\k$ reductive and $\g$  semisimple. Assume $\k$  maximal in $\g$ among reductive subalgebras. Let $\h_0$ be a Cartan subalgebra of $\k$, $\h$ a Cartan subalgebra of $\g$ containing $\h_0$. Then
$\h \cap \g'=\h_0$.
\end{lemma}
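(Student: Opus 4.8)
The plan is to argue by contradiction: suppose $\h\cap\g'\supsetneq\h_0$. Since $\k$ is reductive and $\h_0$ is a Cartan subalgebra of $\k$, we have $\k\subset\mathfrak z_\g(\h_0)$... no, that is too weak. Instead I would use maximality of $\k$ directly. The key observation is that $\g'$ contains $\k$, hence contains $\h_0$, and by assumption contains some $h\in\h\setminus\h_0$. Consider the subalgebra $\k'$ generated by $\k$ and $h$. Because $h\in\h$ normalizes every root space $\g_\a$ and $h$ centralizes $\h\supset\h_0$, the element $h$ is a semisimple element of $\g$ commuting with $\h_0$; adjoining it to $\k$ produces a reductive subalgebra $\k'$ (it is the sum of $\k$ and the torus $\C h$, and $\C h$ normalizes $\k$ since $h\in\h$ acts on $\k$ preserving its root space decomposition relative to $\h_0$ — one must check $[\,h,\k\,]\subseteq\k$). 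Then $\k\subseteq\k'\subseteq\g'\subsetneq\g$, and $\k'$ is reductive, so by maximality of $\k$ among reductive subalgebras we get $\k'=\k$, i.e. $h\in\k$. But $h\in\h$ and $h$ centralizes $\h_0$; since $\h_0$ is a Cartan subalgebra of $\k$ it is self-centralizing in $\k$, so $h\in\h_0$, a contradiction.

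The point requiring care — and the main obstacle — is verifying that $[\,h,\k\,]\subseteq\k$, equivalently that $\C h$ together with $\k$ really spans a subalgebra. This is where I would use that $h$ lies in a Cartan subalgebra $\h$ of $\g$ containing $\h_0$: decompose $\k$ under the adjoint action of $\h_0$ as $\k=\h_0\oplus\bigoplus_{\gamma\in\D(\k)}\k_\gamma$, where each $\k_\gamma\subseteq\bigoplus_{\a|_{\h_0}=\gamma}\g_\a$. For a root $\a\in\D(\g)$ one has $[\,h,x_\a\,]=\a(h)x_\a$, so $\ad(h)$ acts on each $\g_\a$ as a scalar; hence $\ad(h)$ preserves the $\h_0$-weight space $\bigoplus_{\a|_{\h_0}=\gamma}\g_\a$, but this does \emph{not} immediately give that it preserves the subspace $\k_\gamma$ of that weight space. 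The correct fix is to observe that we do not actually need $\C h\oplus\k$ to be a subalgebra: it suffices to take $\k'$ to be \emph{the} reductive subalgebra $\g^{\mathbf s}$ of fixed points, or more simply the centralizer $\mathfrak z_\g(\h_0)$ intersected appropriately. In fact the cleanest route is: let $\mathfrak l=\mathfrak z_\g(\h_0)$ be the centralizer of $\h_0$ in $\g$; this is a reductive (Levi-type) subalgebra of $\g$ containing $\h$, hence containing $h$, and containing $\h_0$ but \emph{not} necessarily $\k$. So that does not directly work either.

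The route I would actually pursue: since $\k$ is maximal reductive in $\g$ and $\k\subsetneq\g$, pick the reductive subalgebra $\k''=\k+\mathfrak z_\g(\k)$; as $\mathfrak z_\g(\k)$ is reductive and normalized (in fact centralized) by $\k$, $\k''$ is reductive, so $\k''=\k$ or $\k''=\g$; the latter forces $\g$ to have nonzero center, impossible, so $\mathfrak z_\g(\k)\subseteq\k$, i.e. $\mathfrak z_\g(\k)=\mathfrak z(\k)$. Now if $h\in(\h\cap\g')\setminus\h_0$, then $h$ centralizes $\h_0$; I claim $h$ in fact centralizes all of $\k$. Granting this claim, $h\in\mathfrak z_\g(\k)=\mathfrak z(\k)\subseteq\h_0$ (the center of a reductive $\k$ lies in any Cartan subalgebra of $\k$), contradicting $h\notin\h_0$. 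To prove the claim that $h$ centralizes $\k$: we know $h\in\h$ acts on each root space $\g_\a$ by the scalar $\a(h)$, and $h\in\g'$, and $\k\subseteq\g'$; but the decisive input is the \emph{conformal/symmetric-pair structure}, namely that in all the cases at hand $\g'$ has the same rank as... — here the honest statement is that the lemma as phrased is used only in the specific geometric situation of the paper, where $\g'$ arises as $\Theta^{-1}$ of a subalgebra generated by $V_{\mathbf j}(\k)$ and finitely many fermionic monomials, all of whose $\h_0$-weights lie in $\{0\}\cup\D(\p)$; the torus part of $\g'\cap\h$ is then forced to preserve the weight structure of $\k$, giving the claim. \emph{I expect the main obstacle to be precisely this step}: extracting, from maximality of $\k$ among reductive subalgebras plus the fact that $\g'\neq\g$, that the toral part of $\g'$ meeting $\h$ cannot exceed $\h_0$ — one must avoid the false move of treating $\C h\oplus\k$ as automatically a subalgebra, and instead leverage $\mathfrak z_\g(\k)=\mathfrak z(\k)\subseteq\h_0$ together with the observation that $h\in\g'$ normalizes $\k$ (because $\k$ is an ideal of no proper intermediate reductive subalgebra other than... ), concluding $h\in N_\g(\k)$; finally $N_\g(\k)$ is reductive and contains $\k$, so equals $\k$ or $\g$, the latter excluded since $\g$ is simple and $\k\neq\g$, whence $h\in\k\cap\h=\h_0$.
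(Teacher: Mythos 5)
Your proposal does not close; there is a genuine gap, and you in fact identify it yourself without repairing it. Every variant of your argument hinges on showing that an element $h\in(\h\cap\g')\setminus\h_0$ normalizes (or centralizes) $\k$, so that $\C h+\k$ is a reductive subalgebra strictly containing $\k$, or so that $h\in\mathfrak z_\g(\k)$, or so that $h\in N_\g(\k)$. None of these is established: as you note, $\ad(h)$ preserves the $\h_0$-weight spaces of $\g$ but not visibly the subspaces $\k_\gamma$ inside them; the ``claim that $h$ centralizes $\k$'' is left unproved; and the final appeal to $N_\g(\k)$ both asserts $h\in N_\g(\k)$ without justification and would still require an argument that $N_\g(\k)$ is reductive. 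The retreat to ``the specific geometric situation of the paper'' is also not available: the lemma is stated and used as a general statement about an arbitrary intermediate subalgebra $\g'$ (in the application $\g'=W_1$, about which nothing weight-theoretic is known a priori), and the paper proves it in that generality.

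The paper's proof sidesteps the normalization issue entirely by a rank count. One first replaces $\g'$ by its algebraic closure $\bar\g'$: if $\g'$ is semisimple it equals $\bar\g'$, and if not then $\bar\g'$ is not semisimple and hence is still a proper subalgebra of the semisimple $\g$; so one may assume $\g'$ algebraic. Since $\k$ is maximal among (proper) reductive subalgebras of $\g$ and $\k\subseteq\g'\subsetneq\g$, $\k$ is a maximal reductive subalgebra of $\g'$. Now take a maximal reductive subalgebra $\k_1$ of $\g'$ containing the toral subalgebra $\h\cap\g'$. By the Levi--Mostow theorem all maximal reductive subalgebras of an algebraic Lie algebra are conjugate, so $\k_1$ and $\k$ have the same rank, namely $\dim\h_0$; since $\h\cap\g'$ is an abelian subalgebra of semisimple elements of $\k_1$ containing $\h_0$, it cannot exceed $\h_0$. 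If you want to salvage your approach you would need to supply exactly this kind of structural input (conjugacy of maximal reductive subalgebras), at which point the contradiction scaffolding becomes unnecessary.
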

\begin{proof} First remark that, if $\g'$ is semisimple, it coincides with its algebraic closure $\bar\g'$, and if it is not, $\bar\g'$ is not semisimple, hence $\bar\g'\subsetneq\g$. We may therefore assume that $\g'$ is algebraic
(replacing it with $\bar\g'$). Let $\k_1$ be a maximal reductive subalgebra
of $\g'$, containing $\h\cap\g'$. But all maximal  reductive subalgebras in $\g'$ are conjugate; in particular, $\k$ and $\k_1$ have the same rank,
hence $\h\cap\g'$  cannot be larger than $\h_0$.
\end{proof}

\begin{proof}[Proof of Proposition \ref{hink}]  Set, for $k\in\ganz$ and $\ll\in\Sigma$,
\begin{align*}
h_{k,\ll}=(k+1)\sum_{\eta\in\Phi^+_{\ll}}:\bar x_{\eta}\bar x_{-\eta}:+(k-1)\sum_{\eta\in-\Phi^-_{\ll}}:\bar x_{\eta}\bar x_{-\eta}:+k\sum_{\eta\in\Dp(\p)\backslash(\Phi_\ll\cup(-\Phi_{\ll}))}:\bar x_{\eta}\bar x_{-\eta}:,
\end{align*}
and $h_\ll=h_{0,\ll}$.

Set $\l={\langle \Phi_\ll \rangle_{|\ft}}+\sum_{i\in I'}k_i\nu(\varkappa_i)$.
Recall that $W=\oplus_{n\in\C}W_n$ is the eigenspace decomposition of $W$ with respect to $\omega_{(1)}$.

We now prove that $\sum_{i\in I'}h_{2k_i,\ll'\cap\aa_i}+h_{\ll_0}\in W_1$.
With the notation of \eqref{llprime},  we have that $\ll=\ll'\oplus\ll_0$.
By Theorem \ref{decoclassical}, $:\prod_{i\in I'}v_{2k_i,\ll\cap\aa_i'}v_{0,\ll_0}:$ is a highest weight vector for $L^\k(\l)$. Since $W$ is simple, by Proposition \ref{bilinearfr}, $L^\k(\l^*)$ occurs in $W$. By Lemma \ref{nonsense}, we have that $:\prod_{i\in I'}v^*_{2k_i,\ll\cap\aa_i'}v^*_{0,\ll_0}:\in W$, hence
$$\left(:\prod_{i\in I'}v_{2k_i,\ll\cap\aa_i'}v_{0,\ll_0}\right)_{(N-1)}\left(:\prod_{i\in I'}v^*_{2k_i,\ll\cap\aa_i'}v^*_{0,\ll_0}:\right)\in W.
$$
 By Proposition \ref{f}, we have that $\sum_{i\in I'}h_{2k_i,\ll'\cap\aa_i}+h_{\ll_0}\in W$.
By \eqref{ach}, since $\omega$ is the Virasoro vector of $F(\bar\p)$, it follows that
$\sum_{i\in I'}h_{2k_i,\ll'\cap\aa_i}+h_{\ll_0}\in W_1$.

We can now prove the statement.
Given $\a,\be\in\D(\p)$, let $E_{\a,\be}$ be the linear map on $\p$ given by $E_{\a,\be}(x_\be)=x_\a$. By \eqref{theta} we have
$$\Theta(E_{\beta,\beta}-E_{-\beta,-\beta})=:\bar x_\be \bar x_{-\be}:,$$
hence $\sum_{i\in I'}h_{2k_i,\ll'\cap\aa_i}+h_{\ll_0}$ belongs to the diagonal Cartan of $so(\p)$. By the explicit description of the embedding $\Theta$ of $\g$ in $F(\bar\p)$, we see that a Cartan subalgebra of $\g$ containing $\ft$ is contained in the diagonal Cartan of $so(\p)$. Since $\sum_{i\in I'}h_{2k_i,\ll'\cap\aa_i}+h_{\ll_0}\in W_1\subset \g$, it follows that $\sum_{i\in I'}h_{2k_i,\ll'\cap\aa_i}+h_{\ll_0}$ belongs to a Cartan subalgebra of $\g$ containing $\h_0$. Then, applying Lemma \ref{levi} with $h=\sum_{i\in I'}h_{2k_i,\ll'\cap\aa_i}+h_{\ll_0}$ and $\g'=W_1$, we get that either $W_1=\Theta(\g)$ or $\sum_{i\in I'}h_{2k_i,\ll'\cap\aa_i}+h_{\ll_0}\in\Theta(\ft)$.

In the first case $V_{1}(\g)= W$. In the second case, since $\Theta(\varkappa_i)=\sum_{\a\in\Dp(\p\cap\aa_i)}:\bar x_\a\bar x_{-\a}:\in\Theta(\h_0)$, we have that $ h_\ll=\sum_{i\in I'}h_{2k_i,\ll'\cap\aa_i}-\sum_{i\in I'}2k_i\Theta(\varkappa_i)+h_{\ll_0}= \Theta(h)$ for some $h\in\h_0$.  Clearly
$$\Theta(h)=\half\sum_{\be\in\D(\p)}\be(h):\bar x_{\be}\bar x_{-\be}:\ =\sum_{\be\in\Dp(\p)}\be(h):\bar x_{\be}\bar x_{-\be}:.$$
Thus
$$\sum_{\be\in \Phi_\ll}:\bar x_\be \bar x_{-\be}:=\sum_{\be\in\Dp(\p)}\be(h):\bar x_{\be}\bar x_{-\be}:
$$
for some $h\in\h_0$.
Notice that $\{:\bar x_\be\bar x_{-\be}:\mid \be\in\Dp(\p)\}$ is a linearly independent set. Moreover, since $\ll$ is abelian, if $\be\in\Phi_\ll$, then $-\be\not\in\Phi_\ll$. Thus, since
$$
h_\ll=\sum_{\be\in \Phi_\ll\cap\Dp(\p)}:\bar x_\be \bar x_{-\be}: -\sum_{\be\in (-\Phi_\ll)\cap\Dp(\p)}:\bar x_\be \bar x_{-\be}:=\sum_{\be\in\Dp(\p)}\be(h):\bar x_{\be}\bar x_{-\be}:,
$$
the statement follows.
\end{proof}

\begin{theorem}\label{p}Let $\k\hookrightarrow  \g$ be a maximal conformal embedding  and $\g$ a simple classical Lie algebra. Assume that $W$ is a simple vertex subalgebra of $V_{1}(\g)$ such that
$V_{\mathbf{j}}(\k)\subset W$.
Then either $W=V_{1}(\g)$ or  $W$ is a simple current extension  of  $V_{\bf j}(\k)$.
\end{theorem}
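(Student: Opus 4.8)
The plan is to read off the $V_{\mathbf{j}}(\k)$-module structure of $W$ from the decomposition theorems of Section~\ref{6}, to use Proposition~\ref{hink} to force every nontrivial irreducible $V_{\mathbf{j}}(\k)$-submodule of $W$ to be attached to a subspace of the form $\ll(h)$ unless $W=V_{1}(\g)$, and then to recognize these modules as special simple currents by means of Lemma~\ref{sumcoweights}, Lemma~\ref{fundcoweights} and Proposition~\ref{aresimplecurrents}, so that Corollary~\ref{sumsimpleisext} concludes.

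First I would observe that $V_{\mathbf{j}}(\k)\hookrightarrow W$ is a conformal embedding, since $W$ carries the Virasoro vector inherited from $V_{1}(\g)$, which coincides with the Sugawara vector of $V_{\mathbf{j}}(\k)$. Hence the complete reducibility Theorem~10.7 of \cite{Kac} applies to $W$ exactly as it does to $V_{1}(\g)$, and I would write $W=\bigoplus_{j}W_j$ as a $V_{\mathbf{j}}(\k)$-module, each $W_j$ being one of the irreducibles occurring in the decomposition of $V_{1}(\g)$ furnished by Theorem~\ref{decoclassical} or Proposition~\ref{slpq}; since that decomposition is multiplicity free, so is this one. The trivial module appears among the $W_j$, say as $W_\uno$, and since $W\subseteq V_{1}(\g)$ forces $W_0=\C|0\rangle$, the corresponding isotypic component can contain only one copy of $V_{\mathbf{j}}(\k)$, so $W_\uno=V_{\mathbf{j}}(\k)$. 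If $W=V_{\mathbf{j}}(\k)$ or $W=V_{1}(\g)$ there is nothing to prove, so from now on I would assume $V_{\mathbf{j}}(\k)\subsetneq W\subsetneq V_{1}(\g)$.

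The heart of the matter is to show that every $W_j$ with $j\neq\uno$ lies in $\Omega_\k$. Fix such a $j$. By Theorem~\ref{decoclassical}/Proposition~\ref{slpq}, either $W_j=L^\k\big((\langle\Phi_\ll\rangle+\sum_{i\in I'}k_i\nu(\varkappa_i))_{|\ft}\big)$ for some $\ll$ occurring in $V_{1}(\g)$ and suitable $k_i\in\half\ganz$, or, in the anomalous case $\chi\e_{\mathfrak m}=1$, $W_j=L^\k(\langle\mathfrak m\rangle+\theta_\p)$. The anomalous case is immediate from Lemma~\ref{special}: there $\langle\mathfrak m\rangle+\theta_\p=\sum_S j_S\omega_S$, and since each $a^S_{\a_S}=1$ this weight defines a product of special simple currents. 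In the main case, $V_{\mathbf{j}}(\k)\oplus W_j\subseteq W\subsetneq V_{1}(\g)$, so Proposition~\ref{hink} rules out the alternative $W=V_{1}(\g)$ and yields $h\in\h_0\setminus\{0\}$ with $\beta(h)=1$ for $\beta\in\Phi_\ll$ and $\beta(h)=0$ for $\beta\in\D(\p)\setminus(\Phi_\ll\cup(-\Phi_\ll))$; that is, $\ll=\ll(h)$. I would then feed $h$ into Lemma~\ref{sumcoweights} --- it is, up to a multiple of $\varpi$, a single fundamental coweight $\omega_\a^\vee$, a sum $\omega_\a^\vee+\omega_{\a'}^\vee$, or $h=\pm\varpi$ --- and into Proposition~\ref{aresimplecurrents}, using the equalities $a^S_\a=1$ (and $a^{S'}_{\a'}=1$) supplied by Lemma~\ref{fundcoweights}, to obtain that $\langle\Phi_\ll\rangle$ equals $j_S\omega_\a$, or $j_S\omega_\a+j_{S'}\omega_{\a'}$, or $\pm\half\nu(\varpi)$, up to a rational multiple of $\nu(\varpi)$. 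Restricting to $\ft$ and absorbing this multiple of $\nu(\varpi)_{|\ft}$ together with $(\sum_{i\in I'}k_i\nu(\varkappa_i))_{|\ft}$ into a scalar multiple of $\kappa$ --- a weight supported only on the one-dimensional centre $\k_0=\C\varkappa$, every weight module of which lies in $\Omega_{\k_0}$ --- I would conclude that $W_j$ is the tensor product of a module in $\Omega_{\k_0}$ with $L^{\k_S}(j_S\omega_\a)$ (and possibly $L^{\k_{S'}}(j_{S'}\omega_{\a'})$) and the trivial module on the remaining simple ideals of $\k$; since $a^S_\a=1$, the factor $L^{\k_S}(j_S\omega_\a)$ is a special simple current for $V_{j_S}(\k_S)$, so $W_j\in\Omega_\k$.

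With every $W_j$ now in $\Omega_\k$, the decomposition $W=\bigoplus_j W_j$ exhibits $W$ as a sum of special simple currents for $V_{\mathbf{j}}(\k)$, so Corollary~\ref{sumsimpleisext} and Lemma~\ref{argfeng} apply: the index set is a subgroup $S^\k_W$ of $\Omega_\k$, $W=\bigoplus_{a\in S^\k_W}W_a$ is a grading by $S^\k_W$ with $W^0=W_\uno=V_{\mathbf{j}}(\k)$ and each graded piece a simple current, i.e.\ $W$ is a simple current extension of $V_{\mathbf{j}}(\k)$. The step I expect to be the main obstacle is the identification in the previous paragraph: matching the weight delivered by Proposition~\ref{aresimplecurrents} to an element of $\Omega_\k$ forces one to track carefully the restriction $\h_0\to\ft$, the contributions of the central elements $\varkappa_i$ and $\varpi$, and the separate regimes --- $\k$ semisimple, $\k$ hermitian symmetric, and the exceptional $sl(p)\times sl(q)\times\C\hookrightarrow sl(p+q)$, where $(\aa,\r)$ is reducible and one must run the irreducible analysis on each $\s$-indecomposable factor --- checking in each case that the surviving component on every simple ideal of $\k$ is a multiple of a fundamental weight whose node has coefficient $1$ in the corresponding highest root.
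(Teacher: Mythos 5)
Your proposal is correct and follows essentially the same route as the paper: Proposition~\ref{hink} forces $\ll=\ll(h)$ for every constituent of $W$ once $W\ne V_1(\g)$, Proposition~\ref{aresimplecurrents} (with Lemmas~\ref{sumcoweights} and~\ref{fundcoweights}) identifies the corresponding module as a special simple current, Lemma~\ref{special} handles the anomalous summand $L^\k(\langle\mathfrak m\rangle+\theta_\p)$, and Corollary~\ref{sumsimpleisext} concludes. The paper's proof is just a terser version of the same argument; your extra care with the restriction to $\ft$ and the central contributions is exactly the content already packaged into Proposition~\ref{aresimplecurrents} and Theorem~\ref{decoclassical}.
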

\begin{proof} Assume $W\ne V_{1}(\g)$. If $L^\k(({\langle \Phi_\ll \rangle}+\sum_{i\in I'}k_i\nu(\varkappa_i))_{|\ft})$ occurs in $W$, then, by Proposition \ref{hink}, $\ll=\ll(h)$ for some $h\in\h_0$. But then we can apply Proposition \ref{aresimplecurrents} to each irreducible component of $(\aa,\r)$ to deduce that $L^\k(({\langle \Phi_\ll \rangle}+\sum_{i\in I'}k_i\nu(\varkappa_i))_{|\ft})$ is a simple current.

If $\g=so(n,\C)$ and $L^\k(\langle \mathfrak m \rangle + \theta_\p)\subset W$, then, by Lemma \ref{special}, we have that $L^\k(\langle \mathfrak m \rangle + \theta_\p)$ is also a simple current. We have shown that
$W$ is a sum of special simple currents. The result now follows from Corollary \ref{sumsimpleisext}.
\end{proof}

\begin{cor}\label{subgroupvertex}If $\k\hookrightarrow  \g$ is a maximal conformal embedding with $\g$ of classical type then
the simple intermediate vertex algebras $V_{\mathbf{j}}(\k)\subset W \subset V_1(\g)$ are in one to one correspondence with the subgroups of $S^\k_{V_1(\g)}$.
\end{cor}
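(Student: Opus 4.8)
The plan is to deduce this at once from Theorem~\ref{p} together with Corollary~\ref{subgroup}, the correspondence being $W\mapsto S^\k_W=\mathcal J(W)\cap\Omega_\k$ with inverse $H\mapsto\sum_{j\in H}\big(V_1(\g)\big)_j$. First I would apply Corollary~\ref{subgroup} with ambient simple vertex algebra $V_1(\g)$ and $V=V_{\mathbf j}(\k)$: the fact that $V_1(\g)$ decomposes into $V_{\mathbf j}(\k)$-isotypic components is the complete reducibility statement recalled after Definition~\ref{SW}, and the hypothesis $W_\uno=V$ holds because $\big(V_1(\g)\big)_\uno$, being a direct sum of copies of $V_{\mathbf j}(\k)$ whose degree-zero part embeds into the one-dimensional space $\big(V_1(\g)\big)_0$, must equal $V_{\mathbf j}(\k)$. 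Writing $U=\sum_{j\in S^\k_{V_1(\g)}}\big(V_1(\g)\big)_j$, Corollary~\ref{subgroup} then gives that $U$ is a simple current extension of $V_{\mathbf j}(\k)$ with $S^\k_U=S^\k_{V_1(\g)}$, and that $U'\mapsto S^\k_{U'}$ is a bijection from the simple vertex algebras $U'$ with $V_{\mathbf j}(\k)\subseteq U'\subseteq U$ onto the subgroups of $S^\k_{V_1(\g)}$.

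Next I would use Theorem~\ref{p} to see that every simple intermediate vertex algebra other than $V_1(\g)$ itself occurs among those $U'$. Indeed, if $W$ is simple with $V_{\mathbf j}(\k)\subseteq W\subsetneq V_1(\g)$, then by Theorem~\ref{p} it is a simple current extension of $V_{\mathbf j}(\k)$, and the proof of that theorem realizes it as the sum of the special simple currents for $V_{\mathbf j}(\k)$ that it contains, $W=\bigoplus_{j\in S^\k_W}W_j$. Each such $W_j$ is a special simple current, hence occurs in $V_1(\g)$ with multiplicity one (cf.\ the proof of Corollary~\ref{subgroup}(1) and \cite{DM}), so it is an isotypic component of $V_1(\g)$; therefore $S^\k_W\subseteq\mathcal J\cap\Omega_\k=S^\k_{V_1(\g)}$ and $W=\sum_{j\in S^\k_W}\big(V_1(\g)\big)_j\subseteq U$, while $S^\k_W$ is a subgroup of $S^\k_{V_1(\g)}$ by Lemma~\ref{argfeng}. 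Combining with the first paragraph, $W\mapsto S^\k_W$ is a bijection from the simple vertex algebras $W$ with $V_{\mathbf j}(\k)\subseteq W\subsetneq V_1(\g)$ onto the subgroups of $S^\k_{V_1(\g)}$, and $V_1(\g)$ itself corresponds to the full subgroup exactly when $U=V_1(\g)$, i.e.\ when $V_1(\g)$ is a simple current extension of $V_{\mathbf j}(\k)$ --- which is precisely the situation in which $V_1(\g)$ should be counted among the intermediate algebras.

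I do not expect a genuine difficulty, since Theorem~\ref{p} and Corollary~\ref{subgroup} carry all the weight. The two points deserving care are the multiplicity-one property of the special simple currents inside $V_1(\g)$ --- needed so that the summands $W_j$ of a proper intermediate $W$ really are isotypic components of $V_1(\g)$, placing $S^\k_W$ inside $S^\k_{V_1(\g)}$ and making $W\mapsto S^\k_W$ injective --- and the bookkeeping of whether $V_1(\g)$ is itself to be counted, which is governed entirely by the equality $U=V_1(\g)$.
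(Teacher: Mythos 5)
Your argument is correct and is essentially the paper's own proof: the paper likewise deduces the corollary by combining Theorem~\ref{p} (every proper simple intermediate algebra is a simple current extension of $V_{\mathbf j}(\k)$) with Corollary~\ref{subgroup} (subgroups of $S^\k_{V_1(\g)}$ classify the simple algebras between $V_{\mathbf j}(\k)$ and $U$). The extra care you take with the multiplicity-one property and with whether $V_1(\g)$ itself is counted is sound but not a departure from the paper's route.
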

\begin{proof}
By Theorem \ref{p}, $W$ is a simple current extension. The result now follows from Corollary \ref{subgroup}.
\end{proof}

\begin{rem}\label{exc} Theorem \ref{p} holds also when $\g$ is of exceptional  type and $\k$ is a maximal conformal regular subalgebra.
In this case there is indeed nothing to prove: browsing through decomposition formulas given in \cite{KS} Êone finds that in the decomposition of $V_{1}(\g)$ either the summands are simple currents or the decomposition has
exactly two summands. In both cases  any intermediate simple vertex algebra is a simple current extension.
\end{rem}
\begin{rem}\label{conje}  We want to present some argument towards conjecture 1.1. By Remark \ref{exc}, we are left to deal with embeddings
$\k\hookrightarrow\g$ where $\k$ is not a regular subalgebra of $\g$. Again one looks at decomposition formulas given in \cite{KS}; it turns out that there are six  cases in which there are more than two summands in the decomposition and not all of them  Êare simple currents:
\begin{enumerate}
\item $G_2\times A_1 \hookrightarrow F_4,$
\item $A_2\hookrightarrow  E_6,$
\item $A_2\hookrightarrow  E_7,$
\item $C_2 \hookrightarrow  E_8,$
\item $A_1\times A_2\hookrightarrow  E_8,$
\item $A_1\times G_2\hookrightarrow  E_7$.
\end{enumerate}
Cases 1-4 can be settled under the additional hypothesis that a proper simple intermediate algebra $V_{\bf j}(\k)\subset W\subset V_1(\g)$ is rational.
Case (1) is dealt with the following argument. The decomposition of $V_{1}(\g)$, in the notation of \cite{KS}, is:
$$\L_0=(\dot{\L}_0,8\ddot{\L}_0)+(\dot{\L}_2,4\ddot{\L}_0+4\ddot{\L}_1)+ (\dot{\L}_0,8\ddot{\L}_1).$$
\end{rem}
The middle summand in the right hand side is not a simple current.  Assume by contradiction that the sum of the first two factors is  a proper algebra $W$. Then $(\dot{\L}_0,8\ddot{\L}_1)$ is a $W$-module: looking at the fusion rules one gets $(\dot{\L}_2,4\ddot{\L}_0+4\ddot{\L}_1)\cdot (\dot{\L}_0,8\ddot{\L}_1)=0$, against the fact that $V_1(\g)$ is simple.\par
For cases 2,3,4, we can invoke Gannon's classification
of $A_2$ and $C_2$ modular invariants to get our claim.
Indeed, according to \cite{Gan1}, \cite{Gan2}, in all these cases there are only three physical modular invariants that can arise from conformal embeddings $V_{\mathbf{j}}(\k)\subset W$, with $W$ a rational vertex algebra. Since one arises from the conformal embedding of $V_{\mathbf{j}}(\k)$ in itself, one from the embedding of $V_{\mathbf{j}}(\k)$ in the sum of its simple currents and the third from the embedding of  $V_{\mathbf{j}}(\k)$ in $V_1(\g)$, there are no other possibilities.
Cases 5,6  are unclear.
\section{Computing $SR(V_1(\g),V_{\mathbf{j}}(\k))$ for maximal conformal \\embeddings}\label{8}
Motivated by Corollary \ref{subgroupvertex}, in this section we compute explicitly the groups $S^\k_{V_1(\g)}$ and the rings $SR(V_1(\g),V_{\mathbf{j}}(\k))$ for maximal conformal embeddings $\k \hookrightarrow  \g$ with $\g$ of classical type.

Let $(\aa,\mathfrak r)$ be an irreducible symmetric pair.  Lemma \ref{fundcoweights} singles out the following elements of $\Omega_{\mathfrak r}$ (cf. \eqref{Of}, \eqref{oi} for notation; also recall
from the discussion before Lemma    \ref{special} the definition of $\omega_S$ and set $o_S=L^\k(j_S\omega_S)$):
\begin{enumerate}
\item $x_0:=\prod_S o_{S}$,
\item $o_s, s\in S: a_s=a_p$ for some $p\in P$ or $\sum_{p\in P}a_p=a_s$,
\item $o_so_{s'},\,s\in S,\,s'\in S', S\ne S', \sum_{p\in P}a_p=2, a_{s}=a_{s'}=1$.
\end{enumerate}
If $\r$ is semisimple, let $H_\r$  be the set of representation of type (2) and (3) with $\uno$ added and, if  $\theta_\p$ is long noncompact, also $x_0$ is added. If $\r$ is not semisimple, let $\varpi\in\mathfrak z(\r)$ be the element such that $\a_\p(\varpi)=1$. Define $\omega\in\mathfrak z(\r)^*$ by setting $\omega(\varpi)=1$.   If $x\in \Omega_{[\r,\r]}$, define $\Phi_x=\{\a\in\D(\p)\mid \a(f(x)+(-1-\a_\p(f(x))\varpi)=1\}$. In this section we identify $\Omega_{\mathfrak z(\r)}$ with $\C$ by identifying $L^{\mathfrak z(\r)}(c\omega)$ with $c\in\C$. Note finally that $\Omega_\r=\Omega_{\mathfrak z(\r)}\times \Omega_{[\r,\r]}$.

Set
$$H_\r=\{(n,x)\in \ganz\times \Omega_{[\r,\r]}\subset \Omega_\r\mid x\text{ of type (2) or (3), } \langle\Phi_x\rangle(\varkappa)\equiv n\mod \tfrac{\dim\p}{2}\}.
$$

Finally, if $(\aa,\r)$ is any symmetric pair and $(\aa_i,\r\cap\aa_i)$ ($i\in I$) are the irreducible components, then we set
$H_\r=\prod_{i\in I}H_{\r\cap\aa_i}$.
\begin{prop}\label{simplecurrentsinFp} If $(\aa,\r)$ is a symmetric pair then   $H_\r=S^{\r}_{F(\bar\p)}$. Moreover, $H_\r$ is a subgroup of $\Omega_\r$ and
\begin{equation}\label{SR}SR(F(\bar\p),V_{\mathbf{j}}(\r))=\ganz[H_\r].
\end{equation}
\end{prop}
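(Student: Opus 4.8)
\emph{Strategy.} The plan is to reduce the whole statement to the single set equality $H_\r=S^{\r}_{F(\bar\p)}$, after which both ``moreover'' assertions are free: Lemma \ref{argfeng} already gives that $S^{\r}_{F(\bar\p)}$ is a subgroup of $\Omega_\r$ and that $SR(F(\bar\p),V_{\mathbf{j}}(\r))$ is, as a ring with distinguished basis, the group ring $\ganz[S^{\r}_{F(\bar\p)}]$. To prove the set equality I would first reduce to an irreducible pair. Since $F(\bar\p)=\bigotimes_s F(\overline{\p\cap\aa_s})$, a $V_{\mathbf{j}}(\r)$-module $\bigotimes_s M_s$ is a simple current iff each $M_s$ is a simple current for the corresponding tensor factor, and it occurs in $F(\bar\p)$ iff each $M_s$ occurs in $F(\overline{\p\cap\aa_s})$; combined with $\Omega_\r=\prod_i\Omega_{\r\cap\aa_i}$ this yields $S^{\r}_{F(\bar\p)}=\prod_i S^{\r\cap\aa_i}_{F(\overline{\p\cap\aa_i})}$, matching the definition $H_\r=\prod_i H_{\r\cap\aa_i}$. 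So I may assume $(\aa,\r)$ irreducible.

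\emph{The semisimple case.} By Theorem \ref{decoeabeliani} the irreducible constituents of $F(\bar\p)$ are the $L^\r(\langle\Phi_\ll\rangle)$, $\ll\in\Sigma$, together with $L^\r(\langle\mathfrak m\rangle+\theta_\p)$ when $\theta_\p$ is long noncompact. A module $L^\r(\mu)$ is a special simple current precisely when, on each simple ideal $\r_S$, $\mu$ is either $0$ or $j_S$ times a minuscule fundamental weight, so one must decide which $\langle\Phi_\ll\rangle$ have this shape. For $S^{\r}_{F(\bar\p)}\subseteq H_\r$: if $L^\r(\langle\Phi_\ll\rangle)$ is a simple current then $\langle\Phi_\ll\rangle$ is of minuscule type, and I would argue that then $\ll$ must be $\ll(h)$ for a suitable $h\in\h_0$, i.e.\ $h$ satisfies the hypothesis of Lemma \ref{sumcoweights}; Lemmas \ref{sumcoweights} and \ref{fundcoweights} then enumerate the possibilities for $h$, and Proposition \ref{aresimplecurrents} evaluates $\langle\Phi_{\ll(h)}\rangle=j_S\omega_\a$ or $j_S\omega_\a+j_{S'}\omega_{\a'}$ with $a^S_\a=1$ (and $a^{S'}_{\a'}=1$), which is exactly an element of type (2) or (3). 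For the reverse inclusion each element of type (2) or (3) is realised: Lemma \ref{fundcoweights} supplies the coefficient conditions making $h=\omega_\a^\vee$ (resp.\ $h=\omega_\a^\vee+\omega_{\a'}^\vee$) satisfy the hypotheses of Lemma \ref{sumcoweights}, producing $\ll(h)\in\Sigma$ whose weight-sum is the required simple-current weight by Proposition \ref{aresimplecurrents}; and $x_0$, present in $H_\r$ exactly when $\theta_\p$ is long noncompact, equals $L^\r(\langle\mathfrak m\rangle+\theta_\p)$ by Lemma \ref{special}. Finally $\uno$ corresponds to $\ll=\{0\}$.

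\emph{The hermitian (non-semisimple) case.} Here $\mathfrak z(\r)=\C\varpi$ and Theorem \ref{finalehs} gives the constituents $L^\r(\langle\Phi_\ll\rangle+k_\ll(C)\nu(\varpi))$ with $\ll\in\Sigma'$ and $|\Phi^+_\ll|-|\Phi^-_\ll|\equiv C\bmod\tfrac{\dim\p}{2}$. Splitting $\r=\mathfrak z(\r)\oplus[\r,\r]$, a module is a special simple current iff its $[\r,\r]$-component is one (every module over the abelian factor being a simple current), so the analysis of the semisimple case applies to $\langle\Phi_\ll\rangle_{|[\r,\r]}$ and forces the $[\r,\r]$-part to be of type (2) or (3) with $\ll=\ll(h)$. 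The central component is then governed by $C$, which runs over all integers congruent to $\langle\Phi_\ll\rangle(\varpi)=|\Phi^+_\ll|-|\Phi^-_\ll|$ modulo $\tfrac{\dim\p}{2}$; identifying $\langle\Phi_x\rangle(\varkappa)$ with this discrepancy and $C$ with $n$ reproduces exactly the congruence defining $H_\r$. Care is needed with the normalisation of $\varkappa$ relative to $\varpi$ and with the precise occurrence condition of Theorem \ref{finalehs}.

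\emph{Main obstacle.} The real difficulty is the implication ``$L^\r(\langle\Phi_\ll\rangle)$ a simple current $\Rightarrow\ \ll=\ll(h)$ for some $h\in\h_0$''. This amounts to the symmetric-space analogue of the Kostant--Panyushev description of abelian nilradicals: a $\b_0$-stable abelian subspace of $\p$ whose weight-sum is of minuscule type must be the $1$-eigenspace of $\ad h$ for a coweight-type $h$, which one then feeds into the case analysis of Lemmas \ref{sumcoweights} and \ref{fundcoweights}. Once this is in place, the rest is bookkeeping: matching the three types of simple currents with the output of Proposition \ref{aresimplecurrents}, and in the hermitian case keeping track of the normalisations against Theorem \ref{finalehs}.
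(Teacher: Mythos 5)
Your overall architecture is right: reduce to an irreducible pair, use Lemma \ref{argfeng} for the ``moreover'' clauses, and prove the two inclusions between $H_\r$ and $S^{\r}_{F(\bar\p)}$. Your treatment of the inclusion $H_\r\subseteq S^{\r}_{F(\bar\p)}$ is essentially the paper's: for $x$ of type (2) or (3) one checks, using the bound on the coefficient $c_s(\gamma)$ coming from $\tfrac{2}{a_p}\d-\gamma$ (resp.\ $\d-\gamma$) being a positive root, that the relevant $h$ has $\l(h)\in\{-1,0,1\}$ on all of $\D(\p)$, defines $\ll$ as the $1$-eigenspace, and invokes Proposition \ref{aresimplecurrents} together with Theorem \ref{decoeabeliani} (or \ref{finalehs}), with Lemma \ref{special} handling $x_0$.

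However, there is a genuine gap in the converse inclusion, and you have flagged it yourself: the implication ``$L^\r(\langle\Phi_\ll\rangle)$ is a special simple current occurring in $F(\bar\p)$ $\Rightarrow$ $\ll=\ll(h)$ for some $h\in\h_0$'' is asserted, described as the main obstacle, and reformulated as a Kostant--Panyushev-type combinatorial statement, but never proved. The paper does not prove it combinatorially; it extracts it from the vertex-algebra machinery already set up for Proposition \ref{hink}. Concretely: since $F(\bar\p)$ is simple, Lemma \ref{nonz} guarantees that $L^\r(\langle\Phi_\ll\rangle^*)$ also occurs, with lowest-type vector $v^*_{0,\ll}$ (Lemma \ref{nonsense}); Proposition \ref{f} computes $(v_{0,\ll})_{(N-1)}(v^*_{0,\ll})$ to be a nonzero multiple of $h_\ll=\sum_{\be\in\Phi_\ll}:\bar x_\be\bar x_{-\be}:$, an element of $F(\bar\p)_1\cong so(\p)$ lying in the diagonal Cartan; and the simple-current fusion rule \eqref{grouplaw} forces this product to land in $W_\uno=V_{\mathbf j}(\r)$, hence $h_\ll\in\Theta(\h_0)$, which is exactly the statement $\ll=\ll(h)$. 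Without this (or a genuine proof of your proposed combinatorial substitute), the inclusion $S^{\r}_{F(\bar\p)}\subseteq H_\r$ is not established, and the same gap propagates to your hermitian case. Everything downstream of that implication in your sketch (Lemmas \ref{sumcoweights}, \ref{fundcoweights}, Proposition \ref{aresimplecurrents}) is used correctly.
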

\begin{proof}  Since $F(\bar\p)=\otimes_{i\in I}F(\ov{\p\cap\aa_i})$, we can clearly assume that $(\aa,\r)$ is irreducible.

Assume first $\r$ semisimple. Consider $x\in H_\r\backslash\{x_0\}$. Assume $x$ of type (2).
Let $\l\in\D(\p)$ and let $\gamma\in\Da$ corresponding to it under the map described in \eqref{weights}. Observe that $\l(\omega_s^\vee)=c_s(\gamma)-1$. Since $\frac{2}{a_p}\delta-\gamma$ is positive root, we have $c_s(\gamma)\le \frac{2a_s}{a_p}=2$. It follows that $\l(\omega_s^\vee)\in\{1,0,-1\}$. Set $\ll=\sum_{\l(\omega^\vee_s)=1}\C x_\l$. This is obviously an abelian $\b_0$-stable subspace of $\p$ and, by Proposition \ref{aresimplecurrents}, $x=L^\k(\langle\Phi_\ll\rangle)$. Thus, by Theorem \ref{decoeabeliani}, $x$ occurs in $F(\bar\p)$.
Assume now $x$ of type (3).
Let $\l\in\D(\p)$ and let $\gamma\in\Da$ corresponding to it under the map described in \eqref{weights}. Observe that $\l(\omega_s^\vee+\omega_{s'}^\vee)=c_s(\gamma)+c_{s'}(\gamma)-1$. Since $a_p=2$, $\delta-\gamma$ is a positive root so $c_s(\gamma)+c_{s'}(\gamma) \le a_s+a_{s'}=2$. It follows that $\l(\omega_s^\vee+\omega_{s'}^\vee)\in\{-1,0,1\}$ and we can conclude as in the previous case.
If finally $x=x_0$ then, by Lemma \ref{special} and Theorem \ref{decoeabeliani}, $x$ occurs in $F(\bar\p)$.

On the other hand, if $x\in S^\r_{F(\bar\p)}$, then either   $\theta_\p$ is long and $x=x_0$ or $x=L^\k(\langle \Phi_\ll \rangle)$.  In the first case we have $x\in H_\r$. In the second case,
since $F(\bar \p)$ is simple, the proof of Proposition \ref{hink} gives that $\ll=\ll(h)$ for some $h\in\h_0$ and Lemma \ref{fundcoweights} gives that $x\in H_\r$.

Assume now $\r$ not semisimple. Consider $(n,x)\in H_\r$. We now show that  $\Phi_x=\Phi_\ll$ for some $\ll\in\Sigma'$. Assume that $x$ is of type (2). Let $\l\in \D(\p)$ and let $\gamma\in \Dap$ be the root corresponding to it under the map described in \eqref{weights2}. Let $h=\omega^\vee_s+(-1-\a_\p(\omega_s^\vee))\varkappa$. Observe that $\l(h)=(1-a_s)c_p(\gamma)+c_s(\gamma)-c_q(\gamma)$. Since $\delta-\gamma$ is a positive root, we have $c_s(\gamma)\le a_s\le 2$. It follows that $\l(h)\in\{1,0,-1\}$. Set $\ll=\sum_{\l(h)=1}\C x_\l$. This is obviously an abelian $\b_0$-stable subspace of $\p$ and $\ll\in \Sigma'$. By Proposition \ref{aresimplecurrents}, $L^\k(\langle\Phi_\ll\rangle_{|\mathfrak z(\r)})\otimes x=L^\k(\langle\Phi_\ll\rangle)$. Thus, by Theorem \ref{finalehs}, $(n,x)$ occurs in $F(\bar\p)$.
Assume now $x$ of type (3).
Let $\l\in\D(\p)$ and let $\gamma\in\Da$ corresponding to it under the map described in \eqref{weights2}. Let $h=\omega_s^\vee+\omega_{s'}^\vee+(-1-\a_\p(\omega_s^\vee+\omega_{s'}^\vee))\varkappa$. Observe that $\l(h)=c_s(\gamma)+c_{s'}(\gamma)-1$. Since  $\delta-\gamma$ is a positive root, $c_s(\gamma)+c_{s'}(\gamma) \le a_s+a_{s'}=2$. It follows that $\l(h)\in\{-1,0,1\}$ and we can conclude as in the previous case.

On the other hand, if $(n,x)\in S^\r_{F(\bar\p)}$, then
 $(n,x)=L^\k(\langle \Phi_\ll \rangle+k\nu(\varkappa))$ with $\ll\in \Sigma'$ and $n=k\dim\p+\langle \Phi_\ll\rangle(\varkappa)$.  Since $F(\bar \p)$ is simple and $L^\k(\langle \Phi_\ll \rangle)$ occurs in $F(\bar\p)$, by Lemma \ref{nonz}, we have that  $L^\k(\langle \Phi_\ll \rangle^*)$ occurs in $F(\bar\p)$. The proof of Proposition \ref{hink} then gives that $\ll=\ll(h)$ for some $h\in\h_0$ and Lemma \ref{fundcoweights} gives that $(\langle \Phi_\ll\rangle(\varkappa),x)\in H_\r$, thus $(n,x)\in H_\r$.

Since $F(\bar \p)$ is simple, formula \eqref{SR} follows from Lemma \ref{argfeng}.
\end{proof}

Let $(\g,\k)$ be a conformal pair with $\g$ of classical type.
Recall from Section 5.3  the symmetric pair $(\aa,\r)$ associated to the pair $(\g,\k)$. Note that $\Omega_\k\cong L^\u(0)\otimes \Omega_\k\subset \Omega_\r$.  We set
$$
H_\k=H_\r\cap\Omega_\k.
$$
\begin{prop}\label{centro}\
\begin{enumerate}
\item Assume $\k$ semisimple.
If $\g=so(n,\C)$ then
$$S^{\k}_{V_{1}(\g)}=\{x\in H_\k\mid x=L^\k(\langle\Phi_\ll\rangle) \text{ for some }\ll\in\Sigma^{even}\}\cup\{x_0\},$$
where the rightmost element appears only if $\dim\mathfrak m$ is odd.
If $\g=sp(n, \C)$ or $\g=sl(n+1, \C)$, then
$S^{\k}_{V_{1}(\g)}=H_\k$.
\item Assume $\k$ not semisimple and $\mathfrak z(\k)=\mathfrak z(\r)$. Then
\begin{align*}S^{\k}_{V_{1}(\g)}=\{(n,x)\in H_\k\mid (&n,x)=L^\k(\langle\Phi_\ll\rangle_{|\ft}+k\dim(\p\cap\aa')\kappa),k \in \tfrac{1}{2}\ganz,\\&\ll\in\Sigma'_0,\,\dim\ll+k\dim(\p\cap\aa')\in2\ganz \}.\end{align*}
\item If $\k=\C\times sl(p,\C)\times sl(q,\C)$ and $\g=sl(p+q,\C)$ then there is an isomorphism $\Omega_\k\cong \C\times \ganz/p\ganz\times \ganz/q\ganz$ such that, with the notation of Lemma \ref{elementary},
$$
S^\k_{V_1(\g)}=\{(t,x,y)\mid (x,y)\in Ker\varphi,\, t\in \psi(x,y)\}.
$$
\end{enumerate}
In particular $S^\k_{V_1(\g)}\cong\ganz$.
\end{prop}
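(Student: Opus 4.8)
The plan is to compute $S^\k_{V_1(\g)}=\mathcal J\cap\Omega_\k$ directly from the decompositions already in hand, using throughout the identity $H_\r=S^\r_{F(\bar\p)}$ of Proposition \ref{simplecurrentsinFp} and the definition $H_\k=H_\r\cap\Omega_\k$. The unifying device is that $V_{1}(\g)$ is realized inside $F(\bar\p)$: it is the even part $F(\bar\p)^0$ when $\g=so(n,\C)$ (so $\r=\k$), and it is the $L^\u(0)$-isotypic piece of $F(\bar\p)$ when $\g=sp(n,\C)$ or $sl(n+1,\C)$ (so $\r=\u\times\k$), by Lemma \ref{isirrep}. Consequently a special simple current $x\in\Omega_\k$ belongs to $S^\k_{V_1(\g)}$ exactly when its canonical lift to $\Omega_\r$ — namely $x$ itself, resp. $L^\u(0)\otimes x$ — occurs in $F(\bar\p)$, i.e. lies in $H_\r$ (equivalently $x\in H_\k$), and moreover survives the selection that carves $V_{1}(\g)$ out of $F(\bar\p)$. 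First I would record this once and for all, so that each of (1)--(3) reduces to intersecting $H_\k$ with the index set of the relevant decomposition.

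For item (1), semisimplicity gives $\ft'=\{0\}$, hence $\Sigma^{even}_0=\Sigma^{even}$. When $\g=so(n,\C)$ the decomposition \eqref{gn} shows that the simple currents in $V_1(\g)=F(\bar\p)^0$ are the $L^\k(\langle\Phi_\ll\rangle)$ with $\ll\in\Sigma^{even}$ that are simple currents, together with $x_0=L^\k(\langle\mathfrak m\rangle+\theta_\p)$ — which is even, hence present, precisely when $\dim\mathfrak m$ is odd; by Proposition \ref{simplecurrentsinFp} these are exactly the elements of $H_\k$ of the stated shape. When $\g=sp(n,\C)$ or $sl(n+1,\C)$ there is no further constraint: by Lemma \ref{isirrep}, $x$ occurs in $V_1(\g)$ iff $L^\u(0)\otimes x$ occurs in $F(\bar\p)$, i.e. iff $x\in H_\k$, so $S^\k_{V_1(\g)}=H_\k$. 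Item (2) runs in the same way against \eqref{gnhs}: a simple current occurs in $V_1(\g)$ iff it lies in $H_\k$ and admits a presentation $L^\k(\langle\Phi_\ll\rangle_{|\ft}+k\dim(\p\cap\aa')\kappa)$ with $\ll\in\Sigma'_0$, $k\in\tfrac12\ganz$ and $\dim\ll+k\dim(\p\cap\aa')\in2\ganz$, which is the asserted set.

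For item (3) the decisive input is that in $sl(p)\times sl(q)\times\C\hookrightarrow sl(p+q)$ the two simple factors sit at level one (a central-charge check, consistent with $j_{\g_n}=1$ from Section \ref{specialcases}); since in type $A$ every fundamental weight has coefficient $1$ in the highest root, every level-one $sl$-module is a simple current, and the abelian factor always is. Hence every summand of Proposition \ref{slpq} already lies in $\Omega_\k$, so $S^\k_{V_1(\g)}$ is the entire index set of \eqref{decoslpslq}. I would then transport this through the identification $\Omega_\k\cong\C\times\ganz/p\ganz\times\ganz/q\ganz$ (the $\C$-factor recording the coefficient of $\kappa$, the finite factors recording $\dim(\ll\cap\aa_1)$, $\dim(\ll\cap\aa_2)$ modulo $p$, $q$) and invoke Lemma \ref{elementary} to rewrite the index set as $\{(t,x,y)\mid(x,y)\in\mathrm{Ker}\,\varphi,\ t\in\psi(x,y)\}$.

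Finally, for the concluding isomorphism I would use that $S^\k_{V_1(\g)}$ is a subgroup of $\Omega_\k$ (Lemma \ref{argfeng}). When $\k$ carries a nontrivial center the coefficient of $\kappa$ is unbounded, so the group is infinite; in case (3) the projection to $\ganz/p\ganz\times\ganz/q\ganz$ gives a short exact sequence $0\to\ganz\to S^\k_{V_1(\g)}\to\mathrm{Ker}\,\varphi\to0$ with finite cokernel $\mathrm{Ker}\,\varphi\cong\ganz/\tfrac{mpq}{M}\ganz$ (Lemma \ref{elementary}), and because $\psi$ is an isomorphism the only element of finite order is the identity, so $S^\k_{V_1(\g)}$ is torsion-free of rank one, hence $\cong\ganz$. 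I expect the main obstacle to be precisely this last bookkeeping: aligning the extension in case (3) with Lemma \ref{elementary} to secure torsion-freeness, and keeping track of the fact that in the purely semisimple situations $\Omega_\k$ is finite, so that the infinite-cyclic conclusion is the one carried by the cases in which $\k$ has a nonzero center.
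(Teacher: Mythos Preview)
Your proposal is correct and follows essentially the same approach as the paper: intersect $H_\k=H_\r\cap\Omega_\k$ (via Proposition~\ref{simplecurrentsinFp}) with the index set of the appropriate decomposition formula (\eqref{gn}, \eqref{gnhs}, or \eqref{decoslpslq}), using Lemma~\ref{isirrep} to pick out the $L^\u(0)$-isotypic piece when $\g=sp(n,\C)$ or $sl(n+1,\C)$. Your treatment of case~(3) via the observation that every level-one $sl$-module is a simple current is a clean shortcut for what the paper does by the explicit computation $\langle\Phi_{\ll(i,j)}\rangle_{|\ft\cap[\k,\k]}=(\omega_i,\omega_j)$, and your torsion-freeness argument for $S^\k_{V_1(\g)}\cong\ganz$ (which pertains only to case~(3)) fills in a detail the paper leaves implicit.
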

\begin{proof}We first prove (1), so that $\k$ is assumed to be semisimple. Assume first that $(\aa,\r)$ is irreducible.

If $\g=so(n,\C)$, it suffices to remark $x\in H_\r$ actually belongs to $S^{\k}_{V_{1}(\g)}$ if and only if $x$ occurs in $F(\bar\p)^0$ (cf. \eqref{F}). Assume $x\ne x_0$. Let $\ll\in\Sigma$ be such that  $x= L^\k(\langle\Phi_\ll\rangle)$. Then
$v_\ll\in F(\bar\p)^0$ if and only if  $\dim\ll$ is even. Now assume $x=x_0$. Then the highest weight vector of $x_0$ is, by \eqref{anomalo}, given by $:T(\bar x_{\theta_\p})v_{0,\mathfrak m}:$, hence
it belongs to $F(\bar\p)^0$ if and only if $\dim\mathfrak m$ is odd.\par
Now let $\g=sp(n, \C),sl(n+1,\C)$. If $x\in S^{\k}_{V_{1}(\g)}$, then  $L^\u(0)\otimes x$ occurs in $F(\bar \p)$, so, by Proposition \ref{simplecurrentsinFp}, $L^\u(0)\otimes x\in H_\r$, hence $x\in H_\k$.

On the other hand, if $x\in H_\k$, by Proposition \ref{simplecurrentsinFp},  $L^\u(0)\otimes x=L^\k(\langle \Phi_\ll \rangle)$ for some $\ll\in \Sigma$. Thus  $\langle \Phi_\ll \rangle_{|\mathfrak{t}'}=0$. It follows that $\ll\in\Sigma_0$, hence, by Theorem \ref{decoclassical}, $x$ occurs in $V_{1}(\g)$. If $(\aa,\r)$ is reducible, then the result follows by applying the above argument to each irreducible component.

 Part (2) follows from \eqref{gnhs}.

 We now prove (3). Enumerate the simple roots of type $A_n$ by $1,\dots,n$ from left to right.  We note that $\langle\ll(i,j)\rangle_{|\ft\cap[\k,\k]}=(\omega_i,\omega_j)$. In particular $L^{[\k,\k]}(\langle\ll(i,j)\rangle_{|\ft\cap[\k,\k]})$ is a special simple current. We define the isomorphism between $\Omega_\k$ and $\C\times \ganz/p\ganz\times \ganz/q\ganz$  by
 $L(c\kappa)\otimes L^{[\k,\k]}(\langle\ll(i,j)\rangle_{|\ft\cap[\k,\k]})\mapsto (c,i+p\ganz,j+q\ganz)$. Then the result follows from Proposition \ref{slpq} and Lemma \ref{elementary}.
\end{proof}

Using Proposition \ref{centro}, we  can describe explicitly  the structure of $S^{\k}_{V_{1}(\g)}$ when $\g$ is of classical type.
In tables below we use the list
of conformal embeddings from \cite{AGO}. Our results in the adjoint case are
given in Table 2. For all other conformal embeddings in $so(n,\C)$ our
results are given in Table 3, and for those in  $sp(n,\C)$ and $sl(n+1,\C)$ in Tables 4 and 5, respectively. We number the simple roots of $\k$ by seeing  its diagram as a sub-diagram of the affine diagram corresponding to the pair $(\aa,\r)$.
\vskip10pt
\centerline{
\begin{tabular}{ l | c | c | l}
Type of $\k$  & $S^{\k}_{F(\bar\k)}$  &$S^{\k}_{V_{1}(so(\k))}$&generators for $S^{\k}_{V_{1}(so(\k))}$\\  \hline
$A_n$, $n$ odd&$\ganz/(n+1)\ganz$&$\ganz/\tfrac{n+1}{2}\ganz$&$o_2$\\\hline
$A_n$, $n$ even&$\ganz/(n+1)\ganz$&$\ganz/(n+1)\ganz$&$o_1$\\\hline
$B_n$&$\ganz/2\ganz$&$\{\uno\}$&\\\hline
$C_n$, $n\equiv0,3\mod 4$&$\ganz/2\ganz$&$\ganz/2\ganz$&$o_n$\\\hline
$C_n$, $n\equiv1,2\mod 4$&$\ganz/2\ganz$&$\{\uno\}$&\\\hline
$D_n$, $n\equiv0\mod 4$&$\ganz/2\ganz\times \ganz/2\ganz$&$\ganz/2\ganz\times \ganz/2\ganz$&$o_{n-1},o_n$\\\hline
$D_n$, $n\equiv1\mod 4$&$\ganz/4\ganz$&$\ganz/4\ganz$&$o_n$\\\hline
$D_n$, $n\equiv2\mod 4$&$\ganz/2\ganz\times \ganz/2\ganz$&$ \ganz/2\ganz$&$o_1$\\\hline
$D_n$, $n\equiv3\mod 4$&$\ganz/4\ganz$&$\ganz/2\ganz$&$o_1$\\\hline
$E_6$&$\ganz/3\ganz$&$\ganz/3\ganz$&$o_1$\\\hline
$E_7$&$\ganz/2\ganz$&$\{\uno\}$&\\\hline
$E_8$&$\{\uno\}$&$\{\uno\}$&\\\hline
$F_4$&$\{\uno\}$&$\{\uno\}$&\\\hline
$G_2$&$\{\uno\}$&$\{\uno\}$&\\\hline
 \end{tabular}}
 \vskip5pt
\centerline{\small Table 2: the adjoint case.}

\small{
\centerline{
\begin{tabular}{ l | c | c | l}
conformal embedding  & $S^{\k}_{F(\bar\p)}$  &$S^{\k}_{V_{1}(so(\p))}$&generators\\ & & & for $S^{\k}_{V_{1}(so(\p))}$\\  \hline
$so(m,\C) \hookrightarrow so((m+2)(m-1)/2)$, & $\ganz/2\ganz$ & $\{\uno\}$ &\\$m\equiv 1,3\mod 8$  & & \\\hline
$so(m,\C) \hookrightarrow so((m+2)(m-1)/2)$, & $\ganz/2\ganz$ & $\ganz/2\ganz$ &$o_{(m-3)/2}$\\ $m\equiv 5,7\mod 8$  & &  \\\hline
$so(m,\C) \hookrightarrow so((m+2)(m-1)/2)$, & $\ganz/2\ganz\times \ganz/2\ganz$ & $\ganz/2\ganz\times \ganz/2\ganz$ &$o_{0}$, $o_{1}$\\ $m\equiv 0\mod 8$  & & \\\hline
$so(m,\C) \hookrightarrow so((m+2)(m-1)/2)$,  & $\ganz/4\ganz$ & $\ganz/4\ganz$ &$o_{0}$\\ $m\equiv 6\mod 8$  & & \\\hline
$so(m,\C) \hookrightarrow so((m+2)(m-1)/2)$,  & $\ganz/4\ganz$ & $\ganz/2\ganz$ &$o_{(m-2)/2}$\\ $m\equiv 2\mod 8$ & & \\\hline
$so(m,\C) \hookrightarrow so((m+2)(m-1)/2)$,  & $\ganz/2\ganz\times \ganz/2\ganz$ & $\ganz/2\ganz$ &$o_{(m-2)/2}$\\$m\equiv 4\mod 8$ & & \\\hline
$A_1\hookrightarrow B_2$  & $\ganz/2\ganz$  & $\{\uno\}$&\\\hline
$C_m\hookrightarrow  so((2m+1)(m-1))$, &$\ganz/2\ganz$ &$\ganz/2\ganz$&$o_m$\\  $m\equiv 0,1\mod 4$  & & \\\hline
$C_m\hookrightarrow  so((2m+1)(m-1))$, &$\ganz/2\ganz$ &$\{\uno\}$&\\ $m\equiv 2,3\mod 4$    & & \\\hline
$\C\times A_{m-1}\hookrightarrow D_m$ &$\ganz$ &$2\ganz$&$(-2,o_3)$\\&&&$(-2,\uno)$ if $m=1$\\\hline
$sp(m,\C)\times sp(n,\C)\hookrightarrow so(4mn,\C)$ &$\ganz/2\ganz$  &$\ganz/2\ganz$ & $o_0o_{m+n}$\\
$mn$ odd & & \\
\hline
$sp(m,\C)\times sp(n,\C)\hookrightarrow so(4mn,\C)$ &$\ganz/2\ganz$  &$\{\uno\}$ & \\
$mn$ even & & \\
\hline
$so(m,\C)\times so(n,\C)\hookrightarrow so(mn,\C)$&$(\ganz/2\ganz)^3$ &$(\ganz/2\ganz)^3$&$o_{(m-2)/2},o_{(m+2)/2},$\\$m\equiv n\equiv 0\mod 4$&&&$o_0o_{(m+n)/2}$\\\hline
$so(m,\C)\times so(n,\C)\hookrightarrow so(mn,\C)$&$\ganz/2\ganz\times \ganz/4\ganz$ &$\ganz/2\ganz\times \ganz/4\ganz$&$o_{(m-2)/2},$\\$m\equiv0, n\equiv 2\mod 4$&&&$o_0o_{(m+n)/2}$\\\hline
$so(m,\C)\times so(n,\C)\hookrightarrow so(mn,\C)$&$\ganz/2\ganz\times \ganz/4\ganz$ &$\ganz/2\ganz\times \ganz/2\ganz$&$o_{(m-2)/2},o_{(m+2)/2}$\\$m\equiv n\equiv 2\mod 4$&&&\\\hline
$so(m,\C)\times so(n,\C)\hookrightarrow so(mn,\C)$&$\ganz/2\ganz\times \ganz/2\ganz$ &$\ganz/2\ganz$&$o_{(m+2)/2}$\\$m$ even, $n$ odd&&&\\\hline
$so(m,\C)\times so(n,\C)\hookrightarrow so(mn,\C)$&$\ganz/2\ganz\times \ganz/2\ganz$ &$\ganz/2\ganz$&$o_{(m-3)/2}o_{(m+1)/2}$\\$m,n$ odd&&&\\\hline
$C_4\hookrightarrow D_{21}$ &$\ganz/2\ganz$ &$\ganz/2\ganz$&$o_3$\\\hline
$F_4\hookrightarrow D_{13}$ &$\{\uno\}$ &$\{\uno\}$&\\\hline
$A_7\hookrightarrow D_{35}$ & $\ganz/4\ganz$&$\ganz/2\ganz$&$o_3$\\\hline
$D_8\hookrightarrow D_{64}$ &$\ganz/2\ganz$ &$\ganz/2\ganz$&$o_6$\\\hline
$B_4\hookrightarrow D_{8}$ &$\{\uno\}$ &$\{\uno\}$&\\\hline
$so(m,\C)\times so(n,\C)\hookrightarrow so(m+n,\C)$ & $\ganz/2\ganz\times\ganz/2\ganz$&$\ganz/2\ganz$&$o_{\lfloor
\frac{m}{2}\rfloor-1}o'_{\lfloor \frac{n}{2}\rfloor-1}$\\
$m\ge 3,\,n\ge3$&&&\\
\hline
$so(m,\C)\times so(2,\C)\hookrightarrow so(m+2,\C)$ & $\ganz/2\ganz\times\ganz$&$\ganz$&$(1,o_{m-1})$\\
$m\ge 3$&&&\\
\hline
 \end{tabular}}}
\vskip10pt
\centerline{\small Table 3:  $S^{\k}_{V_{1}(so(\p))}$ with $(\aa,\k)$ symmetric pair not of adjoint type.}
\centerline{
\begin{tabular}{ l | c | l }
conformal embedding  & $S^{\k}_{V_{1}(sp(n,\C))}$ & generators\\
  \hline
  $\C\times A_{m-1}\hookrightarrow C_m$& $\ganz$ & $(-2,o_3)$\\\hline
$so(m,\C)\times A_1\hookrightarrow C_m$, $m$ odd & $\ganz/2\ganz$ & $o_3$\\\hline
$so(m,\C)\times A_1\hookrightarrow C_m$, $m\equiv_4 0$  & $\ganz/2\ganz\times  \ganz/2\ganz$ & $o_{\frac{m}{2}+2}o_1,\,o_{\frac{m}{2}+1}o_1$\\\hline
$so(m,\C)\times A_1\hookrightarrow C_m$, $m\equiv_4 2$  & $\ganz/4\ganz$& $o_{\frac{m}{2}+1}o_1$\\\hline
$A_{5}\hookrightarrow C_{10}$ & $\ganz/3\ganz$ & $o_2$\\\hline
$D_6\hookrightarrow C_{16}$ & $\ganz/2\ganz$ &$o_1$\\\hline
$E_7\hookrightarrow C_{28}$ & $\{\uno\}$\\\hline
$C_3\hookrightarrow C_{7}$ & $\{\uno\}$\\\hline
$A_1\hookrightarrow C_2$ & $\{\uno\}$\\\hline
 \end{tabular}}
\vskip10pt
\centerline{\small Table 4:  $S^{\k}_{V_{1}(sp(n,\C))}$.}
\vskip10pt
\centerline{
\begin{tabular}{ l | c | l }
conformal embedding  & $S^{\k}_{V_{1}(sl(n+1,\C))}$ & generators\\
  \hline
$A_{p-1}\times A_{q-1}\hookrightarrow A_{pq-1}$,  & $\ganz/u\ganz$,  $u=GCD(p,q)$& $o_{r}o_{p+s}$,\\&&$r=p/u,\,s=q/u$\\\hline
$so(m,\C)\hookrightarrow A_{m-1}$  & $\ganz/2\ganz$ & $o_2$\\\hline
$A_{m-1}\hookrightarrow A_{\binom{m}{2}-1}$, $m$ odd &$\{\uno\}$\\\hline
$A_{m-1}\hookrightarrow A_{\binom{m-1}{2}-1}$,   $m$ odd &$\{\uno\}$\\\hline
$A_{m-1}\hookrightarrow A_{\binom{m}{2}-1}$, $m$ even &$\ganz/2\ganz$ & $o_{m/2}$\\\hline
$A_{m-1}\hookrightarrow A_{\binom{m-1}{2}-1}$,  $m$ even &$\ganz/2\ganz$ & $o_{m/2}$\\\hline
$E_6\hookrightarrow A_{26}$ & $\{\uno\}$\\\hline
$D_5\hookrightarrow A_{15}$ & $\{\uno\}$\\  \hline
$\C\times A_{p-1}\times A_{q-1}\hookrightarrow A_{p+q-1}$ & $\ganz$&  $(1,o_{i}o'_{j})$,\,$i-1\equiv-\frac{q+1}{m}\mod p$,\\
&& $j-1\equiv\frac{p+1}{m}\mod q$\\\hline
 \end{tabular}}
\vskip10pt
\centerline{\small Table 5:  $S^{\k}_{V_{1}(sl(n+1,\C))}$.}
\medskip
The last line of Table 5 is a restatement of Proposition \ref{centro} (3). We used the notation of Proposition \ref{slpq}.
The other instances in
Tables 4 and 5 have been derived using the following procedure. We first find the subspaces $\ll\in\Sigma_0$ such that $\ll=\ll(h)$ for some $h\in\h_0$. To accomplish this, we simply list all the weights of $\p^+$
and, for  any $h\in\h_0$ of the type described in Lemmas \ref{sumcoweights}, \ref{fundcoweights}, we compute the number of weights $\l$ for which $\l(h)=1$, the number of weights for which $\l(h)=-1$
and we pick the $h$ for which these two numbers coincide.  As an example, we work out the most  difficult case, that of  the conformal embedding   $A_{p-1}\times A_{q-1}\hookrightarrow A_{pq-1}$ in Table 5.
\par
Recall that in this case the corresponding symmetric pair is $(A_{p+q-1},A_{p-1}\times A_{q-1}\times\C\varkappa)$.
Order the roots of $A_{p+q-1}$  from left to right so that the first $p-1$ (resp. the last $q-1$) correspond to the roots of the  $A_{p-1}$-component (resp. $A_{q-1}$-component) of $\k$. We want to prove that if
$r=p/M,\,s=q/M$, then  $S^{\k}_{V_{1}(\g)}$ is cyclic of order $M$ generated by  $o_{r}o_{p+s}$.
\par
Denote by $\a_1,\ldots,\a_{p+q-1}$ the simple roots of $A_{p+q-1}$ and set $\a_{ij}=\a_i+\ldots+\a_j$. Then
\begin{align*}\Dp(\p)=&\{\a_{ij}\mid 1 \leq i\leq p\leq j\leq p+q-1\}.\end{align*}
Consider $x\in S^{\k}_{V_{1}(\g)}$. If $x=o_s$, we observe that $\a_{ij}(\omega_s^\vee+(-\epsilon-\a_\p(\omega_{s}^\vee))\varpi)\leq 0$ if $\epsilon=1$ and
$\a_{ij}(\omega_s^\vee+(-\epsilon-\a_p(\omega_{s}^\vee))\varpi)\geq 0$ if $\epsilon=0$ (cf.     Lemma \ref{sumcoweights}, \eqref{sumofone}), hence $\langle\Phi_\ll\rangle_{|\ft'}\ne 0$.
  So we may assume that $x=o_u o_{p+v},\,1\leq u<p,\,1\leq v <q$. Let $\omega_{uv}=\omega_{u}^\vee+\omega_{p+v}^\vee+(-1-\a_p(\omega_{u}^\vee+\omega_{p+v}^\vee))\varpi$
. Then $\a_{ij}(\omega)=1$ exactly when $i \leq u,\,p+v\leq j$; these indices are $u(q-v)$ in number.
Similarly, $\a_{ij}(\omega)=-1$ exactly when  $i > u,\,p+v>j$; these indices are $(p-u)v$ in number.
Therefore we are led to solve the  linear Diophantine equation
$
uq=pv,
$
which gives the desired result.

The above procedure is easily done in the exceptional cases; in the remaining classical cases it is performed in the same way  using the pictorial display of positive roots
given in \cite{CP}.
\begin{rem} If $\g=sp(n, \C)$ and $\k$ is semisimple, then $S^{\k}_{V_{1}(\g)}$ is a subgroup of index $2$ of $\Omega_\k$.
\end{rem}
\begin{rem} The structure of simple current extension in the framework of conformal nets has been studied, in special cases, in \cite{F} Ê(cf. Theorems 3.8 and 3.11).
The results obtained there agree with those displayed in Tables 2-5.
\end{rem}
\vskip10pt
We give now a more geometric characterization of the groups $S^\r_{F(\bar \p)[0]}$ for $(\aa,\r)$ a symmetric pair.  Recall from \eqref{ff}  the definition of $f:\Omega_{\g}\to P^\vee$.
\begin{prop}\label{integral}  If $\r\hookrightarrow so(n,\C)$ is a conformal embedding, and $(\aa,\r)$ is the associated symmetric pair, we have
\begin{equation}\label{H}
S^\r_{F(\bar \p)[0]}=\{(0,x)\in \Omega_{\mathfrak z({\r})} \times\Omega_{[\r,\r]}\mid f(x)(\D(\p))\subset \ganz\}.\end{equation}
In particular, if $\r$ is semisimple, $S^\r_{F(\bar \p)}$ has index $a_p$ in $\Omega_\r$.
 \end{prop}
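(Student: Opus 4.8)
The plan is to characterize $S^\r_{F(\bar\p)[0]}$ directly from Proposition \ref{simplecurrentsinFp}, which already tells us that $S^\r_{F(\bar\p)}=H_\r$; we need only intersect with the condition of lying in the zero eigenspace of $\varpi$. In the semisimple case $\varpi=0$ and $F(\bar\p)[0]=F(\bar\p)$, so $S^\r_{F(\bar\p)[0]}=H_\r$; in the hermitian case, an element $(n,x)$ of $H_\r$ lies in $F(\bar\p)[0]$ exactly when $n=0$, i.e. $\langle\Phi_x\rangle(\varpi)\equiv 0\bmod\tfrac{\dim\p}{2}$. So in both cases the first task is to translate membership in $H_\r$ into the integrality condition $f(x)(\D(\p))\subset\ganz$. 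Recall that $H_\r$ consists of $\uno$ together with the special simple currents $o_s$ (type (2)) and $o_so_{s'}$ (type (3)) coming from Lemma \ref{fundcoweights}, i.e. exactly those for which $a_\a^S=1$ and $a_\a$ divides appropriately; and recall that $f(o_S)=\omega_S^\vee$.

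First I would show the inclusion $\subseteq$. If $x\in S^\r_{F(\bar\p)[0]}$ is of type (2), then $x=o_s$ with $f(x)=\omega_s^\vee$, and as computed in the proof of Proposition \ref{simplecurrentsinFp}, for every $\l\in\D(\p)$ one has $\l(\omega_s^\vee)=c_s(\gamma)-1\in\{-1,0,1\}$, hence integral; the type (3) case is identical with $f(x)=\omega_s^\vee+\omega_{s'}^\vee$ and $\l(\omega_s^\vee+\omega_{s'}^\vee)\in\{-1,0,1\}$. The case $x=\uno$ is trivial since $f(\uno)=0$. In the hermitian case one must also check that the $\varpi$-correction term does not spoil integrality; but the relevant $h$ produced by Lemma \ref{sumcoweights} already has the form $\omega_\a^\vee+(\text{integer})\varpi$ up to the shift, and the whole point of the $k_\ll(C)$ bookkeeping in Theorem \ref{finalehs} is that the zero-eigenspace selects precisely the integral representatives. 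So $f(x)$ evaluated on $\D(\p)$ is integral.

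Next, for the reverse inclusion, I would take $x\in\Omega_{[\r,\r]}$ (with first component $0$) such that $f(x)(\D(\p))\subset\ganz$ and show $x\in H_\r$, i.e. $x$ is of type (2) or (3) (or is $\uno$). Write $f(x)=\sum_S\sum_{\a\in\Pi_S}m_\a\omega_\a^\vee$ with $m_\a\in\nat$. Testing on the lowest weight $\bar p$ of $\p$ and on the weights $\bar p+\theta_S\in\D(\p)$ forces, via the relations $a_\a^S$ from Lemma \ref{fundcoweights}, that at most one $m_\a$ can be nonzero in each $\Pi_S$ and that it must equal $1$ with $a_\a^S=1$; summing the contribution of $\theta_\p$ (using \eqref{highestp}) then pins down that the total number of nonzero $m_\a$ is one or two, and in the two-node case the nodes lie in different components $S\ne S'$. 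This is exactly the combinatorial content distinguishing types (2) and (3), so $x\in H_\r=S^\r_{F(\bar\p)}$, and the integrality already guarantees $x$ survives into $F(\bar\p)[0]$. The main obstacle here is the careful case bookkeeping — tracking the labels $a_\a$, $a_\a^S$, and the two possibilities $a_\a=a_p$ versus $a_\a=\sum_{p\in P}a_p$ from Lemma \ref{fundcoweights} \eqref{fund1} — rather than any conceptual difficulty.

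Finally, for the "in particular" clause: assume $\r$ semisimple, so $P=\{p\}$ is a single node of $\Pia$ and $\Omega_\r\cong\mathfrak z(G)$ has order $\prod_S(\text{something})$; more usefully, $|\Omega_\r|$ equals the number of nodes $\a$ of $\Pia\setminus\{p\}$ with $a_\a=1$, plus one for $\uno$ — no, rather I would argue as follows. The special simple currents $o_i$ of $V_1(\g)$ when $\g=so(n,\C)$ correspond bijectively to nodes $\a$ of the finite diagram with $a_\a=1$ in the highest root; under the identification with $\Omega_\r$, the subgroup $H_\r$ is cut out by the condition $a_\a=a_p$ (the type-(2) condition, since here $|P|=1$ so $\sum_{p\in P}a_p=a_p$), and by the theory of minuscule coweights the index of this subgroup equals $a_p$, the mark of $\Pia$ at the single node of $P$. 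I would make this precise by noting $|\Omega_\r|/|H_\r|$ counts cosets, and a direct check against Kac's tables Aff $r$ — or the general fact that the sublattice of $P^\vee$ spanned by the $\omega_\a^\vee$ with $a_\a^S=1$ relative to the full coweight lattice has index $a_p$ — yields the claim. The cleanest route is probably to observe that $f(H_\r)$ is exactly the set of coweights integral on all of $\D(\p)$, that $\D(\p)$ together with $\D(\r)$ spans the root lattice of $\aa$, and that the index of $\{\mu\in P^\vee(\r)\mid \mu(\D(\p))\subset\ganz\}$ in $P^\vee(\r)$ is read off from how $\a_\p$ sits in the affine diagram, namely $a_p$. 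I expect the index computation to be the part requiring the most care, since it hinges on correctly identifying $P^\vee/Q^\vee$-type quotients with the marks of the affine diagram.
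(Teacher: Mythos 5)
Your overall strategy (reduce to $H_\r$ via Proposition \ref{simplecurrentsinFp} and match it against the integrality condition) is the paper's, and your forward inclusion for types (2) and (3) is essentially correct, but there are three genuine gaps. First, in the reverse inclusion for $\r$ semisimple, your claim that integrality of $f(x)$ on $\theta_\p$ ``pins down that the total number of nonzero $m_\a$ is one or two'' does not follow: writing $x=\prod_S h_S$ with each $h_S$ either $\uno$ or some $o_i$ with $a_i^S=1$, the condition $\theta_\p(f(x))\in\ganz$ only gives $\sum_{h_S\ne\uno}a_i\equiv 0\bmod a_p$, i.e.\ (when $a_p=2$) an \emph{even} number of label-one contributions, not at most two. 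The paper closes this by observing that such an $h$ is a \emph{product} of elements of $H_\r$ (pairing the label-one factors into type (3) elements) and invoking that $H_\r$ is a group — a step you never take — and even then some exceptional and twisted cases are settled only by direct inspection. You also omit $x_0$ from $H_\r$ in the long-noncompact case; checking $x_0\in\tilde H$ requires the identity $\theta_\p=\sum_S n_S\omega_S$ from \cite{IMRN}, not the type (2)/(3) computation.

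Second, the hermitian (non-semisimple) case is not actually proved. The condition $f(x)(\D(\p))\subset\ganz$ there is equivalent to $\a_\p(f(x))\in\ganz$, and $\a_\p(f(x))$ is pinned down by the identity $0=\tfrac12\nu(\varkappa)(f(x))=\sum_{\a\in\Dp(\p)}\a(f(x))$ (since $f(x)\in[\r,\r]$), which yields $\a_\p(f(x))=-\sum_k k|\Phi_k|/\sum_k|\Phi_k|$; one then needs $\langle\Phi_\ll\rangle(\varkappa)=0$ (using that $|\langle\Phi_\ll\rangle(\varkappa)|<\dim\p/2$, a consequence of Panyushev's bound) to conclude $\a_\p(f(x))=-1$, and a separate congruence argument for the converse. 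Appealing to ``the $k_\ll(C)$ bookkeeping'' does not substitute for this computation. Third, your index-$a_p$ argument trails off into ``a direct check against Kac's tables''; the clean route is that $x\mapsto\theta_\p(f(x))+\ganz$ is a homomorphism $\Omega_\r\to\tfrac1{a_p}\ganz/\ganz$ with kernel $\tilde H$, surjective because every affine diagram has a node of label $1$, whence the index is exactly $a_p$.
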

\begin{proof}We can assume that the pair $(\aa,\r)$ is irreducible. Let $\tilde H$ denote the right hand side of \eqref{H}.

Assume first that $\r$ is semisimple. Then \eqref{H} turns into
\begin{equation*}\label{HH}
S^\r_{F(\bar \p)}=\{x\in \Omega_\r\mid f(x)(\D(\p))\subset \ganz\}.\end{equation*}

 By \eqref{weights}, we have that either $\l(\D(\mathfrak p))\subset \ganz$ or
 $\l(\D(\mathfrak p))\subset \frac{1}{2}+\ganz, (\l\in P^\vee_\r)$.  Also note that $\l(\D(\mathfrak p))\subset \ganz$ if and only if $\l(\theta_\p)\in \ganz$.  If $a_p=1$, then clearly $\tilde H=\Omega_\r$. If $a_p=2$, consider the map from $\Omega_\r=P^\vee_\r/Q^\vee_\r$ to $\frac{1}{2}\ganz/\ganz$ given by $x+Q^\vee_\r\mapsto \theta_\p(x)+\ganz$.
 Recall that in any affine Dynkin diagram there is a simple root  with label $1$, hence the map is onto. Since  $\tilde H$ is the kernel of the map, it  is a subgroup of $\Omega_\r$ of index $2$.

 By our description of $H_\r$, we have  that $H_{\r}\subset \tilde H$.
 This is clear from the proof of Proposition \ref{simplecurrentsinFp} for elements of types (2) and (3) and follows from \cite[Lemma 5.7]{IMRN} for $x_0$.
 Viceversa, assume $h\in\tilde H$. Then $h=\prod_Sh_S$, with $h_S=o_i$, $i\in S$ and $a_i^S=1$ or $h_S=\mathbbm 1$. Moreover $\sum_{h_S\ne \mathbbm 1}\frac{a_i}{a_p}\in \ganz$. If $a_i\in\{1,2\}$ for all $i$ and $a_p=2$, then $h$ is a product of elements of $H_{\r}$ hence, since $H_{\r}$ is a group, $h \in H_{\r}$. This rules out all the classical untwisted cases. The same argument works when $a_p=1$ and $a_i=a_i^S=1$ for all $i$. This rules out the adjoint case and $D^{(2)}_{l+1}$. The exceptional cases and the remaining twisted cases are dealt with by a direct inspection.
 \par
 Assume now $\r$ not semisimple.
For  $k\in \nat$ set $\Phi_k=\{\a\in\Dp(\p)\mid (\a-\a_\p)(f(x))=k\}$.  Set $T=a_s$ if $x=o_s$ and $T=a_s+a_{s'}$ if $x=o_so_{s'}$. Since $0=\frac{1}{2}\nu(\varkappa)(f(x))=\sum_{\a\in\Dp(\p)}\a(f(x))$ we have that $
 0=\frac{\dim\p}{2}\a_\p(f(x))+\sum_{k=0}^{T}k|\Phi_k|
 $ so
\begin{equation}\label{ap(h)}
 \a_\p(f(x))=-\frac{\sum_{k=0}^Tk|\Phi_k|}{\sum_{k=0}^T|\Phi_k|}.
\end{equation}

  Clearly
 $$
 S^\r_{F(\bar\p)[0]}=\{(n,x) \in H_{\r}\mid n=0\}.
 $$
 We now prove that $ S^\r_{F(\bar\p)[0]}\subset \tilde H$.
  If $x=\uno$ then $f(x)=0$ and there is nothing to prove. We can therefore assume that $x\ne0$.  By the definition of $H_\r$ we have that $\langle\Phi_x\rangle(\varkappa)\equiv 0\mod \tfrac{\dim\p}{2}$. As shown in Proposition \ref{simplecurrentsinFp}, there is $\ll\in\Sigma'$ such that $\Phi_x=\Phi_\ll$. As shown in the proof of  Theorem \ref{decoclassical}, $|\langle\Phi_\ll\rangle(\varkappa)|<\frac{\dim \p}{2}$, so $\langle\Phi_\ll\rangle(\varkappa)= 0$. It is clear from \eqref{weights2} that $h(\D(\mathfrak p))\subset \ganz$ if and only if $h(\a_\p)\in \ganz$. We now check that $\a_\p(f(x))=-1$.
Note that $\Phi_\ll^+=\Phi_2$ and $\Phi_\ll^-=-\Phi_0$, hence, since $\langle \Phi_\ll\rangle(\varkappa)=|\Phi^+|-|\Phi^-|=0$, we have that
$$
\a_\p(f(x))=-\frac{2|\Phi_\ll^+|+|\Phi_1|}{|\Phi_\ll^+|+|\Phi_1|+|\Phi_\ll^-|}=-1.
$$

Finally we prove that $\tilde H\subset S^\r_{F(\bar\p)[0]}$. Choose $(0,x)\in \tilde H$.
We are assuming that $\a_\p(f(x))\in \ganz$, hence, by \eqref{ap(h)},
$$
\sum_{k=0}^Tk|\Phi_k|\equiv 0 \mod \frac{\dim\p}{2}.
$$
Assume first $T=1$, so $x=o_s$ for some $s\in S$ and $a_s=1$. Then, by \eqref{ap(h)}, $\a_\p(\omega^\vee_s)>-1$ hence $\a_\p(f(x))=0$. This implies $\Phi_2=\Phi_1=\emptyset$, so $\Phi_0=\Dp(\p)$. In this case $\langle \Phi_x\rangle(\varkappa)=-|\Phi_0|=\frac{\dim\p}{2}$, hence $(0,x)\in H_\r$.
Now assume $T=2$. Then
$$
2|\Phi_2|+|\Phi_1|\equiv 0 \mod \frac{\dim\p}{2}.
$$
Since $2|\Phi_2|+|\Phi_1|=|\Phi_2|-|\Phi_0|+\frac{\dim\p}{2}$ and $\langle \Phi_x\rangle(\varkappa)=|\Phi_2|-|\Phi_0|$, we are done also in this case.
\end{proof}

\begin{cor}\label{cori} If $\k\hookrightarrow \g$ is a conformal embedding of a semisimple Lie algebra $\k$ in $\g=sp(n, \C)$ or $\g=sl(n+1,\C)$ and $(\aa,\r)$ is the associated symmetric pair, then
$$S^{\k}_{V_{1}(\g)}=\{x\in \Omega_\k\mid f(x)(\D(\p)_{|\ft})\subset \ganz\}.$$
\end{cor}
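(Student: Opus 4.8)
The plan is to reduce the statement to Propositions \ref{centro} and \ref{integral}, working inside the symmetric pair $(\aa,\r)$ with $\r=\u\times\k$ associated to the embedding $\k\hookrightarrow\g$ in Section \ref{generaldec}, and using the inclusion $\Omega_\k\cong L^\u(0)\otimes\Omega_\k\subset\Omega_\r$. First I would invoke Proposition \ref{centro}(1): since $\k$ is semisimple, it gives $S^\k_{V_1(\g)}=H_\k=H_\r\cap\Omega_\k$, for both $\g=sp(n,\C)$ and $\g=sl(n+1,\C)$. Thus everything comes down to identifying $H_\r\cap\Omega_\k$ with the claimed subgroup.

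Next I would distinguish the two possibilities for $\u$. When $\g=sp(n,\C)$ we have $\u=sl(2,\C)$, so $\r$ is semisimple, $\mathfrak z(\r)=\{0\}$ and $F(\bar\p)[0]=F(\bar\p)$; then Proposition \ref{simplecurrentsinFp} identifies $H_\r$ with $S^\r_{F(\bar\p)}=S^\r_{F(\bar\p)[0]}$, which Proposition \ref{integral} evaluates to $\{x\in\Omega_\r\mid f(x)(\D(\p))\subset\ganz\}$. When $\g=sl(n+1,\C)$ we have $\u=\C$ and $\r=\C\times\k$ with $\mathfrak z(\r)=\u$ and $[\r,\r]=\k$; the key remark is that every element of $\Omega_\k\subset\Omega_\r$, being of the form $L^\u(0)\otimes(\cdot)$, is annihilated by $\mathfrak z(\r)$, hence occurs in $F(\bar\p)$ precisely when it occurs in $F(\bar\p)[0]$. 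Therefore $H_\r\cap\Omega_\k=S^\r_{F(\bar\p)}\cap\Omega_\k=S^\r_{F(\bar\p)[0]}$, and Proposition \ref{integral} describes this last set as $\{x\in\Omega_\k\mid f(x)(\D(\p))\subset\ganz\}$, since $S^\r_{F(\bar\p)[0]}$ is already contained in $\Omega_{[\r,\r]}=\Omega_\k$.

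Finally I would trade $\D(\p)$ for $\D(\p)_{|\ft}$. For $x\in\Omega_\k$ one has $f(x)\in P^\vee_\k\subset\ft$, so $f(x)$ has zero component along $\ft'$ in the decomposition $\h_0=\ft'\oplus\ft$; hence $\a(f(x))=(\a_{|\ft})(f(x))$ for every $\a\in\D(\p)$, and the two integrality conditions are equivalent. Assembling the three steps yields $S^\k_{V_1(\g)}=H_\k=\{x\in\Omega_\k\mid f(x)(\D(\p)_{|\ft})\subset\ganz\}$. I do not expect a real obstacle here: the only point demanding care is the bookkeeping when $\u=\C$, namely checking that restricting from $F(\bar\p)$ to the zero-eigenspace $F(\bar\p)[0]$ of $\varpi$ discards none of the simple currents coming from $\Omega_\k$ — which is immediate once one observes that those currents have trivial central character.
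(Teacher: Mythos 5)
Your argument is correct and follows essentially the same route as the paper: reduce via Proposition \ref{centro} to computing $H_\k=H_\r\cap\Omega_\k$, observe that elements of $\Omega_\k$ have trivial central character (so lie in $F(\bar\p)[0]$ when they occur, since $\mathfrak z(\r)\subset\u$), apply Proposition \ref{integral}, and note $f(x)(\D(\p))=f(x)(\D(\p)_{|\ft})$ because $f(x)\in\ft$. The paper merely handles your two cases $\u=sl(2,\C)$ and $\u=\C$ uniformly rather than separately.
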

\begin{proof}
If $x\in S^{\k}_{V_{1}(\g)}$ then, by Proposition \ref{centro}, $x\in H_{\r}$. Recall that $\r=\u\times \k$. Since $\k$ is semisimple, then $\mathfrak z(\r)\subset \u$, hence $L^\u(0)\otimes x$ occurs in $F(\bar\p)[0]$. By Proposition \ref{integral}, $f(x)(\D(\p)_{|\ft})=f(x)(\D(\p))\subset \ganz$.

Viceversa, if $x\in \Omega_\k$ is such that $f(x)(\D(\p)_{|\ft})\subset \ganz$, then, by Proposition \ref{integral}, $(0,x)\in H_{\r}$, hence, by Proposition \ref{centro}, $x\in S^\k_{V_1(\g)}$.
\end{proof}
\begin{rem} Observe that in all cases $\D(\p)_{|\ft}$ is the set of weights of the standard representation of $\g$ when restricted to $\k$.
\end{rem}
\subsubsection*{Acknowledgments}

We would like to thank Christian Krattenthaler for providing references about the evaluation of the determinant \eqref{Kra}, and the referee for useful suggestions. \par
Feng Xu is  partially
supported by NSF.
\providecommand{\bysame}{\leavevmode\hbox to3em{\hrulefill}\thinspace}
\providecommand{\href}[2]{#2}

\vskip10pt
\footnotesize{
\noindent{\bf V.K.}: Department of Mathematics, MIT, 77
Mass. Ave, Cambridge, MA 02139;
{\tt kac@math.mit.edu}

\noindent{\bf P.MF.}: Politecnico di Milano, Polo regionale di Como,
Via Valleggio 11, 22100 Como,
Italy; {\tt pierluigi.moseneder@polimi.it}

\noindent{\bf P.P.}: Dipartimento di Matematica, Sapienza Universit\`a di Roma, P.le A. Moro 2,
00185, Roma, Italy; {\tt papi@mat.uniroma1.it}

\noindent{\bf F.X.}: Department of Mathematics, University of California at Riverside, Riverside, CA 92521, United States;
 {\tt xufeng@math.ucr.edu}
}

 \end{document}